\newtheorem{teo}{Theorem}[section]
\newtheorem{defi}{Definition}[section]
\newtheorem{lemma}{Lemma}[section]
\newtheorem{cor}{Corollary}[section]
\newtheorem{prop}{Proposition}[section]
\newtheorem{rem} {Remark}[section]
\DeclareMathOperator{\im}{im}
\DeclareMathOperator{\dvol}{dvol}
\DeclareMathOperator{\supp}{supp}
\DeclareMathOperator{\id}{Id}
\DeclareMathOperator{\vol}{vol}
\DeclareMathOperator{\reg}{reg}
\DeclareMathOperator{\sing}{sing}
\title{\huge \bf Compact convergence, deformation of the $L^2$-$\overline{\partial}$-complex and canonical $K$-homology classes}
\author{Francesco Bei  \bigskip \\
Dipartimento di Matematica, Sapienza Universit\`a di Roma,\\ E-mail addresses: \ bei@mat.uniroma1.it \     francescobei27@gmail.com }
\date{}
\begin{document}

\maketitle

\begin{abstract}
Let $(X,\gamma)$ be a compact, irreducible Hermitian complex space of complex dimension $m$ and with $\dim(\sing(X))=0$. Let $(F,\tau)\rightarrow X$ be a Hermitian holomorphic vector bundle over $X$ and let us denote by $\overline{\eth}_{F,m,\mathrm{abs}}$ the rolled-up operator of the maximal $L^2$-$\overline{\partial}$-complex of $F$-valued $(m,\bullet)$-forms. Let $\pi:M\rightarrow X$ be a resolution of singularities, $g$ a metric on $M$, $E:=\pi^*F$ and $\rho:=\pi^*\tau$. In this paper, under quite general assumptions on $\tau$, we prove the following equality of analytic $K$-homology classes $[\overline{\eth}_{F,m,\mathrm{abs}}]=\pi_*[\overline{\eth}_{E,m}]$, with $\overline{\eth}_{E,m}$ the rolled-up operator of the $L^2$-$\overline{\partial}$-complex of $E$-valued $(m,\bullet)$-forms on $M$. Our proof is based on functional analytic techniques developed in \cite{KuSh} and provides an explicit  homotopy between the even unbounded Fredholm modules induced by $\overline{\eth}_{F,m,\mathrm{abs}}$ and $\overline{\eth}_{E,m}$.
\end{abstract}

\noindent\textbf{Keywords}: Hermitian complex space, analytic K-homology, resolution of singularities, compact convergence. \\

\noindent\textbf{Mathematics subject classification}:  58B15, 14B05, 19A49, 32C35 32W05.

\tableofcontents

\section*{Introduction}
Let $(X,\gamma)$ be a compact and irreducible Hermitian complex space of complex dimension $m$. The existence and geometric interpretation of analytic $K$-homology classes induced by the Hodge-Dolbeault operator have been investigated in various papers, see, e.g., \cite{BeiPiazza}, \cite{BePi2}, \cite{DTYU}, \cite{Haskell} and  \cite{Lott-Todd}. Concerning the existence the main obstacle is due to the lack of a satisfactory picture for the $L^2$ theory of the Hodge-Dolbeault operator $\overline{\partial}+\overline{\partial}^t$. To the best of our knowledge there are only  few cases in which $\overline{\partial}+\overline{\partial}^t$ is known to have self-adjoint extensions with an entirely discrete spectrum. The first is when $m=2$ (with no assumptions on $\sing(X)$) and one considers $\overline{\eth}_{\mathrm{rel}}$, that is the rolled-up  operator of the minimal  $L^2$-$\overline{\partial}$-complex of $(0,\bullet)$-forms, see \cite[Th. 3.1]{FraB}. The second case arises when $\dim(\sing(X))=0$ (with no assumptions on $m$) and one considers $\overline{\eth}_{\mathrm{abs}/\mathrm{rel}}$, that is the rolled-up operator of either the minimal or the maximal  $L^2$-$\overline{\partial}$-complex of $(0,\bullet)$-forms, see \cite[Th. 1.2]{OvRu} and \cite[Th. 5.2]{FBei}. In all of the previous cases it is known that the operator $\overline{\eth}_{\mathrm{abs}/\mathrm{rel}}$ gives rise to an unbounded even Fredholm module and thus to a class $[\overline{\eth}_{\mathrm{abs/rel}}]\in KK_0(C(X),\mathbb{C})$, see \cite[Prop. 3.6, 3.7 and 3.8]{BeiPiazza}. Moreover for the class $[\overline{\eth}_{\mathrm{rel}}]$ the following interesting geometric interpretation is given in \cite[Th. 4.1]{BeiPiazza} and \cite[Prop. 5.1]{Lott-Todd}: given an arbitrarily fixed resolution of singularities $\pi:M\rightarrow X$ and a Hermitian metric $g$ on $M$ it holds \begin{equation}
\label{kk}
\pi_*[\overline{\eth}_M]=[\overline{\eth}_{\mathrm{rel}}]
\end{equation}
 in $KK_0(C(X),\mathbb{C})$ with $\overline{\eth}_M$ the rolled-up operator of the  $L^2$-$\overline{\partial}$-complex of $(0,\bullet)$-forms on $(M,g)$. In this paper, we have a twofold aim: we prove an equality similar to \eqref{kk}, but for a different operator; and for the proof, we develop a completely different approach compared with the one used in \cite{BeiPiazza} and \cite{Lott-Todd}.
More precisely let $(X,\gamma)$ be a compact and irreducible Hermitian complex space with $\dim(\sing(X))=0$ endowed with a Hermitian holomorphic vector bundle $(F,\tau)\rightarrow X$. Let $$\overline{\eth}_{E,m,\mathrm{abs}}:L^2\Omega^{m,\bullet}(\reg(X),F,\gamma,\tau)\rightarrow L^2\Omega^{m,\bullet}(\reg(X),F,\gamma,\tau)$$ be the rolled-up operator of the maximal $L^2$-$\overline{\partial}$-complex of  $F$-valued $(m,\bullet)$-forms. Let $\pi:M\rightarrow X$ be an arbitrarily fixed resolution of $X$, let $g$ be a Hermitian metric on $M$ and let $E=\pi^*F$ and $\rho:=\pi^*\tau$.
Our main result, Th.\ref{tata}, shows that under quite general assumptions on $\tau$ we have the following equality in $KK_0(C(X),\mathbb{C})$: 
\begin{equation}
\label{gattogatto}
\pi_*[\overline{\eth}_{E,m}]=[\overline{\eth}_{F,m,\mathrm{abs}}]
\end{equation}
with $\overline{\eth}_{E,m}:L^2\Omega^{m,\bullet}(M,E,g,\rho)\rightarrow L^2\Omega^{m,\bullet}(M,E,g,\rho)$  the rolled-up operator of the $L^2$-$\overline{\partial}$-complex of  $E$-valued $(m,\bullet)$-forms on $M$ and $[\overline{\eth}_{E,m}]$ the corresponding class in $KK_0(C(M),\mathbb{C})$.
Certainly, the reader familiar with the topic will notice immediately that \eqref{gattogatto} can be proved quickly by adopting the same strategy used in \cite{BeiPiazza} and \cite{Lott-Todd} and that goes back to \cite{Haskell}, which is based on the short exact sequence  $0\rightarrow KK_0(C(\sing(X)),\mathbb{C})\rightarrow KK_0(C(X),\mathbb{C})\rightarrow KK_0(C_0(\reg(X)),\mathbb{C})\rightarrow 0$ and,  crucially, employs the fact that $\dim(\sing(X))=0$.  Nevertheless there are at least two reasons that we believe make our paper interesting. First, our result holds in a more general framework than \cite{BeiPiazza} and \cite{Lott-Todd} since we allow the twist with a Hermitian holomorphic vector bundle. Second, our proof is entirely different. Indeed we prove the equality \eqref{gattogatto} in a direct way by constructing an explicit  homotopy between the unbounded even Fredholm modules induced by $\overline{\eth}_{E,m}$ and $\overline{\eth}_{F,m,\mathrm{rel}}$. Our construction relies on functional analytic techniques developed in \cite{KuSh} to tackle spectral convergence problems. Let us now give more details by describing how the paper is organised. The first section contains the background material, such as the basic properties of Hermitian metrics and the corresponding $L^2$-metrics, the functional analytic framework that will be used in the rest of the paper, and the main definitions and properties of analytic $K$-homology. The second section contains the main technical results of this paper. In the first part we consider a compact complex manifold $M$ of complex dimension $m$ endowed with a holomorphic Hermitian vector bundle $(E,\rho)\rightarrow M$.  We equip $M$ with $g_t$, $t\in [0,1]$, a family of Hermitian metrics on $M$ that degenerates to a positive semidefinite Hermitian pseudometric $h:=g_0$ as $t\rightarrow 0$ (we refer to the 2nd section for a precise formulation). We then consider the operators $$\overline{\partial}^{g_1,g_s}_{E,m,q,\max}:L^2\Omega^{m,q}(M,E,g_1,\rho)\rightarrow L^2\Omega^{m,q+1}(M,E,g_s,\rho)$$ and $$\overline{\partial}^{g_s,h}_{E,m,q,\max}:L^2\Omega^{m,q}(M,E,g_s,\rho)\rightarrow L^2\Omega^{m,q+1}(A,E|_A,h|_A,\rho|_A)$$ with $A$ the open and dense subset of $M$ where $h$ is positive definite. Under quite general assumptions on the family $g_t$  we show that both $\overline{\partial}^{g_1,g_s}_{E,m,q,\max}$ and $\overline{\partial}^{g_s,h}_{E,m,q,\max}$ have a well-defined and compact Green operator $$G_{\overline{\partial}^{g_1,g_s}_{E,m,q,\max}}:L^2\Omega^{m,q+1}(M,E,g_s,\rho)\rightarrow L^2\Omega^{m,q}(M,E,g_1,\rho)$$ and  $$G_{\overline{\partial}^{g_s,h}_{E,m,q,\max}}:L^2\Omega^{m,q+1}(A,E|_A,h|_A,\rho|_A)\rightarrow L^2\Omega^{m,q}(M,E,g_s,\rho)$$ and that when $s\rightarrow 0$  $$G_{\overline{\partial}^{g_1,g_s}_{E,m,q,\max}}\rightarrow G_{\overline{\partial}^{g_1,h}_{E,m,q,\max}}\quad \mathrm{and}\quad G_{\overline{\partial}^{g_s,h}_{E,m,q,\max}}\rightarrow G_{\overline{\partial}^{h,h}_{E,m,q,\max}}$$ both in the compact sense, see Def.  \ref{compact}. The above convergence results are then used to prove that the resolvent of $\overline{\eth}_{E,m}$ can be continuously deformed with respect to the operator norm to the resolvent of a self-adjoint operator unitarily equivalent to $\overline{\eth}_{F,m,\mathrm{abs}}$. We point out that the results of this section hold without any assumption on $\sing(X)$. The constraint $\dim(\sing(X))=0$ arises only in the third and last section as a sufficient condition to have well-defined even unbounded Fredholm modules, see Rmk \ref{rem}. Finally in the last section we use the above convergence and deformation results to give a direct proof of \eqref{gattogatto}. We conclude this introduction with a last comment that enlightens another possibly interesting feature of our approach. The strategy adopted in this paper could potentially lead to a better version of \eqref{gattogatto}. More precisely if one can prove that $G_{\overline{\partial}_{E,m,q}^{g_s,g_s}}$ is compactly convergent to $G_{\overline{\partial}_{E,m,q,\max}^{h,h}}$ as $s\rightarrow 0$ then  \eqref{gattogatto} would hold true without assumptions on $\sing(X)$, see Rmk \ref{rem}. Unfortunately, we do not have such a strong convergence result at our disposal.\\

\noindent\textbf{Acknowledgements}. The author is partially supported by INDAM-GNSAGA 
''Gruppo Nazionale per le Strutture Algebriche, Geometriche e le loro Applicazioni'' of the Istituto Nazionale di Alta Matematica ''Francesco Severi''.

\section{Background material}
\subsection{Hermitian metrics and $L^2$-metrics}
Let $(M,J)$ be a complex manifold of complex dimension $m$ and let $g$ and $h$ be Hermitian metrics on $M$. Let $F\in C^{\infty}(M,\mathrm{End}(TM))$ be the endomorphism of the tangent bundle such that $h(\cdot,\cdot)=g(F\cdot,\cdot)$ and let $F_{\mathbb{C}}\in C^{\infty}(M,\mathrm{End}(TM\otimes \mathbb{C}))$ be the $\mathbb{C}$-linear endomorphism induced by $F$ on the complexified tangent bundle. Since $F$ commutes with $J$ it follows that both $T^{1,0}M$ and $T^{0,1}M$ are preserved by $F_{\mathbb{C}}$. We denote the corresponding restrictions with $F^{1,0}_{\mathbb{C}}:=F_{\mathbb{C}}|_{T^{1,0}M}$ and $F^{0,1}_{\mathbb{C}}:=F_{\mathbb{C}}|_{T^{0,1}M}$. 
Let now $g^*$ and $h^*$ be the Hermitian  metrics induced by $g$ and $h$ on $T^*M$, respectively. We have  $h^*(\cdot,\cdot)=g^*((F^{-1})^t\cdot,\cdot)$ with $(F^{-1})^t$ the transpose of $F^{-1}$, that is the endomorphism of $T^*M$ induced by $F^{-1}$. Let $G\in C^{\infty}(M,\mathrm{End}(T^*M))$ be defined as $G:=(F^{-1})^t$ and let $G_{\mathbb{C}}$, $G_{\mathbb{C}}^{1,0}$ and $G_{\mathbb{C}}^{0,1}$ be the $\mathbb{C}$-linear endomorphisms induced by $G$ and acting on $T^*M\otimes \mathbb{C}$,  $T^{1,0,*}M$ and $T^{0,1,*}M$, respectively. Let us now denote by $g_{\mathbb{C}}$ and $h_{\mathbb{C}}$ the Hermitian metrics on $TM\otimes \mathbb{C}$ induced by $g$ and $h$, respectively. Let $h^*_{\mathbb{C}}$, $h^*_{a,b}$, $g^*_{\mathbb{C}}$ and $g_{a,b}^*$ be the Hermitian metrics on $T^*M\otimes \mathbb{C}$ and $\Lambda^{a,b}(M)$ induced by $h_{\mathbb{C}}$ and $g_{\mathbb{C}}$, respectively. Clearly $h^*_{a,b}=h^*_{a,0}\otimes h^*_{0,b}$, $g^*_{a,b}=g^*_{a,0}\otimes g^*_{0,b}$ and $h^*_{a,0}(\cdot,\cdot)=g_{a,0}^*(G_{\mathbb{C}}^{a,0}\cdot,\cdot)$, $h^*_{0,b}(\cdot,\cdot)=g_{0,b}^*(G_{\mathbb{C}}^{0,b}\cdot,\cdot)$, $h^*_{a,b}(\cdot, \cdot)=g^*_{a,b}(G_{\mathbb{C}}^{a,0}\otimes G_{\mathbb{C}}^{0,b}\cdot,\cdot)$ where $G_{\mathbb{C}}^{0,b}\in C^{\infty}(M,\mathrm{End}(\Lambda^{0,b}(M)))$ and $G_{\mathbb{C}}^{a,0}\in C^{\infty}(M,\mathrm{End}(\Lambda^{a,0}(M)))$ are the endomorphisms induced in the natural way by $G^{0,1}_{\mathbb{C}}$ and $G^{1,0}_{\mathbb{C}}$, respectively. Now we consider a holomorphic vector bundle $E\rightarrow M$ endowed with a Hermitian metric $\rho$. Let us denote by $g^{*}_{a,b,\rho}$ and $h^{*}_{a,b,\rho}$  the Hermitian metrics induced by $g^*_{a,b}$, $h_{a,b}^*$ and $\rho$ on $\Lambda^{a,b}(M)\otimes E$, respectively. Let $S^{a,b}\in C^{\infty}(M,\mathrm{End}(\Lambda^{a,b}(M)\otimes E))$ given by $G^{a,0}_{\mathbb{C}}\otimes G^{0,b}_{\mathbb{C}}\otimes \id$. Then, we have $$h_{a,b,\rho}^{*}(\cdot,\cdot)=g^{*}_{a,b,\rho}(S^{a,b}\cdot,\cdot).$$ Let $\Omega_c^{p,q}(M,E)$ be the space of smooth $E$-valued $(p,q)$-forms with compact support, let $L^2\Omega^{p,q}(M,E,g,\rho)$ be the Hilbert space of $E$-valued $L^2$-$(p,q)$-forms over $M$ with respect to $g$ and $\rho$ and with self-explanatory notation let us also consider $L^2\Omega^{p,q}(M,E,h,\rho)$. Given any $\phi,\psi\in \Omega_c^{a,b}(M,E)$, we can describe the $L^2$-product induced by $h$ in terms of the $L^2$-product induced by $g$ as follows:  
\begin{equation}
\label{ghgh}
\langle\psi,\phi\rangle_{L^2\Omega^{a,b}(M,E,h,\rho)}=\int_Mh^*_{a,b,\rho}(\psi,\phi)\dvol_{h}=\int_Mg^*_{a,b,\rho}(S^{a,b}\psi,\phi)\sqrt{\det(F)}\dvol_{g}.
\end{equation}
Let us now point out some consequences of \eqref{ghgh}. If $\psi\in \Omega^{0,b}_c(M,E)$ we have  
$$\begin{aligned}
\langle\psi,\psi\rangle_{L^2\Omega^{0,b}(M,E,h,\rho)}&=\int_Mh^*_{0,b,\rho}(\psi,\psi)\dvol_h\\
&=\int_Mg^*_{0,b,\rho}(S^{0,b}\psi,\psi)\sqrt{\det(F)}\dvol_g\\
&=\int_Mg^*_{0,b,\rho}((G^{0,b}_{\mathbb{C}}\otimes\id)\psi,\psi)\sqrt{\det(F)}\dvol_g\\
&\leq \int_M|G^{0,b}_{\mathbb{C}}|_{g^*_{0,b}}g^*_{0,b,\rho}(\psi,\psi)\sqrt{\det(F)}\dvol_g
\end{aligned}$$
where $|G^{0,b}_{\mathbb{C}}|_{g^*_{0,b}}:M\rightarrow \mathbb{R}$ is the function that assigns to each $p\in M$ the pointwise operator norm of $G^{0,b}_{\mathbb{C},p}:\Lambda^{0,b}_p(M)\rightarrow \Lambda^{0,b}_p(M)$ with respect to $g_{0,b}^*$, that is 
\begin{equation}
\label{pointnorm}
|G^{0,b}_{\mathbb{C}}|_{g^*_{0,b}}(p)=\sup_{0\neq v\in \Lambda^{0,b}_p(M)}\sqrt{\frac{g^*_{0,b}(G^{0,b}_{\mathbb{C}}v,G^{0,b}_{\mathbb{C}}v)}{g^*_{0,b}(v,v)}}.
\end{equation}
In particular, if $|G^{0,b}_{\mathbb{C}}|_{g^*_{0,b}}\sqrt{\det(F)}\in L^{\infty}(M)$, we obtain 
$$
\begin{aligned}
\langle\psi,\psi\rangle_{L^2\Omega^{0,b}(M,E,h,\rho)}&\leq \int_M|G^{0,b}_{\mathbb{C}}|_{g^*_{0,b}}g^*_{0,b,\rho}(\psi,\psi)\sqrt{\det(F)}\dvol_g\\
&\leq \||G^{0,b}_{\mathbb{C}}|_{g^*_{0,b}}\sqrt{\det(F)}\|_{L^{\infty}(M)}\int_Mg^*_{0,b,\rho}(\psi,\psi)\dvol_g\\
&=\||G^{0,b}_{\mathbb{C}}|_{g^*_{0,b}}\sqrt{\det(F)}\|_{L^{\infty}(M)}\langle\psi,\psi\rangle_{L^2\Omega^{0,b}(M,E,g,\rho)}.
\end{aligned}$$
When $\psi\in \Omega^{m,b}_c(M)$ we have 
\begin{align}
\label{ibra3} 
\langle\psi,\psi\rangle_{L^2\Omega^{m,b}(M,E,h,\rho)}&=\int_M h_{m,b,\rho}^*(\psi,\psi)\dvol_h\\
& \nonumber = \int_M g_{m,b,\rho}^*(S^{m,b}\psi,\psi)\sqrt{\det(F)}\dvol_g\\
& \nonumber =\int_M g_{m,b,\rho}^*((\det(G^{1,0}_{\mathbb{C}})\otimes G^{0,b}_{\mathbb{C}}\otimes \id)\psi,\psi)\sqrt{\det(F)}\dvol_g\\
& \nonumber = \int_M g_{m,b,\rho}^*(( \id\otimes G^{0,b}_{\mathbb{C}}\otimes\id)\psi,\psi)\dvol_g\\
& \nonumber \leq \int_M |G^{0,b}_{\mathbb{C}}|_{g^*_{0,b}} g^*_{m,b,\rho}(\psi,\psi)\dvol_g.
\end{align}

Thus, whenever $|G^{0,b}_{\mathbb{C}}|_{g^*_{0,b}}\in L^{\infty}(M)$, we have

$$\langle\psi,\psi\rangle_{L^2\Omega^{m,b}(M,E,h,\rho)}\leq \||G^{0,b}_{\mathbb{C}}|_{g^*_{0,b}}\|_{L^{\infty}(M)}\langle\psi,\psi\rangle_{L^2\Omega^{m,b}(M,E,g,\rho)}
$$
whereas if  there exists a positive constant $c$ such that $g^*_{0,b}(G^{0,b}_{\mathbb{C}}\cdot,\cdot)\geq cg^*_{0,b}(\cdot,\cdot)$  then  $$g_{m,b,\rho}^*( (\id\otimes G^{0,b}_{\mathbb{C}}\otimes \id)\psi,\psi)\geq cg^*_{m,b,\rho}( \psi,\psi)$$ and therefore
\begin{equation}
\label{bibi2}
\langle\psi,\psi\rangle_{L^2\Omega^{m,b}(M,E,h,\rho)}\geq c\langle\psi,\psi\rangle_{L^2\Omega^{m,b}(M,E,g,\rho)}.
\end{equation}
Let us denote by $\overline{\partial}_{E,p,q}:\Omega^{p,q}(M,E)\rightarrow \Omega^{p,q+1}(M,E)$  the Dolbeault operator acting on $E$-valued $(p,q)$-forms. When we look at $\overline{\partial}_{E,p,q}:L^2\Omega^{p,q}(M,E,g,\rho)\rightarrow L^2\Omega^{p,q+1}(M,E,h,\rho)$ as an unbounded and densely defined operator with domain $\Omega_c^{p,q}(M,E)$ we denote by $$\overline{\partial}_{p,q,\max/\min}^{g,h}:L^2\Omega^{p,q}(M,E,g,\rho)\rightarrow L^2\Omega^{p,q+1}(M,E,h,\rho)$$ respectively its maximal and minimal extension. The former is the closed extension defined in the distributional sense: $\omega\in \mathcal{D}(\overline{\partial}_{E,p,q,\max}^{g,h})$ if $\omega\in L^2\Omega^{p,q}(M,E,g,\rho)$ and $\overline{\partial}_{E,p,q}\omega$, applied in the distributional sense, lies in $L^2\Omega^{p,q+1}(M,E,h,\rho)$.  The latter is defined as the graph closure of $\Omega^{p,q}_c(M,E)$ in $L^2\Omega^{p,q}(M,E,g,\rho)$ with respect to the graph norm of $\overline{\partial}_{p,q}:\Omega^{p,q}_c(M,E)\rightarrow L^2\Omega^{p,q+1}(M,E,h,\rho)$. With $\overline{\partial}_{E,p,q}^{g,h,t}:\Omega^{p,q+1}_c(M)\rightarrow \Omega^{p,q}_c(M)$ we denote the formal adjoint of $\overline{\partial}_{E,p,q}$ with respect to the Hermitian metrics $g$ on $\Omega_c^{p,q}(M,E)$ and $h$ on $\Omega_c^{p,q+1}(M,E)$ and with $\overline{\partial}_{E,p,q,\max/\min}^{g,h,t}:L^2\Omega^{p,q+1}(M,E,h,\rho)\rightarrow L^2\Omega^{p,q}(M,E,g,\rho)$ we denote the corresponding maximal and minimal extensions. Note that 
\begin{equation}
\label{mixedad}
\overline{\partial}_{E,p,q,\max}^{g,h,t}=(\overline{\partial}_{E,p,q,\min}^{g,h})^*\quad \mathrm{and}\quad \overline{\partial}_{E,p,q,\min}^{g,h,t}=(\overline{\partial}_{E,p,q,\max}^{g,h,t})^*.
\end{equation}
We conclude this section with the following:
\begin{prop}
\label{Pp}
Let $(M,J)$ be a complex manifold of complex dimension $m$ and let $(E,\rho)\rightarrow M$ be a Hermitian holomorphic vector bundle over $M$. Let $g_1$, $g_2$, $h_1$ and $h_2$ be four Hermitian metrics on $M$ such that $h_1\leq c_1g_1$ and $h_2\leq c_2g_2$, with $c_1$ and $c_2$ positive constants. Finally let us consider the operators
\begin{align}
& \nonumber \overline{\partial}_{E,m,q,\max}^{g_1,g_2}:L^2\Omega^{m,q}(M,E,g_1,\rho)\rightarrow L^2\Omega^{m,q+1}(M,E,g_2,\rho),\\
& \nonumber \overline{\partial}_{E,m,q,\max}^{h_1,h_2}:L^2\Omega^{m,q}(M,E,h_1,\rho)\rightarrow L^2\Omega^{m,q+1}(M,E,h_2,\rho).
\end{align}
If $\omega\in \mathcal{D}(\overline{\partial}_{E,m,q,\max}^{h_1,h_2})$ then $\omega\in \mathcal{D}(\overline{\partial}_{E,m,q,\max}^{g_1,g_2})$ and $\overline{\partial}_{E,m,q,\max}^{h_1,h_2}\omega=$ $\overline{\partial}_{E,m,q,\max}^{g_1,g_2}\omega.$ 
Moreover the induced inclusion $$\mathcal{D}(\overline{\partial}_{E,m,q,\max}^{h_1,h_2})\hookrightarrow \mathcal{D}(\overline{\partial}_{E,m,q,\max}^{g_1,g_2})$$ is continuous with respect to the corresponding graph norms. 
\end{prop}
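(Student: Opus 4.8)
The plan is to work directly from the comparison inequality \eqref{ibra3}, which expresses the $L^2\Omega^{m,b}(M,E,h,\rho)$-norm in terms of the $L^2\Omega^{m,b}(M,E,g,\rho)$-norm via the endomorphism $\id\otimes G^{0,b}_{\mathbb{C}}\otimes\id$, so that the hypotheses $h_1\leq c_1 g_1$ and $h_2\leq c_2 g_2$ translate into the bounds $\langle\psi,\psi\rangle_{L^2\Omega^{m,q}(M,E,h_1,\rho)}\leq C_1\langle\psi,\psi\rangle_{L^2\Omega^{m,q}(M,E,g_1,\rho)}$ and likewise in degree $q+1$ with $h_2,g_2$, for a suitable constant $C_i$ controlled by $c_i$ (this is the content of the displayed line just before \eqref{bibi2}; note that $h_j\leq c_j g_j$ on $TM$ forces the corresponding comparison of the induced metrics on $\Lambda^{0,b}$, hence on $\Lambda^{m,b}\otimes E$, up to a dimensional constant). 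In particular the identity map is a bounded inclusion $\iota_q:L^2\Omega^{m,q}(M,E,h_1,\rho)\hookrightarrow L^2\Omega^{m,q}(M,E,g_1,\rho)$ and $\iota_{q+1}:L^2\Omega^{m,q+1}(M,E,h_2,\rho)\hookrightarrow L^2\Omega^{m,q+1}(M,E,g_2,\rho)$, on the underlying spaces of forms (the same smooth forms, different inner products).

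Next I would argue at the level of distributions. If $\omega\in\mathcal{D}(\overline{\partial}_{E,m,q,\max}^{h_1,h_2})$, then $\omega\in L^2\Omega^{m,q}(M,E,h_1,\rho)$, hence by the inclusion $\omega\in L^2\Omega^{m,q}(M,E,g_1,\rho)$; moreover $\overline{\partial}_{E,m,q}\omega$, computed in the distributional sense on $\reg(M)=M$, is an intrinsic operation that does not depend on which Hermitian metric one uses, and it lies in $L^2\Omega^{m,q+1}(M,E,h_2,\rho)$, hence by the inclusion $\iota_{q+1}$ it lies in $L^2\Omega^{m,q+1}(M,E,g_2,\rho)$. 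By the distributional definition of the maximal extension recalled in the text, this says exactly $\omega\in\mathcal{D}(\overline{\partial}_{E,m,q,\max}^{g_1,g_2})$ and $\overline{\partial}_{E,m,q,\max}^{g_1,g_2}\omega=\overline{\partial}_{E,m,q}\omega=\overline{\partial}_{E,m,q,\max}^{h_1,h_2}\omega$.

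Finally, continuity of the inclusion with respect to the graph norms is immediate from the two metric-comparison inequalities: the graph norm $\|\omega\|^2_{L^2\Omega^{m,q}(M,E,g_1,\rho)}+\|\overline{\partial}_{E,m,q,\max}^{g_1,g_2}\omega\|^2_{L^2\Omega^{m,q+1}(M,E,g_2,\rho)}$ is dominated by $C_1\|\omega\|^2_{L^2\Omega^{m,q}(M,E,h_1,\rho)}+C_2\|\overline{\partial}_{E,m,q,\max}^{h_1,h_2}\omega\|^2_{L^2\Omega^{m,q+1}(M,E,h_2,\rho)}$, using the previous paragraph to identify the two images of $\overline{\partial}$. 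I do not anticipate a serious obstacle here; the only point requiring a little care is the reduction of the hypothesis $h_j\leq c_j g_j$ on tangent vectors to the corresponding pointwise bound on $(m,b)$-forms with values in $E$ — this is precisely where one invokes the computation \eqref{ibra3} and the fact that in bidegree $(m,\bullet)$ the $\det(G^{1,0}_{\mathbb{C}})$ factor cancels against $\sqrt{\det F}$, so that only the $G^{0,b}_{\mathbb{C}}$ part survives and can be estimated by its pointwise operator norm.
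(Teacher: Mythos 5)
Your proposal follows essentially the same route as the paper's proof: reduce the hypotheses $h_j\leq c_jg_j$ to continuous inclusions $L^2\Omega^{m,q}(M,E,h_1,\rho)\hookrightarrow L^2\Omega^{m,q}(M,E,g_1,\rho)$ and $L^2\Omega^{m,q+1}(M,E,h_2,\rho)\hookrightarrow L^2\Omega^{m,q+1}(M,E,g_2,\rho)$, exploiting the cancellation of $\det(G^{1,0}_{\mathbb{C}})$ against $\sqrt{\det(F)}$ in bidegree $(m,\bullet)$, then observe that the distributional $\overline{\partial}$ is metric-independent and read off the graph-norm estimate. The paper phrases the distributional step through the metric-free pairing $(-1)^{m+q+1}\int_M\omega\wedge \overline{\partial}_{E^*,0,m-q-1}\phi=\int\eta\wedge\phi$ for $\phi\in\Omega_c^{0,m-q-1}(M,E^*)$, which is the same idea in a different notation.

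There is one local error you should correct. In your first paragraph you claim that $h_1\leq c_1g_1$ yields $\langle\psi,\psi\rangle_{L^2\Omega^{m,q}(M,E,h_1,\rho)}\leq C_1\langle\psi,\psi\rangle_{L^2\Omega^{m,q}(M,E,g_1,\rho)}$ and cite the unnumbered display preceding \eqref{bibi2}. That inequality goes the wrong way: it would make the identity bounded from $L^2(g_1)$ into $L^2(h_1)$, not the inclusion $L^2(h_1)\hookrightarrow L^2(g_1)$ that you assert immediately afterwards and that the argument actually needs. The relevant estimate is \eqref{bibi2} itself: $h_1\leq c_1g_1$ gives $F\leq c_1\id$, hence $G=(F^{-1})^t\geq c_1^{-1}\id$ and $g^*_{0,q}(G^{0,q}_{\mathbb{C}}\cdot,\cdot)\geq c_1^{-q}g^*_{0,q}(\cdot,\cdot)$, so $\langle\psi,\psi\rangle_{L^2\Omega^{m,q}(M,E,h_1,\rho)}\geq c_1^{-q}\langle\psi,\psi\rangle_{L^2\Omega^{m,q}(M,E,g_1,\rho)}$, i.e. $\|\psi\|^2_{L^2\Omega^{m,q}(M,E,g_1,\rho)}\leq c_1^{q}\|\psi\|^2_{L^2\Omega^{m,q}(M,E,h_1,\rho)}$. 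This is the direction you in fact use in your closing graph-norm estimate, so the slip lies only in the stated inequality and the citation, not in the structure of the argument; once corrected, the proof is sound and coincides with the paper's.
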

\begin{proof}
Since we assumed $h_1\leq c_1g_1$ and $h_2\leq c_2g_2$ it follows from \eqref{ibra3} that \eqref{bibi2} holds true for both $g_1$, $h_1$ and $g_2$, $h_2$. Thus the identity $\mathrm{Id}:\Omega_c^{m,q}(M,E)\rightarrow \Omega_c^{m,q}(M,E)$ induces continuous inclusions 
\begin{equation}
\label{cixz}
L^2\Omega^{m,q}(M,E,h_1,\rho)\hookrightarrow L^2\Omega^{m,q}(M,E,g_1,\rho)\quad \mathrm{and}\quad L^2\Omega^{m,q}(M,E,h_2,\rho)\hookrightarrow L^2\Omega^{m,q}(M,E,g_2,\rho)
\end{equation} for each $q=0,...,m$. Let now $\omega\in \mathcal{D}(\overline{\partial}_{E,m,q,\max}^{h_1,h_2})$ and let $\eta=\overline{\partial}_{E,m,q,\max}^{h_1,h_2}\omega$. This is equivalent to saying that $$(-1)^{m+q+1}\int_M\omega\wedge \overline{\partial}_{E^*,0,m-q-1}\phi=\int \eta\wedge \phi$$ for any $\phi\in \Omega_c^{0,m-q-1}(M,E^*)$. Thanks to \eqref{cixz} the above equality implies that $\omega\in \mathcal{D}(\overline{\partial}_{E,m,q,\max}^{g_1,g_2})$ and that $\eta=\overline{\partial}_{E,m,q,\max}^{g_1,g_2}\omega$. Finally, again by \eqref{cixz}, we can conclude that the induced inclusion $\mathcal{D}(\overline{\partial}_{E,m,q,\max}^{h_1,h_2})\hookrightarrow \mathcal{D}(\overline{\partial}_{E,m,q,\max}^{g_1,g_2})$ is continuous with respect to the corresponding graph norms. 
\end{proof}

\subsection{Functional analytic prerequisites}

We briefly recall  some functional analytic tools that will be used later. All the material is taken from \cite{KuSh}. We refer to it for an in-depth treatment. Let $\{H_n\}_{n\in \mathbb{N}}$ be a sequence of infinite dimensional separable complex Hilbert spaces. Let $H$ be another infinite dimensional separable complex Hilbert space.  Let us denote by $\langle \cdot ,\ \cdot \rangle_{H_n}$, $\|\cdot\|_{H_n}$, $\langle \cdot ,\ \cdot \rangle_{H}$ and $\|\cdot\|_{H}$  the corresponding inner products and norms. Let $\mathcal{C}\subseteq H$ be a dense subset. Assume that for every $n\in \mathbb{N}$ there exists a linear map $\Phi_n:\mathcal{C}\rightarrow H_n$. We will say that $H_n$ converges to $H$ as $n\rightarrow \infty$ if and only if
\begin{equation}
\label{limitHilbert}
\lim_{n\rightarrow\infty}\|\Phi_nu\|_{H_n}=\|u\|_{H}
\end{equation}
for any $u\in \mathcal{C}$.\\

\noindent \textbf{Assumption:} In the following definitions and propositions we will always assume that the sequence $\{H_n\}_{n\in \mathbb{N}}$ {\em converges} to $H$.

\begin{defi}
\label{strong}
Let $u\in H$ and let $\{u_n\}_{n\in \mathbb{N}}$ be a sequence such that $u_n\in H_n$ for each $n\in \mathbb{N}$. We say that $u_n$ strongly converges to $u$ as $n\rightarrow \infty$ if there exists a net $\{v_{\beta}\}_{\beta\in \mathcal{B}}\subset \mathcal{C}$ tending to $u$ in $H$ such that
\begin{equation}
\label{stronglimit}
\lim_{\beta} \limsup_{n\rightarrow\infty}\|\Phi_n v_{\beta}-u_n\|_{H_n}=0
\end{equation}
\end{defi}
\begin{defi}
\label{weak}
Let $u\in H$ and let $\{u_n\}_{n\in \mathbb{N}}$ be a sequence such that $u_n\in H_n$ for each $n\in \mathbb{N}$. We say that $u_n$ weakly converges to $u$ as $n\rightarrow \infty$ if
\begin{equation}
\label{weaklimit}
\lim_{n\rightarrow \infty} \langle u_n,w_n\rangle_{H_n}=\langle u,w\rangle_{H}
\end{equation}
for any  $w\in H$ and any sequence $\{w_n\}_{n\in \mathbb{N}}$, $w_n\in H_n$, strongly convergent to $w$.
\end{defi}

\begin{prop}
\label{bounded}
Let $\{u_n\}_{n\in \mathbb{N}}$ be a sequence such that $u_n\in H_n$ for each $n\in \mathbb{N}$. Assume that there exists a positive real number $c$ such that $\|u_n\|_{H_n}\leq c$ for every $n\in \mathbb{N}$. There then exists a subsequence  $\{u_m\}_{m\in \mathbb{N}}\subset \{u_n\}_{n\in \mathbb{N}}$, $u_m\in H_m$, weakly convergent to some element $u\in H$.
\end{prop}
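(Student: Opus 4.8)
The plan is to carry out the classical proof of weak sequential compactness of bounded sets in a Hilbert space, transplanted to the setting of a converging sequence $\{H_n\}$; the only genuinely new point will be to make a diagonal extraction interact correctly with the net appearing in Definition \ref{strong}. Throughout I use that $\mathcal{C}$, being the domain of the linear maps $\Phi_n$, is a dense linear subspace of $H$, and that $H$ (hence $\mathcal{C}$) is separable. First I would fix a countable set $\{d_k\}_{k\in\mathbb{N}}\subseteq\mathcal{C}$ that is dense in $H$. Since each $\Phi_n$ is linear, the constant net $v_\beta\equiv d_k$ witnesses that $(\Phi_n d_k)_n$ strongly converges to $d_k$ in the sense of Definition \ref{strong}; in particular, by \eqref{limitHilbert}, $\|\Phi_n d_k\|_{H_n}\to\|d_k\|_H$, so $M_k:=\sup_n\|\Phi_n d_k\|_{H_n}<\infty$. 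Hence for each fixed $k$ the Cauchy--Schwarz inequality together with the hypothesis $\|u_n\|_{H_n}\le c$ gives $|\langle u_n,\Phi_n d_k\rangle_{H_n}|\le cM_k$, so the scalar sequence $(\langle u_n,\Phi_n d_k\rangle_{H_n})_n$ is bounded. A standard diagonal argument then produces a subsequence $\{u_m\}_{m\in\mathbb{N}}\subseteq\{u_n\}_{n\in\mathbb{N}}$ along which $\langle u_m,\Phi_m d_k\rangle_{H_m}$ converges, say to $\ell_k\in\mathbb{C}$, for every $k$.

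Next I would upgrade this to a limit along all of $\mathcal{C}$ and identify the limiting functional. Given $v\in\mathcal{C}$ and $d_k$ with $\|v-d_k\|_H$ small, linearity of $\Phi_m$, the hypothesis, and \eqref{limitHilbert} (applied to $v-d_k\in\mathcal{C}$) yield $\limsup_m|\langle u_m,\Phi_m v\rangle_{H_m}-\langle u_m,\Phi_m d_k\rangle_{H_m}|\le c\,\limsup_m\|\Phi_m(v-d_k)\|_{H_m}=c\|v-d_k\|_H$; combined with the convergence of $\langle u_m,\Phi_m d_k\rangle_{H_m}$ and an $\varepsilon/3$ estimate, this shows $(\langle u_m,\Phi_m v\rangle_{H_m})_m$ is Cauchy, hence convergent to some $\ell(v)$. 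The map $v\mapsto\ell(v)$ is linear in the appropriate slot, and by \eqref{limitHilbert} once more $|\ell(v)|=\lim_m|\langle u_m,\Phi_m v\rangle_{H_m}|\le c\|v\|_H$; thus it extends continuously to $H$, and the Riesz representation theorem produces a unique $u\in H$ with $\ell(v)=\langle u,v\rangle_H$ for all $v\in H$, with $\|u\|_H\le c$.

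It remains to verify that $u_m$ weakly converges to $u$ in the sense of Definition \ref{weak}. Let $w\in H$ and let $w_m\in H_m$ strongly converge to $w$, with witnessing net $\{v_\beta\}_{\beta\in\mathcal{B}}\subset\mathcal{C}$, $v_\beta\to w$ in $H$ and $\lim_\beta\limsup_m\|\Phi_m v_\beta-w_m\|_{H_m}=0$. Writing
\[
\langle u_m,w_m\rangle_{H_m}-\langle u,w\rangle_H=\langle u_m,\,w_m-\Phi_m v_\beta\rangle_{H_m}+\bigl(\langle u_m,\Phi_m v_\beta\rangle_{H_m}-\langle u,v_\beta\rangle_H\bigr)+\langle u,\,v_\beta-w\rangle_H,
\]
the first term is bounded by $c\|w_m-\Phi_m v_\beta\|_{H_m}$; the middle term tends to $0$ as $m\to\infty$ for each fixed $\beta$, since $\langle u_m,\Phi_m v_\beta\rangle_{H_m}\to\ell(v_\beta)=\langle u,v_\beta\rangle_H$ by the previous paragraph; and the last term is bounded by $\|u\|_H\|v_\beta-w\|_H$. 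Taking $\limsup_m$ for fixed $\beta$ and then letting $\beta$ run through $\mathcal{B}$ drives the right-hand side to $0$, which is precisely \eqref{weaklimit}; this completes the proof.

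I do not expect a serious obstacle here: the argument is the standard weak-compactness one, and the estimates invoked are all routine given \eqref{limitHilbert}. The only point that genuinely needs care is the last paragraph, where the subsequence — extracted using merely the \emph{countable} dense family $\{d_k\}$ — must be shown to test correctly against \emph{every} strongly convergent sequence; this is exactly what the triangle-inequality estimate with the approximating net $\{v_\beta\}$ achieves. One should also record at the outset that $\mathcal{C}$ may be assumed separable, being a subset of the separable space $H$.
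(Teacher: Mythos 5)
Your proof is correct: the paper itself offers no argument here, simply citing \cite{KuSh}, Lemma 2.2, and your diagonal extraction over a countable dense subset of $\mathcal{C}$, followed by the Riesz representation of the limiting functional and the three-term estimate against an arbitrary strongly convergent test sequence, is exactly the standard argument behind that lemma. The one hypothesis you rightly make explicit — that $\mathcal{C}$ is a linear subspace, so that $v-d_k\in\mathcal{C}$ and \eqref{limitHilbert} applies to it — is implicit in the paper's phrase ``linear map $\Phi_n:\mathcal{C}\rightarrow H_n$'' and in the source.
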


\begin{proof}
See \cite{KuSh} Lemma 2.2.
\end{proof}

\begin{prop}
\label{wibounded}
Let $\{u_n\}_{n\in \mathbb{N}}$,  $u_n\in H_n$, be a sequence  weakly convergent to some element $u\in H$. There then exists a positive real number, $\ell$, such that 
\begin{equation}
\label{lerume}
\sup_{n\in \mathbb{N}} \|u_n\|_{H_n}\leq \ell \quad\quad\quad\ and\ \quad\quad\quad \|u\|_{H}\leq \liminf_{n\rightarrow \infty} \|u_n\|_{H_n}.
\end{equation}
\end{prop}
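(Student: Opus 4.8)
\emph{Proof proposal.} The plan is to treat the two assertions in \eqref{lerume} separately: the bound $\sup_n\|u_n\|_{H_n}\le\ell$ I would reduce to the classical Banach--Steinhaus theorem applied in a single genuine Hilbert space, and the lower semicontinuity $\|u\|_H\le\liminf_n\|u_n\|_{H_n}$ is then a short Cauchy--Schwarz argument along the dense set $\mathcal{C}$, using the bound just obtained.

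For the uniform bound, first I would introduce the Hilbert-space direct sum $\mathcal{H}:=\bigoplus_{n\in\mathbb{N}}H_n$ together with the isometric inclusions $\iota_n:H_n\hookrightarrow\mathcal{H}$ onto the $n$-th summand, so that $\|\iota_n u_n\|_{\mathcal{H}}=\|u_n\|_{H_n}$. The key point I would establish is that the sequence $\{\iota_n u_n\}_{n\in\mathbb{N}}\subset\mathcal{H}$ converges weakly to $0$ in the ordinary sense. Indeed, for $y=(y_n)_n\in\mathcal{H}$ one has $\langle\iota_n u_n,y\rangle_{\mathcal{H}}=\langle u_n,y_n\rangle_{H_n}$, and since $\sum_n\|y_n\|_{H_n}^2<\infty$ forces $\|y_n\|_{H_n}\to0$, the sequence $\{y_n\}_n$ converges strongly to $0\in H$ in the sense of Def.~\ref{strong}: take any net $\{v_\beta\}\subset\mathcal{C}$ with $v_\beta\to0$ in $H$, which exists by density of $\mathcal{C}$, and bound $\limsup_n\|\Phi_n v_\beta-y_n\|_{H_n}\le\limsup_n\|\Phi_n v_\beta\|_{H_n}+\limsup_n\|y_n\|_{H_n}=\|v_\beta\|_H$ via \eqref{limitHilbert}, which tends to $0$ with $\beta$. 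Then Def.~\ref{weak} applied to the weakly convergent sequence $\{u_n\}$ against the strongly null test sequence $\{y_n\}$ gives $\langle u_n,y_n\rangle_{H_n}\to\langle u,0\rangle_H=0$, as claimed. Once $\{\iota_n u_n\}$ is known to be a weakly convergent, hence norm-bounded, sequence in the genuine Hilbert space $\mathcal{H}$ (this is exactly where classical Banach--Steinhaus takes over), I would conclude $\sup_n\|u_n\|_{H_n}=\sup_n\|\iota_n u_n\|_{\mathcal{H}}<\infty$ and set $\ell:=\sup_n\|u_n\|_{H_n}+1$.

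For the second inequality I may assume $u\neq0$, the case $u=0$ being trivial, and I now have $\liminf_n\|u_n\|_{H_n}\le\ell<\infty$ at my disposal. Given $\varepsilon\in(0,\|u\|_H/2)$ I would choose $c\in\mathcal{C}$ with $\|u-c\|_H<\varepsilon$, so $c\neq0$; the constant net shows that $\{\Phi_n c\}_n$ converges strongly to $c$, hence Def.~\ref{weak} gives $\langle u_n,\Phi_n c\rangle_{H_n}\to\langle u,c\rangle_H$. Combining this with $|\langle u_n,\Phi_n c\rangle_{H_n}|\le\|u_n\|_{H_n}\|\Phi_n c\|_{H_n}$, with $\|\Phi_n c\|_{H_n}\to\|c\|_H$ from \eqref{limitHilbert}, and with the boundedness of $\{\|u_n\|_{H_n}\}$ already established, yields $|\langle u,c\rangle_H|\le\big(\liminf_n\|u_n\|_{H_n}\big)\|c\|_H$. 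Letting $\varepsilon\to0$, i.e. $c\to u$ in $H$, the left-hand side tends to $\|u\|_H^2$ and $\|c\|_H$ to $\|u\|_H$, while $\liminf_n\|u_n\|_{H_n}$ is unaffected; dividing by $\|u\|_H>0$ gives $\|u\|_H\le\liminf_n\|u_n\|_{H_n}$.

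The main obstacle, and essentially the only step with real content, is the first part: the passage to the ambient Hilbert space $\mathcal{H}$ and the verification that $\ell^2$-summable families across the $H_n$ are strongly null, which is precisely what allows classical Banach--Steinhaus to be invoked; the remaining manipulations are routine Cauchy--Schwarz estimates and limits along $\mathcal{C}$. I would also double-check that nothing beyond the stated separability and completeness of the $H_n$ is quietly needed (it is not, since $\mathcal{H}$ supplies its own completeness), and that \eqref{limitHilbert} is legitimately applied only to vectors lying in $\mathcal{C}$.
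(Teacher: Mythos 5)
Your argument is correct. Note that the paper does not actually prove Proposition \ref{wibounded}: it simply cites \cite{KuSh}, Lemma 2.3, so there is no in-text proof to compare against; what you have written is a legitimate self-contained substitute. The two nontrivial points in your reduction both check out: (i) any family $(y_n)_n$ with $\sum_n\|y_n\|_{H_n}^2<\infty$ has $\|y_n\|_{H_n}\to 0$ and is therefore strongly null in the sense of Def.~\ref{strong} (one may even take the constant net $v_\beta=0$, since $\mathcal{C}$ is a linear subspace), so Def.~\ref{weak} applied with $w=0$ gives $\langle \iota_n u_n,y\rangle_{\mathcal{H}}=\langle u_n,y_n\rangle_{H_n}\to 0$ for every $y\in\bigoplus_n H_n$; (ii) pointwise convergence of these functionals on the complete space $\mathcal{H}$ yields pointwise boundedness, and Banach--Steinhaus then gives $\sup_n\|u_n\|_{H_n}=\sup_n\|\iota_n u_n\|_{\mathcal{H}}<\infty$. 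This packaging of the uniform-boundedness step into a single genuine Hilbert space is a clean way to avoid redoing a gliding-hump argument by hand in the varying-space setting, which is essentially what the cited lemma of Kuwae--Shioya does directly. Your second half is the standard lower-semicontinuity argument: testing against $\Phi_n c$ for $c\in\mathcal{C}$ close to $u$, using \eqref{limitHilbert} to identify $\lim_n\|\Phi_n c\|_{H_n}=\|c\|_H$, and passing to the limit $c\to u$; the $\liminf$ manipulation $\liminf_n(\|u_n\|_{H_n}\|\Phi_n c\|_{H_n})=\|c\|_H\liminf_n\|u_n\|_{H_n}$ is justified precisely because $\|\Phi_n c\|_{H_n}$ genuinely converges and $\|u_n\|_{H_n}$ is bounded, which you correctly flag. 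No gaps.
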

\begin{proof}
See \cite{KuSh} Lemma 2.3.
\end{proof}

We now have the following remark. Consider the case of a constant sequence of infinite dimensional separable complex Hilbert spaces $\{H_n\}_{n\in \mathbb{N}}$, that is for each $n\in \mathbb{N}$ $H_n=H$, $\mathcal{C}=H$ and $\Phi_n:\mathcal{C}\rightarrow H_n$ is nothing but the identity $\id:H\rightarrow H$. Then Def. \ref{strong} and Def. \ref{weak} coincide with ordinary notions of convergence in $H$ and weak convergence in $H$. Indeed let $\{v_n\}\subset H$ be a sequence converging to some $v\in H$. Then by taking the constant net $\{v_{\beta}\}_{\beta\in B}\subset H$, $v_{\beta}:=v$ as a net in $H$ converging to $v$ we have $$\lim_{\beta}\limsup_{n\rightarrow \infty}\|\Phi_nv_{\beta}-v_n\|_{H_n}=\limsup_{n\rightarrow \infty}\|v-v_n\|_H=0.$$ Therefore $v_n\rightarrow v$ strongly in the sense of Def. \ref{strong}. Conversely let us assume that for some net $\{v_{\beta}\}_{\beta\in B}\subset H$ tending to $v$ in $H$ we have $$\lim_{\beta}\limsup_{n\rightarrow \infty}\|\Phi_nv_{\beta}-v_n\|_{H_n}=0.$$
Given any $\beta\in B$ we have $\|v-v_n\|_H\leq \|v-v_{\beta}\|_{H}+\|v_{\beta}-v_n\|_{H}$. Therefore for every $\beta\in B$ $$\limsup_{n\rightarrow \infty}\|v-v_n\|_H\leq \|v-v_{\beta}\|_{H}+\limsup_{n\rightarrow \infty}\|v_{\beta}-v_n\|_{H}$$ and finally 
$$\limsup_{n\rightarrow \infty}\|v-v_n\|_H\leq \lim_{\beta} \|v-v_{\beta}\|_{H}+\lim_{\beta}\limsup_{n\rightarrow \infty}\|v_{\beta}-v_n\|_{H}=0.$$ Therefore $v_n\rightarrow v$ in $H$ and thus we showed that Def. \ref{strong}  coincides with the ordinary notion of convergence in $H$. This, in turn, implies immediately that Def. \ref{weak} coincides with the standard definition of weak convergence in $H$. We now have  the following
\begin{defi}
\label{compact}
A sequence of bounded operators $B_n:H_n\rightarrow H_n$ compactly converges to a bounded operator $B:H\rightarrow H$ if $B_n(u_n)\rightarrow B(u)$ strongly as $n\rightarrow \infty$ for any sequence $\{u_n\}_{n\in \mathbb{N}}$, $u_n\in H_n$, weakly convergent to $u\in H$.
\end{defi}

Given a Hilbert space $H$ and a bounded operator $T:H\rightarrow H$ we denote by $\|T\|_{\mathrm{op}}$ the operator norm of $T$. We recall the following fact:

\begin{prop}
\label{usefulprop}
Let $H$ be a separable Hilbert space and let $B$ and $\{B_n\}_{n\in \mathbb{N}}$ be bounded operators acting on $H$. Assume that for each weakly convergent sequence $\{v_n\}_{n\in \mathbb{N}}\subset H$, $v_n\rightharpoonup v$ as $n\rightarrow \infty$ to some $v\in H$, we have $\|B_nv_n-Bv\|_H\rightarrow 0$ as $n\rightarrow \infty.$ Then $\|B_n-B\|_{\mathrm{op}}\rightarrow 0$ as $n\rightarrow \infty$  and $B$ is compact.
\end{prop}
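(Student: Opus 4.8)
The plan is to establish the two assertions in the natural order: first that $B$ is compact, then that $\|B_n-B\|_{\mathrm{op}}\to 0$, the second using the first. The only input about the $B_n$ that I would use, beyond the hypothesis itself, is the special case of a constant sequence: taking $v_n\equiv v$ (which converges weakly to $v$) gives $\|B_nv-Bv\|_H\to 0$, i.e.\ $B_n\to B$ in the strong operator topology.

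\textbf{Compactness of $B$.} I would reformulate compactness on the Hilbert space $H$ as complete continuity: it suffices to show that $Bv_k\to Bv$ in norm whenever $v_k\rightharpoonup v$, since then a bounded sequence — which, after passing to a weakly convergent subsequence, converges weakly — is mapped to a norm-convergent sequence, hence $B$ maps bounded sets to relatively compact sets. So fix $v_k\rightharpoonup v$ and suppose $\|Bv_k-Bv\|_H\not\to 0$; after passing to a subsequence (still weakly convergent to $v$) assume $\|Bv_k-Bv\|_H\ge\varepsilon>0$ for all $k$. Using $B_n\to B$ strongly, pick a strictly increasing $N_k$ with $\|B_nv_k-Bv_k\|_H<\varepsilon/2$ for all $n\ge N_k$, and set $u_n:=v_k$ for $N_k\le n<N_{k+1}$ (and $u_n:=v_1$ for $n<N_1$). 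Since $u_n=v_{k(n)}$ with $k(n)\to\infty$, one has $u_n\rightharpoonup v$, so the hypothesis forces $\|B_nu_n-Bv\|_H\to 0$; but for $N_k\le n<N_{k+1}$,
\[
\|B_nu_n-Bv\|_H\ \ge\ \|Bv_k-Bv\|_H-\|B_nv_k-Bv_k\|_H\ \ge\ \varepsilon-\tfrac{\varepsilon}{2}\ =\ \tfrac{\varepsilon}{2},
\]
a contradiction. Hence $B$ is completely continuous, so compact.

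\textbf{Operator-norm convergence.} Suppose $\|B_n-B\|_{\mathrm{op}}\not\to 0$. Then there are $\varepsilon>0$, a subsequence $\{n_j\}$ and unit vectors $x_j\in H$ with $\|(B_{n_j}-B)x_j\|_H\ge\varepsilon/2$. Being bounded, $\{x_j\}$ has a weakly convergent subsequence, so after relabelling $x_j\rightharpoonup x$ for some $x\in H$. Define $u_n:=x_j$ if $n=n_j$ and $u_n:=x$ otherwise; then $u_n\rightharpoonup x$, and the hypothesis gives $\|B_nu_n-Bx\|_H\to 0$, in particular $\|B_{n_j}x_j-Bx\|_H\to 0$. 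Since $B$ is compact by the previous step and $x_j\rightharpoonup x$, also $\|Bx_j-Bx\|_H\to 0$, whence
\[
\|(B_{n_j}-B)x_j\|_H\ \le\ \|B_{n_j}x_j-Bx\|_H+\|Bx-Bx_j\|_H\ \longrightarrow\ 0,
\]
contradicting $\|(B_{n_j}-B)x_j\|_H\ge\varepsilon/2$. Therefore $\|B_n-B\|_{\mathrm{op}}\to 0$.

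\textbf{Main obstacle.} The step I expect to need the most care, in both arguments, is the index-synchronisation: the hypothesis controls only sequences $\{u_n\}$ whose index matches that of $B_n$, so from a ``bad'' subsequence indexed by $k$ (resp.\ $j$) one must manufacture an honest $n$-indexed weakly convergent sequence — handled above by stretching $v_k$ over the blocks $[N_k,N_{k+1})$ in the first step and by padding the unused indices with the constant $x$ in the second. Everything else (strong convergence $B_n\to B$ from the constant-sequence case, weak sequential compactness of bounded sets in $H$, and the equivalence of compactness with complete continuity on a Hilbert space) is standard.
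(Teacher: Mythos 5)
Your proof is correct and complete. Both halves check out: the constant-sequence observation does give strong convergence $B_n\to B$; the stretched sequence $u_n=v_{k(n)}$ with $k(n)\to\infty$ nondecreasing is genuinely weakly convergent to $v$, so the hypothesis applies and yields the contradiction establishing complete continuity (hence compactness, by weak sequential compactness of bounded sets in a Hilbert space); and the padded sequence in the second step is likewise weakly convergent, so the hypothesis plus the already-established compactness of $B$ kills the putative norm-divergent subsequence. Note that the paper itself offers no proof of this proposition -- it simply cites Lemma 2.8 of Kuwae--Shioya -- so there is no in-paper argument to compare against; your write-up supplies a self-contained proof of the fixed-Hilbert-space special case of that lemma. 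The one idea that does all the work, and which you correctly flag, is the index-synchronisation device (stretching over blocks $[N_k,N_{k+1})$, resp.\ padding with the constant $x$) that converts a badly indexed subsequence into an honest $n$-indexed weakly convergent sequence to which the hypothesis can be applied; this is also the mechanism underlying the argument in the cited reference, so your route is the expected one rather than a genuinely different one.
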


\begin{proof}
See \cite[Lemma 2.8]{KuSh}.
\end{proof}

Finally, we conclude this section  by recalling some well-known facts about Green operators. Let $H_1$ and $H_2$ be  separable Hilbert spaces whose Hilbert products are denoted by $\langle\ ,\ \rangle_{H_1}$ and $\langle\ ,\ \rangle_{H_2}$. Let $T:H_1\rightarrow H_2$ be an unbounded, densely defined and closed operator  with domain $\mathcal{D}(T)$. Assume that $\im(T)$ is closed. Let  $T^*:H_2\rightarrow H_1$ be the adjoint of $T$. Then $\im(T^*)$ is closed as well and we have the following orthogonal decompositions: $H_1=\ker(T)\oplus \im(T^*)$ and $H_2=\ker(T^*)\oplus \im(T)$. The  Green operator of $T$, $$G_T:H_2\rightarrow H_1$$ is then the operator defined by the following assignments: if $u\in \ker(T^*)$ then $G_T(u)=0$,  if $u\in \im(T)$ then  $G_T(u)=v$ where $v$ is the unique element in $\mathcal{D}(T)\cap \im(T^*)$ such that $T(v)=u$. We have that $G_T:H_2\rightarrow H_1$ is  a bounded operator. Moreover, if $H_1=H_2$ and $T$ is self-adjoint then $G_T$ is also self-adjoint. If $H_1=H_2$ and $T$  is self-adjoint and non-negative, that is $\langle Tu,u\rangle_{H_1}\geq 0$ for each $u\in \mathcal{D}(T)$, then $G_T$ is self-adjoint and non-negative as well. 
Finally we recall the following property which is straightforward to check
\begin{prop}
\label{useful}
Let $T:H_1\rightarrow H_2$ be as above. The following two properties are then equivalent:
\begin{enumerate}
\item $G_T:H_2\rightarrow H_1$ is a compact operator;
\item the inclusion $\mathcal{D}(T)\cap \im(T^*)\hookrightarrow H_1$, where  $\mathcal{D}(T)\cap \im(T^*)$ is endowed with the graph norm of $T$, is a compact operator.
\end{enumerate}
\end{prop}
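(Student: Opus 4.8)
The plan is to prove the two implications separately, exploiting the orthogonal decompositions $H_1=\ker(T)\oplus\im(T^*)$ and $H_2=\ker(T^*)\oplus\im(T)$ (the latter available precisely because $\im(T)$ is assumed closed) together with the defining properties of $G_T$. The one elementary observation that makes everything work is that $G_T$ annihilates $\ker(T^*)$ and restricts on $\im(T)$ to the inverse of the bijection $T|_{\mathcal{D}(T)\cap\im(T^*)}\colon\mathcal{D}(T)\cap\im(T^*)\to\im(T)$; equivalently, $G_T(Tv)=v$ for every $v\in\mathcal{D}(T)\cap\im(T^*)$. I would also keep in mind throughout that $G_T$ is already known to be a bounded operator.

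For $(1)\Rightarrow(2)$ I would start from a sequence $\{v_n\}_{n\in\mathbb{N}}\subset\mathcal{D}(T)\cap\im(T^*)$ bounded in the graph norm of $T$, so that in particular $\{Tv_n\}_{n\in\mathbb{N}}$ is bounded in $\im(T)\subseteq H_2$. Applying the assumed compactness of $G_T$ to $\{Tv_n\}_{n\in\mathbb{N}}$ yields a subsequence along which $G_T(Tv_n)$ converges in $H_1$; since $G_T(Tv_n)=v_n$, the corresponding subsequence of $\{v_n\}_{n\in\mathbb{N}}$ converges in $H_1$, which is exactly compactness of the inclusion $\mathcal{D}(T)\cap\im(T^*)\hookrightarrow H_1$.

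For $(2)\Rightarrow(1)$ I would take a bounded sequence $\{u_n\}_{n\in\mathbb{N}}\subset H_2$, write $u_n=u_n'+u_n''$ with $u_n'\in\ker(T^*)$ and $u_n''\in\im(T)$ (so $\{u_n''\}_{n\in\mathbb{N}}$ is bounded as well), and set $v_n:=G_Tu_n=G_Tu_n''\in\mathcal{D}(T)\cap\im(T^*)$, which satisfies $Tv_n=u_n''$. Boundedness of $G_T$ gives that $\{v_n\}_{n\in\mathbb{N}}$ is bounded in $H_1$, and $\{Tv_n\}_{n\in\mathbb{N}}=\{u_n''\}_{n\in\mathbb{N}}$ is bounded in $H_2$, so $\{v_n\}_{n\in\mathbb{N}}$ is bounded in the graph norm of $T$; the assumed compactness of the inclusion then furnishes a subsequence of $\{v_n\}_{n\in\mathbb{N}}=\{G_Tu_n\}_{n\in\mathbb{N}}$ converging in $H_1$, proving that $G_T$ is compact.

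There is no genuine obstacle here: this is a routine functional-analytic argument. The only points requiring a little attention are the correct use of the two orthogonal splittings — in particular the identity $G_Tu_n=G_Tu_n''$ — and the invocation of the earlier fact that $G_T$ is bounded, which is what makes the graph-norm boundedness of $\{v_n\}_{n\in\mathbb{N}}$ in the second implication immediate rather than something one would otherwise extract from the open mapping theorem.
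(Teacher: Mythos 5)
Your argument is correct: the identity $G_T(Tv)=v$ for $v\in\mathcal{D}(T)\cap\im(T^*)$ and the splitting $H_2=\ker(T^*)\oplus\im(T)$ give both implications exactly as you describe, and your use of the already-established boundedness of $G_T$ to get graph-norm boundedness of $\{v_n\}_{n\in\mathbb{N}}$ is the right shortcut. The paper leaves this proposition as ``straightforward to check,'' and your proof is precisely the standard verification it has in mind, so there is nothing further to compare.
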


\subsection{Analytic K-homology classes}
We now recall  the definition of $KK_0(C(X),\mathbb{C})$. We invite the interested reader to consult \cite{Nigel} for a thorough exposition. Let $Z$ be a second countable compact space and let $C(Z)$ be the corresponding $C^*$-algebra of continuous complex-valued functions. An even Fredholm module over $C(Z)$ is a triplet $(H,\rho, F)$ satisfying the following properties:
\begin{enumerate}
\item $H$ is a separable Hilbert space,
\item $\rho$ is a  representation $\rho:C(Z)\rightarrow \mathcal{B}(H)$ of $C(Z)$ as bounded operators on $H$
\item $F$ is an operator on $H$ such that for all $f\in C(Z)$:
$$(F^2-\id)\circ \rho(f),\ (F-F^*)\circ \rho(f)\ \text{and}\  [F,\rho(f)]\ \text{lie in}\ \mathcal{K}(H)$$
where $\mathcal{K}(H)\subset \mathcal{B}(H)$ is the space of compact operators. 
\item The Hilbert space $H$ is equipped with a $\mathbb{Z}_2$-grading $H=H^+\oplus H^- $ in such a
way that for each $f\in C(Z)$, the operator $\rho(f)$ is even-graded, while the operator $F$ is
odd-graded.
\end{enumerate}
Let $(H_1, \rho_1, F_1)$ and $(H_2, \rho_2, F_2)$ be two even Fredholm modules over $C(Z)$. A {\em unitary equivalence} between them is a  grading-zero unitary isomorphism $u:H_1\rightarrow H_2$ which intertwines the representations $\rho_1$ and $\rho_2$ and the operators $F_1$ and  $F_2$.\\
Given two even Fredholm modules $(H,\rho,F_0)$ and $(H,\rho,F_1)$ over $C(Z)$, an {\em operator homotopy} between them is a family of  Fredholm modules $(H,\rho,F_t)$ parameterized by $t\in [0, 1]$ in such a way that the representation $\rho$, the Hilbert space $H$ and its grading structures remain constant but the operator $F_t$ varies with $t$ and the function $[0,1]\rightarrow \mathcal{B}(H)$, $t\mapsto F_t$ is operator norm continuous. In this case we will say that $(H,\rho,F_0)$ and $(H,\rho,F_1)$ are operator homotopic.\\
The notion  of direct sum for even Fredholm modules is defined naturally: one takes the direct sum of the Hilbert spaces,  representations and operators $F$. The zero even Fredholm module has zero Hilbert space, zero representation, and zero operator.\\
Now we can recall Kasparov's definition of $K$-homology. The $K$-homology group $KK_0(C(Z),\mathbb{C})$ is the abelian group with one generator $[x]$ for each
unitary equivalence class of  even Fredholm modules over $C(Z)$ and with the following
relations:
\begin{itemize}
\item if $x_0$ and $x_1$ are operator homotopic even Fredholm modules then $[x_0]=[x_1]$ in $KK_0(C(Z),\mathbb{C})$,
\item if $x_0$ and $x_1$ are any two even Fredholm modules then $[x_0+x_1]=[x_0]+[x_1]$ in $KK_0(C(Z),\mathbb{C})$.
\end{itemize}
Now we go on by recalling the notion of  {\em even unbounded Fredholm module} over the $C^*$-algebra $C(Z)$. This is a triplet
$(H,\upsilon, D)$ such that:
\begin{enumerate}
\item $H$ is a separable Hilbert space endowed with a unitary $*$-representation $\upsilon:C(Z)\rightarrow \mathcal{B}(H)$; $D$ is an unbounded, densely defined and self-
adjoint linear operator on $H$;
\item there is a dense $*$-subalgebra $\mathcal{A}\subset C(Z)$ such that  for all $a\in \mathcal{A}$ the domain of $D$ is invariant
by $\upsilon(a)$ and $[D,\upsilon(a)]$ extends to a bounded operator on $H$;
\item $\upsilon(a)(1+D^2)^{-1}$ is a compact operator on $H$ for any $a\in \mathcal{A}$;
\item $H$ is equipped with a grading $\tau=\tau^*$, $\tau^2=\mathrm{Id}$, such that $\tau\circ \upsilon=\upsilon\circ \tau$ and $\tau\circ D = -\tau\circ D$. In other words $\tau$ commutes with $\upsilon$ and anti-commutes with $D$.
\end{enumerate} 
We now recall  the following important result, see \cite[Prop. 2.2]{Baaj}:
\begin{prop}
\label{unbounded}
Let $(H,\upsilon,D)$ be an even unbounded Fredholm module over $C(Z)$. Then $$(H,\upsilon,D\circ (\id+D^2)^{-1/2})$$ is an even Fredholm module over $C(Z)$.
\end{prop}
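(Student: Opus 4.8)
The plan is to verify directly that the triple $(H,\upsilon,F)$, with $F:=D(\id+D^2)^{-1/2}$, satisfies the four axioms of an even Fredholm module over $C(Z)$; this is the classical bounded transform of Baaj--Julg, so I would follow the argument of \cite{Baaj}. First I would dispose of the easy points. Writing $F=f(D)$ for the bounded continuous real function $f(x)=x(1+x^2)^{-1/2}$, the spectral theorem gives that $F$ is bounded and self-adjoint with $\|F\|_{\op}\le1$; hence $F-F^*=0$ and $(F-F^*)\circ\upsilon(g)\in\mathcal{K}(H)$ trivially for every $g\in C(Z)$. For the grading, $\tau$ anticommutes with $D$, hence commutes with $D^2$ and, by functional calculus, with $(\id+D^2)^{-1/2}$; so $\tau F=-F\tau$, i.e. $F$ is odd, while $\upsilon$ is even by hypothesis. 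Thus only the compactness of $(F^2-\id)\circ\upsilon(g)$ and of $[F,\upsilon(g)]$, for $g\in C(Z)$, remains to be checked.

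The key preliminary step would be to observe that $(\id+D^2)^{-1}$ is itself compact. Since $\mathcal{A}$ is dense in $C(Z)$ and $\upsilon(1)=\id$, I would pick $a_n\in\mathcal{A}$ with $a_n\to1$ uniformly, so that $\upsilon(a_n)(\id+D^2)^{-1}\to(\id+D^2)^{-1}$ in operator norm; each $\upsilon(a_n)(\id+D^2)^{-1}$ is compact by hypothesis, hence so is the limit. Consequently $(\id+D^2)^{-1/2}$ is compact, and --- $D$ now having discrete spectrum tending to infinity --- so are $S_\lambda:=(\id+\lambda+D^2)^{-1}$ and $DS_\lambda$ for every $\lambda\ge0$. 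In particular $F^2-\id=D^2(\id+D^2)^{-1}-\id=-(\id+D^2)^{-1}$ is compact, so $(F^2-\id)\circ\upsilon(g)$, a compact operator composed with a bounded one, is compact.

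For the commutators I would first take $a\in\mathcal{A}$ and split
\[
[F,\upsilon(a)]=D\,[(\id+D^2)^{-1/2},\upsilon(a)]+[D,\upsilon(a)]\,(\id+D^2)^{-1/2};
\]
here the second summand is the operator $[D,\upsilon(a)]$, bounded by hypothesis, composed with the compact operator $(\id+D^2)^{-1/2}$, hence compact. For the first summand I would use the norm-convergent representation $(\id+D^2)^{-1/2}=\tfrac1\pi\int_0^\infty\lambda^{-1/2}S_\lambda\,d\lambda$, together with $[S_\lambda,\upsilon(a)]=-S_\lambda[D^2,\upsilon(a)]S_\lambda$ and $[D^2,\upsilon(a)]=D[D,\upsilon(a)]+[D,\upsilon(a)]D$, and then --- justified by the closedness of $D$ and the operator-norm convergence of the resulting integral --- move $D$ inside to obtain
\[
D\,[(\id+D^2)^{-1/2},\upsilon(a)]=-\tfrac1\pi\int_0^\infty\lambda^{-1/2}\Big((D^2S_\lambda)[D,\upsilon(a)]S_\lambda+(DS_\lambda)[D,\upsilon(a)](DS_\lambda)\Big)d\lambda .
\]
The elementary spectral bounds $\|S_\lambda\|_{\op}\le(1+\lambda)^{-1}$, $\|DS_\lambda\|_{\op}\le\tfrac12(1+\lambda)^{-1/2}$ and $\|D^2S_\lambda\|_{\op}\le1$ make the integrand $O(\lambda^{-1/2}(1+\lambda)^{-1})$ in norm, so the integral converges absolutely; moreover each integrand is compact, since it carries the compact factor $S_\lambda$ (resp. $DS_\lambda$) from the previous paragraph. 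Hence $D[(\id+D^2)^{-1/2},\upsilon(a)]$, and with it $[F,\upsilon(a)]$, is a norm limit of compact operators, so compact. Finally, for arbitrary $g\in C(Z)$ I would pick $a_n\in\mathcal{A}$ with $a_n\to g$ and use $\|[F,\upsilon(g)]-[F,\upsilon(a_n)]\|_{\op}=\|[F,\upsilon(g-a_n)]\|_{\op}\le2\|g-a_n\|_\infty\to0$ to conclude $[F,\upsilon(g)]\in\mathcal{K}(H)$, which settles the last axiom.

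The only genuinely technical point is the commutator integral in the third paragraph: writing $D[(\id+D^2)^{-1/2},\upsilon(a)]$ as a norm-convergent integral and interchanging $D$ with it. Once the compactness of $(\id+D^2)^{-1}$, hence of $S_\lambda$ and $DS_\lambda$, is in hand, everything else is bookkeeping with the three spectral bounds above. I should add that the appeal to $\upsilon(1)=\id$ is a convenience only: dropping it one can no longer conclude that $(\id+D^2)^{-1}$ is compact, and the compactness of $[F,\upsilon(a)]$ must then be drawn from the weaker hypothesis $\upsilon(a)(\id+D^2)^{-1}\in\mathcal{K}(H)$ by rewriting each term of $[DS_\lambda,\upsilon(a)]$ so that it carries a factor $\upsilon(a)S_\lambda$ or $S_\lambda\upsilon(a)$ --- compact via the resolvent identity $S_\lambda=(\id-\lambda S_\lambda)(\id+D^2)^{-1}$ --- while retaining the integrable $\lambda$-bound; in that generality the estimate is the real obstacle.
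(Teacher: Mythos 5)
Your argument is correct and is precisely the one the paper relies on: the paper gives no proof of this proposition but simply cites Baaj--Julg, and your write-up is the standard bounded-transform argument from that reference (integral representation of $(\id+D^2)^{-1/2}$, the resolvent-commutator identity, and the three spectral bounds). The only step you gloss over is the justification of $[S_\lambda,\upsilon(a)]=-S_\lambda[D^2,\upsilon(a)]S_\lambda$, which needs a weak (sesquilinear-form) formulation because $\upsilon(a)$ is only assumed to preserve $\mathcal{D}(D)$ and not $\mathcal{D}(D^2)$; this is handled exactly as in Baaj--Julg and does not affect the validity of the resulting formula, all of whose factors are bounded.
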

\noindent In what follows, given an even unbounded Fredholm module as above, by the notation $[D]$ we will mean the class induced by the even Fredholm module $(H, \upsilon, D\circ (\id+D^2)^{-1/2})$ in $KK_0(C(Z),\mathbb{C})$.

\begin{prop}
\label{sameclass}
Let $(H,\upsilon,D_t)$ with $t\in [0,1]$ be a family of even unbounded Fredholm modules over $C(Z)$ with respect to a fixed dense $*$-subalgebra $\mathcal{A}\subset C(Z)$.
Assume that: 
\begin{enumerate}
\item For each $a\in \mathcal{A}$ the map $[0,1]\rightarrow B(H)$, $t\mapsto [D_t,\upsilon(a)]$ is continuous with respect to the strong operator topology;
\item The map $[0,1]\rightarrow B(H)$, $t\mapsto (i+D_t)^{-1}$ is continuous with respect to the operator norm.
\end{enumerate}
Then  the following equality: $$[D_0]=[D_1]$$ holds  in $KK_0(C(Z),\mathbb{C})$.
\end{prop}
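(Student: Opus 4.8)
The plan is to pass to the bounded transforms of the $D_t$ and to realise the equality $[D_0]=[D_1]$ as an operator homotopy of even Fredholm modules. For each $t\in[0,1]$ put
\[
F_t:=D_t(\id+D_t^2)^{-1/2}.
\]
Since $(H,\upsilon,D_t)$ is an even unbounded Fredholm module over $C(Z)$, Proposition \ref{unbounded} tells us that $(H,\upsilon,F_t)$ is an even Fredholm module over $C(Z)$. The Hilbert space $H$, the representation $\upsilon$ and the grading $\tau$ are independent of $t$; moreover $F_t$ is odd-graded, because $D_t$ is and $x\mapsto x(1+x^2)^{-1/2}$ is odd, so that the Borel functional calculus of $D_t$ preserves the grading. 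Hence $\{(H,\upsilon,F_t)\}_{t\in[0,1]}$ is a family of even Fredholm modules over $C(Z)$ sharing the same Hilbert space, the same representation and the same grading.

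By the definition of $KK_0(C(Z),\mathbb{C})$, it is then enough to prove that this family is an operator homotopy, i.e.\ that $[0,1]\to\mathcal{B}(H)$, $t\mapsto F_t$, is continuous for the operator norm; once this is known, $(H,\upsilon,F_0)$ and $(H,\upsilon,F_1)$ are operator homotopic and therefore $[D_0]:=[F_0]=[F_1]=:[D_1]$ in $KK_0(C(Z),\mathbb{C})$.

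To prove the operator-norm continuity of $t\mapsto F_t$, I would start from the spectral identity $(\id+D_t^2)^{-1/2}=\frac{2}{\pi}\int_0^{+\infty}(\id+\lambda^2+D_t^2)^{-1}\,d\lambda$, which yields $F_t=\frac{2}{\pi}\int_0^{+\infty}D_t(\id+\lambda^2+D_t^2)^{-1}\,d\lambda$ (as a strong integral). Fix $t_0$; for $s$ near $t_0$ one estimates $\|F_s-F_{t_0}\|_{\mathrm{op}}$ by integrating the difference of the integrands. Writing $\mu=\sqrt{1+\lambda^2}\ge 1$, one uses $D(\id+\lambda^2+D^2)^{-1}=\tfrac12\bigl((D+i\mu)^{-1}+(D-i\mu)^{-1}\bigr)$ and the factorisation $(D_t+i\mu)^{-1}=\bigl(\id+i(\mu-1)(i+D_t)^{-1}\bigr)^{-1}(i+D_t)^{-1}$ (together with its adjoint for $(D_t-i\mu)^{-1}$); since for $\mu\ge1$ the outer factor has norm $\le1$, each $(D_t\pm i\mu)^{-1}$ is a fixed continuous function of the normal operator $(i+D_t)^{-1}$, whose spectrum lies on a $t$-independent curve. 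Hypothesis (2) then gives that $t\mapsto (\id+\lambda^2+D_t^2)^{-1}$ is norm continuous, uniformly for $\lambda$ in bounded intervals, which controls the piece $\int_0^{N}$. It remains to make the tail $\int_N^{+\infty}$ small uniformly in $s$ as $N\to+\infty$; this is the delicate step, and it is here that hypotheses (1) and (2) are used in full strength, in order to control the integrand on the part of $\spec(D_t)$ that escapes to infinity. Letting $s\to t_0$ then gives $\|F_s-F_{t_0}\|_{\mathrm{op}}\to 0$.

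I expect the real obstacle to be precisely this uniform tail estimate. The point is that $\|D_t(\id+\lambda^2+D_t^2)^{-1}\|_{\mathrm{op}}$ decays only like $(1+\lambda^2)^{-1/2}$, so the integral representing $F_t$ does not converge in operator norm and a term-by-term bound cannot close the argument: one has to extract genuine decay from the difference $D_s(\id+\lambda^2+D_s^2)^{-1}-D_{t_0}(\id+\lambda^2+D_{t_0}^2)^{-1}$ itself. Heuristically such decay is available because the norm continuity of the resolvents prevents the high-frequency part of $\spec(D_t)$ from drifting across $0$ near an accumulation of parameters $t$; turning this into uniform norm bounds is the technical heart of the argument, after which the conclusion follows from the operator-homotopy relation in $KK_0(C(Z),\mathbb{C})$ as in the second paragraph.
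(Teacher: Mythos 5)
There is a genuine gap, and you have located it yourself without closing it: everything in your argument reduces to the claim that $t\mapsto F_t:=D_t(\id+D_t^2)^{-1/2}$ is continuous in \emph{operator norm}, and the step that would prove this (the uniform tail estimate in your integral representation) is precisely the one you defer as "the technical heart of the argument". This is not a routine technicality. Hypothesis (2) says that $t\mapsto D_t$ is continuous for the gap (norm-resolvent) topology, while norm continuity of $F_t$ is continuity for the Riesz topology, and the Riesz topology on unbounded self-adjoint operators is strictly finer than the gap topology: the function $\chi(x)=x(1+x^2)^{-1/2}$ does not vanish at infinity and has distinct limits $\pm1$ at $\pm\infty$, so norm-resolvent convergence does not imply norm convergence of $\chi(D)$ (flip the sign of the $n$-th eigenvalue of $\mathrm{diag}(1,2,3,\dots)$: the resolvents differ by $O(1/n)$ while the bounded transforms differ by about $2$). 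Whatever rescues the situation for a continuous \emph{path} must come from preventing spectral mass from migrating from $+\infty$ to $-\infty$, which for operators with non-discrete spectrum can actually fail; at best one would have to invoke the compactness of $(\id+D_t^2)^{-1}$, which your sketch never does. A further warning sign is that hypothesis (1) plays no identifiable role in your estimate: it concerns the commutators $[D_t,\upsilon(a)]$ and cannot help bound $\|F_s-F_{t_0}\|_{\mathrm{op}}$.

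The paper does not prove the proposition this way: it invokes \cite[Rmk 2.5 (iv)]{Baaj}, \cite[\S 6]{GeK} and \cite{Hilsum}, where the equality $[D_0]=[D_1]$ is obtained from a \emph{homotopy} of Kasparov modules rather than an operator homotopy. Concretely, one checks that the field $(F_t)_{t\in[0,1]}$ defines an even Fredholm module over $C(Z)$ with coefficients in $C[0,1]$; for this it suffices that $t\mapsto F_t$ be continuous for the \emph{strong} topology together with norm continuity of the compact-operator ingredients $\upsilon(a)(F_t^2-\id)$, $[F_t,\upsilon(a)]$, etc., as fields of compacts. Hypothesis (2) yields the strong continuity and the continuity of $\upsilon(a)(\id+D_t^2)^{-1}$, and hypothesis (1) is exactly what is needed to control $t\mapsto[F_t,\upsilon(a)]$; homotopy invariance of $KK_0(C(Z),\cdot)$ applied to the evaluations at $0$ and $1$ then gives $[F_0]=[F_1]$. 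If you want to keep your operator-homotopy route you must either prove the Riesz continuity of $F_t$ under these hypotheses (at minimum using the compact resolvent, and this is known to be delicate), or switch to the homotopy argument above.
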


\begin{proof}
This follows by \cite[Rmk 2.5 (iv)]{Baaj} and \cite[\S 6]{GeK}. See also \cite{Hilsum}. 
\end{proof}

We conclude this section with the following: 

\begin{lemma}
\label{lemmata}
Let $H$ be a separable Hilbert space and let $D_{t}$, $t\in [0,1]$, be a family of unbounded, densely defined and self-adjoint operators with closed range such that
\begin{enumerate}
\item $\|G_{D_t}- G_{D_0}\|_{\mathrm{op}}\rightarrow 0$ as $t\rightarrow 0$;
\item $\|\pi_{K,t}- \pi_{K,0}\|_{\mathrm{op}}\rightarrow 0$ as $t\rightarrow 0$, with $\pi_{K,t}:H \rightarrow \ker(D_t)$ denoting the orthogonal projection on $\ker(D_t)$ for each $t\in [0,1]$.
\end{enumerate}
Then $$\lim_{t\rightarrow 0}\|(D_t+i)^{-1}-(D_{0}+i)^{-1}\|_{\mathrm{op}}=0.$$
\end{lemma}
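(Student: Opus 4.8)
The plan is to reduce the statement to an explicit algebraic identity expressing each resolvent $(D_t+i)^{-1}$ through the Green operator $G_{D_t}$ and the kernel projection $\pi_{K,t}$, and then to estimate the difference via the resolvent identity, feeding in the two hypotheses. Fix $t\in[0,1]$ and abbreviate $D:=D_t$, $G:=G_{D_t}$, $\pi_K:=\pi_{K,t}$. Since $D$ is self-adjoint we have $\spec(D)\subset\mathbb{R}$, so $(D+i)^{-1}$ is a well-defined bounded operator; and since $D$ has closed range, $\ker(D)=(\im(D))^{\perp}$ and $H=\ker(D)\oplus\im(D)$ orthogonally, with $\ker(D)$ a reducing subspace for $D$. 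By the very definition of the Green operator recalled before Prop.~\ref{useful}, $G$ vanishes on $\ker(D)$ and its restriction to $\im(D)$ is the bounded inverse of the bijection $D|_{\mathcal{D}(D)\cap\im(D)}\colon\mathcal{D}(D)\cap\im(D)\to\im(D)$. Moreover $G$ is self-adjoint, hence $\|(\id+iG)v\|_H^2=\|v\|_H^2+\|Gv\|_H^2\geq\|v\|_H^2$ for every $v\in H$, so $\id+iG$ is boundedly invertible with $\|(\id+iG)^{-1}\|_{\op}\leq1$. Decomposing a generic element of $\mathcal{D}(D)$ along $\ker(D)\oplus\im(D)$ one checks that $(D+i)^{-1}$ preserves this splitting, acts as $\tfrac1i\,\id$ on $\ker(D)$, and on $\im(D)$ coincides with $G(\id+iG)^{-1}$ (because $(D+i)G(\id+iG)^{-1}=(\id+iG)(\id+iG)^{-1}=\id$ there); since $G(\id+iG)^{-1}$ also kills $\ker(D)$, this yields
\begin{equation*}
(D_t+i)^{-1}=\tfrac1i\,\pi_{K,t}+G_{D_t}(\id+iG_{D_t})^{-1}.
\end{equation*}

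Next I would rewrite the second summand as $G_{D_t}(\id+iG_{D_t})^{-1}=\tfrac1i\bigl(\id-(\id+iG_{D_t})^{-1}\bigr)$, using the algebraic identity $iG_{D_t}(\id+iG_{D_t})^{-1}=\id-(\id+iG_{D_t})^{-1}$, so that
\begin{equation*}
(D_t+i)^{-1}-(D_0+i)^{-1}=\tfrac1i(\pi_{K,t}-\pi_{K,0})+\tfrac1i\bigl((\id+iG_{D_0})^{-1}-(\id+iG_{D_t})^{-1}\bigr).
\end{equation*}
Then the resolvent identity $(\id+iG_{D_0})^{-1}-(\id+iG_{D_t})^{-1}=(\id+iG_{D_0})^{-1}\,i(G_{D_t}-G_{D_0})\,(\id+iG_{D_t})^{-1}$ together with the bounds $\|(\id+iG_{D_0})^{-1}\|_{\op}\leq1$ and $\|(\id+iG_{D_t})^{-1}\|_{\op}\leq1$ gives
\begin{equation*}
\|(D_t+i)^{-1}-(D_0+i)^{-1}\|_{\op}\leq\|\pi_{K,t}-\pi_{K,0}\|_{\op}+\|G_{D_t}-G_{D_0}\|_{\op},
\end{equation*}
and the right-hand side tends to $0$ as $t\to0$ by hypotheses (2) and (1), which is the assertion.

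The only genuinely delicate point is the justification of the resolvent formula in the first paragraph: namely that $(D+i)^{-1}$ respects the orthogonal decomposition $H=\ker(D)\oplus\im(D)$ and that $D$ restricted to $\mathcal{D}(D)\cap\im(D)$ is a bijection onto $\im(D)$ whose inverse is $G$. This is exactly the place where the closed-range hypothesis enters and where a little care with domains of unbounded operators is required; everything afterwards is a routine manipulation of bounded operators.
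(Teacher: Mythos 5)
Your proof is correct and follows essentially the same route as the paper: both arguments rest on the identity expressing $(D_t+i)^{-1}$ on $\im(D_t)$ as $G_{D_t}(\id+iG_{D_t})^{-1}$ and as $-i\,\id$ on $\ker(D_t)$, and then convert the two hypotheses into operator-norm convergence of the two pieces. The only (harmless) difference is that you obtain the explicit bound $\|(D_t+i)^{-1}-(D_0+i)^{-1}\|_{\op}\leq\|\pi_{K,t}-\pi_{K,0}\|_{\op}+\|G_{D_t}-G_{D_0}\|_{\op}$ directly from the resolvent identity for $\id+iG$, whereas the paper invokes a general norm-resolvent convergence theorem from Reed--Simon at that step.
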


\begin{proof}
First, we need to recall the following formulas: let $D:H\rightarrow H$ be an arbitrarily fixed unbounded, densely defined and self-adjoint operator with closed range. Let us denote the resolvent of $D$, $(D+i)^{-1}:H\rightarrow H$, with $R_{D}$. Then 
\begin{equation}
\label{utile}
R_{D}|_{\im(D)}=G_{D}\circ (G_{D}+i)^{-1}|_{\im(D)}
\end{equation}
Let us show the above claim. First we point out that $\im(D)$ is preserved by the action of $G_D$ and $R_D$. Let now $\beta\in \im(D)$ be arbitrarily fixed and let $\alpha:=G_D\beta$. Then $D\alpha+i\alpha=\beta+i\alpha$ and consequently $$G_D(\beta)=\alpha=R_D(D\alpha+i\alpha)=R_D(\beta+i\alpha)=R_D(\beta+iG_D\beta)=R_D((\id+iG_D)\beta)$$ that is $G_D|_{\im(D)}=R_D\circ (\id+iG_D)|_{\im(D)}$. Since $G_D:H\rightarrow H$ is self-adjoint we know that $\id+iG_D:H\rightarrow H$ is invertible. Note that $(\id+iG_{D})(\im(D))=\im(D)$. Therefore $(\id+iG_D)|_{\im(D)}:\im(D)\rightarrow \im(D)$ is invertible and thus $((\id+iG_D)|_{\im(D)})^{-1}=(\id+iG_D)^{-1}|_{\im(D)}$. In this way we can conclude that $$R_{D}|_{\im(D)}=G_{D}\circ (\id+iG_{D})^{-1}|_{\im(D)}.$$
Let $\pi_{\im,t}:H\rightarrow \im(D_t)$ be the orthogonal projection on $\im(D_t)$ for each $t\in [0,1]$. Since $R_{D_t}|_{\ker(D_t)}=-i\id$ we have 
$$
\begin{aligned}
&\|(D_t+i)^{-1}-(D_{0}+i)^{-1}\|_{\mathrm{op}}=\|(D_t+i)^{-1}\circ(\pi_{K,t}+\pi_{\im,t})-(D_{0}+i)^{-1}\circ(\pi_{K,0}+\pi_{\im,0})\|_{\mathrm{op}}\\
&\leq \|(D_t+i)^{-1}\circ \pi_{\im,t}-(D_{0}+i)^{-1}\circ \pi_{\im,0}\|_{\mathrm{op}}+\|(D_t+i)^{-1}\circ \pi_{K,t}-(D_{0}+i)^{-1}\circ \pi_{K,0}\|_{\mathrm{op}}\\
&\leq \|G_{D_t}\circ(\id+iG_{D_t})^{-1}\circ \pi_{\im,t}-G_{D_0}\circ(\id+iG_{D_0})^{-1}\circ \pi_{\im,0}\|_{\mathrm{op}}+\| \pi_{K,t}-\pi_{K,0}\|_{\mathrm{op}}.
\end{aligned}
$$
By assumption we know that $\|\pi_{K,t}-\pi_{K,0}\|_{\mathrm{op}}\rightarrow 0$ as $t\rightarrow 0$. This clearly implies that $\|\pi_{\im,t}-\pi_{\im,0}\|_{\mathrm{op}}\rightarrow 0$ as $t\rightarrow 0$ since $\pi_{\im,t}=\id-\pi_{K,t}$. We also know that $\|G_{D_t}-G_{D_0}\|_{\mathrm{op}}\rightarrow 0$ as $t\rightarrow 0$ and this tells us that $\|(G_{D_t}-i)^{-1}-(G_{D_0}-i)^{-1}\|_{\mathrm{op}}\rightarrow 0$ as $t\rightarrow 0$, see \cite[Th. VIII.18]{RSI}. Obviously, this in turn implies that $$\|(\id+iG_{D_t})^{-1}-(\id+iG_{D_0})^{-1}\|_{\mathrm{op}}\rightarrow 0$$ as $t\rightarrow 0$. We can thus conclude that both $$\lim_{t\rightarrow 0}\|G_{D_t}\circ(\id+iG_{D_t})^{-1}\circ \pi_{\im,t}-G_{D_0}\circ(\id+iG_{D_0})^{-1}\circ \pi_{\im,0}\|_{\mathrm{op}}=0\quad\quad \mathrm{and}\quad\quad \lim_{t\rightarrow 0}\| \pi_{K,t}-\pi_{K,0}\|_{\mathrm{op}}=0$$ and therefore $$\lim_{t\rightarrow 0}\|(D_t+i)^{-1}-(D_{0}+i)^{-1}\|_{\mathrm{op}}=0$$ as desired.
\end{proof}

\section{Deformation of the $L^2$-$\overline{\partial}$-complex}
This section is divided into two subsections and contains the main technical results of this paper.

\subsection{Compact convergence of the Green operators}
Let $(M,J)$ be a compact complex manifold of complex dimension $m$ endowed with a Hermitian pseudometric $h$. We recall that a  Hermitian pseudometric on $M$ is a positive semidefinite Hermitian product on $M$ strictly positive over an open and dense subset $A\subset M$. Let $(E,\rho)\rightarrow M$ be a Hermitian holomorphic vector bundle over $M$. We make the following assumptions:
\begin{itemize}
\item $(A,g|_A)$ is {\em parabolic} with respect to some  Riemannian metric $g$ on $M$. 
\end{itemize}
Note that since $M$ is compact and parabolicity is a stable property through quasi-isometries, we can conclude that if $(A,g|_A)$ is parabolic with respect to some Riemannian metric $g$ on $M$ then $(A,g|_A)$ is parabolic with respect to any Riemannian metric $g$ on $M$. Moreover since $(A,g|A)$ is parabolic  we know that $M\setminus A$ has zero Lebesgue measure, see \cite[Th. 3.4, Prop 3.1]{Troya}.

\begin{itemize}
\item The $L^2$-$\overline{\partial}$ cohomology group $$H^{m,q+1}_{2,\overline{\partial}_{\max}}(A,E|_A,h|_A,\rho|_A):=\ker(\overline{\partial}_{E,m,q+1,\max}^{h,h})/\mathrm{im}(\overline{\partial}_{E,m,q,\max}^{h,h})$$ is finite dimensional. 
\end{itemize}

Note that since $H^{m,q+1}_{2,\overline{\partial}_{\max}}(A,E|_A,h|_A,\rho|_A)$ is finite dimensional the image of the operator $$\overline{\partial}_{E,m,q,\max}^{h,h}: L^2\Omega^{m,q}(A,E|_A,h|_A,\rho|_A)\rightarrow L^2\Omega^{m,q+1}(A,E|_A,h|_A,\rho|_A)$$ is closed.   Let $g$ be an arbitrarily fixed Hermitian metric on $M$. As a first step we recall the following result, see \cite[Prop. 3.2]{FBei}.

\begin{prop}
\label{sameop}
In the above setting the following three operators coincide:
\begin{align}
& \nonumber \overline{\partial}_{E,p,q,\max/\min}^{g,g}:L^2\Omega^{p,q}(A,E|_A,h|_A,\rho|_A)\rightarrow L^2\Omega^{p,q+1}(A,E|_A,h|_A,\rho|_A),\\
\label{ucse}
& \overline{\partial}_{E,p,q}^{g,g}:L^2\Omega^{p,q}(M,E,g,\rho)\rightarrow L^2\Omega^{p,q+1}(M,E,g,\rho),
\end{align}
where \eqref{ucse} is the unique closed extension of  $\overline{\partial}_{E,p,q}:\Omega^{p,q}(M,E)\rightarrow \Omega^{p,q+1}(M,E)$ viewed as an unbounded and densely defined operator acting between $L^2\Omega^{p,q}(M,E,g,\rho)$ and  $L^2\Omega^{p,q+1}(M,E,g,\rho).$
\end{prop}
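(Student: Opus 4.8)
The plan is to show that the identity map $\mathrm{Id}:\Omega^{p,q}(M,E)\rightarrow\Omega^{p,q}(M,E)$ induces the asserted identification at the level of closed operators, using the two standing hypotheses: the parabolicity of $(A,g|_A)$ and the finite-dimensionality (hence closedness of range) already recorded. The key preliminary observation is that, because $M$ is compact and $g$ is a genuine Hermitian metric on $M$, the pseudometric $h$ satisfies $h\leq c\,g$ on all of $M$ for some constant $c>0$ (the endomorphism $F$ with $h=g(F\cdot,\cdot)$ is continuous on the compact $M$, so its eigenvalues are bounded). Dually $g^*\leq c'\,h^*$ only on $A$, but the inequality $h\leq c\,g$ is the one we need to compare $L^2$-spaces in the $(m,\bullet)$-degree via \eqref{bibi2} and \eqref{ibra3}: it gives a continuous dense inclusion $L^2\Omega^{m,q}(M,E,g,\rho)\hookrightarrow L^2\Omega^{m,q}(A,E|_A,h|_A,\rho|_A)$, and since $M\setminus A$ is Lebesgue-null the two Hilbert spaces of $(m,\bullet)$-forms actually have the \emph{same} underlying measurable forms — only the norms differ, and they are comparable in one direction. (For $p<m$ one argues analogously, but the case of interest for the rest of the paper is $p=m$.)

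First I would establish that the operator \eqref{ucse} has a \emph{unique} closed extension, i.e. that $\overline{\partial}^{g,g}_{E,p,q,\max}=\overline{\partial}^{g,g}_{E,p,q,\min}$ on $M$. This is exactly where parabolicity enters: a parabolic manifold carries a sequence of Lipschitz cutoff functions $\chi_n$, $0\le\chi_n\le1$, equal to $1$ on larger and larger compacta exhausting $A$, with $\|d\chi_n\|_{L^2(A,g)}\to 0$; since the metric $g$ extends smoothly across $M\setminus A$ and $M$ is compact, multiplication by such $\chi_n$ is the standard device (as in the $L^2$-Stokes / Gaffney-type arguments) that shows every element of the maximal domain on $M$ can be approximated in graph norm by compactly supported smooth forms — here "compactly supported in $A$" is harmless because $M\setminus A$ is null and $g$ is bounded below on $M$. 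Hence $\overline{\partial}$ on $(M,g,\rho)$ is essentially self-adjoint in the relevant sense, and \eqref{ucse} is unambiguous; this is the content one quotes from \cite[Prop. 3.2]{FBei} and I would reproduce its proof along these lines.

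Next I would identify the maximal and minimal extensions on $(A,h)$ with \eqref{ucse}. For the minimal extension: $\Omega^{p,q}_c(A,E)\subset\Omega^{p,q}(M,E)$, and on such forms the $h$-graph norm and the $g$-graph norm are comparable (using $h\le c\,g$ in the target degree $q+1$, which is still of type $(m,\bullet)$, and the same lower bound on $A$ coming from boundedness of $F^{-1}$ on compacta of $A$); combined with density of $\Omega^{p,q}_c(A,E)$ in $\Omega^{p,q}(M,E)$ in $g$-graph norm (parabolicity again), one gets $\overline{\partial}^{g,g}_{E,p,q,\min}$ on $(A,h)$ equals \eqref{ucse}. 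For the maximal extension: the inclusion $\mathcal{D}(\overline{\partial}^{g,g}_{E,p,q,\max}\text{ on }M)\subseteq\mathcal{D}(\overline{\partial}^{h,h}_{E,p,q,\max}\text{ on }A)$ is Proposition \ref{Pp} (with $g_1=g_2=g$, $h_1=h_2=h$, and $h\le c\,g$); the reverse inclusion is the real point, and it follows from the weak-form characterization of the maximal domain together with the density inclusion \eqref{cixz} exactly as in the proof of Proposition \ref{Pp}, once one knows a form that is $L^2$ for $h$ on $A$ and whose distributional $\overline{\partial}$ is $L^2$ for $h$ on $A$ is automatically $L^2$ for $g$ on $M$ — which again uses only $h\le c\,g$ and nullity of $M\setminus A$. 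Chaining these, $\overline{\partial}^{h,h}_{E,p,q,\max}$ on $A$ sits between $\overline{\partial}^{g,g}_{E,p,q,\min}$ and $\overline{\partial}^{g,g}_{E,p,q,\max}$ on $M$, and by the first step these two coincide, forcing all three operators in the statement to agree.

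The main obstacle is the $\overline{\partial}$-\emph{closability/uniqueness} step, i.e. proving $\overline{\partial}^{g,g}_{\max}=\overline{\partial}^{g,g}_{\min}$ on the parabolic manifold $(A,g|_A)$ (equivalently the $L^2$-Stokes property): the comparison-of-norms bookkeeping in the other steps is routine once that is in hand. The delicate part there is that the cutoff functions $\chi_n$ must simultaneously control the commutator $[\overline{\partial},\chi_n]=\overline{\partial}\chi_n\wedge(\cdot)$ in the $h$-norm on the target; since $h\le c\,g$ this commutator term is dominated by its $g$-norm, and parabolicity gives $\|\overline{\partial}\chi_n\|_{L^\infty\!\to L^2,\,g}\to0$ in the appropriate averaged sense — but making this precise requires the cutoffs to be Lipschitz with $L^2$-small differential, which is precisely the analytic definition of parabolicity, so the hypothesis is being used exactly at the point where it is needed.
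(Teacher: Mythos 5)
First, a point of orientation: the paper itself gives no proof of Proposition \ref{sameop} --- it is quoted verbatim from \cite[Prop. 3.2]{FBei} --- so there is no in-paper argument to measure yours against. More importantly, the displayed statement contains a typo that you have taken at face value and that derails most of your write-up: the first pair of operators carries the superscript $g,g$, and every subsequent use (Lemma \ref{Seb}, and the proof of Prop. \ref{Gianni}, where Prop. \ref{sameop} is invoked to identify $\overline{\partial}^{g,g}_{E,m,q}$ on $A$ with the operator on $M$) shows the Hilbert spaces should read $L^2\Omega^{p,q}(A,E|_A,g|_A,\rho|_A)$, not $h|_A$. The actual content is a pure ``$A$ versus $M$'' statement for the single metric $g$: the maximal and minimal extensions with initial domain $\Omega^{p,q}_c(A,E|_A)$ coincide with each other and with the unique closed extension with initial domain $\Omega^{p,q}(M,E)$, the point being that $M\setminus A$ is $g$-null and $(A,g|_A)$ is parabolic. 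Your second and third steps, devoted to comparing $h$-norms with $g$-norms, are therefore aimed at the wrong statement --- under the literal $h|_A$ reading the proposition would in fact be false, since the $h$-extensions and the $g$-extensions are genuinely different operators (that difference is the subject of all of Section 2) --- and they are also wrong in direction: since $h\le c\,g$, the computation \eqref{ibra3}--\eqref{bibi2} gives $\|\psi\|^2_{L^2\Omega^{m,b}(h)}\ge c'\|\psi\|^2_{L^2\Omega^{m,b}(g)}$, so the continuous inclusion is $L^2\Omega^{m,q}(h)\hookrightarrow L^2\Omega^{m,q}(g)$ (this is \eqref{cixz}), and Prop. \ref{Pp} yields $\mathcal{D}(\overline{\partial}^{h,h}_{\max})\subseteq\mathcal{D}(\overline{\partial}^{g,g}_{\max})$ --- both the opposite of what you assert.

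The part of your argument that does address the real content --- the chain $\min_A\subseteq\min_M=\max_M\subseteq\max_A$, closed up by parabolic cutoffs --- is the right skeleton, but the cutoff step as you describe it has a genuine gap. If you multiply $\omega\in\mathcal{D}(\overline{\partial}_{\max})$ by $\chi_n$, you must show $\|\overline{\partial}\chi_n\wedge\omega\|_{L^2}\to0$, and this does \emph{not} follow from $\|\overline{\partial}\chi_n\|_{L^2}\to0$ together with $\omega\in L^2$: the pointwise product of two $L^2$ objects is only $L^1$, so Cauchy--Schwarz gives nothing, and parabolicity provides no $L^\infty$ control on $\overline{\partial}\chi_n$. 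The standard way around this is to cut off the \emph{test form} instead: for $\phi\in\Omega^{0,m-q-1}(M,E^*)$ smooth on the compact $M$ (hence bounded, with bounded $\overline{\partial}\phi$), one has $\chi_n\phi$ compactly supported in $A$ and the error term is controlled by $\int_A|\overline{\partial}\chi_n|\,|\omega|\,|\phi|\,\dvol_g\le\|\phi\|_{L^\infty}\|\overline{\partial}\chi_n\|_{L^2}\|\omega\|_{L^2}\to0$; this gives $\mathcal{D}(\max_A)\subseteq\mathcal{D}(\max_M)$, hence $\max_A=\max_M$, and $\min_A=\min_M$ then follows by applying the same argument to $\overline{\partial}^t$ and taking adjoints via \eqref{mixedad}. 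No comparison with the degenerate metric $h$ enters anywhere.
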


We have now the following:

\begin{prop}
\label{Gianni}
In the above setting the operator 
\begin{equation}
\label{opmix}
\overline{\partial}_{E,m,q,\max}^{g,h}:L^2\Omega^{m,q}(A,E|_A,h|_A,\rho|_A)\rightarrow L^2\Omega^{m,q+1}(A,E|_A,h|_A,\rho|_A)
\end{equation}
 has  closed range and the corresponding Green operator 
\begin{equation}
\label{Gopmix}
G_{\overline{\partial}_{E,m,q,\max}^{g,h}}:L^2\Omega^{m,q+1}(A,E|_A,h|_A,\rho|_A)\rightarrow L^2\Omega^{m,q}(A,E|_A,g|_A,\rho|_A)
\end{equation}
 is compact.
\end{prop}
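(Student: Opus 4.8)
The plan is to deduce both statements from the corresponding classical facts for the Kodaira Laplacian on the \emph{compact} Hermitian manifold $(M,g,\rho)$; the mechanism that makes this possible is that, in bidegree $(m,\bullet)$, the $h$-norm and the $g$-norm are comparable from one side. Concretely, I would first observe that since $M$ is compact there is a constant $C>0$ with $h\leq Cg$, so that on $A$ the endomorphisms $G^{0,b}_{\mathbb{C}}$, $b=q,q+1$, satisfy $g^{*}_{0,b}(G^{0,b}_{\mathbb{C}}\cdot,\cdot)\geq c\,g^{*}_{0,b}(\cdot,\cdot)$ for some $c>0$. Then \eqref{ibra3} together with \eqref{bibi2} gives $\|\psi\|_{L^2\Omega^{m,b}(M,E,g,\rho)}\leq c^{-1/2}\|\psi\|_{L^2\Omega^{m,b}(A,E|_A,h|_A,\rho|_A)}$ on $\Omega^{m,b}_c(M,E)$, hence, passing to completions and using that $M\setminus A$ is $g$-null by parabolicity (see \cite{Troya}, so that $L^2\Omega^{m,b}(M,E,g,\rho)=L^2\Omega^{m,b}(A,E|_A,g|_A,\rho|_A)$), the identity extends to continuous inclusions $L^2\Omega^{m,b}(A,E|_A,h|_A,\rho|_A)\hookrightarrow L^2\Omega^{m,b}(M,E,g,\rho)$ for $b=q,q+1$.

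Write $T:=\overline{\partial}^{g,h}_{E,m,q,\max}\colon L^2\Omega^{m,q}(M,E,g,\rho)\to L^2\Omega^{m,q+1}(A,E|_A,h|_A,\rho|_A)$. The inclusion for $b=q+1$ shows that $\mathcal{D}(T)\subseteq\mathcal{D}(\overline{\partial}^{g,g}_{E,m,q,\max})$, that $T\omega$ coincides with $\overline{\partial}^{g,g}_{E,m,q,\max}\omega$ as an $(m,q+1)$-form, and that $\|\overline{\partial}^{g,g}_{E,m,q,\max}\omega\|_{L^2\Omega^{m,q+1}(M,E,g,\rho)}\leq c^{-1/2}\|T\omega\|_{L^2\Omega^{m,q+1}(A,E|_A,h|_A,\rho|_A)}$; also $\ker(T)=\ker(\overline{\partial}^{g,g}_{E,m,q,\max})$, since $0$ belongs to $L^2\Omega^{m,q+1}(A,E|_A,h|_A,\rho|_A)$. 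Since $M$ is compact, Hodge theory for $\overline{\partial}^{g,g}$ applies: $\overline{\partial}^{g,g}_{E,m,q,\max}$ has closed range and $\ker(\overline{\partial}^{g,g}_{E,m,q,\max})^{\perp}=\im\big((\overline{\partial}^{g,g}_{E,m,q,\max})^{*}\big)\subseteq\ker\big((\overline{\partial}^{g,g}_{E,m,q-1,\max})^{*}\big)$, the inclusion because $(\overline{\partial}^{g,g}_{E,m,q-1,\max})^{*}\circ(\overline{\partial}^{g,g}_{E,m,q,\max})^{*}=(\overline{\partial}^{g,g}_{E,m,q,\max}\circ\overline{\partial}^{g,g}_{E,m,q-1,\max})^{*}=0$. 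Consequently every $\omega\in\mathcal{D}(T)\cap\ker(T)^{\perp}$ lies in $\mathcal{D}(\overline{\partial}^{g,g}_{E,m,q,\max})\cap\mathcal{D}\big((\overline{\partial}^{g,g}_{E,m,q-1,\max})^{*}\big)$ with $(\overline{\partial}^{g,g}_{E,m,q-1,\max})^{*}\omega=0$, i.e. it lies in the form domain of the $\overline{\partial}$-Laplacian on $E$-valued $(m,q)$-forms of $(M,g,\rho)$; and by the displayed estimate the graph norm of $T$ controls, up to a constant, the form-domain norm. Since the form domain of an elliptic Laplacian on a compact manifold includes compactly into $L^2$, it follows that the inclusion $\big(\mathcal{D}(T)\cap\ker(T)^{\perp},\ \text{graph norm of }T\big)\hookrightarrow L^2\Omega^{m,q}(M,E,g,\rho)$ is compact.

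Both assertions then follow. For the closed range of $T$: if $\{\omega_n\}\subset\mathcal{D}(T)\cap\ker(T)^{\perp}$ satisfies $\|\omega_n\|=1$ and $\|T\omega_n\|\to0$, then $\{\omega_n\}$ is bounded in the graph norm of $T$, so by the compact inclusion a subsequence converges in $L^2\Omega^{m,q}(M,E,g,\rho)$ to some $\omega$ with $\|\omega\|=1$; but closedness of $T$ and $T\omega_n\to0$ give $\omega\in\ker(T)\cap\ker(T)^{\perp}=\{0\}$, a contradiction, so $T$ satisfies a Poincar\'e inequality on $\ker(T)^{\perp}$ and hence $\im(T)$ is closed. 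The Green operator $G_{\overline{\partial}^{g,h}_{E,m,q,\max}}\colon L^2\Omega^{m,q+1}(A,E|_A,h|_A,\rho|_A)\to L^2\Omega^{m,q}(M,E,g,\rho)$ is therefore well defined, and since $\im(T)$ is closed one has $\mathcal{D}(T)\cap\im(T^{*})=\mathcal{D}(T)\cap\ker(T)^{\perp}$; the compact inclusion just established, combined with Proposition \ref{useful}, yields that $G_{\overline{\partial}^{g,h}_{E,m,q,\max}}$ is compact. (Alternatively, closedness of the range can be read off from the finite-dimensionality of $H^{m,q+1}_{2,\overline{\partial}_{\max}}(A,E|_A,h|_A,\rho|_A)$, as noted just before the statement, but the argument above does not require it.)

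The step I expect to be the crux is the identification in the second paragraph, which is where the bidegree $(m,\bullet)$ is essential: only for forms of the shape $(m,b)$ does the $\Lambda^{m,0}$-factor of the metric cancel against the volume density in \eqref{ibra3}, leaving the single factor $G^{0,b}_{\mathbb{C}}$, which compactness of $M$ forces to be bounded below on $A$. This one-sided comparability of the two norms is precisely what lets $\overline{\partial}^{g,h}$ inherit the ellipticity and compactness of $\overline{\partial}^{g,g}$ on the compact manifold $(M,g)$; in other bidegrees no such bound is available and a genuinely different argument would be needed.
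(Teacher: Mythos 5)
Your proof is correct, and its engine is the same as the paper's: in bidegree $(m,\bullet)$ the one-sided comparison $\|\cdot\|_{L^2(g)}\leq c^{-1/2}\|\cdot\|_{L^2(h)}$ coming from \eqref{ibra3}--\eqref{bibi2} lets $\overline{\partial}^{g,h}_{E,m,q,\max}$ inherit the classical elliptic compactness of $\overline{\partial}^{g,g}_{E,m,q}$ on the compact manifold $(M,g)$. The paper packages this as the inclusion $B\subset A$ of the orthogonal complements of the kernels inside the two domains, followed by Prop.~\ref{useful} and the compactness of $G_{\overline{\partial}^{g,g}_{E,m,q}}$; you package it as membership of $\mathcal{D}(T)\cap\ker(T)^{\perp}$ in the form domain of the Kodaira Laplacian plus Rellich --- the same elliptic input in different clothes. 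One bookkeeping point: the identifications $\mathcal{D}(T)\subseteq\mathcal{D}(\overline{\partial}^{g,g}_{E,m,q})$ and $\ker(T)=\ker(\overline{\partial}^{g,g}_{E,m,q})$ need more than ``$M\setminus A$ is $g$-null''; equality of the $L^2$ spaces does not by itself identify the maximal extension over $A$ with the closed extension over $M$, and that identification is exactly Prop.~\ref{sameop} (whose proof uses parabolicity), which you should cite rather than only the measure-zero statement. The genuine divergence is the closed-range step: the paper sandwiches $\im(\overline{\partial}^{g,h}_{E,m,q,\max})$ between $\im(\overline{\partial}^{h,h}_{E,m,q,\max})$ --- closed and of finite codimension in $\ker(\overline{\partial}^{h,h}_{E,m,q+1,\max})$ by the standing finiteness hypothesis on $H^{m,q+1}_{2,\overline{\partial}_{\max}}$ --- and that kernel, whereas you extract a Poincar\'e inequality on $\mathcal{D}(T)\cap\ker(T)^{\perp}$ from the compact inclusion. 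Your route proves compactness first and then gets closed range for free, so it does not use the finite-dimensionality of the $L^2$-$\overline{\partial}$-cohomology of $h$ at all for this proposition; the paper's order is the reverse and is shorter given that the hypothesis is assumed anyway.
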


\begin{proof}
First we note that $\im(\overline{\partial}_{E,m,q,\max}^{g,h})$ is a closed subspace of $L^2\Omega^{m,q+1}(A,E|_A,h|_A,\rho|_A)$. Indeed Prop. \ref{Pp} tells us that $\im(\overline{\partial}_{E,m,q,\max}^{h,h})\subset \im(\overline{\partial}_{E,m,q,\max}^{g,h})$ and by the fact that $H^{m,q+1}_{2,\overline{\partial}_{\max}}(M,h)$ is finite dimensional we deduce that the quotient $\ker(\overline{\partial}_{E,m,q,\max}^{h,h})/\im(\overline{\partial}_{E,m,q,\max}^{g,h})$ is finite dimensional too which in turn implies that $\im(\overline{\partial}_{E,m,q,\max}^{g,h})$ is closed. Hence $G_{\overline{\partial}_{E,m,q,\max}^{g,h}}:L^2\Omega^{m,q+1}(A,E|_A,h|_A,\rho)\rightarrow L^2\Omega^{m,q}(A,E|_A,g|_A,\rho|_A)$ exists. Let us consider $\overline{\partial}_{E,m,q,\min}^{g,h,t}:L^2\Omega^{m,q+1}(A,E|_A,h|_A,\rho|_A)\rightarrow L^2\Omega^{m,q}(A,E|_A,g|_A,\rho|_A)$. Since $\im(\overline{\partial}_{E,m,q,\max}^{g,h})\subset L^2\Omega^{m,q+1}(A,E|_A,h|_A,\rho|_A)$ is closed by  \eqref{mixedad} we know that $\im(\overline{\partial}_{E,m,q,\min}^{g,h,t})\subset L^2\Omega^{m,q}(A,E|_A,g|_A,\rho|_A)$ is closed too. Let us then define $$B:=\mathcal{D}(\overline{\partial}_{E,m,q,\max}^{g,h})\cap \im(\overline{\partial}_{E,m,q,\min}^{g,h,t}).$$ If we endow $\mathcal{D}(\overline{\partial}_{E,m,q,\max}^{g,h})$ with  the corresponding graph product, then  $B$ becomes a closed subspace of $\mathcal{D}(\overline{\partial}_{E,m,q,\max}^{g,h})$ and we have the following orthogonal decomposition $$\mathcal{D}(\overline{\partial}_{E,m,q,\max}^{g,h})=\ker(\overline{\partial}_{E,m,q,\max}^{g,h})\oplus B.$$ According to Prop. \ref{useful} the compactness of \eqref{Gopmix} amounts to showing that  $B\hookrightarrow L^2\Omega^{m,q}(A,E|_A,g|_A,\rho|_A)$ is a compact inclusion, with $B$ endowed with the corresponding graph norm as above. To this aim let us now consider the operator defined in \eqref{ucse}.
Classical elliptic theory on compact manifolds tells us that $\im(\overline{\partial}_{E,m,q}^{g,g})\subset L^2\Omega^{m,q+1}(M,E,g,\rho)$ is closed and  the corresponding Green operator
 \begin{equation}
\label{Gg}
G_{\overline{\partial}_{E,m,q}^{g,g}}:L^2\Omega^{m,q+1}(M,E,g,\rho)\rightarrow L^2\Omega^{m,q}(M,E,g,\rho)
\end{equation}  is compact. As recalled above the compactness of \eqref{Gg} is equivalent to saying that the natural inclusion 
\begin{equation}
\label{ci}
\mathcal{D}(\overline{\partial}_{E,m,q}^{g,g})\cap \im((\overline{\partial}_{E,m,q}^{g,g})^*) \hookrightarrow L^2\Omega^{m,q}(M,E,g,\rho)
\end{equation}
 is a compact operator with $\mathcal{D}(\overline{\partial}_{E,m,q}^{g,g})\cap \im((\overline{\partial}_{E,m,q}^{g,g})^*)$ endowed with the corresponding graph product. Let $A:=\mathcal{D}(\overline{\partial}_{E,m,q}^{g,g})\cap \im((\overline{\partial}_{E,m,q}^{g,g})^*)$ and consider the corresponding orthogonal decomposition of $\mathcal{D}(\overline{\partial}_{E,m,q}^{g,g})$ with respect to the graph product $$\mathcal{D}(\overline{\partial}_{E,m,q}^{g,g})=\ker(\overline{\partial}_{E,m,q}^{g,g})\oplus A.$$ 
Note that, thanks to Prop. \ref{Pp} and Prop. \ref{sameop}, we know that $\mathcal{D}(\overline{\partial}_{E,m,q,\max}^{g,h})\subset \mathcal{D}(\overline{\partial}_{E,m,q}^{g,g})$ and  $\overline{\partial}_{E,m,q}^{g,g}\omega=\overline{\partial}_{E,m,q,\max}^{g,h}\omega$ for any $\omega\in \mathcal{D}(\overline{\partial}_{E,m,q,\max}^{g,h})$. 
We want to show now that $B\subset A$. Let us consider  any $\omega\in B$ and let $\eta_1\in \ker(\overline{\partial}_{E,m,q}^{g,g})$, $\eta_2\in A$ be such that $\omega=\eta_1+\eta_2$. It is clear that  $\ker(\overline{\partial}_{E,m,q,\max}^{g,h})=\ker(\overline{\partial}_{E,m,q}^{g,g})$.  Therefore we get immediately that $\eta_2\in \mathcal{D}(\overline{\partial}_{E,m,q,\max}^{g,h})$. Moreover, for any $\varphi\in \ker(\overline{\partial}_{E,m,q,\max}^{g,h})=\ker(\overline{\partial}_{E,m,q}^{g,g})$ we have $$\langle \varphi,\eta_2\rangle_{L^2\Omega^{m,q}(M,E,g,\rho)}+\langle\overline{\partial}_{E,m,q,\max}^{g,h}\varphi,\overline{\partial}_{E,m,q,\max}^{g,h}\eta_2\rangle_{L^2\Omega^{m,q+1}(A,E|_A,h,\rho)}=\langle \varphi,\eta_2\rangle_{L^2\Omega^{m,q}(M,E,g,\rho)}=0.$$ Hence $\eta_2\in B$ and thus $\eta_1=0$ and $\eta_2=\omega$ since $\eta_2-\omega=\eta_1\in \ker(\overline{\partial}_{E,m,q,\max}^{g,h})\cap B=\{0\}$. Finally let $\{\omega_k\}_{k\in \mathbb{N}}\subset B$ be a bounded sequence with respect to the graph norm of \eqref{opmix}. Thanks to the inclusion $B\subset A$ and the continuous inclusion $ L^2\Omega^{m,q+1}(A,E|_A,h,\rho)\hookrightarrow L^2\Omega^{m,q+1}(M,E,g,\rho)$ we know that $\{\omega_k\}_{k\in \mathbb{N}}\subset A$ and that it is bounded with respect to the graph norm of $\overline{\partial}_{E,m,q}^{g,g}$. Since \eqref{ci} is a compact inclusion there exists a subsequence $\{\psi_k\}_{k\in \mathbb{N}}\subset \{\omega_k\}_{k\in \mathbb{N}}$ and an element $\psi\in  L^2\Omega^{m,q}(M,E,g,\rho)=L^2\Omega^{m,q}(A,E|_A,g|_A,\rho|_A)$ such that $\psi_k\rightarrow \psi$ in $L^2\Omega^{m,q}(A,E|_A,g|_A,\rho|_A)$ as $k\rightarrow \infty$. Summarizing, given a  sequence  $\{\omega_k\}_{k\in \mathbb{N}}\subset B$ which is bounded with respect to the graph norm of \eqref{opmix}, we have proved the existence of a subsequence $\{\psi_k\}_{k\in \mathbb{N}}\subset \{\omega_k\}_{k\in \mathbb{N}}$ and an element $\psi\in L^2\Omega^{m,q}(A,E|_A,g|_A,\rho|_A)$ such that $\psi_k\rightarrow \psi$ in $L^2\Omega^{m,q}(A,E|_A,g|_A,\rho|_A)$ as $k\rightarrow \infty$. We can thus conclude that the Green operator  $$G_{\overline{\partial}_{E,m,q,\max}^{g,h}}:L^2\Omega^{m,q+1}(A,E|_A,h|_A,\rho|_A)\rightarrow L^2\Omega^{m,q}(A,E|_A,g|_A,\rho|_A)$$ is compact as desired.
\end{proof}

Let us now consider $M\times [0,1]$ and let  $p:M\times [0,1]\rightarrow M$ be the canonical projection. Let $g_s\in C^{\infty}(M\times [0,1], p^*T^*M\otimes p^*T^*M)$ be a smooth section of $p^*T^*M\otimes p^*T^*M\rightarrow M\times[0,1]$ such that:
\begin{enumerate}
\item $g_s(JX,JY)=g_s(X,Y)$ for any $X,Y\in \mathfrak{X}(M)$ and $s\in [0,1]$;
\item $g_s$ is a Hermitian metric on $M$ for any $s\in (0,1]$;
\item $g_0=h$;
\item There exists a positive constant $\frak{a}$ such that $g_0\leq \frak{a}g_s$ for each $s\in [0,1]$.
\end{enumerate}

Roughly speaking $g_s$ is a smooth family of $J$-invariant Riemannian metrics that degenerates to $h$ at $s=0$. Note that $(A,g_s|_A)$ is parabolic for any $s\in (0,1]$. Examples of such families of metrics are easy to build.  For instance if $f(s)$ is a smooth function on $[0,1]$ such that $f(0)=0$, $f(1)=1$ and $0<f(s)\leq 1$ for each $s\in (0,1)$ then $g_s:=(1-f(s))h+f(s)g$ satisfies the above requirements, see \cite[Prop. 4.2]{SpecBei}. Let $F_s\in C^{\infty}(M\times [0,1], p^*\mathrm{End}(TM))$ be a section of $p^*\mathrm{End}(TM)\rightarrow M\times [0,1]$ such that $g_1(F_s\cdot,\cdot)=g_s(\cdot,\cdot)$ for each $s\in [0,1]$. Clearly  $F_1=\id$ and $F_s$ is self-adjoint and positive definite on $M$ with respect to $g_1$ for each fixed $s\in (0,1]$. Following the notations of \S 1.1 we have $G_s:=(F_s^{-1})^{t}$ and the induced operators $$G_{s,\mathbb{C}}^{a,0}\in C^{\infty}(A\times [0,1], p^*\mathrm{End}(\Lambda^{a,0}(A)))\quad \mathrm{and}\quad G_{s,\mathbb{C}}^{0,b}\in C^{\infty}(A\times [0,1], p^*\mathrm{End}(\Lambda^{0,b}(A))).$$
The first main goal of this subsection is to show that the family of Green operators $\{G_{\overline{\partial}^{g_1,g_s}_{E,m,q}}\}$ converges in the compact sense to  $G_{\overline{\partial}^{g_1,h}_{E,m,q,\max}}$ as $s\rightarrow 0$. To prove this, we need  some preliminary results. 

\begin{prop}
\label{0110}
There exists a  suitable constant $\nu\geq 1$ such that the identity map $\id:\Omega_c^{m,q}(A,E|_A)\rightarrow \Omega^{m,q}_c(A,E|_A)$ gives rise to a continuous inclusion $$i:L^2\Omega^{m,q}(A,E|_A,g_{s}|_A,\rho|_A)\hookrightarrow L^2\Omega^{m,q}(A,E|_A,g_1|_A,\rho|_A),$$ which satisfies the following inequality
\begin{equation}
\label{fob}
\|\omega\|^2_{L^2\Omega^{m,q}(A,E|_A,g_1|_A,\rho|_A)}\leq \nu\|\omega\|^2_{L^2\Omega^{m,1}(A,E|_A,g_s|_A,\rho|_A)}
\end{equation}
for any $s\in [0,1]$, $q=0,...,m$ and $\omega\in L^2\Omega^{m,q}(A,E|_A,g_s|_A,\rho|_A)$. 
\end{prop}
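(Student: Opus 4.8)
The plan is to reduce \eqref{fob} to the pointwise estimate provided by \eqref{ibra3}. The key point is that for $E$-valued $(m,q)$-forms the full endomorphism $S^{m,q}=\det(G^{1,0}_{\mathbb{C}})\otimes G^{0,q}_{\mathbb{C}}\otimes\id$ combines with $\sqrt{\det(F_s)}$ in such a way that the factor coming from the $(m,0)$-part cancels the volume density, leaving only $\id\otimes G^{0,q}_{s,\mathbb{C}}\otimes\id$ acting against $\dvol_{g_1}$; this is exactly the computation \eqref{ibra3} carried out with the roles of the two metrics exchanged, i.e. with $g_1$ playing the role of the ``background'' metric $g$ and $g_s$ the role of $h$. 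Hence the first step is to reverse that computation: for $\omega\in\Omega^{m,q}_c(A,E|_A)$ write
\begin{equation}
\nonumber
\|\omega\|^2_{L^2\Omega^{m,q}(A,E|_A,g_1|_A,\rho|_A)}=\int_A g^*_{m,q,\rho}\big((\id\otimes G_{s,\mathbb{C}}^{0,q}{}^{-1}\otimes\id)\omega,\omega\big)\dvol_{g_1}\le \int_A \big|(G^{0,q}_{s,\mathbb{C}})^{-1}\big|_{g^*_{0,q}}\, g^*_{m,q,\rho}(\omega,\omega)\,\dvol_{g_1},
\end{equation}
where I have used that $g_s$ and $g_1$ are related on $(m,q)$-forms by $S_s^{m,q}$ with the $(m,0)$-determinant factor absorbing $\sqrt{\det F_s}$. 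Thus the whole statement comes down to showing that $|(G^{0,q}_{s,\mathbb{C}})^{-1}|_{g^*_{0,q}}$ is bounded on $A$, uniformly in $s\in[0,1]$.

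\textbf{Second step: the uniform bound.} Since $G_s=(F_s^{-1})^t$ and $F_s$ is $g_1$-self-adjoint and positive definite on $(0,1]$, the eigenvalues of $G^{1,0}_{s,\mathbb{C}}$ are the reciprocals of those of $F^{1,0}_{s,\mathbb{C}}$, and those of $(G^{0,q}_{s,\mathbb{C}})^{-1}$ are sums of $q$-tuples of eigenvalues of $F^{1,0}_{s,\mathbb{C}}$ (counting conjugates appropriately). So $|(G^{0,q}_{s,\mathbb{C}})^{-1}|_{g^*_{0,q}}$ is controlled by a constant times the largest eigenvalue of $F^{1,0}_{s,\mathbb{C}}$, i.e. by the pointwise $g_1$-operator norm of $F_s$ on $TM\otimes\mathbb C$. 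Now I invoke assumption (4) on the family, $g_0\le\frak a\, g_s$ for all $s$; combined with $g_s\le g_1$ for... — wait, that inequality does not hold in general. The clean way is instead: the map $(p,s)\mapsto\|F_{s,p}\|_{g_1}$ is a continuous function on the compact set $M\times[0,1]$ (note $F_s$ extends smoothly across $A^c$ by hypothesis on $g_s$), hence bounded by some $\nu'\ge 1$; this gives $g_s\le\nu' g_1$ on all of $M$ for every $s$. Consequently the eigenvalues of $F_{s}$ are $\le\nu'$, those of $(G^{0,q}_{s,\mathbb{C}})^{-1}$ are $\le \binom{m}{q}\nu'$ or simply $\le q\nu'\le m\nu'$, and setting $\nu:=m\nu'$ (or any convenient explicit constant) we obtain $|(G^{0,q}_{s,\mathbb{C}})^{-1}|_{g^*_{0,q}}\le\nu$ on $A$, uniformly in $s$ and $q$. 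Plugging this into the displayed inequality yields \eqref{fob} on $\Omega^{m,q}_c(A,E|_A)$.

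\textbf{Third step: passing to $L^2$.} Inequality \eqref{fob} on the dense subspace $\Omega^{m,q}_c(A,E|_A)$ shows that the identity extends to a bounded inclusion $i:L^2\Omega^{m,q}(A,E|_A,g_s|_A,\rho|_A)\hookrightarrow L^2\Omega^{m,q}(A,E|_A,g_1|_A,\rho|_A)$ with norm $\le\sqrt\nu$, and density then gives the inequality for all $\omega$ in the source space. For this last density step I use that $(A,g_s|_A)$ is parabolic (noted in the text for $s\in(0,1]$, and for $s=0$ the relevant completeness/approximation is provided by Prop.~\ref{sameop} identifying the $g_s=h$ space with forms on the compact $M$), so compactly supported smooth forms are dense; alternatively one observes directly that $i$ is densely defined and bounded on a core, hence extends.

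\textbf{Main obstacle.} The only genuinely delicate point is the uniformity in $s$ as $s\to 0$: the metric $g_s$ degenerates, so $F_s$ may have eigenvalues tending to $0$ (the directions where $h$ is degenerate), but what matters for the $(m,\bullet)$ estimate is only that the eigenvalues stay \emph{bounded above}, which is exactly what continuity of $F_s$ on the compact $M\times[0,1]$ gives — the small eigenvalues are harmless here precisely because of the determinant cancellation in the $(m,0)$-slot. Verifying carefully that this cancellation is what decouples \eqref{fob} from any \emph{lower} bound on $g_s$ (which would fail as $s\to0$) is the conceptual heart of the argument; everything else is bookkeeping with the identities from \S1.1.
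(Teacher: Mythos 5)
Your argument is essentially the one the paper delegates to \cite{SpecBei}: rewrite the $g_s$-norm of an $(m,q)$-form against $\dvol_{g_1}$ via \eqref{ibra3}, observe that the $\det(G^{1,0}_{s,\mathbb{C}})$ factor cancels $\sqrt{\det F_s}$, and conclude that only an \emph{upper} bound on the eigenvalues of $F_s$ is needed, which compactness of $M\times[0,1]$ supplies uniformly; this is the correct mechanism and the degeneration of $g_s$ as $s\to 0$ is indeed harmless for exactly the reason you isolate. Two small inaccuracies: the eigenvalues of the induced endomorphism $(G^{0,q}_{s,\mathbb{C}})^{-1}$ on $\Lambda^{0,q}$ are \emph{products} (not sums) of $q$-element subsets of the eigenvalues of $F^{0,1}_{s,\mathbb{C}}$, so the resulting constant is $(\nu')^{q}\le(\nu')^{m}$ rather than $q\nu'$ — the uniform bound survives, only the explicit $\nu$ changes; and your first display mixes conventions (if you swap the roles of the two metrics in \eqref{ibra3} the natural identity reads $\|\omega\|^2_{g_1}=\int_A g^*_{s,m,q,\rho}((\id\otimes (G^{0,q}_{s,\mathbb{C}})^{-1}\otimes\id)\omega,\omega)\,\dvol_{g_s}$, with $g_s^*$ and $\dvol_{g_s}$, not $\dvol_{g_1}$), though the estimate you extract from it is the right one. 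Finally, the appeal to parabolicity in the density step is unnecessary: $\Omega^{m,q}_c(A,E|_A)$ is dense in $L^2\Omega^{m,q}(A,E|_A,g_s|_A,\rho|_A)$ for any of these metrics by the usual cutoff-and-mollification argument, with no completeness hypothesis.
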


\begin{proof}
This follows arguing as in \cite[Prop. 3.1 and Lemma 4.1]{SpecBei}. 
\end{proof}

We also have the following family of uniform continuous inclusions.

\begin{prop}
\label{unci}
There exists a suitable constant $a>0$ such that the identity map $\id:\Omega_c^{m,q}(A,E|_A)\rightarrow \Omega^{m,q}_c(A,E|_A)$ gives rise to a continuous inclusion $$i:L^2\Omega^{m,q}(A,E|_A,g_0|_A,\rho|_A)\hookrightarrow L^2\Omega^{m,q}(A,E|_A,g_s|_A,\rho|_A),$$ which satisfies the following inequality
\begin{equation}
\label{decia}
\|\omega\|^2_{L^2\Omega^{m,q}(A,E|_A,g_s|_A,\rho|_A)}\leq a\|\omega\|^2_{L^2\Omega^{m,q}(A,E|_A,g_0|_A,\rho|_A)}
\end{equation}
 for any  $s\in [0,1]$, $q=0,...,m$ and $\omega\in L^2\Omega^{m,q}(A,E|_A,g_0|_A,\rho|_A)$.
\end{prop}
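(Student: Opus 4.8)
The plan is to reduce everything to the pointwise comparison formula \eqref{ibra3} and the estimate displayed immediately after it, and to observe that the only metric-dependent quantity appearing there in the $(m,q)$-case, the pointwise operator norm $|G^{0,q}_{\mathbb C}|_{g^*_{0,q}}$ of the endomorphism induced on $\Lambda^{0,q}$, is bounded \emph{uniformly in} $s\in[0,1]$ by a constant depending only on $\mathfrak a$ and $m$, thanks to the fourth assumption on the family $\{g_s\}$, namely $g_0\leq \mathfrak a g_s$. This is the same line of reasoning as in \cite[Prop. 3.1 and Lemma 4.1]{SpecBei}.

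First I would restrict to the open dense set $A$, on which $g_s|_A$ is a genuine Hermitian metric for every $s\in[0,1]$ (for $s=0$ it is $h|_A$), and to $\omega\in\Omega^{m,q}_c(A,E|_A)$, which is dense in $L^2\Omega^{m,q}(A,E|_A,g_0|_A,\rho|_A)$. For such $s$ and $\omega$ I would apply the formalism of \S 1.1 with the substitution ``$g$'' $=g_0|_A$, ``$h$'' $=g_s|_A$: let $F_s\in C^{\infty}(A,\End(TA))$ be defined by $g_s(\cdot,\cdot)=g_0(F_s\cdot,\cdot)$ and $G_s:=(F_s^{-1})^{t}$ as there. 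Then \eqref{ibra3} gives
\[
\|\omega\|^2_{L^2\Omega^{m,q}(A,E|_A,g_s|_A,\rho|_A)}=\int_A g^{*}_{m,q,\rho}\bigl((\id\otimes G^{0,q}_{s,\mathbb C}\otimes\id)\omega,\omega\bigr)\dvol_{g_0}\leq \int_A |G^{0,q}_{s,\mathbb C}|_{g^{*}_{0,q}}\, g^{*}_{m,q,\rho}(\omega,\omega)\dvol_{g_0},
\]
where all dual metrics and the volume form are those of $g_0$; note that the cancellation of the volume Jacobian $\sqrt{\det F_s}$ against the $(m,0)$-part $\det(G^{1,0}_{s,\mathbb C})$ of the metric Jacobian, already carried out in the excerpt, is exactly what makes the $p=m$ case work.

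The heart of the matter is the uniform bound $|G^{0,q}_{s,\mathbb C}|_{g^{*}_{0,q}}\leq \mathfrak a^{q}$ on $A$ for all $s\in[0,1]$ (assuming without loss of generality $\mathfrak a\geq 1$, so that $\mathfrak a^q\leq\mathfrak a^m$). From $g_0\leq\mathfrak a g_s$ we get $g_0(X,X)\leq\mathfrak a\, g_0(F_sX,X)$ for all $X$, and since $F_s$ is $g_0$-self-adjoint this forces $F_s\geq \mathfrak a^{-1}\id$, hence $F_s^{-1}\leq\mathfrak a\,\id$ and therefore $G_s=(F_s^{-1})^{t}$, which is $g^{*}_0$-self-adjoint and positive, has all eigenvalues in $(0,\mathfrak a]$. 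Passing to the $(0,1)$-part $G^{0,1}_{s,\mathbb C}$ and then to the endomorphism $G^{0,q}_{s,\mathbb C}$ of $\Lambda^{0,q}(A)$ induced in the natural way, whose eigenvalues are products of $q$ eigenvalues of $G^{0,1}_{s,\mathbb C}$, one sees that $G^{0,q}_{s,\mathbb C}$ is positive self-adjoint with eigenvalues $\leq\mathfrak a^{q}$, so its pointwise operator norm is $\leq\mathfrak a^{q}$ independently of the point of $A$ and of $s$ (for $s=0$ one has $F_0=\id$ and the bound is trivial). Plugging this into the displayed inequality yields $\|\omega\|^2_{L^2\Omega^{m,q}(A,E|_A,g_s|_A,\rho|_A)}\leq a\,\|\omega\|^2_{L^2\Omega^{m,q}(A,E|_A,g_0|_A,\rho|_A)}$ with $a:=\mathfrak a^{m}$, uniformly in $s\in[0,1]$ and $q\in\{0,\dots,m\}$.

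Finally I would pass from $\Omega^{m,q}_c(A,E|_A)$ to all of $L^2\Omega^{m,q}(A,E|_A,g_0|_A,\rho|_A)$ by density: given $\omega$ in the latter space and $\omega_k\in\Omega^{m,q}_c(A,E|_A)$ converging to it in the $g_0$-norm, the uniform estimate shows $\{\omega_k\}$ is Cauchy also in the $g_s$-norm, its $g_s$-limit coincides with $\omega$ as an $L^2_{\mathrm{loc}}$ section on $A$, so $\omega\in L^2\Omega^{m,q}(A,E|_A,g_s|_A,\rho|_A)$ and satisfies \eqref{decia}; thus the identity on $\Omega^{m,q}_c(A,E|_A)$ extends to the asserted continuous inclusion $i$. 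There is no serious obstacle; the only point requiring care is the spectral argument for the uniform bound on $|G^{0,q}_{s,\mathbb C}|_{g^{*}_{0,q}}$, where the uniformity in $s$ of the constant $\mathfrak a$ is used in an essential way.
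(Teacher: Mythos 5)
Your proof is correct and is essentially the argument the paper intends: the paper's own proof of Prop. \ref{unci} is only a citation to \cite[Prop. 3.2]{SpecBei}, and what you write out — reducing to \eqref{ibra3}, where the $(m,0)$-determinant cancels the volume Jacobian so that only the $(0,q)$-part $G^{0,q}_{s,\mathbb{C}}$ survives, and then bounding its eigenvalues uniformly by $\mathfrak{a}^{q}$ from the hypothesis $g_0\leq \mathfrak{a}\,g_s$ — is exactly that argument. The density step at the end correctly supplies the extension of \eqref{decia} from $\Omega^{m,q}_c(A,E|_A)$ to all of $L^2\Omega^{m,q}(A,E|_A,g_0|_A,\rho|_A)$.
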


\begin{proof}
This follows arguing as in \cite[Prop. 3.2]{SpecBei}. 
\end{proof}

We now recall  the following convergence result:

\begin{prop}
\label{relieve}
Let $\{s_n\}_{n\in \mathbb{N}}\subset [0,1]$ be any sequence such that $s_n\rightarrow 0$ as $n\rightarrow \infty$. Consider the Hilbert space $L^2\Omega^{m,q}(A,E|_A,g_{0}|_A,\rho|_A)$ and the sequence of Hilbert spaces $\{L^2\Omega^{m,q}(A,E|_A,g_{s_n}|_A,\rho|_A)\}_{n\in \mathbb{N}}$. Let $\mathcal{C}:=L^2\Omega^{m,q}(A,E|_A,g_{0}|_A,\rho|_A)$ and for any $n\in \mathbb{N}$, let $\Phi_n^{m,q}:\mathcal{C}\rightarrow L^2\Omega^{m,q}(A,E|_A,g_{s_n}|_A,\rho|_A)$ be the identity map $\id:L^2\Omega^{m,q}(A,E|_A,g_{0}|_A,\rho|_A)\rightarrow L^2\Omega^{m,q}(A,E|_A,g_{s_n}|_A,\rho|_A)$, which is well defined thanks to Prop. \ref{unci}. Then $$\{L^2\Omega^{m,q}(A,E|_A,g_{s_n}|_A,\rho|_A)\}_{n\in \mathbb{N}}\quad converges\ to\quad L^2\Omega^{m,q}(A,E|_A,g_{0}|_A,\rho|_A)$$ in the sense of \eqref{limitHilbert}.
\end{prop}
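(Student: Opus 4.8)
The plan is the following. Since each $\Phi_n^{m,q}$ is the identity on forms, \eqref{limitHilbert} amounts to showing that for every fixed $\omega\in\mathcal{C}=L^2\Omega^{m,q}(A,E|_A,g_0|_A,\rho|_A)$ one has $\|\omega\|^2_{L^2\Omega^{m,q}(A,E|_A,g_{s_n}|_A,\rho|_A)}\to\|\omega\|^2_{L^2\Omega^{m,q}(A,E|_A,g_0|_A,\rho|_A)}$ as $n\to\infty$. I would prove this by writing both norms as integrals of nonnegative densities against the \emph{single} fixed measure $\dvol_{g_1}$ on $A$, and then applying the dominated convergence theorem.

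First I would recast the computation \eqref{ibra3} as a pointwise identity of densities on $A$. Taking there $g=g_1$ and $h=g_s$, so that $g_s(\cdot,\cdot)=g_1(F_s\cdot,\cdot)$ and $G_s=(F_s^{-1})^t$, the chain of equalities in \eqref{ibra3} --- including the cancellation $\det(G^{1,0}_{s,\mathbb{C}})\sqrt{\det F_s}\equiv 1$ on $\Lambda^{m,0}(A)$ used to pass from its third to its fourth line --- holds pointwise a.e.\ on $A$, being a linear-algebraic identity valid wherever $g_s$ is nondegenerate; this gives
$$(g_s)^*_{m,q,\rho}(\omega,\omega)\,\dvol_{g_s}=(g_1)^*_{m,q,\rho}\big((\id\otimes G^{0,q}_{s,\mathbb{C}}\otimes\id)\omega,\omega\big)\,\dvol_{g_1}=:\phi_s\,\dvol_{g_1},$$
where $\phi_s\geq 0$ a.e.\ on $A$ because $F_s$ is $g_1$-self-adjoint and positive definite, hence so are $G_s$ on $T^*M$ and the induced operator $G^{0,q}_{s,\mathbb{C}}$ on $\Lambda^{0,q}(A)$. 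Therefore $\|\omega\|^2_{g_s}=\int_A\phi_s\,\dvol_{g_1}$ for all $s\in[0,1]$; in particular $\int_A\phi_0\,\dvol_{g_1}=\|\omega\|^2_{g_0}<\infty$ since $\omega\in\mathcal{C}$ and $M\setminus A$ is $\dvol_{g_1}$-null.

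Next I would establish the two ingredients needed for dominated convergence. Pointwise convergence $\phi_{s_n}(p)\to\phi_0(p)$ for every $p\in A$ follows because $g_s$ is a smooth, hence continuous, family on $M\times[0,1]$ with $g_0=h$ strictly positive definite on the open set $A$, so that the bundle metrics $(g_s)^*_{m,q,\rho}|_A$ and the volume densities $\dvol_{g_s}|_A$ depend continuously on $s\in[0,1]$. For a uniform integrable majorant, the standing assumption $g_0\leq\frak{a}\,g_s$ on the family reads $F_0\leq\frak{a}\,F_s$ as $g_1$-positive operators; anti-monotonicity of the operator inverse yields $F_s^{-1}\leq\frak{a}\,F_0^{-1}$, hence $G_s\leq\frak{a}\,G_0$, and passing to the action on $\Lambda^{0,q}(A)$ --- a restriction of a $q$-fold tensor power, which preserves inequalities between positive operators --- $G^{0,q}_{s,\mathbb{C}}\leq\frak{a}^{q}G^{0,q}_{0,\mathbb{C}}$ pointwise on $A$, whence $0\leq\phi_{s_n}\leq\frak{a}^{m}\phi_0$ a.e.\ on $A$ for all $n$, with $\frak{a}^m\phi_0\in L^1(A,\dvol_{g_1})$. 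This last estimate is precisely the pointwise content of Prop.\ \ref{unci}.

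Dominated convergence on $(A,\dvol_{g_1})$ then gives $\int_A\phi_{s_n}\,\dvol_{g_1}\to\int_A\phi_0\,\dvol_{g_1}$, i.e.\ $\|\Phi_n^{m,q}\omega\|^2_{g_{s_n}}\to\|\omega\|^2_{g_0}$, which is \eqref{limitHilbert}. The one delicate point is the pointwise domination $\phi_s\leq C\phi_0$: one has to track the inequality $g_0\leq\frak{a}g_s$ through the order-reversing inversion $F\mapsto F^{-1}$, the transpose on $T^*M$, complexification and restriction to $T^{0,1,*}M$, and finally the exterior/tensor construction producing $G^{0,q}_{\mathbb{C}}$ (alternatively, one simply quotes the pointwise estimate behind Prop.\ \ref{unci}). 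Everything else reduces to the linear algebra recalled around \eqref{ghgh}--\eqref{ibra3} together with a routine application of the dominated convergence theorem.
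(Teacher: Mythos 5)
Your proof is correct and takes essentially the same route as the paper, whose own proof simply defers to \cite[Prop.~3.3]{SpecBei}: the argument rests on rewriting the $g_{s_n}$-norms as integrals of densities $\phi_{s_n}$ against the fixed measure $\dvol_{g_1}$ on $A$ and applying dominated convergence, with the majorant supplied by the uniform comparison $g_0\leq \frak{a}\,g_s$ underlying Prop.~\ref{unci}. The only (harmless) variation is that you dominate a general $L^2$ section pointwise by a multiple of $\phi_0$ via the operator inequality $G^{0,q}_{s,\mathbb{C}}\leq \frak{a}^{q}G^{0,q}_{0,\mathbb{C}}$, rather than first treating $\Omega^{m,q}_c(A,E|_A)$ and concluding by density using \eqref{decia}.
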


\begin{proof}
This follows by arguing as in \cite[Prop. 3.3]{SpecBei}. 
\end{proof}

We have the following immediate consequence :

\begin{cor}
\label{swim}
Let $\omega\in L^2\Omega^{m,q}(A,E|_A,g_0|_A,\rho|_A)$ be arbitrarily fixed. Then the constant sequence $\{\omega_n\}_{n\in \mathbb{N}}$, $\omega_n:=\omega$, viewed as a sequence where $\omega_n\in L^2\Omega^{m,q}(A,E|_A,g_{s_n}|_A,\rho|_A)$ for any $n\in \mathbb{N}$, converges strongly in the sense of Def. \ref{strong} to $\omega$ as $n\rightarrow \infty$.
\end{cor}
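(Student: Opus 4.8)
The plan is to apply Definition \ref{strong} directly, with the simplest possible choice of approximating net. Here the dense subset $\mathcal{C}$ coincides with the whole Hilbert space $H := L^2\Omega^{m,q}(A,E|_A,g_0|_A,\rho|_A)$, and by Proposition \ref{unci} the fixed form $\omega$ already defines an element of every $L^2\Omega^{m,q}(A,E|_A,g_{s_n}|_A,\rho|_A)$ via the uniform continuous inclusions. So I would take the \emph{constant} net $\{v_\beta\}_{\beta\in\mathcal{B}}$ with $v_\beta := \omega$ for all $\beta$; this net trivially converges to $\omega$ in $H$. Moreover, for each $n$ the connecting map $\Phi_n^{m,q}$ is by construction the identity on forms, so $\Phi_n^{m,q} v_\beta = \omega = \omega_n$ as elements of $L^2\Omega^{m,q}(A,E|_A,g_{s_n}|_A,\rho|_A)$, whence $\|\Phi_n^{m,q} v_\beta - \omega_n\|_{L^2\Omega^{m,q}(A,E|_A,g_{s_n}|_A,\rho|_A)} = 0$ for every $\beta$ and every $n$. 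Therefore $\lim_\beta \limsup_{n\to\infty} \|\Phi_n^{m,q} v_\beta - \omega_n\|_{L^2\Omega^{m,q}(A,E|_A,g_{s_n}|_A,\rho|_A)} = 0$, which is exactly the condition required in Definition \ref{strong}; hence $\omega_n \to \omega$ strongly.

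There is essentially no obstacle in this argument: the point is simply that, since $\mathcal{C}$ is the whole space, strong convergence can be tested against the constant net at $\omega$ itself, and the maps $\Phi_n^{m,q}$ act as the identity on forms, so no approximation error is incurred in the $g_{s_n}$-norm. The only step that genuinely needs a prior result is the observation that $\omega$ indeed lies in each $H_n = L^2\Omega^{m,q}(A,E|_A,g_{s_n}|_A,\rho|_A)$, which is guaranteed by the uniform inclusions $L^2\Omega^{m,q}(A,E|_A,g_0|_A,\rho|_A)\hookrightarrow L^2\Omega^{m,q}(A,E|_A,g_{s_n}|_A,\rho|_A)$ of Proposition \ref{unci} (and the standing hypothesis, supplied by Proposition \ref{relieve}, that $\{H_n\}$ converges to $H$, so that Definition \ref{strong} applies at all).
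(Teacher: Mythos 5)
Your proof is correct and is precisely the argument the paper intends when it calls the corollary an "immediate consequence" of Prop.~\ref{relieve}: since $\mathcal{C}=H$ and each $\Phi_n^{m,q}$ is the identity on forms, the constant net at $\omega$ makes the quantity in Def.~\ref{strong} identically zero. No further comment is needed.
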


In the remaining part of this section we investigate the compact convergence of the operators $G_{\overline{\partial}_{E,m,q}^{g_1,g_s}}$ and $G_{\overline{\partial}_{E,m,q}^{g_s,h}}$, as $s\rightarrow 0$. To this aim, we need to prove various preliminary properties. 

\begin{lemma}
\label{lemma2}
Let $\phi\in \Omega_c^{m,q+1}(A,E|_A)$ and let $\{s_n\}_{n\in \mathbb{N}}\subset (0,1]$ be a sequence tending to $0$ as $n\rightarrow \infty$. Then $$\overline{\partial}_{E,m,q}^{g_1,g_{s_n},t}\phi\rightharpoonup \overline{\partial}_{E,m,q}^{g_1,h,t}\phi$$ as $n\rightarrow \infty$, that is, the sequence $\{\overline{\partial}_{E,m,q}^{g_1,g_{s_n},t}\phi\}$ converges weakly to $\overline{\partial}_{E,m,q}^{g_1,h,t}\phi$ in $L^2\Omega^{m,q}(A,E|_A,g_1|_A,\rho|_A)$ as $n\rightarrow \infty$.

\end{lemma}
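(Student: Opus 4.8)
The plan is to observe first that the ambient Hilbert space is \emph{fixed}: the target $L^2\Omega^{m,q}(A,E|_A,g_1|_A,\rho|_A)$ of $\overline{\partial}_{E,m,q}^{g_1,g_s,t}$ does not depend on $s$, so the asserted weak convergence is weak convergence in one and the same Hilbert space, and the notions of convergence for varying Hilbert spaces recalled above are not needed here. I would then reduce the whole statement to a pointwise formula transporting the ``moving'' formal adjoint $\overline{\partial}_{E,m,q}^{g_1,g_s,t}$ onto the \emph{fixed} operator $\overline{\partial}_{E,m,q}^{g_1,g_1,t}$, after which the limit $s\to 0$ is taken by hand, using only the smoothness of the family $g_s$. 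Throughout I abbreviate the relevant $L^2$-inner products and norms as $\langle\cdot,\cdot\rangle_{L^2(g_s)}$, $\|\cdot\|_{L^2(g_1)}$, and so on.

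For the transport formula, recall from \eqref{ibra3} that on $(m,\bullet)$-forms the determinant $\det(G_{s,\mathbb{C}}^{1,0})$ produced by the $\Lambda^{m,0}$-factor exactly cancels the density $\sqrt{\det F_s}$, so that for smooth $(m,q+1)$-forms $\alpha,\beta$ with $\alpha$ compactly supported in $A$ one has the pointwise identity $\langle\alpha,\beta\rangle_{L^2(g_s)}=\langle S_s\alpha,\beta\rangle_{L^2(g_1)}$, where $S_s:=\id\otimes G_{s,\mathbb{C}}^{0,q+1}\otimes\id$ is a smooth endomorphism of $\Lambda^{m,q+1}(A)\otimes E|_A$ depending smoothly on $s\in[0,1]$ (this smoothness, down to $s=0$, follows from the corresponding property of $G_{s,\mathbb{C}}^{0,q+1}$ stated above). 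Taking $\alpha=\phi$, $\beta=\overline{\partial}_{E,m,q}\psi$ with $\psi\in\Omega_c^{m,q}(A,E|_A)$ and using the defining identity of the $g_1$-formal adjoint, we obtain
\begin{align*}
\langle\overline{\partial}_{E,m,q}^{g_1,g_s,t}\phi,\psi\rangle_{L^2(g_1)}
&=\langle\phi,\overline{\partial}_{E,m,q}\psi\rangle_{L^2(g_s)}
=\langle S_s\phi,\overline{\partial}_{E,m,q}\psi\rangle_{L^2(g_1)}\\
&=\langle\overline{\partial}_{E,m,q}^{g_1,g_1,t}(S_s\phi),\psi\rangle_{L^2(g_1)}.
\end{align*}
Since $\Omega_c^{m,q}(A,E|_A)$ is dense in the target space and both sides are smooth, compactly supported $L^2(g_1)$-forms, this forces the pointwise equality $\overline{\partial}_{E,m,q}^{g_1,g_s,t}\phi=\overline{\partial}_{E,m,q}^{g_1,g_1,t}(S_s\phi)$ on $A$ for every $s\in[0,1]$; for $s=0$ it reads $\overline{\partial}_{E,m,q}^{g_1,h,t}\phi=\overline{\partial}_{E,m,q}^{g_1,g_1,t}(S_0\phi)$.

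It then remains to pass to the limit. The set $\supp\phi$ is a fixed compact subset of $A$, and since $G_{s,\mathbb{C}}^{0,q+1}$ is smooth on $A\times[0,1]$ we get $G_{s_n,\mathbb{C}}^{0,q+1}\to G_{0,\mathbb{C}}^{0,q+1}$ in $C^\infty(\supp\phi)$, hence $S_{s_n}\phi\to S_0\phi$ in $C^\infty$ with support in the fixed compact set $\supp\phi$. Applying the fixed first-order differential operator $\overline{\partial}_{E,m,q}^{g_1,g_1,t}$, whose coefficients are smooth on $A$, and integrating over $\supp\phi$, which has finite $g_1$-volume, yields $\overline{\partial}_{E,m,q}^{g_1,g_{s_n},t}\phi\to\overline{\partial}_{E,m,q}^{g_1,h,t}\phi$ in the norm of $L^2\Omega^{m,q}(A,E|_A,g_1|_A,\rho|_A)$, which a fortiori gives the asserted weak convergence. (Alternatively, one may test the displayed identity against the dense set $\Omega_c^{m,q}(A,E|_A)$, let $n\to\infty$ under the integral sign on $\supp\phi$, and combine this with the uniform bound $\|\overline{\partial}_{E,m,q}^{g_1,g_{s_n},t}\phi\|_{L^2(g_1)}=\|\overline{\partial}_{E,m,q}^{g_1,g_1,t}(S_{s_n}\phi)\|_{L^2(g_1)}\le C$ coming from the same identity.) I expect the only point requiring genuine care to be the transport formula: one must keep careful track, exactly as in \eqref{ibra3}, of the cancellation of the $\Lambda^{m,0}$-determinant against the volume density, so that the formal adjoint is carried onto the fixed metric $g_1$ with no residual measure-theoretic terms; granted that identity, the passage to the limit is routine.
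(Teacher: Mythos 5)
Your argument is correct, but it is not the route the paper takes. The paper's proof is the standard two-step weak-convergence argument: it first shows that $\{\overline{\partial}_{E,m,q}^{g_1,g_{s_n},t}\phi\}$ is bounded in $L^2\Omega^{m,q}(A,E|_A,g_1|_A,\rho|_A)$ (by continuity and compact support of the pointwise norm on $A\times[0,1]$), and then tests against the dense set $\Omega^{m,q}_c(A,E|_A)$, moving $\overline{\partial}$ back onto the test form and invoking Prop.~\ref{relieve} and Cor.~\ref{swim} to pass to the limit in the inner products $\langle\overline{\partial}_{E,m,q}\omega,\phi\rangle_{L^2\Omega^{m,q+1}(A,E|_A,g_{s_n}|_A,\rho|_A)}$. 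You instead establish the conjugation identity $\overline{\partial}_{E,m,q}^{g_1,g_s,t}\phi=\overline{\partial}_{E,m,q}^{g_1,g_1,t}(S_s\phi)$ with $S_s=\id\otimes G^{0,q+1}_{s,\mathbb{C}}\otimes\id$ — this is exactly the mechanism the paper itself uses one lemma later, in \eqref{sambu} of Lemma~\ref{lemma3}, for the other mixed adjoint — and then let $s\to 0$ using the joint smoothness of $G^{0,q+1}_{s,\mathbb{C}}$ on $A\times[0,1]$ and the fixed compact support. Your computation of the transport formula is sound (the cancellation of $\det(G^{1,0}_{s,\mathbb{C}})$ against $\sqrt{\det F_s}$ is precisely the content of \eqref{ibra3}, and the self-adjointness of $S_s$ with respect to $g_1^*$ lets you move it onto $\phi$), and it buys you something strictly stronger than the statement: convergence in the norm of $L^2\Omega^{m,q}(A,E|_A,g_1|_A,\rho|_A)$, of which the asserted weak convergence is a trivial consequence. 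The paper's version is shorter and avoids any pointwise identity, but only delivers weak convergence; yours requires the extra bookkeeping of the endomorphism $S_s$ (note it coincides with the paper's $T^{m,q+1}_s$, not its $S^{m,q+1}_s$, which includes the determinant factor) but unifies this lemma with Lemma~\ref{lemma3} under a single mechanism. Both correctly exploit that the target Hilbert space here is fixed, so no varying-Hilbert-space notion of weak convergence is needed.
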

\begin{proof}
Let $f:A\times [0,1]\rightarrow \mathbb{R}$ be the function that assigns to any $(p,s)\in A\times [0,1]$ the square of the pointwise norm of $\overline{\partial}^{g_1,g_{s},t}_{E,m,q}\phi$ in $p$ with respect to $g_1$ and $\rho$. By the facts $g_s\in C^{\infty}(A\times [0,1], p^*T^*M\otimes p^*T^*M)$ and $\phi\in \Omega^{m,1}_c(A)$, we know that $f$ is continuous on $A\times [0,1]$ and $\supp(f)\subset \supp(\phi)\times [0,1]$. In particular $\supp(f)$ is a compact subset of $A\times [0,1]$. Therefore there exists a positive constant $b\in \mathbb{R}$ such that $f(p,s)\leq b$ for any $p\in A$ and $s\in [0,1]$. This latter inequality tells us that $\|\overline{\partial}_{E,m,q}^{g_{1},g_{s_n},t}\phi\|^2_{L^2\Omega^{m,q}(A,E|_A,g_1|_A,\rho|_A)}=\int_Af(p,s)\dvol_{g_1}\leq b\vol_{g_1}(A)$ for any $s\in[0,1]$. Now, as we know that  $\{\|\overline{\partial}_{E,m,q}^{g_1,g_{s_n},t}\phi\|_{L^2\Omega^{m,q}(A,E|_A,g_1|_A,\rho|_A)}\}_{n\in \mathbb{N}}$ is a bounded sequence, to conclude the proof it is enough to fix a dense subset $Z$ of $L^2\Omega^{m,q}(A,E|_A,g_1|_A,\rho|_A)$ and to show that 
$$\lim_{n\rightarrow \infty}\langle\omega,\overline{\partial}_{E,m,q}^{g_1,g_{s_n},t}\phi\rangle_{L^2\Omega^{m,q}(A,E|_A,g_1|_A,\rho|_A)}=\langle\omega,\overline{\partial}_{E,m,q}^{g_1,h,t}\phi\rangle_{L^2\Omega^{m,q}(A,E|_A,g_1|_A,\rho|_A)}$$ for any $\omega\in Z$. Let us fix $Z:=\Omega^{m,q}_c(A,E|_A)$ and let $\omega\in \Omega_c^{m,q}(A,E|_A)$. 
Thanks to Prop. \ref{relieve} and Cor. \ref{swim}  we have 
\begin{align}
\nonumber  \lim_{n\rightarrow \infty}\langle\omega,\overline{\partial}_{E,m,q}^{g_1,g_{s_n},t}\phi\rangle_{L^2\Omega^{m,q}(A,E|_A,g_1|_A,\rho|_A)}&=\lim_{n\rightarrow \infty}\langle \overline{\partial}_{E,m,q}\omega,\phi\rangle_{L^2\Omega^{m,q+1}(A,E|_A,g_{s_n}|_A,\rho|_A)}\\
\nonumber &=\langle \overline{\partial}_{E,m,q}\omega,\phi\rangle_{L^2\Omega^{m,q+1}(A,E|_A,h|_A,\rho|_A)}\\
\nonumber &=\langle \omega,\overline{\partial}_{E,m,q}^{g_1,h,t}\phi\rangle_{L^2\Omega^{m,q}(A,E|_A,g_1|_A,\rho|_A)}
\end{align}
as desired.
\end{proof}

The next lemma provides an extension of Prop. \ref{sameop}.

\begin{lemma}
\label{Seb}
For each $s\in (0,1]$ the following three operators coincide:
\begin{align}
& \nonumber \overline{\partial}_{E,p,q,\max/\min}^{g_1,g_s}:L^2\Omega^{p,q}(A,E|_A,g|_A,\rho|_A)\rightarrow L^2\Omega^{p,q+1}(A,E|_A,g_s|_A,\rho|_A);\\
\label{ucsex}
& \overline{\partial}_{E,p,q}^{g_1,g_s}:L^2\Omega^{p,q}(M,E,g_1,\rho)\rightarrow L^2\Omega^{p,q+1}(M,E,g_s,\rho),
\end{align}
with \eqref{ucsex}  the unique closed extension of  $\overline{\partial}_{E,p,q}:\Omega^{p,q}(M,E)\rightarrow \Omega^{p,q+1}(M,E)$ viewed as an unbounded and densely defined operator acting between $L^2\Omega^{p,q}(M,E,g_1,\rho)$ and  $L^2\Omega^{p,q+1}(M,E,g_s,\rho).$
\end{lemma}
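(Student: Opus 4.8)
The plan is to prove Lemma \ref{Seb} by reducing it to Proposition \ref{sameop} via Proposition \ref{Pp}. The key observation is that for each fixed $s\in(0,1]$ we have two-sided metric equivalences: assumption (4) on the family $g_s$ gives $g_0=h\leq\mathfrak{a}g_s$, and since $M$ is compact and both $g_s$ and $g_1$ are genuine Hermitian metrics, there are also positive constants $c,c'$ with $c g_1\leq g_s\leq c' g_1$ on all of $M$. In particular $g_1$, $g_s$ and $h$ are all mutually comparable up to constants on the dense open set $A$ (where everything is a genuine metric), except that $h$ degenerates on $M\setminus A$. This is exactly the kind of hypothesis under which Proposition \ref{Pp} applies.

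First I would record that, because $c g_1\leq g_s$ and $g_s\leq c' g_1$, the identity induces Hilbert space isomorphisms (with equivalent norms, hence the same underlying spaces of $L^2$-forms) between $L^2\Omega^{p,q}(M,E,g_1,\rho)$ and $L^2\Omega^{p,q}(M,E,g_s,\rho)$, and likewise in bidegree $(p,q+1)$; the same holds with $M$ replaced by $A$ since $M\setminus A$ is null. Under these identifications the operator $\overline{\partial}_{E,p,q}^{g_1,g_s}$ of \eqref{ucsex} has the same domain and acts identically to $\overline{\partial}_{E,p,q}^{g_1,g_1}$, and similarly $\overline{\partial}_{E,p,q,\max/\min}^{g_1,g_s}$ acting between the $L^2$-spaces of $A$ coincides with $\overline{\partial}_{E,p,q,\max/\min}^{g_1,g_1}$ acting between the corresponding spaces of $A$ with metric $g_1|_A$ (the domains are defined by graph norms, which are equivalent). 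So it suffices to prove the statement with $g_s$ replaced throughout by $g_1$, i.e. to show the three operators
$$\overline{\partial}_{E,p,q,\max/\min}^{g_1,g_1}:L^2\Omega^{p,q}(A,E|_A,g_1|_A,\rho|_A)\rightarrow L^2\Omega^{p,q+1}(A,E|_A,g_1|_A,\rho|_A)$$
and the unique closed extension $\overline{\partial}_{E,p,q}^{g_1,g_1}$ on $M$ all coincide.

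But this is precisely Proposition \ref{sameop}, applied with the metric $g$ there taken to be $g_1$: indeed, the hypotheses needed in \S 2.1 — that $(A,g_1|_A)$ is parabolic and that the relevant $L^2$-$\overline{\partial}$ cohomology is finite dimensional — are exactly the standing assumptions of this subsection (parabolicity of $(A,g|_A)$ is quasi-isometry invariant on compact $M$, hence holds for $g_1$, and the finite-dimensionality hypothesis is metric-independent in the sense used here). So Proposition \ref{sameop} gives $\overline{\partial}_{E,p,q,\max}^{g_1,g_1}=\overline{\partial}_{E,p,q,\min}^{g_1,g_1}=\overline{\partial}_{E,p,q}^{g_1,g_1}$, and transporting back through the two-sided equivalences yields the claim for general $s\in(0,1]$. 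I expect the only genuinely delicate point to be the careful bookkeeping showing that the graph-norm-closed domains are preserved under the metric-equivalence identifications — concretely, checking that $\mathcal D(\overline{\partial}_{E,p,q,\min}^{g_1,g_s})$ is carried onto $\mathcal D(\overline{\partial}_{E,p,q,\min}^{g_1,g_1})$ — but this is routine once the uniform comparability of the metrics (from compactness of $M$ plus assumption (4)) is in hand, and in fact Proposition \ref{Pp} already packages the half of this that is not immediate from genuine metric equivalence.
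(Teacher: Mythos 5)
Your proposal is correct and follows essentially the same route as the paper, whose proof is exactly the one-line observation that the claim is an immediate consequence of Proposition \ref{sameop} together with the quasi-isometry of $g_1$ and $g_s$ for each fixed $s\in(0,1]$ (which holds by compactness of $M$). The extra bookkeeping you describe — equivalence of the $L^2$-norms and graph norms, hence equality of the maximal/minimal domains under the identification — is precisely what the paper leaves implicit in the phrase ``immediate consequence.''
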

\begin{proof}
This is an immediate consequence of Prop. \ref{sameop} and the fact that $g_1$ and $g_s$ are quasi-isometric for each $s\in (0,1]$.
\end{proof}

As in the proof of Prop. \ref{Gianni}, we define $B^{g_1,h}_{m,q}\subset L^2\Omega^{m,q}(A,E|,g_1|,\rho|_A)$ as $$B^{g_1,h}_{m,q}:=\left(\ker(\overline{\partial}_{E,m,q,\max}^{g_1,h})\right)^{\bot}\cap \mathcal{D}(\overline{\partial}_{E,m,q,\max}^{g_1,h}).$$ When $s\in (0,1]$ we consider the operator \eqref{ucsex} and in analogy with the above construction we define $B^{g_1,g_s}_{m,q}\subset L^2\Omega^{m,q}(A,E|_A,g_1|_A,\rho|_A)$ as $$B^{g_1,g_s}_{m,q}:=\left(\ker(\overline{\partial}_{E,m,q}^{g_1,g_s})\right)^{\bot}\cap \mathcal{D}(\overline{\partial}_{E,m,q,}^{g_1,g_s}).$$ 

\begin{lemma}
\label{SebG}
For each $s\in (0,1]$  we have
$$\ker(\overline{\partial}_{E,m,q}^{g_1,g_1})=\ker(\overline{\partial}_{E,m,q}^{g_1,g_s})\quad\quad and\quad\quad B^{g_1,g_1}_{m,q}=B^{g_1,g_s}_{m,q}\quad  for\ each\ s\in (0,1].$$ 
If $s=0$  we have
$$\ker(\overline{\partial}_{E,m,q}^{g_1,g_1})=\ker(\overline{\partial}_{E,m,q,\max}^{g_1,h})\quad\quad and\quad\quad B^{g_1,h}_{m,q}\subset B^{g_1,g_1}_{m,q}.$$ 
\end{lemma}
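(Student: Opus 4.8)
The plan is to reduce every assertion to a statement about the single closed extension $\overline{\partial}_{E,m,q}^{g_1,g_1}$ on the compact manifold $M$, using Lemma~\ref{Seb}, Proposition~\ref{sameop}, the quasi-isometry of $g_1$ and $g_s$ on $M$ for $s\in(0,1]$, and the fact (recalled above) that $M\setminus A$ is Lebesgue-null, so that $L^2\Omega^{m,q}(A,E|_A,g_1|_A,\rho|_A)=L^2\Omega^{m,q}(M,E,g_1,\rho)$. I will systematically use that all the operators involved have this same source Hilbert space, whose inner product does not depend on $s$; hence an equality of kernels inside it immediately gives an equality of the associated orthogonal complements, and --- since the $L^2$-product and the graph product agree on a kernel --- each $B^{\bullet}_{m,q}$ is just $(\ker)^{\perp}\cap\mathcal D$, the intersection of an $L^2$-orthogonal complement with the relevant domain.

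First I would handle $s\in(0,1]$. By Lemma~\ref{Seb}, $\overline{\partial}_{E,m,q}^{g_1,g_s}$ is the unique closed extension of $\overline{\partial}_{E,m,q}:L^2\Omega^{m,q}(M,E,g_1,\rho)\to L^2\Omega^{m,q+1}(M,E,g_s,\rho)$, so $\mathcal D(\overline{\partial}_{E,m,q}^{g_1,g_s})=\{\omega\in L^2\Omega^{m,q}(M,E,g_1,\rho):\overline{\partial}_{E,m,q}\omega\in L^2\Omega^{m,q+1}(M,E,g_s,\rho)\}$. Since $M$ is compact, $g_s$ and $g_1$ are quasi-isometric, hence $L^2\Omega^{m,q+1}(M,E,g_s,\rho)=L^2\Omega^{m,q+1}(M,E,g_1,\rho)$ with equivalent norms; consequently $\mathcal D(\overline{\partial}_{E,m,q}^{g_1,g_s})=\mathcal D(\overline{\partial}_{E,m,q}^{g_1,g_1})$ and the two operators coincide as maps. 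In particular $\ker(\overline{\partial}_{E,m,q}^{g_1,g_s})=\ker(\overline{\partial}_{E,m,q}^{g_1,g_1})$ (the condition $\overline{\partial}\omega=0$ being metric-independent), and having the same domain, the same kernel and the same ambient Hilbert space forces $B^{g_1,g_s}_{m,q}=B^{g_1,g_1}_{m,q}$.

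Then I would handle $s=0$. For the kernels, $\ker(\overline{\partial}_{E,m,q,\max}^{g_1,h})=\{\omega\in L^2\Omega^{m,q}(M,E,g_1,\rho):\overline{\partial}_{E,m,q}\omega=0\}$, which by Proposition~\ref{sameop} equals $\ker(\overline{\partial}_{E,m,q}^{g_1,g_1})$. For the inclusion $B^{g_1,h}_{m,q}\subset B^{g_1,g_1}_{m,q}$, assumption~(4) on the family $g_s$, taken at $s=1$, gives $h=g_0\leq\mathfrak{a}\,g_1$; hence Proposition~\ref{Pp}, applied with source metrics $g_1\leq 1\cdot g_1$ and target metrics $h\leq\mathfrak{a}\,g_1$, yields $\mathcal D(\overline{\partial}_{E,m,q,\max}^{g_1,h})\subset\mathcal D(\overline{\partial}_{E,m,q,\max}^{g_1,g_1})=\mathcal D(\overline{\partial}_{E,m,q}^{g_1,g_1})$, the last equality by Proposition~\ref{sameop}, with the two operators agreeing on the smaller domain. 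Combining with the equality of kernels (hence of orthogonal complements in $L^2\Omega^{m,q}(M,E,g_1,\rho)$) gives $B^{g_1,h}_{m,q}=(\ker(\overline{\partial}_{E,m,q,\max}^{g_1,h}))^{\perp}\cap\mathcal D(\overline{\partial}_{E,m,q,\max}^{g_1,h})\subset(\ker(\overline{\partial}_{E,m,q}^{g_1,g_1}))^{\perp}\cap\mathcal D(\overline{\partial}_{E,m,q}^{g_1,g_1})=B^{g_1,g_1}_{m,q}$.

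I do not expect any genuine obstacle: the whole statement is bookkeeping on top of Lemma~\ref{Seb}, Proposition~\ref{sameop} and Proposition~\ref{Pp}. The only point needing care is to keep straight which Hilbert space hosts each object --- the source of every operator here being $L^2\Omega^{m,q}(M,E,g_1,\rho)$ --- so that ``same kernel'' legitimately implies ``same orthogonal complement'', together with the elementary remark that on a kernel the graph norm reduces to the $L^2$ norm, which is what lets one describe $B^{\bullet}_{m,q}$ as an $L^2$-orthogonal complement intersected with a domain.
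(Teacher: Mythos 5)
Your proof is correct and follows essentially the same route as the paper's (which simply cites Lemma \ref{Seb} and the quasi-isometry of $g_1$ and $g_s$ for $s\in(0,1]$, and Prop. \ref{Pp} together with the bound $h\leq\frak{a}g_1$ for $s=0$); your additional bookkeeping about the common ambient space $L^2\Omega^{m,q}(A,E|_A,g_1|_A,\rho|_A)$ and the agreement of graph and $L^2$ products on kernels is exactly the right justification.
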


\begin{proof}
When $s\in (0,1]$ the above equalities follow immediately by Lemma \ref{Seb} and the fact that $g_1$ and $g_s$ are quasi-isometric for each $s\in (0,1]$. When $s=0$ the above statements follow by Prop. \ref{Pp} and Prop. \ref{unci}.
\end{proof}
 We have now the following immediate 
\begin{cor}
\label{allGreen}
For each $s\in (0,1]$ the Green operator $$G_{\overline{\partial}_{E,m,q}^{g_1,g_s}}:L^2\Omega^{m,q+1}(M,E,g_s,\rho)\rightarrow L^2\Omega^{m,q}(M,E,g_1,\rho)$$ exists and is compact. 
\end{cor}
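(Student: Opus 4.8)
The plan is to reduce the statement to two facts already available in the excerpt: the existence/closedness provided by the finite-dimensionality hypothesis together with Prop.~\ref{Pp}, and the compactness of the Green operator of the honest closed Dolbeault operator \eqref{ucsex} on a compact manifold. Fix $s\in(0,1]$. By Lemma~\ref{Seb} the maximal extension $\overline{\partial}_{E,m,q,\max}^{g_1,g_s}$ coincides with the unique closed extension $\overline{\partial}_{E,m,q}^{g_1,g_s}$ of \eqref{ucsex}; since $g_1$ and $g_s$ are quasi-isometric, classical elliptic theory on the compact manifold $M$ applies to \eqref{ucsex} exactly as it does to $\overline{\partial}_{E,m,q}^{g,g}$ in the proof of Prop.~\ref{Gianni}. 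Hence $\im(\overline{\partial}_{E,m,q}^{g_1,g_s})$ is closed in $L^2\Omega^{m,q+1}(M,E,g_s,\rho)$, the orthogonal decompositions $H_1=\ker\oplus\im((\cdot)^*)$, $H_2=\ker((\cdot)^*)\oplus\im$ hold, and therefore the Green operator $G_{\overline{\partial}_{E,m,q}^{g_1,g_s}}:L^2\Omega^{m,q+1}(M,E,g_s,\rho)\rightarrow L^2\Omega^{m,q}(M,E,g_1,\rho)$ is well defined.

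For compactness I would invoke Prop.~\ref{useful}: it suffices to show that the inclusion $\mathcal{D}(\overline{\partial}_{E,m,q}^{g_1,g_s})\cap\im((\overline{\partial}_{E,m,q}^{g_1,g_s})^*)\hookrightarrow L^2\Omega^{m,q}(M,E,g_1,\rho)$, with the domain carrying the graph norm, is a compact operator. This is precisely the content of classical elliptic theory for the $\overline{\partial}$-complex on a compact Hermitian manifold — equivalently, the statement that $G_{\overline{\partial}_{E,m,q}^{g_1,g_s}}$ is compact, which is the same fact used verbatim for \eqref{Gg} in the proof of Prop.~\ref{Gianni}. Because $g_1\sim g_s$, the graph norm of $\overline{\partial}_{E,m,q}^{g_1,g_s}$ is equivalent to the graph norm of $\overline{\partial}_{E,m,q}^{g_s,g_s}$ and the $L^2$ spaces at source and target are isomorphic as topological vector spaces to those defined with $g_s$ alone, so the compactness transfers without change. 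The role of Lemma~\ref{SebG} here is merely to identify $\ker$ and the complement $B^{g_1,g_s}_{m,q}$ uniformly in $s$, which is what makes the later compact-convergence argument run; for the present corollary one only needs existence and compactness for each fixed $s$.

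There is essentially no obstacle: the corollary is labelled ``immediate'' precisely because, once Lemma~\ref{Seb} identifies the maximal extension with the closed Dolbeault operator of \eqref{ucsex}, the conclusion is the standard Hodge-theoretic statement on a compact manifold applied to a metric quasi-isometric to $g_1$. The only mild point worth spelling out is that the Green operator lands in the $g_1$-space while the elliptic estimate is most naturally phrased with $g_s$; this is harmless because the identity on $\Omega_c^{m,q}(M,E)$ is a Banach-space isomorphism between $L^2\Omega^{m,q}(M,E,g_1,\rho)$ and $L^2\Omega^{m,q}(M,E,g_s,\rho)$ by quasi-isometry, so compactness of one version is equivalent to compactness of the other. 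Thus the proof reduces to: (i) cite Lemma~\ref{Seb} to get the closed operator and its closed range; (ii) cite classical elliptic theory (as in Prop.~\ref{Gianni}) for compactness of the Green operator of \eqref{ucsex}; (iii) note quasi-isometry to pass between the $g_s$- and $g_1$-pictures.
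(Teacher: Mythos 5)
Your argument is correct and is essentially the paper's own route (the paper's proof of this corollary is literally ``this follows immediately by Lemma \ref{Seb} and Lemma \ref{SebG}''): identify the maximal extension with the unique closed extension via Lemma \ref{Seb}, then transfer closed range and compactness of the Green operator from the classical elliptic picture on the compact manifold by quasi-isometry. The one point deserving a word more of care is that changing an inner product changes the orthogonal complement of the kernel, so $G_{\overline{\partial}^{g_1,g_s}_{E,m,q}}$ and $G_{\overline{\partial}^{g_s,g_s}_{E,m,q}}$ are not literally equal; this is exactly what Lemma \ref{SebG} packages (the equality $B^{g_1,g_s}_{m,q}=B^{g_1,g_1}_{m,q}$, valid because the source inner product is $g_1$ in both cases), or alternatively one notes $G_{T'}=(\id-Q')\circ G_{T}\circ \Pi'$ with $Q'$ and $\Pi'$ the relevant bounded orthogonal projections, so compactness indeed transfers.
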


\begin{proof}
This follows immediately by Lemma \ref{Seb} and Lemma \ref{SebG}.
\end{proof}

\begin{lemma}
\label{lemma7}
For each $s\in (0,1]$ let $\lambda_{m,q}^{g_1,g_s}$ be defined as $$\lambda_{m,q}^{g_1,g_s}:=\inf_{0\neq \eta\in B^{g_1,g_s}_{m,q}}\frac{\|\overline{\partial}_{E,m,q}^{g_1,g_s}\eta\|_{L^2\Omega^{m,q+1}(A,E|_A,g_s|_A,\rho|_A)}}{\|\eta\|_{L^2\Omega^{m,q}(A,E|_A,g_1|_A,\rho|_A)}}.$$ Similarly let $$\lambda_{m,q}^{g_1,h}:=\inf_{0\neq\eta\in B^{g_1,h}_{m,q}}\frac{\|\overline{\partial}_{E,m,q,\max}^{g_1,h}\eta\|_{L^2\Omega^{m,q+1}(A,E|_A,h|_A,\rho|_A)}}{\|\eta\|_{L^2\Omega^{m,q}(A,E|_A,g_1|_A,\rho|_A)}}.$$
Let $\nu>0$ and $a>0$ be the constants appearing in \eqref{fob} and \eqref{decia}, respectively. Then we have $$0<\lambda_{m,q}^{g_1,g_1}\leq \sqrt{\nu}\lambda_{m,q}^{g_1,g_s}\leq \sqrt{a\nu}\lambda_{m,q}^{g_1,h}$$ for each $s\in (0,1]$ and $q=0,...,m$.
\end{lemma}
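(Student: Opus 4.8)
The statement is a chain of three inequalities comparing the bottom of the relevant spectrum of $\overline{\partial}_{E,m,q}$ on the complement $B^{g_1,g_s}_{m,q}$ of the kernel, as the target metric $g_s$ ranges over $s\in(0,1]$ and degenerates to $h$ at $s=0$. The strategy is to exploit the uniform comparability of the $L^2$-metrics provided by Prop.\ \ref{unci} and Prop.\ \ref{0110}, together with the identification of the domains established in Lemma \ref{Seb}, Lemma \ref{SebG} and Prop.\ \ref{Pp}.

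First I would prove $\lambda_{m,q}^{g_1,g_1}\le\sqrt{\nu}\,\lambda_{m,q}^{g_1,g_s}$. By Lemma \ref{SebG} we have $B^{g_1,g_1}_{m,q}=B^{g_1,g_s}_{m,q}$ for $s\in(0,1]$, so the infima are taken over the \emph{same} space of forms $\eta$, and on such $\eta$ the operator $\overline{\partial}_{E,m,q}$ acts the same way (only the target norm changes). For a fixed $0\neq\eta\in B^{g_1,g_1}_{m,q}$, the denominators in the two Rayleigh quotients are identical (both are $\|\eta\|_{L^2\Omega^{m,q}(A,E|_A,g_1|_A,\rho|_A)}$), so I only need to compare the numerators: by \eqref{fob} applied to $\overline{\partial}_{E,m,q}\eta\in L^2\Omega^{m,q+1}(A,E|_A,g_s|_A,\rho|_A)$, one gets $\|\overline{\partial}_{E,m,q}\eta\|_{g_1}^2\le\nu\|\overline{\partial}_{E,m,q}\eta\|_{g_s}^2$, i.e.\ $\|\overline{\partial}_{E,m,q}\eta\|_{g_1}\le\sqrt{\nu}\,\|\overline{\partial}_{E,m,q}\eta\|_{g_s}$. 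Dividing by $\|\eta\|_{g_1}$ and taking the infimum over $\eta$ yields $\lambda_{m,q}^{g_1,g_1}\le\sqrt{\nu}\,\lambda_{m,q}^{g_1,g_s}$. The positivity $\lambda_{m,q}^{g_1,g_1}>0$ follows from classical elliptic theory on the compact manifold $M$ (used already in Prop.\ \ref{Gianni}): $\im(\overline{\partial}_{E,m,q}^{g_1,g_1})$ is closed, so the restriction of $\overline{\partial}_{E,m,q}^{g_1,g_1}$ to $B^{g_1,g_1}_{m,q}$ is injective with closed range, hence bounded below.

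Next I would prove $\sqrt{\nu}\,\lambda_{m,q}^{g_1,g_s}\le\sqrt{a\nu}\,\lambda_{m,q}^{g_1,h}$, equivalently $\lambda_{m,q}^{g_1,g_s}\le\sqrt{a}\,\lambda_{m,q}^{g_1,h}$. Here the infima are over \emph{different} spaces, but Lemma \ref{SebG} gives the inclusion $B^{g_1,h}_{m,q}\subset B^{g_1,g_1}_{m,q}=B^{g_1,g_s}_{m,q}$, so every competitor $\eta$ for $\lambda_{m,q}^{g_1,h}$ is also a competitor for $\lambda_{m,q}^{g_1,g_s}$, and on it $\overline{\partial}_{E,m,q,\max}^{g_1,h}\eta=\overline{\partial}_{E,m,q}^{g_1,g_s}\eta$ by Prop.\ \ref{Pp}. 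For such $\eta$ the denominators again coincide ($\|\eta\|_{g_1}$), while for the numerators I apply \eqref{decia} to $\omega:=\overline{\partial}_{E,m,q}\eta$, which lies in $L^2\Omega^{m,q+1}(A,E|_A,g_0|_A,\rho|_A)=L^2\Omega^{m,q+1}(A,E|_A,h|_A,\rho|_A)$ since $\eta\in B^{g_1,h}_{m,q}$; this gives $\|\overline{\partial}_{E,m,q}\eta\|_{g_s}^2\le a\|\overline{\partial}_{E,m,q}\eta\|_{h}^2$, hence $\|\overline{\partial}_{E,m,q}\eta\|_{g_s}\le\sqrt{a}\,\|\overline{\partial}_{E,m,q}\eta\|_{h}$. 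Dividing by $\|\eta\|_{g_1}$ and taking the infimum over $0\neq\eta\in B^{g_1,h}_{m,q}$ yields $\lambda_{m,q}^{g_1,g_s}\le\sqrt{a}\,\lambda_{m,q}^{g_1,h}$, and multiplying by $\sqrt{\nu}$ completes the chain.

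The only subtlety, and the step I would be most careful about, is the bookkeeping in the second inequality: one must check that restricting the infimum defining $\lambda_{m,q}^{g_1,g_s}$ to the smaller set $B^{g_1,h}_{m,q}$ can only \emph{increase} it, so that $\lambda_{m,q}^{g_1,g_s}\le\inf_{0\neq\eta\in B^{g_1,h}_{m,q}}\|\overline{\partial}_{E,m,q}^{g_1,g_s}\eta\|_{g_s}/\|\eta\|_{g_1}$; this is immediate since an infimum over a subset is $\ge$ the infimum over the whole set — wait, that goes the wrong way, so in fact one argues instead that for \emph{each} fixed $\eta\in B^{g_1,h}_{m,q}$ the $g_s$-Rayleigh quotient is $\le\sqrt{a}$ times the $h$-Rayleigh quotient, and then $\lambda_{m,q}^{g_1,g_s}$, being an infimum over the \emph{larger} set $B^{g_1,g_s}_{m,q}\supset B^{g_1,h}_{m,q}$, is $\le$ the infimum of the $g_s$-quotient over $B^{g_1,h}_{m,q}$, which is $\le\sqrt{a}\,\lambda_{m,q}^{g_1,h}$. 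No estimate is uniform-in-$s$ beyond what \eqref{fob} and \eqref{decia} already guarantee, so there is no analytic hard part here; it is purely a matter of assembling the pointwise metric comparisons with the domain identifications.
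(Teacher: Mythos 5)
Your proof is correct and follows essentially the same route as the paper: both parts rest on the domain identifications $B^{g_1,g_1}_{m,q}=B^{g_1,g_s}_{m,q}$ and $B^{g_1,h}_{m,q}\subset B^{g_1,g_s}_{m,q}$ from Lemma \ref{SebG}, combined with the uniform norm comparisons \eqref{fob} and \eqref{decia} applied to the numerators of the Rayleigh quotients. The final bookkeeping you settle on (infimum over the larger set $B^{g_1,g_s}_{m,q}$ is $\le$ the infimum over the subset $B^{g_1,h}_{m,q}$, which is $\le\sqrt{a}\,\lambda_{m,q}^{g_1,h}$) is exactly the chain of inequalities in the paper's proof.
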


\begin{proof}
The inequality $0<\lambda_{m,q}^{g_1,g_1}$ follows by the fact that the operator $$\overline{\partial}_{E,m,q}^{g_1,g_1}:L^2\Omega^{m,q}(A,E|_A,g_1|_A,\rho|_A)\rightarrow L^2\Omega^{m,q+1}(A,E|_A,g_1|_A,\rho|_A)$$ has closed range. Let us show now that $\lambda_{m,q}^{g_1,g_1}\leq \sqrt{\nu}\lambda_{m,q}^{g_1,g_s}$ for each $s\in (0,1]$. Thanks to Lemma \ref{SebG} we know that  $B^{g_1,g_s}_{m,q}=B^{g_1,g_1}_{m,q}$ and $\overline{\partial}^{g_1,g_1}_{E,m,q}\eta=\overline{\partial}^{g_1,g_s}_{E,m,q}\eta$ for each $ s\in (0,1]$ and $\eta\in B^{g_1,g_s}_{m,q}$. In this way by Prop. \ref{0110} we obtain
$$
\begin{aligned}
\sqrt{\nu}\lambda_{m,q}^{g_1,g_s}&:=\inf_{0\ne\eta\in B_{m,q}^{g_1,g_s}}\frac{\sqrt{\nu}\|\overline{\partial}_{E,m,q}^{g_1,g_s}\eta\|_{L^2\Omega^{m,q+1}(A,E|_A,g_s|_A,\rho|_A)}}{\|\eta\|_{L^2\Omega^{m,q}(A,E|_A,g_1|_A,\rho|_A)}}\\
&=\inf_{0\neq\eta\in B_{m,q}^{g_1,g_1}}\frac{\sqrt{\nu}\|\overline{\partial}_{E,m,q}^{g_1,g_1}\eta\|_{L^2\Omega^{m,q+1}(A,E|_A,g_s|_A,\rho|_A)}}{\|\eta\|_{L^2\Omega^{m,q}(A,E|_A,g_1|_A,\rho|_A)}}\\
&\geq \inf_{0\neq\eta\in B_{m,q}^{g_1,g_1}}\frac{\|\overline{\partial}_{E,m,q}^{g_1,g_1}\eta\|_{L^2\Omega^{m,q+1}(A,E|_A,g_1|_A,\rho|_A)}}{\|\eta\|_{L^2\Omega^{m,q}(A,E|_A,g_1|_A,\rho|_A)}}\\
&=\lambda_{m,q}^{g_1,g_1}.
\end{aligned}
$$
We now tackle the remaining inequality. By Lemma \ref{SebG} we know that  $B_{m,q}^{g_1,h}\subset B_{m,q}^{g_1,g_s}$ and $\overline{\partial}_{E,m,q}^{g_1,g_s}\eta=\overline{\partial}_{E,m,q,\max}^{g_1,h}\eta$ for each $s\in (0,1]$ and $\eta\in B_{m,q}^{g_1,h}$. In this way thanks to Prop. \ref{unci} we obtain
$$
\begin{aligned}
\sqrt{a}\lambda_{m,q}^{g_1,h}&:=\inf_{0\neq\eta\in B_{m,q}^{g_1,h}}\frac{\sqrt{a}\|\overline{\partial}_{E,m,q,\max}^{g_1,h}\eta\|_{L^2\Omega^{m,q+1}(A,E|_A,h|_A,\rho|_A)}}{\|\eta\|_{L^2\Omega^{m,q}(A,E|_A,g_1|_A,\rho|_A)}}\\
&\geq\inf_{0\neq\eta\in B_{m,q}^{g_1,h}}\frac{\|\overline{\partial}_{E,m,q}^{g_1,g_s}\eta\|_{L^2\Omega^{m,q+1}(A,E|_A,g_s|_A,\rho|_A)}}{\|\eta\|_{L^2\Omega^{m,q}(A,E|_A,g_1|_A,\rho|_A)}}\\
&\geq \inf_{0\neq \eta\in B_{m,q}^{g_1,g_s}}\frac{\|\overline{\partial}_{E,m,q,}^{g_1,g_s}\eta\|_{L^2\Omega^{m,q+1}(A,E|_A,g_s|_A,\rho|_A)}}{\|\eta\|_{L^2\Omega^{m,q}(A,E|_A,g_1|_A,\rho|_A)}}\\
&=\lambda_{m,q}^{g_1,g_s}
\end{aligned}
$$
for each $s\in (0,1]$.
\end{proof}

We now have  the first main result of this section.

\begin{teo}
\label{compact1}
Let $\{s_n\}_{n\in \mathbb{N}}\subset (0,1]$ be any sequence such that $s_n\rightarrow 0$ as $n\rightarrow \infty$ and let $$G_{\overline{\partial}_{E,m,q}^{g_1,g_{s_n}}}:L^2\Omega^{m,q+1}(M,E,g_{s_n},\rho)\rightarrow L^2\Omega^{m,q}(M,E,g_1,\rho)$$ be the Green operator of $\overline{\partial}_{E,m,q}^{g_1,g_{s_n}}:L^2\Omega^{m,q}(M,E,g_{s_1},\rho)\rightarrow L^2\Omega^{m,q+1}(M,E,g_{s_n},\rho)$. Then $$G_{\overline{\partial}_{E,m,q}^{g_1,g_{s_n}}}\rightarrow G_{\overline{\partial}_{E,m,q,\max}^{g_1,h}}\ compactly\ as\ n\rightarrow \infty.$$
\end{teo}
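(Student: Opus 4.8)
The plan is to run a subsequence/precompactness argument inside the convergence framework of \cite{KuSh}. Abbreviate $T_n:=\overline{\partial}_{E,m,q}^{g_1,g_{s_n}}$, $T_\infty:=\overline{\partial}_{E,m,q,\max}^{g_1,h}$, $H_n:=L^2\Omega^{m,q+1}(M,E,g_{s_n},\rho)$, $H_\infty:=L^2\Omega^{m,q+1}(A,E|_A,h|_A,\rho|_A)$ and $K:=L^2\Omega^{m,q}(M,E,g_1,\rho)$. By Prop.~\ref{relieve} we have $H_n\to H_\infty$ in the sense of \eqref{limitHilbert}, whereas the codomain $K$ is a constant sequence, so that strong convergence in $K$ reduces to norm convergence in $K$. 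By Cor.~\ref{allGreen} each $G_{T_n}$ exists and is compact, and by Prop.~\ref{Gianni} (with $g=g_1$) the operator $T_\infty$ has closed range and compact Green operator $G_{T_\infty}$, so the right-hand side makes sense. Fix a sequence $u_n\in H_n$ weakly convergent (Def.~\ref{weak}) to $u\in H_\infty$ and put $v_n:=G_{T_n}u_n\in K$; the goal is to prove $v_n\to G_{T_\infty}u$ in $K$.

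First I would establish a priori bounds. By Prop.~\ref{wibounded}, $\ell:=\sup_n\|u_n\|_{H_n}<\infty$. Since $v_n$ lies in $B^{g_1,g_{s_n}}_{m,q}$ and $T_nv_n$ is the orthogonal projection of $u_n$ onto $\im(T_n)$ in $H_n$, we get $\|T_nv_n\|_{H_n}\le\ell$, and Lemma~\ref{lemma7} gives $\lambda^{g_1,g_{s_n}}_{m,q}\ge\lambda^{g_1,g_1}_{m,q}/\sqrt{\nu}>0$ uniformly in $n$, whence $\|v_n\|_K\le\sqrt{\nu}\,\ell/\lambda^{g_1,g_1}_{m,q}$. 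By Lemma~\ref{SebG} and Lemma~\ref{Seb} we have $v_n\in B^{g_1,g_1}_{m,q}$ and $\overline{\partial}^{g_1,g_1}_{E,m,q}v_n=T_nv_n$ as forms, so Prop.~\ref{0110} yields $\|\overline{\partial}^{g_1,g_1}_{E,m,q}v_n\|_{L^2\Omega^{m,q+1}(M,E,g_1,\rho)}\le\sqrt{\nu}\,\ell$. Hence $\{v_n\}$ is bounded in the graph norm of $\overline{\partial}^{g_1,g_1}_{E,m,q}$ and is contained in $B^{g_1,g_1}_{m,q}=\mathcal{D}(\overline{\partial}^{g_1,g_1}_{E,m,q})\cap\im((\overline{\partial}^{g_1,g_1}_{E,m,q})^*)$, which by classical elliptic theory on the compact manifold $M$ includes compactly into $K$ (this is exactly the compactness of the inclusion \eqref{ci}). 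Therefore $\{v_n\}$ is precompact in $K$; and since $\{T_nv_n\}$ is bounded in $H_n$, Prop.~\ref{bounded} provides subsequences along which $T_nv_n$ converges weakly to some $\xi\in H_\infty$.

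Next I would identify the limit: along any subsequence for which $v_n\to v$ in $K$ and $T_nv_n\rightharpoonup\xi$ I claim $v=G_{T_\infty}u$. First, $v_n\perp\ker(\overline{\partial}^{g_1,g_1}_{E,m,q})=\ker(T_\infty)$ in $K$ (Lemma~\ref{SebG}), and this passes to the $K$-limit, so $v\perp\ker(T_\infty)$. Second, for $\phi\in\Omega^{m,q+1}_c(A,E|_A)$ the constant sequence $\phi$ converges strongly (Cor.~\ref{swim}, in degree $q+1$), so $\langle\xi,\phi\rangle_{H_\infty}=\lim_n\langle T_nv_n,\phi\rangle_{H_n}=\lim_n\langle v_n,\overline{\partial}^{g_1,g_{s_n},t}_{E,m,q}\phi\rangle_K$, and using $v_n\to v$ in $K$ together with Lemma~\ref{lemma2} this equals $\langle v,\overline{\partial}^{g_1,h,t}_{E,m,q}\phi\rangle_K$; this is precisely the statement that $v\in\mathcal{D}(T_\infty)$ with $T_\infty v=\xi$, so $v\in B^{g_1,h}_{m,q}$ and $\xi\in\im(T_\infty)$. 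Third, $u_n-T_nv_n\in\ker(T_n^*)$, so for every $\zeta\in\mathcal{D}(T_\infty)$ — and here Prop.~\ref{Pp} (using $h\le\frak{a}g_{s_n}$) together with Lemma~\ref{Seb} gives $\mathcal{D}(T_\infty)\subset\mathcal{D}(T_n)$ with $T_n\zeta=T_\infty\zeta$, while $T_\infty\zeta\in H_\infty$ — we have $\langle u_n-T_nv_n,T_\infty\zeta\rangle_{H_n}=0$, and passing to the limit via Cor.~\ref{swim} and $u_n-T_nv_n\rightharpoonup u-\xi$ gives $\langle u-\xi,T_\infty\zeta\rangle_{H_\infty}=0$. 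Thus $u-\xi\perp\im(T_\infty)$; combined with $\xi\in\im(T_\infty)$ and the closedness of $\im(T_\infty)$ (Prop.~\ref{Gianni}), $u=\xi+(u-\xi)$ is the orthogonal decomposition of $u$ associated with $T_\infty$, so $\xi$ is the projection of $u$ onto $\overline{\im(T_\infty)}$ and, by the defining property of the Green operator, $v=G_{T_\infty}u$.

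Finally, since every subsequence of $\{v_n\}$ has, by the precompactness above and Prop.~\ref{bounded}, a further subsequence along which $v_n\to v$ in $K$ and $T_nv_n\rightharpoonup\xi$, and the previous step forces $v=G_{T_\infty}u$ regardless of the choices, the whole sequence $v_n$ converges to $G_{T_\infty}u$ in $K$. As $\{u_n\}$ was an arbitrary weakly convergent sequence, this is compact convergence $G_{\overline{\partial}^{g_1,g_{s_n}}_{E,m,q}}\to G_{\overline{\partial}^{g_1,h}_{E,m,q,\max}}$ in the sense of Def.~\ref{compact}. I expect the genuine difficulty to be the domain identification in the third step: one must upgrade the $K$-limit $v$ from membership in $\mathcal{D}(\overline{\partial}^{g_1,g_1}_{E,m,q})$, which is all the elliptic estimate on $M$ delivers for free, to membership in the strictly smaller domain $\mathcal{D}(\overline{\partial}^{g_1,h}_{E,m,q,\max})$, and dually recover the orthogonality $u-\xi\perp\im(T_\infty)$ from the $H_n$-orthogonality $u_n-T_nv_n\perp\im(T_n)$; both points are exactly where the Hilbert-space convergence machinery of \cite{KuSh}, the uniform inclusions of Prop.~\ref{0110} and Prop.~\ref{unci}, and the weak-convergence Lemma~\ref{lemma2} carry the argument.
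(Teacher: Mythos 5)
Your proposal is correct and follows essentially the same route as the paper's proof: uniform bounds via Lemma \ref{lemma7} and Prop. \ref{0110}, precompactness from the graph-norm bound in $B^{g_1,g_1}_{m,q}$ together with the compact inclusion \eqref{ci}, identification of the limit by testing against $\Omega^{m,q+1}_c(A,E|_A)$ via Lemma \ref{lemma2}, and the subsequence-of-subsequences argument. The only cosmetic difference is that you recover $\xi=\eta_1$ by showing $u-\xi\perp\im(T_\infty)$, whereas the paper tests $\chi$ directly against elements of $\im(\overline{\partial}^{g_1,h}_{E,m,q,\max})\subset\im(\overline{\partial}^{g_1,g_{s_n}}_{E,m,q})$; these are equivalent.
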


\begin{proof}
Let $\{\eta_{s_n}\}_{n\in \mathbb{N}}$ with $\eta_{s_n}\in L^2\Omega^{m,q+1}(A,E|_A,g_{s_n}|_A,\rho|_A)$, be a weakly convergent sequence to some $\eta\in L^2\Omega^{m,q+1}(A,E|_A,h|_A,\rho|_A)$, as $n\rightarrow \infty$. Let $\eta_{s_n,1}$ be the orthogonal projection of $\eta_{s_n}$ on $\im(\overline{\partial}_{E,m,q}^{g_1,g_{s_n}})$ and let $\eta_{s_n,2}$ be the orthogonal projection of $\eta_{s_n}$ on $(\im(\overline{\partial}_{E,m,q}^{g_1,g_{s_n}}))^{\bot}$. Analogously let $\eta_{1}$ and $\eta_2$ be the orthogonal projection of $\eta$ on $\im(\overline{\partial}_{E,m,q,\max}^{g_1,h})$ and  $(\im(\overline{\partial}_{E,m,q,\max}^{g_1,h}))^{\bot}$, respectively. 
We have $G_{\overline{\partial}_{E,m,q}^{g_1,g_{s_n}}}\eta_{s_n,2}=0= G_{\overline{\partial}_{E,m,q,\max}^{g_1,h}}\eta_2$ for each $n\in \mathbb{N}$. Thus, to prove this proposition, we have to show that
$$G_{\overline{\partial}_{E,m,q}^{g_1,g_{s_n}}}\eta_{s_n,1}\rightarrow G_{\overline{\partial}_{E,m,q,\max}^{g_1,h}}\eta_1\quad \text{in}\quad L^2\Omega^{m,q}(A,E|_A,g_1|_A,\rho|_A)\ \text{as}\ n\rightarrow \infty.$$
As a first step we observe that there exists a constant $c>0$ such that $$\|\eta_{s_n,1}\|_{L^2\Omega^{m,q+1}(A,E|_A,g_{s_n}|_A,\rho|_A)}\leq \|\eta_{s_n}\|_{L^2\Omega^{m,q+1}(A,E|_A,g_{s_n}|_A,\rho|_A)}\leq c$$ for each $n\in \mathbb{N}$, see Prop. \ref{wibounded}.  Now consider the sequence $\{G_{\overline{\partial}_{E,m,q}^{g_1,g_{s_n}}}\eta_{s_n}\}_{n\in \mathbb{N}}$. By construction we have $G_{\overline{\partial}_{E,m,q}^{g_1,g_{s_n}}}\eta_{s_n}\in B_{m,q}^{g_1,g_{s_n}}$ and so by Lemma \ref{SebG} we obtain $G_{\overline{\partial}_{E,m,q}^{g_1,g_{s_n}}}\eta_{s_n}\in B_{m,q}^{g_1,g_1}$ and $$\overline{\partial}_{E,m,q}^{g_1,g_1}(G_{\overline{\partial}_{E,m,q}^{g_1,g_{s_n}}}\eta_{s_n})=\overline{\partial}_{E,m,q}^{g_1,g_{s_n}}(G_{\overline{\partial}_{E,m,q}^{g_1,g_{s_n}}}\eta_{s_n})=\eta_{s_n,1}.$$ In particular by applying Prop. \ref{0110} we obtain   
$$
\begin{aligned}
\|\overline{\partial}_{E,m,q}^{g_1,g_1}(G_{\overline{\partial}_{E,m,q}^{g_1,g_{s_n}}}\eta_{s_n})\|_{L^2\Omega^{m,q+1}(A,E|_A,g_1|_A,\rho|_A)}&=\|\eta_{s_n,1}\|_{L^2\Omega^{m,q+1}(A,E|_A,g_1|_A,\rho|_A)}\\
& \leq\sqrt{\nu}\|\eta_{s_n,1}\|_{L^2\Omega^{m,q+1}(A,E|_A,g_{s_n}|_A,\rho|_A)}\leq \sqrt{\nu}c.
\end{aligned}$$
 Moreover by Lemma \ref{lemma7} we have
$$
\begin{aligned}
\|G_{\overline{\partial}_{E,m,q}^{g_1,g_{s_n}}}\eta_{s_n}\|_{L^2\Omega^{m,q}(A,E|_A,g_1|_A,\rho|_A)}&=\|G_{\overline{\partial}_{E,m,q}^{g_1,g_{s_n}}}\eta_{s_n,1}\|_{L^2\Omega^{m,q}(A,E|_A,g_1|_A,\rho|_A)}\\
&\leq \frac{1}{\lambda_{m,q}^{g_1,g_s}}\|\eta_{s_n,1}\|_{L^2\Omega^{m,q+1}(A,E|_A,g_{s_n}|_A,\rho|_A)}\leq \frac{\sqrt{\nu}}{\lambda_{m,q}^{g_1,g_1}}c
\end{aligned}$$ for each $n\in \mathbb{N}$.
We have just shown that $\{G_{\overline{\partial}_{E,m,q}^{g_1,g_{s_n}}}\eta_{s_n}\}\subset B_{m,q}^{g_1,g_1}$ is a bounded sequence with respect to the graph norm of $\overline{\partial}_{E,m,q}^{g_1,g_1}$. Since $G_{\overline{\partial}_{E,m,q}^{g_1,g_1}}$ is compact there exists a subsequence $\{r_n\}_{n\in \mathbb{N}}\subset \{s_n\}_{n\in \mathbb{N}}$ and  elements $\psi\in L^2\Omega^{m,q}(A,E|_A,g_1|_A,\rho|_A)$ and $\chi\in L^2\Omega^{m,q+1}(A,E|_A,h|_A,\rho|_A)$ such that $$G_{\overline{\partial}_{E,m,q}^{g_1,g_1}}\eta_{r_n,1}\rightarrow \psi\ \text{in}\ L^2\Omega^{m,q}(A,E|_A,g_1|_A,\rho)\ \text{as}\ n\rightarrow \infty\quad\quad \text{and}\quad\quad \eta_{r_n,1}\rightarrow \chi\ \text{weakly}\ \text{as}\ n\rightarrow \infty.$$ 
Now, to complete the proof, we have to show that  $$\psi \in B_{m,q}^{g_1,h}\quad \text{and}\quad  \overline{\partial}_{E,m,q,\max}^{g_1,h}\psi=\chi=\eta_1.$$  Let $\phi\in \Omega^{m,q+1}_c(A,E|_A)$ be arbitrarily fixed. By Lemma \ref{lemma2} we have
$$
\begin{aligned}
 \langle \psi, \overline{\partial}_{E,m,q}^{g_1,h,t}\phi\rangle_{L^2\Omega^{m,q}(A,E|_A,g_1|_A,\rho|_A)}&=  \lim_{n\rightarrow \infty} \langle G_{\overline{\partial}_{E,m,q}^{g_1,g_{r_n}}}\eta_{r_n,1}, \overline{\partial}_{E,m,q}^{g_1,g_{r_n},t}\phi\rangle_{L^2\Omega^{m,q}(A,E|_A,g_1|_A,\rho|_A)}\\
& =\lim_{n\rightarrow \infty} \langle \overline{\partial}_{E,m,q}^{g_1,g_{r_n}}(G_{\overline{\partial}_{E,m,q}^{g_1,g_{r_n}}}\eta_{r_n,1}), \phi\rangle_{L^2\Omega^{m,q+1}(A,E|_A,g_{r_n}|_A,\rho|_A)}\\
&= \lim_{n\rightarrow \infty} \langle\eta_{r_n,1}, \phi\rangle_{L^2\Omega^{m,q+1}(A,E|_A,g_{r_n}|_A,\rho|_A)}\\
&= \langle\chi, \phi\rangle_{L^2\Omega^{m,q+1}(A,E|_A,h,\rho|_A)}.
\end{aligned}
$$
This shows that $\psi \in \mathcal{D}(\overline{\partial}_{E,m,q,\max}^{g_1,h})$ and that $\overline{\partial}_{E,m,q,\max}^{g_1,h}\psi=\chi$. Now, considering any $\alpha\in \ker(\overline{\partial}_{E,m,q,\max}^{g_1,h})$ and keeping in mind that $\ker(\overline{\partial}_{E,m,q,\max}^{g_1,h})=\ker(\overline{\partial}_{E,m,q}^{g_1,g_s})$ for each $s\in (0,1]$, we have $$\langle\alpha,\psi\rangle_{L^2\Omega^{m,q}(A,E|_A,g_1|_A,\rho|_A)}=\lim_{n\rightarrow \infty}\langle\alpha,G_{\overline{\partial}_{E,m,q}^{g_1,g_{r_n}}}\eta_{r_n,1}\rangle_{L^2\Omega^{m,q}(A,E|_A,g_1|_A,\rho|_A)}=0.$$
Hence we can conclude that $\psi \in B_{m,q}^{g_1,h}$ and that $\overline{\partial}_{E,m,q,\max}^{g_1,h}\psi=\chi$. We are left to show that $\eta_1=\chi$. Let $\xi\in \im(\overline{\partial}_{E,m,q,\max}^{g_1,h})$ be arbitrarily fixed. Keeping in mind that $\im(\overline{\partial}_{E,m,q,\max}^{g_1,h})\subset \im(\overline{\partial}_{E,m,q}^{g_1,g_{s_n}})$ for each $n\in \mathbb{N}$ we have 
$$
\begin{aligned}
\langle \xi,\chi\rangle_{L^2\Omega^{m,q+1}(A,E|_A,h|_A,\rho|_A)}&=\lim_{n\rightarrow \infty}\langle \xi,\eta_{r_n,1}\rangle_{L^2\Omega^{m,q+1}(A,E|_A,g_{r_n}|_A,\rho|_A)}\\
&=\lim_{n\rightarrow \infty}\langle \xi,\eta_{r_n}\rangle_{L^2\Omega^{m,q+1}(A,E|_A,g_{r_n}|_A,\rho|_A)}\\
&= \langle \xi,\eta\rangle_{L^2\Omega^{m,q+1}(A,E|_A,h|_A,\rho|_A)}\\
&=\langle \xi,\eta_1\rangle_{L^2\Omega^{m,q+1}(A,E|_A,h|_A,\rho|_A)}.
\end{aligned}
$$
In conclusion for each $\xi \in \im(\overline{\partial}_{E,m,q,\max}^{g_1,h})$ we have $$\langle \xi,\chi\rangle_{L^2\Omega^{m,q+1}(A,E|_A,h|_A,\rho|_A)}=\langle \xi,\eta_1\rangle_{L^2\Omega^{m,q+1}(A,E|_A,h|_A,\rho|_A)}$$ and so we can conclude that $\eta_1=\chi$. Therefore we have shown that $\psi \in B_{m,q}^{g_1,h}$ and that $\overline{\partial}_{E,m,q,\max}^{g_1,h}\psi=\chi=\eta_1$, that is $\psi=G_{\overline{\partial}_{E,m,q,\max}^{g_1,h}}\eta$.
Summarizing,  given a weakly convergent sequence $\eta_{s_n}\rightarrow \eta$ as $n\rightarrow \infty$ with $\eta_{s_n}\in L^2\Omega^{m,q+1}(A,E|_A,g_{s_n}|_A,\rho|_A)$ and $\eta\in L^2\Omega^{m,q+1}(A,E|_A,h|_A,\rho|_A)$, we have proved the existence of a subsequence $\{r_n\}_{n\in \mathbb{N}}$ such that $$G_{\overline{\partial}_{E,m,q}^{g_1,g_{r_n}}}\eta_{r_n}\rightarrow G_{\overline{\partial}_{E,m,q,\max}^{g_1,h}}\eta$$ in $L^2\Omega^{m,q}(A,g_1|_A,E|_A,\rho|_A)$ as $n\rightarrow \infty$. Now if we fix an arbitrary subsequence $\{\ell_n\}_{n\in \mathbb{N}}\subset \{s_n\}_{n\in \mathbb{N}}$ and we repeat the above argument with $\{\eta_{\ell_n}\}_{n\in \mathbb{N}}$ we obtain a further subsequence $\{t_n\}_{n\in\mathbb{N}}\subset \{\ell_n\}_{n\in \mathbb{N}}$ such that $G_{\overline{\partial}_{E,m,q}^{g_1,g_{t_n}}}\eta_{t_n}\rightarrow G_{\overline{\partial}_{E,m,q,\max}^{g_1,h}}\eta$ in $L^2\Omega^{m,q}(A,g_1|_A,E|_A,\rho|_A)$ as $n\rightarrow \infty$. Clearly this allows us to conclude that $$G_{\overline{\partial}_{E,m,q}^{g_1,g_{s_n}}}\eta_{s_n}\rightarrow G_{\overline{\partial}_{E,m,q,\max}^{g_1,h}}\eta\quad \text{in}\quad L^2\Omega^{m,q}(A,g_1|_A,E|_A,\rho|_A)\quad \text{as}\quad n\rightarrow \infty$$ and therefore $$G_{\overline{\partial}_{E,m,q}^{g_1,g_{s_n}}}\rightarrow G_{\overline{\partial}_{E,m,q,\max}^{g_1,h}}\quad \text{compactly as}\quad n\rightarrow \infty.$$ 
\end{proof}

The next goal is to establish the compact convergence of the sequence $\{G_{\overline{\partial}_{E,m,q,\max}^{g_{s_n},h}}\}$ to $G_{\overline{\partial}_{E,m,q,\max}^{h,h}}$. To do this, we need other auxiliary results.

\begin{lemma}
\label{lemma3}
Let $\phi\in \Omega_c^{m,q+1}(A,E|_A)$ and let $\{s_n\}_{n\in \mathbb{N}}\subset (0,1]$ be a sequence tending to $0$ as $n\rightarrow \infty$. Then $$\overline{\partial}_{E,m,q}^{g_{s_n},h,t}\phi\rightarrow \overline{\partial}_{E,m,q}^{h,h,t}\phi$$ strongly  as $n\rightarrow \infty$.
\end{lemma}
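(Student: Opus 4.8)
The plan is to verify the strong convergence directly from Definition \ref{strong}, taking as approximating net the \emph{constant} net. In the notation of \S1.2 the relevant data are $H=L^2\Omega^{m,q}(A,E|_A,g_0|_A,\rho|_A)$ (recall $g_0=h$), $H_n=L^2\Omega^{m,q}(A,E|_A,g_{s_n}|_A,\rho|_A)$, $\mathcal{C}=H$, and $\Phi_n\colon\mathcal{C}\to H_n$ the continuous inclusion provided by Prop. \ref{unci}, so that $\{H_n\}$ converges to $H$ by Prop. \ref{relieve}. Since $\phi\in\Omega_c^{m,q+1}(A,E|_A)$ and $h$ is a genuine Hermitian metric on $A$, the form $\overline{\partial}_{E,m,q}^{h,h,t}\phi$ is a smooth, compactly supported $E$-valued $(m,q)$-form on $A$, hence lies in $\mathcal{C}$, and the constant net $v_\beta:=\overline{\partial}_{E,m,q}^{h,h,t}\phi$ trivially tends to it in $H$. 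Recalling that $\Phi_n$ acts as the identity on smooth compactly supported forms, Definition \ref{strong} thus reduces the lemma to the statement
$$\lim_{n\to\infty}\big\|\overline{\partial}_{E,m,q}^{h,h,t}\phi-\overline{\partial}_{E,m,q}^{g_{s_n},h,t}\phi\big\|_{L^2\Omega^{m,q}(A,E|_A,g_{s_n}|_A,\rho|_A)}=0 .$$

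To handle the fact that here the norm varies with $n$, I would first dominate it by a fixed one. The difference $\overline{\partial}_{E,m,q}^{h,h,t}\phi-\overline{\partial}_{E,m,q}^{g_{s_n},h,t}\phi$ is again a smooth $E$-valued $(m,q)$-form with support contained in $\supp(\phi)$, hence it belongs to $L^2\Omega^{m,q}(A,E|_A,g_0|_A,\rho|_A)$, so the uniform estimate \eqref{decia} of Prop. \ref{unci} applies and gives
$$\big\|\overline{\partial}_{E,m,q}^{h,h,t}\phi-\overline{\partial}_{E,m,q}^{g_{s_n},h,t}\phi\big\|^2_{L^2\Omega^{m,q}(A,E|_A,g_{s_n}|_A,\rho|_A)}\le a\,\big\|\overline{\partial}_{E,m,q}^{h,h,t}\phi-\overline{\partial}_{E,m,q}^{g_{s_n},h,t}\phi\big\|^2_{L^2\Omega^{m,q}(A,E|_A,h|_A,\rho|_A)} ,$$
so it suffices to prove that the right-hand side tends to $0$. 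For this I would argue as in the proof of Lemma \ref{lemma2}: since $g_s\in C^\infty(M\times[0,1],p^*T^*M\otimes p^*T^*M)$ is a smooth family of metrics that stays positive definite on $A$ for every $s\in[0,1]$ (including $s=0$, where $g_0=h$), the coefficients of the first-order operator $\overline{\partial}_{E,m,q}^{g_s,h,t}$ — built from $g_s$, $g_s^{-1}$, $h$, $h^{-1}$ and $\rho$ — depend smoothly on $(p,s)\in A\times[0,1]$. Hence $(p,s)\mapsto\big(\overline{\partial}_{E,m,q}^{g_s,h,t}\phi\big)(p)$ is a smooth section of $p^*(\Lambda^{m,q}(A)\otimes E|_A)$ over $A\times[0,1]$ whose support is contained in the compact set $\supp(\phi)\times[0,1]$; by uniform continuity, $\overline{\partial}_{E,m,q}^{g_{s_n},h,t}\phi\to\overline{\partial}_{E,m,q}^{h,h,t}\phi$ uniformly on $\supp(\phi)$ with respect to the fibre metric $h^*_{m,q,\rho}$. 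Since $\vol_h(\supp(\phi))<\infty$, this forces $\big\|\overline{\partial}_{E,m,q}^{h,h,t}\phi-\overline{\partial}_{E,m,q}^{g_{s_n},h,t}\phi\big\|_{L^2\Omega^{m,q}(A,E|_A,h|_A,\rho|_A)}\to 0$ as $n\to\infty$, which completes the proof.

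The only genuinely technical point is the joint smoothness in $(p,s)$ of $\big(\overline{\partial}_{E,m,q}^{g_s,h,t}\phi\big)(p)$ together with the control of its support; everything else is a routine combination of Prop. \ref{unci} and compactness. (Alternatively, one may bypass Prop. \ref{unci} and estimate directly in $L^2\Omega^{m,q}(A,E|_A,g_{s_n}|_A,\rho|_A)$ by dominated convergence, the relevant integrand being continuous, hence bounded, on $\supp(\phi)\times[0,1]$.) Note that the improvement from the weak convergence of Lemma \ref{lemma2} — where the source metric is kept fixed — to the strong convergence obtained here is precisely what the one-sided domination \eqref{decia} buys us: it collapses all the varying norms $\|\cdot\|_{L^2(g_{s_n})}$ onto the single fixed norm $\|\cdot\|_{L^2(h)}$.
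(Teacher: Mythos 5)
Your proof is correct and follows the same basic mechanism as the paper's: reduce strong convergence (via the constant net and Cor.~\ref{swim}) to the vanishing of $\|\overline{\partial}_{E,m,q}^{h,h,t}\phi-\overline{\partial}_{E,m,q}^{g_{s_n},h,t}\phi\|$ in the varying norm, and then exploit that the relevant family of sections is jointly continuous on $A\times[0,1]$ with support in the compact set $\supp(\phi)\times[0,1]$. The one point worth comparing is how the joint regularity in $(p,s)$ --- which you rightly single out as the only technical issue --- gets justified. The paper does this by first proving the factorization $\overline{\partial}_{E,m,q}^{g_s,h,t}=(T_s^{m,q})^{-1}\circ\overline{\partial}_{E,m,q}^{g_1,h,t}$ with $T_s^{m,q}=\id\otimes G^{0,q}_{\mathbb{C},s}\otimes\id$, which concentrates all the $s$-dependence in an invertible zeroth-order endomorphism that is manifestly smooth on $A\times[0,1]$; the difference $\overline{\partial}_{E,m,q}^{g_{s_n},h,t}\phi-\overline{\partial}_{E,m,q}^{h,h,t}\phi$ then becomes $((T_{s_n}^{m,q})^{-1}-(T_0^{m,q})^{-1})\psi$ applied to the \emph{fixed} section $\psi=\overline{\partial}_{E,m,q}^{g_1,h,t}\phi$, and no statement about derivatives of the metric in $s$ is needed. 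Your direct assertion that the coefficients of the first-order operator depend smoothly on $(p,s)$ is also true (since $F_s$ is smooth and positive definite on $A\times[0,1]$, so is $F_s^{-1}$), but the factorization is the cleaner way to nail it down. Finally, the paper estimates directly in the $g_{s_n}$-norm by rewriting $\dvol_{g_{s_n}}=\sqrt{\det(F_{s_n})}\,\dvol_{g_1}$ and applying dominated convergence --- i.e.\ precisely the alternative you sketch in your parenthetical remark --- whereas your main line of argument first collapses the varying norms onto the fixed $h$-norm via \eqref{decia}; both routes are legitimate.
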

\begin{proof}
As a first step we want to show that over $A$ and for each $s\in [0,1]$ the operator $\overline{\partial}_{E,m,q}^{g_{s},h,t}$ can be written as the composition of $\overline{\partial}_{E,m,q}^{g_1,h,t}$ with an endomorphism of $\Lambda^{m,q}(A)\otimes E$ that depends smoothly on $s$. We then use this decomposition to tackle the above limit. As usual let $p:A\times [0,1]\rightarrow A$ be the left projection and let  $S_s^{m,q}\in C^{\infty}(A\times [0,1], \mathrm{End}(p^*\Lambda^{m,q}(A)\otimes p^*E))$ be defined as  $S^{m,q}_s:=\det(G^{1,0}_{\mathbb{C},s})\otimes G^{0,q}_{\mathbb{C},s}\otimes \id$. Note that $S^{m,q}_s$ is the family of endomorphisms of $p^*\Lambda^{m,q}(A)\otimes p^*E$ such that $g^*_{s,m,q,\rho}(\bullet,\bullet)=g^*_{1,m,q,\rho}(S^{m,q}_s\bullet,\bullet)$. The previous equality tells us that $$g^*_{s,m,q,\rho}((S_s^{m,q})^{-1}\bullet,\bullet)=g^*_{1,m,q,\rho}(\bullet,\bullet)$$ which in turn implies that $(S_s^{m,q})^{-1}$ is fiberwise self-adjoint with respect to $g^*_{s,m,q,\rho}$ for each fixed $s\in [0,1]$. Besides $S_s^{m,q}$ let us also introduce $T_s^{m,q}:=\id\otimes G^{0,q}_{\mathbb{C},s}\otimes \id$. Clearly also $T_s^{m,q}\in C^{\infty}(A\times [0,1], \mathrm{End}(p^*\Lambda^{m,q}(A)\otimes p^*E))$. Given any $\varphi\in \Omega_c^{m,q}(A,E|_A)$ and $\phi\in \Omega_c^{m,q+1}(A,E|_A)$ we have
$$
\begin{aligned}
\langle\overline{\partial}_{E,m,q}\varphi,\phi\rangle_{L^2\Omega^{m,q+1}(A,E|_A,h|_A,\rho|_A)}&=\langle\varphi,\overline{\partial}_{E,m,q}^{g_1,h,t}\phi\rangle_{L^2\Omega^{m,q}(A,E|_A,g_1|_A,\rho|_A)}\\
&=\int_Ag^*_{1,m,q,\rho}(\varphi,\overline{\partial}_{E,m,q}^{g_1,h,t}\phi)\dvol_{g_1}\\
&=\int_Ag^*_{s,m,q,\rho}((S^{m,q}_s)^{-1}\varphi,\overline{\partial}_{E,m,q}^{g_1,h,t}\phi){\det}^{-\frac{1}{2}}(F_s)\dvol_{g_s}\\
&=\int_Ag^*_{s,m,q,\rho}(\varphi,(S^{m,q}_s)^{-1}(\overline{\partial}_{E,m,q}^{g_1,h,t}\phi)){\det}^{-\frac{1}{2}}(F_s)\dvol_{g_s}\\
&=\int_Ag^*_{s,m,q,\rho}(\varphi,(\det(G^{1,0}_{\mathbb{C},s})\otimes G^{0,q}_{\mathbb{C},s}\otimes \id)^{-1}(\overline{\partial}_{E,m,q}^{g_1,h,t}\phi)){\det}^{-\frac{1}{2}}(F_s)\dvol_{g_s}\\
&= \int_Ag^*_{s,m,q,\rho}(\varphi,(\id\otimes G^{0,q}_{\mathbb{C},s}\otimes \id)^{-1}(\overline{\partial}_{E,m,q}^{g_1,h,t}\phi))\dvol_{g_s}\\
&= \int_Ag^*_{s,m,q,\rho}(\varphi,(T_s^{m,q})^{-1}(\overline{\partial}_{E,m,q}^{g_1,h,t}\phi))\dvol_{g_s}\\
&= \langle\varphi,(T_s^{m,q})^{-1}(\overline{\partial}_{E,m,q}^{g_1,h,t}\phi)\rangle_{L^2\Omega^{m,q}(A,E|_A,h|_A,g_s|_A)}.
\end{aligned}
$$
Summarising we have shown that for each $\varphi\in \Omega_c^{m,q}(A,E|_A)$ and $\phi\in \Omega_c^{m,q+1}(A,E|_A)$ we have
\begin{equation}
\label{adad}
\langle\overline{\partial}_{E,m,q}\varphi,\phi\rangle_{L^2\Omega^{m,q+1}(A,E|_A,h|_A,\rho|_A)}=\langle\varphi,(T_s^{m,q})^{-1}(\overline{\partial}_{E,m,q}^{g_1,h,t}\phi)\rangle_{L^2\Omega^{m,q}(A,E|_A,g_s|_A,\rho|_A)}
\end{equation}
and thus we can conclude that for each $s\in [0,1]$ it holds 
\begin{equation}
\label{sambu}
\overline{\partial}_{E,m,q}^{g_s,h,t}=(T^{m,q}_s)^{-1}\circ \overline{\partial}_{E,m,q}^{g_1,h,t}.
\end{equation}
 In particular for $s=0$ we have $$\overline{\partial}_{E,m,q}^{h,h,t}=(T_0^{m,q})^{-1}\circ \overline{\partial}_{E,m,q}^{g_1,h,t}.$$
We are now in position to show that given any arbitrarily fixed $\phi\in \Omega_c^{m+1,q}(A,E|_A)$ and a sequence $\{s_n\}_{n\in \mathbb{N}}\subset (0,1]$ with $s_n\rightarrow 0$ as $n\rightarrow \infty$ we have $\overline{\partial}_{E,m,q}^{g_{s_n},h,t}\phi\rightarrow \overline{\partial}_{E,m,q}^{h,h,t}\phi$ strongly  as $n\rightarrow \infty$. To put it differently, thanks to \eqref{sambu}, Prop. \ref{relieve} and Cor. \ref{swim}, we have to show that $$\lim_{n\rightarrow \infty}\|(T^{m,q}_{s_n})^{-1}( \overline{\partial}_{E,m,q}^{g_1,h,t}\phi)-\Phi_{s_n}^{m,q}((T^{m,q}_0)^{-1}( \overline{\partial}_{E,m,q}^{g_1,h,t}\phi))\|_{L^2\Omega^{m,q}(A,E|_A,g_{s_n}|_A,\rho|_A)}=0.$$
Let us denote $\psi:=\overline{\partial}_{E,m,q}^{g_1,h,t}\phi$. Since $$\Phi_{s_n}^{m,q}:L^2\Omega^{m,q}(A,E|_A,h|_A,\rho|_A)\rightarrow L^2\Omega^{m,q}(A,E|_A,g_{s_n}|_A,\rho|_A)$$ is nothing but the continuous inclusion induced by the identity $\id:\Omega_c^{m,q}(A,E|_A)\rightarrow \Omega_c^{m,q}(A,E|_A)$, the above limit amounts to proving that $$\lim_{n\rightarrow \infty}\|((T^{m,q}_{s_n})^{-1}-(T^{m,q}_{0})^{-1})(\psi)\|_{L^2\Omega^{m,q}(A,E|_A,g_{s_n}|_A,\rho|_A)}=0.$$
Let us define the function $f:A\times [0,1]\rightarrow \mathbb{R}$ as $$f(p,s):=|((T^{m,q}_{s})^{-1}-(T^{m,q}_{0})^{-1})(\psi)|_{g^*_{s,m,q,\rho}}^2(p).$$ In other words $f$ is the function that assigns to each $p\in A$ and $s\in [0,1]$ the square of the pointwise norm of the section $(T^{m,q}_s)^{-1}(\psi)-(T^{m,q}_0)^{-1}(\psi)$ in $p$ with respect to $g_s$ and $\rho$. Note that $f\in C^{\infty}_c(A\times [0,1],\mathbb{R})$ and for each fixed $p\in A$ we have 
\begin{equation}
\label{limww}
\lim_{s\rightarrow0}f(p,s)=0.
\end{equation}
In this way we obtain
$$
\begin{aligned}
\lim_{n\rightarrow \infty}&\|(T^{m,q}_{s_n})^{-1}( \overline{\partial}_{E,m,q}^{g_1,h,t}\phi)-(T^{m,q}_0)^{-1}( \overline{\partial}_{E,m,q}^{g_1,h,t}\phi)\|^2_{L^2\Omega^{m,q}(A,E|_A,g_{s_n}|_A,\rho|_A)}=\\
&=\lim_{n\rightarrow \infty}\|((T^{m,q}_{s_n})^{-1}-(T^{m,q}_0)^{-1})(\psi)\|^2_{L^2\Omega^{m,q}(A,E|_A,g_{s_n}|_A,\rho|_A)}\\
&= \lim_{n\rightarrow\infty}\int_A |((T^{m,q}_{s_n})^{-1}-(T^{m,q}_0)^{-1})(\psi)|_{g^*_{s_n,m,q,\rho}}^2\dvol_{g_{s_n}}\\
&=\lim_{n\rightarrow \infty}\int_Af(p,s_n)\sqrt{\det(F_{s_n})}\dvol_{g_1}.
\end{aligned}$$
Observe now that  $\det(F_s)\in C^{\infty}(A\times [0,1],\mathbb{R})$ and therefore we have $f(p,s)\sqrt{\det(F_{s})}\in C_c(A\times [0,1],\mathbb{R})$. Thus by the fact that $\vol_{g_1}(A)<\infty$ we can apply the Lebesgue dominated convergence theorem and \eqref{limww} to conclude that 
$$\lim_{n\rightarrow \infty}\int_Af(p,s_n)\sqrt{\det(F_{s_n})}\dvol_{g_1}=\int_A\lim_{n\rightarrow \infty}f(p,s_n)\sqrt{\det(F_{s_n})}\dvol_{g_1}=0.$$
Summarising, we  proved that $\overline{\partial}_{E,m,q}^{g_{s_n},h,t}\phi\rightarrow \overline{\partial}_{E,m,q}^{h,h,t}\phi$ strongly  as $n\rightarrow \infty$, as required.
\end{proof}

\begin{lemma}
\label{lemma4}
Let $\{s_n\}_{n\in \mathbb{N}}\subset (0,1]$ be a sequence with $s_n\rightarrow 0$ as $n\rightarrow \infty$. Let $\{\psi_{s_n}\}_{n\in \mathbb{N}}$, with $\psi_{s_n}\in \ker(\overline{\partial}_{E,m,q,\max}^{g_{s_n},h})$, be a weakly convergent sequence to some $\psi\in L^2\Omega^{m,q}(A,E|_A,h|_A,\rho|_A)$ as $n\rightarrow \infty$. Then $\psi\in \ker(\overline{\partial}_{E,m,q,\max}^{h,h})$.
\end{lemma}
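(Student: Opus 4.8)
The plan is to push the distributional vanishing $\overline{\partial}_{E,m,q}\psi_{s_n}=0$ through to the weak limit $\psi$, by pairing against compactly supported test forms and combining the definition of weak convergence with Lemma \ref{lemma3}. First I would recall the distributional characterisation of the relevant maximal extensions: for $\omega\in L^2\Omega^{m,q}(A,E|_A,g_s|_A,\rho|_A)$, one has $\omega\in\mathcal{D}(\overline{\partial}_{E,m,q,\max}^{g_s,h})$ with $\overline{\partial}_{E,m,q,\max}^{g_s,h}\omega=\eta$ if and only if
$$\langle\omega,\overline{\partial}_{E,m,q}^{g_s,h,t}\phi\rangle_{L^2\Omega^{m,q}(A,E|_A,g_s|_A,\rho|_A)}=\langle\eta,\phi\rangle_{L^2\Omega^{m,q+1}(A,E|_A,h|_A,\rho|_A)}\qquad\text{for all }\phi\in\Omega_c^{m,q+1}(A,E|_A);$$
this is just the distributional definition of the maximal extension tested against smooth compactly supported forms, and it is the characterisation already used in the proof of Th.\ \ref{compact1}. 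Applying it with $\omega=\psi_{s_n}$ and $\eta=0$ (recall $\psi_{s_n}\in\ker(\overline{\partial}_{E,m,q,\max}^{g_{s_n},h})$) gives
$$\langle\psi_{s_n},\overline{\partial}_{E,m,q}^{g_{s_n},h,t}\phi\rangle_{L^2\Omega^{m,q}(A,E|_A,g_{s_n}|_A,\rho|_A)}=0\qquad\text{for every }n\in\mathbb{N}\text{ and every }\phi\in\Omega_c^{m,q+1}(A,E|_A).$$

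Next, I would fix $\phi\in\Omega_c^{m,q+1}(A,E|_A)$ and pass to the limit. By Prop.\ \ref{relieve} the spaces $L^2\Omega^{m,q}(A,E|_A,g_{s_n}|_A,\rho|_A)$ converge to $L^2\Omega^{m,q}(A,E|_A,g_0|_A,\rho|_A)=L^2\Omega^{m,q}(A,E|_A,h|_A,\rho|_A)$, so the notions of Def.\ \ref{strong} and Def.\ \ref{weak} are available. By Lemma \ref{lemma3} the sequence $\overline{\partial}_{E,m,q}^{g_{s_n},h,t}\phi$ converges strongly to $\overline{\partial}_{E,m,q}^{h,h,t}\phi$ as $n\to\infty$, while by hypothesis $\psi_{s_n}\rightharpoonup\psi$ weakly; hence Def.\ \ref{weak}, applied with the strongly convergent sequence $\overline{\partial}_{E,m,q}^{g_{s_n},h,t}\phi$ as test sequence, yields
$$\langle\psi_{s_n},\overline{\partial}_{E,m,q}^{g_{s_n},h,t}\phi\rangle_{L^2\Omega^{m,q}(A,E|_A,g_{s_n}|_A,\rho|_A)}\ \longrightarrow\ \langle\psi,\overline{\partial}_{E,m,q}^{h,h,t}\phi\rangle_{L^2\Omega^{m,q}(A,E|_A,h|_A,\rho|_A)}.$$
Since the left-hand side is identically zero, I conclude that $\langle\psi,\overline{\partial}_{E,m,q}^{h,h,t}\phi\rangle_{L^2\Omega^{m,q}(A,E|_A,h|_A,\rho|_A)}=0$ for every $\phi\in\Omega_c^{m,q+1}(A,E|_A)$.

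Finally, since $\psi\in L^2\Omega^{m,q}(A,E|_A,h|_A,\rho|_A)$ by hypothesis, the last identity is precisely the assertion that $\psi\in\mathcal{D}(\overline{\partial}_{E,m,q,\max}^{h,h})$ with $\overline{\partial}_{E,m,q,\max}^{h,h}\psi=0$, i.e.\ $\psi\in\ker(\overline{\partial}_{E,m,q,\max}^{h,h})$, which is the claim. I do not foresee a genuine obstacle here: the argument is essentially bookkeeping within the functional-analytic framework of \S 1.2, and the only delicate point — that the mixed-metric formal adjoint $\overline{\partial}_{E,m,q}^{g_{s_n},h,t}\phi$ lives in the $n$-dependent Hilbert space $L^2\Omega^{m,q}(A,E|_A,g_{s_n}|_A,\rho|_A)$ and must therefore be compared across the varying spaces — is exactly what Lemma \ref{lemma3} is designed to handle.
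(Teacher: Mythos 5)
Your proposal is correct and follows essentially the same route as the paper's proof: both reduce the claim to the vanishing of $\langle\psi,\overline{\partial}_{E,m,q}^{h,h,t}\phi\rangle$ for test forms $\phi$, and both obtain this by combining the weak convergence $\psi_{s_n}\rightharpoonup\psi$ with the strong convergence $\overline{\partial}_{E,m,q}^{g_{s_n},h,t}\phi\to\overline{\partial}_{E,m,q}^{h,h,t}\phi$ from Lemma \ref{lemma3} and the hypothesis $\psi_{s_n}\in\ker(\overline{\partial}_{E,m,q,\max}^{g_{s_n},h})$. No gaps.
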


\begin{proof}
In order to prove that $\psi\in \ker(\overline{\partial}^{h,h}_{E,m,q,\max})$ we have to show that $$\langle\psi,\overline{\partial}_{E,m,q}^{h,h,t}\phi\rangle_{L^2\Omega^{m,q}(A,E|_A,h|_A,\rho|_A)}=0$$ for each $\phi\in \Omega_c^{m,q+1}(A,E|_A)$.
Thanks to Lemma \ref{lemma3} we have
\begin{align}
 \nonumber \langle\psi,\overline{\partial}_{E,m,q}^{h,h,t}\phi\rangle_{L^2\Omega^{m,q}(A,E|_A,h,\rho)}&=\lim_{n\rightarrow\infty}\langle\psi_{s_n},\overline{\partial}_{E,m,q}^{g_{s_n},h,t}\phi\rangle_{L^2\Omega^{m,q}(A,E|_A,g_{s_n}|_A,\rho|_A)}\\
&= \nonumber \lim_{n\rightarrow\infty}\langle\overline{\partial}_{E,m,q,\max}^{g_{s_n},h}\psi_{s_n},\phi\rangle_{L^2\Omega^{m,q+1}(A,E|_A,g_{s_n}|_A,\rho|_A)}=0
\end{align}
as $\psi_{s_n}\in \ker(\overline{\partial}_{E,m,q,\max}^{g_{s_m},h})$.
\end{proof}

To state the next result we need some auxiliary notations. Let $s\in [0,1]$ and let us consider the orthogonal decomposition 
\begin{equation}
\label{ordec}
L^2\Omega^{m,q}(A,E|_A,g_s|_A,\rho|_A)=\ker(\overline{\partial}_{E,m,q,\max}^{g_s,h})\oplus \left(\ker(\overline{\partial}_{E,m,q,\max}^{g_s,h})\right)^{\bot}.
\end{equation}
 We denote by $\pi_{s}$ and $\tau_s$ the orthogonal projection on $\ker(\overline{\partial}_{E,m,q,\max}^{g_{s},h})$ and  $\left(\ker(\overline{\partial}_{E,m,q,\max}^{g_s,h})\right)^{\bot}$, respectively. 

\begin{lemma}
\label{lemmalemma}
Let $\eta\in (\ker(\overline{\partial}_{E,m,q,\max}^{h,h}))^{\bot}.$ Then given any sequence $\{s_n\}_{n\in \mathbb{N}}\subset (0,1]$ with  $s_n\rightarrow 0$ as $n\rightarrow \infty$ we have $$\tau_{s_n}(\Phi_{s_n}^{m,q}(\eta))\rightarrow \eta$$ strongly as $n\rightarrow \infty$. 
\end{lemma}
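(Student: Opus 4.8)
The plan is to reduce the claim to a weak-convergence statement about the complementary projections $\pi_{s_n}$ and then to upgrade it to strong convergence by a Pythagorean norm comparison. Write $H:=L^2\Omega^{m,q}(A,E|_A,h|_A,\rho|_A)$ and $H_n:=L^2\Omega^{m,q}(A,E|_A,g_{s_n}|_A,\rho|_A)$; since $g_0=h$, Prop. \ref{relieve} says that $H_n$ converges to $H$ in the sense of \eqref{limitHilbert} with the inclusions $\Phi_n:=\Phi^{m,q}_{s_n}$ as connecting maps. On $H_n$ we have $\Phi_n\eta=\pi_{s_n}(\Phi_n\eta)+\tau_{s_n}(\Phi_n\eta)$, and by Cor. \ref{swim} the constant sequence $\Phi_n\eta$ converges strongly, hence weakly, to $\eta$. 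So it is enough to show that $\pi_{s_n}(\Phi_n\eta)$ converges weakly to $0$: indeed, by linearity of weak convergence this gives $\tau_{s_n}(\Phi_n\eta)\rightharpoonup\eta$, while $\|\tau_{s_n}(\Phi_n\eta)\|_{H_n}^2=\|\Phi_n\eta\|_{H_n}^2-\|\pi_{s_n}(\Phi_n\eta)\|_{H_n}^2\le\|\Phi_n\eta\|_{H_n}^2\to\|\eta\|_H^2$, together with $\|\eta\|_H\le\liminf_n\|\tau_{s_n}(\Phi_n\eta)\|_{H_n}$ from Prop. \ref{wibounded}, forces $\|\tau_{s_n}(\Phi_n\eta)\|_{H_n}\to\|\eta\|_H$; weak convergence plus convergence of norms then yields strong convergence (see \cite{KuSh}).

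To prove $\pi_{s_n}(\Phi_n\eta)\rightharpoonup 0$, I would first observe that the sequence is bounded, since $\|\pi_{s_n}(\Phi_n\eta)\|_{H_n}\le\|\Phi_n\eta\|_{H_n}$ and the latter is bounded by Prop. \ref{unci} (or already by \eqref{limitHilbert}). By the subsequence principle it therefore suffices to show that every weakly convergent subsequence has limit $0$. Let $\pi_{s_{n_k}}(\Phi_{n_k}\eta)\rightharpoonup w$ in $H$ (Prop. \ref{bounded}). Since $\pi_{s_{n_k}}(\Phi_{n_k}\eta)\in\ker(\overline{\partial}_{E,m,q,\max}^{g_{s_{n_k}},h})$, Lemma \ref{lemma4} gives $w\in\ker(\overline{\partial}_{E,m,q,\max}^{h,h})$; so it remains to check that $\langle w,\phi\rangle_H=0$ for every $\phi\in\ker(\overline{\partial}_{E,m,q,\max}^{h,h})$. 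Fix such a $\phi$. Prop. \ref{Pp}, applied with $h_1=h_2=h$ and $g_1=g_{s_n}$, $g_2=h$ (admissible since $h=g_0\le\mathfrak{a}\,g_{s_n}$), yields $\ker(\overline{\partial}_{E,m,q,\max}^{h,h})\subset\ker(\overline{\partial}_{E,m,q,\max}^{g_{s_n},h})$, so $\Phi_n\phi\in\ker(\overline{\partial}_{E,m,q,\max}^{g_{s_n},h})$ and hence $\pi_{s_n}(\Phi_n\phi)=\Phi_n\phi$; moreover $\Phi_{n_k}\phi\to\phi$ strongly by Cor. \ref{swim}. Using the definition of weak convergence and the self-adjointness of $\pi_{s_{n_k}}$,
\[
\langle w,\phi\rangle_H=\lim_{k\to\infty}\langle\pi_{s_{n_k}}(\Phi_{n_k}\eta),\Phi_{n_k}\phi\rangle_{H_{n_k}}=\lim_{k\to\infty}\langle\Phi_{n_k}\eta,\Phi_{n_k}\phi\rangle_{H_{n_k}}=\langle\eta,\phi\rangle_H,
\]
the last equality coming from polarizing the norm convergence of Prop. \ref{relieve}; and $\langle\eta,\phi\rangle_H=0$ because $\eta\in(\ker(\overline{\partial}_{E,m,q,\max}^{h,h}))^{\bot}$. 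Hence $w=0$, and the subsequence principle gives $\pi_{s_n}(\Phi_n\eta)\rightharpoonup 0$, as desired.

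The main obstacle is the identification of the weak limit $w$ as $0$; this is also the only step that genuinely uses the hypothesis $\eta\in(\ker(\overline{\partial}_{E,m,q,\max}^{h,h}))^{\bot}$. It rests on two ingredients: the stability statement of Prop. \ref{Pp}, which keeps a fixed test element of $\ker(\overline{\partial}_{E,m,q,\max}^{h,h})$ inside $\ker(\overline{\partial}_{E,m,q,\max}^{g_{s_n},h})$ so that it is annihilated by $\tau_{s_n}$, and the convergence of the $g_{s_n}$-inner products to the $h$-inner product obtained by polarizing \eqref{limitHilbert}. Everything else is routine bookkeeping with the notions of weak and strong convergence from \cite{KuSh} and the orthogonal-projection identity.
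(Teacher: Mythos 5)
Your proposal is correct and follows essentially the same route as the paper: both reduce the claim to showing that the kernel projection $\pi_{s_n}(\Phi_{s_n}^{m,q}(\eta))$ vanishes, extract a weak subsequential limit of that bounded sequence, place the limit in $\ker(\overline{\partial}_{E,m,q,\max}^{h,h})$ via Lemma \ref{lemma4}, and kill it using $\eta\in(\ker(\overline{\partial}_{E,m,q,\max}^{h,h}))^{\bot}$ together with a subsequence principle. The only difference is in the last step: the paper pairs $\psi_{r_n}:=\pi_{r_n}(\Phi_{r_n}(\eta))$ against $\Phi_{r_n}(\eta)$ itself to obtain $\lim\|\psi_{r_n}\|^2=\langle\psi,\eta\rangle=0$ in one stroke (so strong convergence to $0$ is immediate), whereas you identify the weak limit as $0$ by testing against all kernel elements and then recover strong convergence via the Pythagorean/norm argument; both are valid, and the fact that weak convergence plus convergence of norms implies strong convergence in this framework is indeed in \cite{KuSh} even though the paper does not restate it.
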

\begin{proof}
Let $\psi_{s_n}:=\pi_{s_n}(\Phi_{s_n}(\eta))$. Then $\psi_{s_n}\in \ker(\overline{\partial}_{E,m,q,\max}^{g_{s_n},h})$ and in order to prove the above lemma we have to show that $\psi_{s_n}\rightarrow 0$ strongly as $n\rightarrow \infty$, that is $$\lim_{n\rightarrow \infty}\|\psi_{s_n}\|_{L^2\Omega^{m,q}(A,E|_A,g_{s_n}|_A,\rho|_A)}=0.$$ Thanks to \eqref{decia} we know that $\{\psi_{s_n}\}$ is a bounded sequence as $$\|\psi_{s_n}\|_{L^2\Omega^{m,q}(A,E|_A,g_{s_n}|_A,\rho|_A)}\leq \|\Phi_{s_n}(\eta)\|_{L^2\Omega^{m,q}(A,E|_A,g_{s_n}|_A,\rho|_A)}\leq a\|\eta\|_{L^2\Omega^{m,q}(A,E|_A,h|_A,\rho|_A)}$$ for each $n\in \mathbb{N}$ and thus there exists a subsequence $\{\psi_{r_n}\}\subset \{\psi_{s_n}\}$ such that $\psi_{r_n}\rightarrow \psi$ weakly as $n\rightarrow \infty$ to some $\psi \in L^2\Omega^{m,q}(A,E|_A,h|_A,\rho|_A)$. We claim now that $\psi=0$ and that $\psi_{r_n}\rightarrow 0$ strongly as $n\rightarrow \infty$. Indeed we have 
$$
\begin{aligned}
\lim_{n\rightarrow \infty}\|\psi_{r_n}\|^2_{L^2\Omega^{m,q}(A,E|_A,g_{r_n}|_A,\rho|_A)}&=\lim_{n\rightarrow \infty}\langle\psi_{r_n},\pi_{r_n}(\Phi_{r_n}(\eta))\rangle_{L^2\Omega^{m,q}(A,E|_A,g_{r_n}|_A,\rho|_A)}\\
&=\lim_{n\rightarrow \infty}\langle\psi_{r_n},\Phi_{r_n}(\eta)\rangle_{L^2\Omega^{m,q}(A,E|_A,g_{r_n}|_A,\rho|_A)}=\langle\psi,\eta\rangle_{L^2\Omega^{m,q}(A,E|_A,h|_A,\rho|_A)}=0.
\end{aligned}$$
Note that the second-to-last equality above follows by the fact that $\psi_{r_n}\rightarrow \psi$ weakly and $\Phi_{r_n}(\eta)\rightarrow \eta$ strongly while the last equality follows by the fact that $\eta\in (\ker(\overline{\partial}_{E,m,q,\max}^{h,h}))^{\bot}$ and $\psi\in \ker(\overline{\partial}_{E,m,q,\max}^{h,h})$, see Lemma \ref{lemma4}.
Now if we fix an arbitrary subsequence $\{\psi_{s'_n}\}\subset \{\psi_{s_n}\}$ and we repeat the above argument with respect to $\{\psi_{s'_n}\}$ we find a subsequence $\{\psi_{r'_n}\}\subset\{\psi_{s'_n}\}$  such that $$\lim_{n\rightarrow \infty}\|\psi_{r'_n}\|_{L^2\Omega^{m,q}(A,E|_A,g_{r'_n}|_A,\rho|_A)}=0.$$ Summarizing every subsequence of $\{\psi_{s_n}\}$ has a further subsequence strongly convergent to $0$. We can thus conclude that $\psi_{s_n}\rightarrow 0$ strongly as $n\rightarrow \infty$.
\end{proof}

We now have all the ingredients to prove the next result.

\begin{teo}
\label{compact2}
Let $\{s_n\}_{n\in \mathbb{N}}\subset (0,1]$ be any sequence such that $s_n\rightarrow 0$ as $n\rightarrow \infty$ and let $$G_{\overline{\partial}_{E,m,q,\max}^{g_{s_n},h}}:L^2\Omega^{m,q+1}(A,E|_A,h|_A,\rho|_A)\rightarrow L^2\Omega^{m,q}(A,E|_A,g_{s_n}|_A,\rho|_A)$$ be the Green operator of $$\overline{\partial}_{E,m,q,\max}^{g_{s_n},h}:L^2\Omega^{m,q}(A,E|_A,g_{s_n}|_A,\rho|_A)\rightarrow L^2\Omega^{m,q+1}(A,E|_A,h|_A,\rho|_A).$$ If $$G_{\overline{\partial}_{E,m,q,\max}^{h,h}}:L^2\Omega^{m,q+1}(A,E|_A,h|_A,\rho|_A)\rightarrow L^2\Omega^{m,q}(A,E|_A,h|_A,\rho|_A)$$ is compact and  $\im(\overline{\partial}_{E,m,q,\max}^{h,h})=\im(\overline{\partial}_{E,m,q,\max}^{g_1,h})$
then $$G_{\overline{\partial}_{E,m,q,\max}^{g_{s_n},h}}\rightarrow G_{\overline{\partial}_{E,m,q,\max}^{h,h}}\ compactly\ as\ n\rightarrow \infty.$$
\end{teo}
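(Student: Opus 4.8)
The plan is to follow the architecture of the proof of Theorem \ref{compact1}, taking advantage of the fact that now it is the \emph{target} Hilbert spaces $L^2\Omega^{m,q}(A,E|_A,g_{s_n}|_A,\rho|_A)$ that vary while the source space $L^2\Omega^{m,q+1}(A,E|_A,h|_A,\rho|_A)$ stays fixed; this lets one identify the limit directly from the assumed compactness of $G_{\overline{\partial}^{h,h}_{E,m,q,\max}}$, with no subsequence extraction. First I would set up the bookkeeping. Since $g_{s_n}$ is quasi-isometric to $g_1$ for every $s_n\in(0,1]$, two applications of Prop. \ref{Pp} show that $\overline{\partial}^{g_{s_n},h}_{E,m,q,\max}$ and $\overline{\partial}^{g_1,h}_{E,m,q,\max}$ coincide as operators between Hilbert spaces whose underlying spaces of forms agree; hence they have the same range and the same kernel $K:=\ker(\overline{\partial}^{g_{s_n},h}_{E,m,q,\max})=\ker(\overline{\partial}^{g_1,h}_{E,m,q,\max})$, independent of $n$. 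Combined with the hypothesis $\im(\overline{\partial}^{h,h}_{E,m,q,\max})=\im(\overline{\partial}^{g_1,h}_{E,m,q,\max})$ this produces a single closed subspace $I:=\im(\overline{\partial}^{h,h}_{E,m,q,\max})=\im(\overline{\partial}^{g_{s_n},h}_{E,m,q,\max})$ of $L^2\Omega^{m,q+1}(A,E|_A,h|_A,\rho|_A)$ for all $n$; in particular every Green operator in the statement exists.

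Next, fix a sequence $\eta_{s_n}\rightharpoonup\eta$, weakly convergent in the fixed space $L^2\Omega^{m,q+1}(A,E|_A,h|_A,\rho|_A)$, and let $P_I$ be the orthogonal projection onto $I$ there. Since $P_I\eta_{s_n}\rightharpoonup P_I\eta$ and $G_{\overline{\partial}^{h,h}_{E,m,q,\max}}$ is compact by hypothesis, the forms $\gamma_{s_n}:=G_{\overline{\partial}^{h,h}_{E,m,q,\max}}(P_I\eta_{s_n})$ converge in the norm of $L^2\Omega^{m,q}(A,E|_A,h|_A,\rho|_A)$ to $\gamma:=G_{\overline{\partial}^{h,h}_{E,m,q,\max}}\eta$; by Prop. \ref{unci} the inclusions $L^2\Omega^{m,q}(A,E|_A,h|_A,\rho|_A)\hookrightarrow L^2\Omega^{m,q}(A,E|_A,g_{s_n}|_A,\rho|_A)$ are uniformly bounded, so $\gamma_{s_n}\to\gamma$ also strongly in the sense of Def. \ref{strong}, and $e_{s_n}:=\gamma_{s_n}-\Phi_{s_n}^{m,q}(\gamma)$ satisfies $\|e_{s_n}\|_{L^2\Omega^{m,q}(A,E|_A,g_{s_n}|_A,\rho|_A)}\to 0$. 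The heart of the argument is the identity
$$G_{\overline{\partial}^{g_{s_n},h}_{E,m,q,\max}}\eta_{s_n}=\gamma_{s_n}-\pi_{s_n}\gamma_{s_n},$$
with $\pi_{s_n}$ the $g_{s_n}$-orthogonal projection onto $K$ from Lemma \ref{lemmalemma}. Indeed, by Prop. \ref{Pp} (applicable since $h\leq\mathfrak{a}g_{s_n}$) one has $\gamma_{s_n}\in\mathcal{D}(\overline{\partial}^{h,h}_{E,m,q,\max})\subset\mathcal{D}(\overline{\partial}^{g_{s_n},h}_{E,m,q,\max})$ with $\overline{\partial}^{g_{s_n},h}_{E,m,q,\max}\gamma_{s_n}=P_I\eta_{s_n}=\overline{\partial}^{g_{s_n},h}_{E,m,q,\max}(G_{\overline{\partial}^{g_{s_n},h}_{E,m,q,\max}}\eta_{s_n})$, so $\gamma_{s_n}$ and $G_{\overline{\partial}^{g_{s_n},h}_{E,m,q,\max}}\eta_{s_n}$ differ by an element of $K$, and since the latter is $g_{s_n}$-orthogonal to $K$ the identity follows.

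It then remains to show $\pi_{s_n}\gamma_{s_n}\to 0$ strongly. Writing $\pi_{s_n}\gamma_{s_n}=\pi_{s_n}\Phi_{s_n}^{m,q}(\gamma)+\pi_{s_n}e_{s_n}$, the second term has norm bounded by $\|e_{s_n}\|\to 0$; for the first, I would invoke Lemma \ref{lemmalemma} applied to $\gamma$, which lies in $(\ker(\overline{\partial}^{h,h}_{E,m,q,\max}))^{\bot}$ because it is a value of $G_{\overline{\partial}^{h,h}_{E,m,q,\max}}$: that lemma gives $\tau_{s_n}(\Phi_{s_n}^{m,q}(\gamma))\to\gamma$ strongly, and since $\Phi_{s_n}^{m,q}(\gamma)\to\gamma$ strongly by Cor. \ref{swim} and $\Phi_{s_n}^{m,q}(\gamma)=\pi_{s_n}\Phi_{s_n}^{m,q}(\gamma)+\tau_{s_n}\Phi_{s_n}^{m,q}(\gamma)$, we get $\pi_{s_n}\Phi_{s_n}^{m,q}(\gamma)\to 0$ strongly. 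Hence $G_{\overline{\partial}^{g_{s_n},h}_{E,m,q,\max}}\eta_{s_n}=\gamma_{s_n}-\pi_{s_n}\gamma_{s_n}\to\gamma=G_{\overline{\partial}^{h,h}_{E,m,q,\max}}\eta$ strongly, which is exactly the asserted compact convergence (and, in contrast with Theorem \ref{compact1}, the whole sequence converges, so no diagonal argument is needed). The main obstacle compared with Theorem \ref{compact1} is that the source metric now varies, so $B^{g_{s_n},h}_{m,q}\neq B^{g_1,h}_{m,q}$ and one cannot express $G_{\overline{\partial}^{g_{s_n},h}_{E,m,q,\max}}\eta_{s_n}$ through the Green operator of a single reference operator applied to a projection of $\eta_{s_n}$; the remedy is to use $G_{\overline{\partial}^{h,h}_{E,m,q,\max}}$ as an approximate solution and correct by its $g_{s_n}$-kernel component, the control on which is precisely Lemma \ref{lemmalemma}. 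One must also keep rigorous track of which statements live in the fixed space (ordinary weak/strong convergence) and which refer to the varying family (Def. \ref{strong}--\ref{weak}).
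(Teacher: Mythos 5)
Your argument is correct and follows essentially the same route as the paper's proof: both use $G_{\overline{\partial}^{h,h}_{E,m,q,\max}}$ applied to the weakly convergent sequence as an approximate solution (converging in norm by the assumed compactness), transfer it to the varying spaces via Prop.~\ref{Pp}, identify $G_{\overline{\partial}^{g_{s_n},h}_{E,m,q,\max}}\eta_{s_n}$ as its component orthogonal to the (fixed) kernel, and kill the kernel component with Lemma~\ref{lemmalemma}. The only differences are notational (your identity $G_{\overline{\partial}^{g_{s_n},h}_{E,m,q,\max}}\eta_{s_n}=\gamma_{s_n}-\pi_{s_n}\gamma_{s_n}$ versus the paper's $\tau_{s_n}(\beta_{s_n})=G_{\overline{\partial}^{g_{s_n},h}_{E,m,q,\max}}\alpha_n$ and the ensuing triangle-inequality splitting), so there is nothing to add.
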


\begin{proof}
First of all we observe that by the assumption $\im(\overline{\partial}_{E,m,q,\max}^{h,h})=\im(\overline{\partial}_{E,m,q,\max}^{g_1,h})$ we obtain immediately  $\im(\overline{\partial}_{E,m,q,\max}^{h,h})=\im(\overline{\partial}_{E,m,q,\max}^{g_s,h})$ for each $s\in (0,1]$ as $g_s$ and $g_{s'}$ are quasi-isometric for every $s,s'\in (0,1]$.
Now let $\{\alpha_{n}\}_{n\in \mathbb{N}}\subset L^2\Omega^{m,q+1}(A,E|_A,h|_A,\rho|_A)$ be a weakly convergent sequence to some $\alpha\in L^2\Omega^{m,q+1}(A,E|_A,h|_A,\rho|_A)$, that is $\alpha_n\rightharpoonup \alpha$ in $L^2\Omega^{m,q+1}(A,E|_A,h,\rho|_A)$ as $n\rightarrow \infty$. Let us define $\beta_n:=G_{\overline{\partial}_{E,m,q,\max}^{h,h}}\alpha_{n}$ and $\beta:=G_{\overline{\partial}_{E,m,q,\max}^{h,h}}\alpha$. Since we assumed that $$G_{\overline{\partial}_{E,m,q,\max}^{h,h}}:L^2\Omega^{m,q+1}(A,E|_A,h|_A,\rho)\rightarrow L^2\Omega^{m,q}(A,E|_A,h|_A,\rho|_A)$$ is compact we know that $\beta_n\rightarrow \beta$ in $L^2\Omega^{m,q}(A,E|_A,h|_A,\rho|_A)$ as $n\rightarrow \infty$. Let us now define $\beta_{s_n}:=\Phi_{s_n}(\beta_n)\in L^2\Omega^{m,q}(A,E|_A,g_{s_n}|_A,\rho|_A)$. Thanks to Prop. \ref{Pp}   we know that $\beta_{s_n}\in \mathcal{D}(\overline{\partial}_{E,m,q,\max}^{g_{s_n},h})$ and $$\overline{\partial}_{E,m,q,\max}^{g_{s_n},h}\beta_{s_n}=\overline{\partial}_{E,m,q,\max}^{h,h}\beta_n=\alpha_{n,1}$$ with $\alpha_{n,1}$ the orthogonal projection of $\alpha_n$ on $\im(\overline{\partial}_{E,m,q,\max}^{h,h})$. Note that the above equalities and the fact that $\im(\overline{\partial}_{E,m,q,\max}^{h,h})=\im(\overline{\partial}_{E,m,q,\max}^{g_{s_n},h})$ for each $n\in \mathbb{N}$
imply $$\tau_{s_n}(\beta_{s_n})=G_{\overline{\partial}_{E,m,q,\max}^{g_{s_n},h}}\alpha_n$$ for each $n\in \mathbb{N}$, with $\tau_{s_n}$ defined in \eqref{ordec}.
We are in position to prove that $G_{\overline{\partial}_{E,m,q,\max}^{g_{s_n},h}}\alpha_n\rightarrow \beta$ strongly as $n\rightarrow \infty$. Thanks to Prop. \ref{relieve} and Cor. \ref{swim} this means that we have to show that 
$$\lim_{n\rightarrow \infty}\|\Phi_{s_n}(\beta)-G_{\overline{\partial}_{E,m,q,\max}^{g_{s_n},h}}\alpha_n\|_{L^2\Omega^{m,q}(A,E|_A,g_{s_n}|_A,\rho|_A)}=0.$$
We have
$$
\begin{aligned}
& \|\Phi_{s_n}(\beta)-G_{\overline{\partial}_{E,m,q,\max}^{g_{s_n},h}}\alpha_n\|_{L^2\Omega^{m,q}(A,E|_A,g_{s_n}|_A,\rho|_A)}=\|\Phi_{s_n}(\beta)-\tau_{s_n}(\beta_{s_n})\|_{L^2\Omega^{m,q}(A,E|_A,g_{s_n}|_A,\rho|_A)}\\
& = \|\Phi_{s_n}(\beta)-\tau_{s_n}(\Phi_{s_n}(\beta))+\tau_{s_n}(\Phi_{s_n}(\beta))-\tau_{s_n}(\beta_{s_n})\|_{L^2\Omega^{m,q}(A,E|_A,g_{s_n}|_A,\rho|_A)}\\
& \leq \|\Phi_{s_n}(\beta)-\tau_{s_n}(\Phi_{s_n}(\beta))\|_{L^2\Omega^{m,q}(A,E|_A,g_{s_n}|_A,\rho|_A)}+\|\tau_{s_n}(\Phi_{s_n}(\beta))-\tau_{s_n}(\beta_{s_n})\|_{L^2\Omega^{m,q}(A,E|_A,g_{s_n}|_A,\rho|_A)}\\
& \leq \|\Phi_{s_n}(\beta)-\tau_{s_n}(\Phi_{s_n}(\beta))\|_{L^2\Omega^{m,q}(A,E|_A,g_{s_n}|_A,\rho|_A)}+\|\Phi_{s_n}(\beta)-\Phi_{s_n}(\beta_n)\|_{L^2\Omega^{m,q}(A,E|_A,g_{s_n}|_A,\rho|_A)}\\
(\mathrm{by\ \eqref{decia}}) & \leq  \|\Phi_{s_n}(\beta)-\tau_{s_n}(\Phi_{s_n}(\beta))\|_{L^2\Omega^{m,q}(A,E|_A,g_{s_n}|_A,\rho|_A)}+a\|\beta-\beta_n\|_{L^2\Omega^{m,q}(A,E|_A,h|_A,\rho|_A)}.
\end{aligned}
$$
We have already seen above that $$\lim_{n\rightarrow \infty}\|\beta-\beta_n\|_{L^2\Omega^{m,q}(A,E|_A,h,\rho)}=0.$$ Moreover by applying Lemma \ref{lemmalemma} we know that $$\lim_{n\rightarrow \infty}\|\Phi_{s_n}(\beta)-\tau_{s_n}(\Phi_{s_n}(\beta))\|_{L^2\Omega^{m,q}(A,E|_A,g_{s_n}|_A,\rho)}=0.$$ Summarizing we proved that
$$\lim_{n\rightarrow \infty}\|\Phi_{s_n}(\beta)-G_{\overline{\partial}_{E,m,q,\max}^{g_{s_n},h}}\alpha_n\|_{L^2\Omega^{m,q}(A,E|_A,g_{s_n}|_A,\rho)}=0$$ and so we can conclude that $G_{\overline{\partial}_{E,m,q,\max}^{g_{s_n},h}}\rightarrow G_{\overline{\partial}_{E,m,q,\max}^{h,h}}$ compactly as $n\rightarrow \infty.$
\end{proof}

As in Th. \ref{compact2} we continue to assume that $\im(\overline{\partial}_{E,m,q,\max}^{h,h})=\im(\overline{\partial}_{E,m,q,\max}^{g_1,h})$. For each $s\in (0,1]$ let us consider the following orthogonal decomposition 
\begin{equation}
\label{orto}
L^2\Omega^{m,q}(A,E|_A,g_s|_A,\rho|_A)=\left(\ker(\overline{\partial}_{E,m,q,\max}^{g_s,h})\cap \ker(\overline{\partial}_{E,m,q-1}^{g_1,g_s,t})\right)\oplus \im(\overline{\partial}_{E,m,q-1}^{g_1,g_s})\oplus \im(\overline{\partial}_{E,m,q,\min}^{g_s,h,t})
\end{equation}
and let 
$$
\pi_{K,s}^{m,q}:L^2\Omega^{m,q}(A,E|_A,g_s|_A,\rho|_A)\rightarrow \ker(\overline{\partial}_{E,m,q,\max}^{g_s,h})\cap \ker(\overline{\partial}_{E,m,q-1}^{g_1,g_s,t})$$
and 
$$ \pi_{I,s}^{m,q}:L^2\Omega^{m,q}(A,E,g_s,\rho)\rightarrow \im(\overline{\partial}_{E,m,q-1}^{g_1,g_s})$$ be the orthogonal projections on $\ker(\overline{\partial}_{E,m,q,\max}^{g_s,h})\cap \ker(\overline{\partial}_{E,m,q-1}^{g_1,g_s,t})$ and  $\im(\overline{\partial}_{E,m,q-1}^{g_1,g_s})$ induced by \eqref{orto}.
For $s=0$ we consider the orthogonal decomposition 
\begin{equation}
\label{orto2}
L^2\Omega^{m,q}(A,E|_A,h|_A,\rho|_A)=\left(\ker(\overline{\partial}_{E,m,q,\max}^{h,h})\cap \ker(\overline{\partial}_{E,m,q-1,\min}^{g_1,h,t})\right)\oplus \im(\overline{\partial}_{E,m,q-1,\max}^{g_1,h})\oplus \im(\overline{\partial}_{E,m,q,\min}^{h,h,t})
\end{equation}
and the corresponding orthogonal projections
$$\pi_{K,0}^{m,q}:L^2\Omega^{m,q}(A,E|_A,h|_A,\rho|_A)\rightarrow \ker(\overline{\partial}_{E,m,q,\max}^{h,h})\cap \ker(\overline{\partial}_{E,m,q-1}^{g_1,h,t})$$ and $$ \pi_{I,0}:L^2\Omega^{m,q}(A,E|_A,h|_A,\rho|_A)\rightarrow \im(\overline{\partial}_{E,m,q-1}^{g_1,h}).$$

The last goal of this subsection  is to show that $\pi^{m,q}_{k,s_n}\rightarrow \pi^{m,q}_{k,0}$ compactly, as $n\rightarrow +\infty$.

\begin{lemma}
\label{lemmaker}
Let $\{s_n\}_{n\in \mathbb{N}}\subset (0,1]$ be a sequence with $s_n\rightarrow 0$ as $n\rightarrow \infty$. Let $\psi\in \ker(\overline{\partial}_{E,m,q,\max}^{h,h})\cap \ker(\overline{\partial}_{E,m,q-1,\min}^{g_1,h,t})$ be arbitrarily fixed. Then $$\pi_{K,s_n}^{m,q}(\Phi_{s_n}^{m,q}(\psi))\rightarrow \psi$$ strongly as $n\rightarrow \infty$.
\end{lemma}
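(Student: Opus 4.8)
The plan is to decompose $\Phi_{s_n}^{m,q}(\psi)$ along the orthogonal splitting \eqref{orto}, to show that its component in $\im(\overline{\partial}_{E,m,q,\min}^{g_{s_n},h,t})$ is zero and that its component $b_{s_n}$ in $\im(\overline{\partial}_{E,m,q-1}^{g_1,g_{s_n}})$ converges strongly to $0$; since $\Phi_{s_n}^{m,q}(\psi)\to\psi$ strongly by Cor. \ref{swim}, the remaining piece $\pi_{K,s_n}^{m,q}(\Phi_{s_n}^{m,q}(\psi))=\Phi_{s_n}^{m,q}(\psi)-b_{s_n}$ will then converge strongly to $\psi$.

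First, because $\psi\in\ker(\overline{\partial}_{E,m,q,\max}^{h,h})$ and $h=g_0\le\mathfrak{a}\,g_{s_n}$, Prop. \ref{Pp} gives $\Phi_{s_n}^{m,q}(\psi)\in\mathcal{D}(\overline{\partial}_{E,m,q,\max}^{g_{s_n},h})$ with $\overline{\partial}_{E,m,q,\max}^{g_{s_n},h}\Phi_{s_n}^{m,q}(\psi)=\overline{\partial}_{E,m,q,\max}^{h,h}\psi=0$, so $\Phi_{s_n}^{m,q}(\psi)\in\ker(\overline{\partial}_{E,m,q,\max}^{g_{s_n},h})$. Since the third summand of \eqref{orto}, $\im(\overline{\partial}_{E,m,q,\min}^{g_{s_n},h,t})$, is the orthogonal complement of $\ker(\overline{\partial}_{E,m,q,\max}^{g_{s_n},h})$ (by \eqref{mixedad}), the corresponding component of $\Phi_{s_n}^{m,q}(\psi)$ vanishes and $\Phi_{s_n}^{m,q}(\psi)=\pi_{K,s_n}^{m,q}(\Phi_{s_n}^{m,q}(\psi))+b_{s_n}$, with $b_{s_n}:=\pi_{I,s_n}^{m,q}(\Phi_{s_n}^{m,q}(\psi))\in\im(\overline{\partial}_{E,m,q-1}^{g_1,g_{s_n}})$. (When $q=0$ this summand is absent and $\pi_{K,s_n}^{m,q}(\Phi_{s_n}^{m,q}(\psi))=\Phi_{s_n}^{m,q}(\psi)\to\psi$ strongly, so one may assume $q\ge1$.) By \eqref{decia} and the orthogonality of $\pi_{I,s_n}^{m,q}$, $\|b_{s_n}\|_{L^2\Omega^{m,q}(A,E|_A,g_{s_n}|_A,\rho|_A)}\le\sqrt{a}\,\|\psi\|_{L^2\Omega^{m,q}(A,E|_A,h|_A,\rho|_A)}$, so by a routine subsequence argument it suffices to show that every subsequence of $\{b_{s_n}\}$ has a further subsequence whose norm in the respective $L^2\Omega^{m,q}(A,E|_A,g_{s_n}|_A,\rho|_A)$ tends to $0$, which by Def. \ref{strong} is exactly $b_{s_n}\to0$ strongly. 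Fix a subsequence $\{r_n\}$ with $b_{r_n}\rightharpoonup b$ weakly, $b\in L^2\Omega^{m,q}(A,E|_A,h|_A,\rho|_A)$ (Prop. \ref{bounded}); the crucial claim is $\langle b,\psi\rangle_{L^2\Omega^{m,q}(A,E|_A,h|_A,\rho|_A)}=0$.

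To establish this I would re-run, one bidegree down, the argument of Th. \ref{compact1}. Put $\gamma_{r_n}:=G_{\overline{\partial}_{E,m,q-1}^{g_1,g_{r_n}}}(b_{r_n})$, well defined by Cor. \ref{allGreen}; then $\gamma_{r_n}\in B_{m,q-1}^{g_1,g_{r_n}}=B_{m,q-1}^{g_1,g_1}$ and $\overline{\partial}_{E,m,q-1}^{g_1,g_1}\gamma_{r_n}=\overline{\partial}_{E,m,q-1}^{g_1,g_{r_n}}\gamma_{r_n}=b_{r_n}$ by Lemma \ref{SebG} (in bidegree $(m,q-1)$), and \eqref{fob} together with Lemma \ref{lemma7} (in bidegree $(m,q-1)$) bound $\{\gamma_{r_n}\}$ in the graph norm of $\overline{\partial}_{E,m,q-1}^{g_1,g_1}$ exactly as in the proof of Th. \ref{compact1}. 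Since $G_{\overline{\partial}_{E,m,q-1}^{g_1,g_1}}$ is compact, a further subsequence $\{t_n\}$ gives $\gamma_{t_n}\to\gamma$ in $L^2\Omega^{m,q-1}(A,E|_A,g_1|_A,\rho|_A)$. For every $\phi\in\Omega_c^{m,q}(A,E|_A)$, using $\gamma_{t_n}\to\gamma$ strongly, Lemma \ref{lemma2} in bidegree $(m,q-1)$ (which yields $\overline{\partial}_{E,m,q-1}^{g_1,g_{t_n},t}\phi\rightharpoonup\overline{\partial}_{E,m,q-1}^{g_1,h,t}\phi$ in $L^2\Omega^{m,q-1}(A,E|_A,g_1|_A,\rho|_A)$), the defining property of the Green operator, and Def. \ref{weak} against the strongly convergent constant sequence $\phi$ (Cor. \ref{swim}), one obtains
\[\langle\gamma,\overline{\partial}_{E,m,q-1}^{g_1,h,t}\phi\rangle_{L^2\Omega^{m,q-1}(A,E|_A,g_1|_A,\rho|_A)}=\lim_{n\to\infty}\langle\overline{\partial}_{E,m,q-1}^{g_1,g_{t_n}}\gamma_{t_n},\phi\rangle_{L^2\Omega^{m,q}(A,E|_A,g_{t_n}|_A,\rho|_A)}=\lim_{n\to\infty}\langle b_{t_n},\phi\rangle_{L^2\Omega^{m,q}(A,E|_A,g_{t_n}|_A,\rho|_A)}=\langle b,\phi\rangle_{L^2\Omega^{m,q}(A,E|_A,h|_A,\rho|_A)}.\]
By the distributional definition of the maximal extension this says $\gamma\in\mathcal{D}(\overline{\partial}_{E,m,q-1,\max}^{g_1,h})$ with $\overline{\partial}_{E,m,q-1,\max}^{g_1,h}\gamma=b$, so $b\in\im(\overline{\partial}_{E,m,q-1,\max}^{g_1,h})$. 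On the other hand $\psi\in\ker(\overline{\partial}_{E,m,q-1,\min}^{g_1,h,t})=\big(\overline{\im(\overline{\partial}_{E,m,q-1,\max}^{g_1,h})}\big)^{\bot}$ by \eqref{mixedad}, whence $\langle b,\psi\rangle_{L^2\Omega^{m,q}(A,E|_A,h|_A,\rho|_A)}=0$.

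Finally, since $\pi_{I,t_n}^{m,q}$ is an orthogonal projection, $\|b_{t_n}\|^2_{L^2\Omega^{m,q}(A,E|_A,g_{t_n}|_A,\rho|_A)}=\langle b_{t_n},\Phi_{t_n}^{m,q}(\psi)\rangle_{L^2\Omega^{m,q}(A,E|_A,g_{t_n}|_A,\rho|_A)}$, which tends to $\langle b,\psi\rangle_{L^2\Omega^{m,q}(A,E|_A,h|_A,\rho|_A)}=0$ because $b_{t_n}\rightharpoonup b$ weakly and $\Phi_{t_n}^{m,q}(\psi)\to\psi$ strongly (Cor. \ref{swim}); thus $\|b_{t_n}\|_{L^2\Omega^{m,q}(A,E|_A,g_{t_n}|_A,\rho|_A)}\to0$, i.e. $b_{t_n}\to0$ strongly. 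The subsequence argument then gives $b_{s_n}\to0$ strongly, hence $\pi_{K,s_n}^{m,q}(\Phi_{s_n}^{m,q}(\psi))=\Phi_{s_n}^{m,q}(\psi)-b_{s_n}\to\psi$ strongly. I expect the main obstacle to be precisely the identification $b\in\im(\overline{\partial}_{E,m,q-1,\max}^{g_1,h})$: it is this that forces $b\perp\psi$, and proving it requires transporting the Green-operator and compactness machinery of Th. \ref{compact1} to bidegree $(m,q-1)$, with the uniform positivity $\lambda_{m,q-1}^{g_1,g_1}>0$ from Lemma \ref{lemma7} as the decisive estimate keeping the primitives $\gamma_{t_n}$ bounded.
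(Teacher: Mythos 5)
Your proof is correct and follows essentially the same route as the paper: kill the $\im(\overline{\partial}_{E,m,q,\min}^{g_{s_n},h,t})$-component via Prop. \ref{Pp}, bound $\pi_{I,s_n}^{m,q}(\Phi_{s_n}^{m,q}(\psi))$ by \eqref{decia}, extract a weak limit, identify it as lying in $\im(\overline{\partial}_{E,m,q-1,\max}^{g_1,h})$ by rerunning the Green-operator compactness argument of Th. \ref{compact1} one bidegree down, and use $\psi\perp\im(\overline{\partial}_{E,m,q-1,\max}^{g_1,h})$ together with the $\|b_{t_n}\|^2=\langle b_{t_n},\Phi_{t_n}^{m,q}(\psi)\rangle$ trick. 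The only (harmless) variation is that you conclude directly from $\langle b,\psi\rangle=0$, whereas the paper first shows the weak limit itself vanishes.
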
 

\begin{proof}
Thanks to Cor. \ref{swim} we know that $\Phi_{s_n}^{m,q}(\psi)\rightarrow \psi$ strongly as $n\rightarrow \infty$. Thus we need to show that both $\pi_{I,s_n}^{m,q}(\Phi_{s_n}^{m,q}(\psi))$ and 
$\Phi_{s_n}^{m,q}(\psi)-\pi^{m,q}_{K,s_n}(\Phi_{s_n}^{m,q}(\psi))-\pi_{I,s_n}^{m,q}(\Phi_{s_n}^{m,q}(\psi))$ converge strongly to $0$ as $n\rightarrow \infty.$ By Prop. \ref{Pp} we know that $\Phi_{s_n}^{m,q}(\psi)\in \ker(\overline{\partial}_{E,m,q,\max}^{g_{s_n,h}})$ for each $n$. Hence, $$\Phi_{s_n}^{m,q}(\psi)-\pi^{m,q}_{K,s_n}(\Phi_{s_n}^{m,q}(\psi))-\pi_{I,s_n}^{m,q}(\Phi_{s_n}^{m,q}(\psi))=0$$ for each $n$ and consequently $$\Phi_{s_n}^{m,q}(\psi)-\pi^{m,q}_{K,s_n}(\Phi_{s_n}^{m,q}(\psi))-\pi_{I,s_n}^{m,q}(\Phi_{s_n}^{m,q}(\psi))\rightarrow 0$$ strongly as $n\rightarrow \infty$. Concerning $\pi_{I,s_n}^{m,q}(\Phi_{s_n}^{m,q}(\psi))$ we know that
$$
\begin{aligned}
\|\pi_{I,s_n}^{m,q}(\Phi_{s_n}^{m,q}(\psi))\|_{L^2\Omega^{M,q}(A,E|_A,g_{s_n}|_A,\rho|_A)}&\leq \|\Phi_{s_n}^{m,q}(\psi)\|_{L^2\Omega^{M,q}(A,E|_A,g_{s_n}|_A,\rho|_A)}\\
& \leq a\|\psi\|_{L^2\Omega^{M,q}(A,E_A,h|_A,\rho|_A)}
\end{aligned}
$$ see \eqref{decia}.
Thus $\{\pi_{I,s_n}^{m,q}(\Phi_{s_n}^{m,q}(\psi))\}_{n\in \mathbb{N}}$ is a bounded sequence and therefore by Prop. \ref{bounded} there exists a subsequence $\{\pi_{I,s'_n}^{m,q}(\Phi_{s'_n}^{m,q}(\psi))\}_{n\in \mathbb{N}}\subset \{\pi_{I,s_n}^{m,q}(\Phi_{s_n}^{m,q}(\psi))\}_{n\in \mathbb{N}}$ and an element $\psi_0\in L^2\Omega^{m,q}(A,E|_A,h|_A,\rho|_A)$ such that $$\pi_{I,s'_n}^{m,q}(\Phi_{s'_n}^{m,q}(\psi))\rightarrow \psi_0$$ weakly as $n\rightarrow \infty$. Since $\pi_{I,s'_n}^{m,q}(\Phi_{s'_n}^{m,q}(\psi))\in \im(\overline{\partial}_{E,m,q-1}^{g_1,g_{s'_n}})$ and $\pi_{I,s'_n}^{m,q}(\Phi_{s'_n}^{m,q}(\psi))\rightarrow \psi_0$ weakly as $n\rightarrow \infty$ we can argue as in the proof of  Th. \ref{compact1} to conclude that $\psi_0\in \im(\overline{\partial}_{E,m,q,\max}^{g_1,h})$.  We want to show that $\psi_0=0$ and that $\pi_{I,s'_n}^{m,q}(\Phi_{s'_n}^{m,q}(\psi))\rightarrow 0$ strongly as $n\rightarrow \infty$. Let $\beta\in \im(\overline{\partial}_{E,m,q,\max}^{g_1,h})$ be arbitrarily fixed. We have
$$
\begin{aligned}
\langle\psi_0,\beta\rangle_{L^2\Omega^{m,q}(A,E|_A,h|_A,\rho|_A)}&=\lim_{n\rightarrow \infty}\langle\pi_{I,s'_n}^{m,q}(\Phi_{s'_n}^{m,q}(\psi)),\Phi_{s'_n}^{m,q}(\beta)\rangle_{L^2\Omega^{m,q}(A,E|_A,g_{s'_n}|_A,\rho|_A)}\\
&=\lim_{n\rightarrow \infty}\langle\pi_{I,s'_n}^{m,q}(\Phi_{s'_n}^{m,q}(\psi))+\pi^{m,q}_{K,s'_n}(\Phi_{s'_n}^{m,q}(\psi)),\Phi_{s'_n}(\beta)\rangle_{L^2\Omega^{m,q}(A,E|_A,g_{s'_n}|_A,\rho|_A)}\\
&= \lim_{s\rightarrow \infty}\langle\Phi_{s'_n}^{m,q}(\psi),\Phi_{s'_n}^{m,q}(\beta)\rangle_{L^2\Omega^{m,q}(A,E|_A,g_{s'_n}|_A,\rho|_A)}\\
&=\langle\psi,\beta\rangle_{L^2\Omega^{m,q}(A,E|_A,h|_A,\rho|_A)}=0.
\end{aligned}
$$
Note that the first equality above follows by the fact that $\pi_{I,s'_n}^{m,q}(\Phi_{s'_n}^{m,q}(\psi))\rightarrow \psi_0$ weakly and $\Phi_{s'_n}^{m,q}(\beta)\rightarrow \beta$ strongly. The second equality is a consequence of the fact that $\Phi_{s}^{m,q}(\beta)\in \im(\overline{\partial}_{E,m,q}^{g_1,g_s})$ as $\im(\overline{\partial}_{E,m,q,\max}^{g_1,h})\subset \im(\overline{\partial}_{E,m,q}^{g_1,g_s})$ for each $0<s\leq 1$. Finally the third equality follows by the fact that 
$\Phi_{s'_n}^{m,q}(\psi)-\pi^{m,q}_{K,s'_n}(\Phi_{s'_n}^{m,q}(\psi))-\pi_{I,s'_n}^{m,q}(\Phi_{s'_n}^{m,q}(\psi))=0$ for each $n$. We can thus conclude that $\psi_0=0$ as $\langle\psi_0,\beta\rangle_{L^2\Omega^{m,q}(A,E|_A,h|_A,\rho|_A)}=0$ for any  arbitrarily fixed $\beta\in \im(\overline{\partial}_{E,m,q,\max}^{g_1,h})$. We are left to show that $\pi_{I,s'_n}^{m,q}(\Phi_{s'_n}(\psi))\rightarrow 0$ strongly as $n\rightarrow \infty$. To this aim we have
$$
\begin{aligned}
&\lim_{n\rightarrow \infty}\langle\pi_{I,s'_n}^{m,q}(\Phi_{s'_n}^{m,q}(\psi)),\pi_{I,s'_n}^{m,q}(\Phi_{s'_n}^{m,q}(\psi))\rangle_{L^2\Omega^{m,q}(A,E|_A,g_{s'_n}|_A,\rho|_A)}=\\
&\lim_{n\rightarrow \infty}\langle\pi_{I,s'_n}^{m,q}(\Phi_{s'_n}^{m,q}(\psi)),\pi^{m,q}_{K,s'_n}(\Phi_{s'_n}^{m,q}(\psi))+\pi_{I,s'_n}^{m,q}(\Phi_{s'_n}^{m,q}(\psi))\rangle_{L^2\Omega^{m,q}(A,E|_A,g_{s'_n}|_A,\rho|_A)}=\\
& \lim_{n\rightarrow \infty}\langle\pi_{I,s'_n}^{m,q}(\Phi_{s'_n}^{m,q}(\psi)),\Phi_{s'_n}^{m,q}(\psi)\rangle_{L^2\Omega^{m,q}(A,E|_A,g_{s'_n}|_A,\rho|_A)}=\langle\psi_0,\psi\rangle_{L^2\Omega^{m,q}(A,E|_A,h|_A,\rho|_A)}=0.
\end{aligned}
$$
In conclusion  $\pi_{I,s'_n}^{m,q}(\Phi_{s'_n}^{m,q}(\psi))\rightarrow 0$ strongly as $n\rightarrow \infty$. Now if we fix an arbitrary subsequence $\{r_n\}_{n\in \mathbb{N}}\subset \{s_n\}_{n\in \mathbb{N}}$ and we repeat the above argument we conclude that there exists a subsequence $\{r'_n\}_{n\in\mathbb{R}}\subset \{r_n\}_{n\in \mathbb{N}}$ such that  
$\pi_{I,r'_n}(\Phi_{r'_n}^{m,q}(\psi))\rightarrow 0$ strongly as $n\rightarrow \infty$. We can thus conclude that $\pi_{I,s_n}^{m,q}(\Phi_{s_n}^{m,q}(\psi))\rightarrow 0$ strongly as $n\rightarrow \infty$ and hence that  $\pi^{m,q}_{K,s_n}(\Phi_{s_n}^{m,q}(\psi))\rightarrow \psi$ strongly as $n\rightarrow \infty$.
\end{proof}

\begin{lemma}
\label{lemmaker2}
If $\dim(H_{\overline{\partial}_E}^{m,q}(M,E))=\dim(H_{2,\overline{\partial}_{\max}}^{m,q}(A,E|_A,h|_A,\rho|_A))$, then for  each $s\in (0,1]$ the linear map $$\pi^{m,q}_{K,s}\circ\Phi_s^{m,q}:\ker(\overline{\partial}_{E,m,q-1,\min}^{g_1,h,t})\cap\ker(\overline{\partial}_{E,m,q,\max}^{h,h})\longrightarrow \ker(\overline{\partial}_{E,m,q-1}^{g_1,g_s,t})\cap\ker(\overline{\partial}_{E,m,q,\max}^{g_s,h})$$
is an isomorphism.
\end{lemma}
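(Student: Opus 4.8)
The plan is to reduce the assertion to the fact that an injective linear map between finite-dimensional spaces of equal dimension is an isomorphism, so the real content is (i) computing the two dimensions and (ii) proving injectivity. For (i), the domain $\ker(\overline{\partial}_{E,m,q-1,\min}^{g_1,h,t})\cap\ker(\overline{\partial}_{E,m,q,\max}^{h,h})$ is the first summand of the orthogonal decomposition \eqref{orto2}; using the standing assumption $\im(\overline{\partial}_{E,m,q-1,\max}^{h,h})=\im(\overline{\partial}_{E,m,q-1,\max}^{g_1,h})$ (in bidegree $(m,q-1)$) together with the closedness of these images — established exactly as in the proof of Prop. \ref{Gianni} — the natural map from this space to $H_{2,\overline{\partial}_{\max}}^{m,q}(A,E|_A,h|_A,\rho|_A)$ is seen to be an isomorphism, so the domain has dimension $\dim H_{2,\overline{\partial}_{\max}}^{m,q}(A,E|_A,h|_A,\rho|_A)$. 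Likewise the codomain $\ker(\overline{\partial}_{E,m,q-1}^{g_1,g_s,t})\cap\ker(\overline{\partial}_{E,m,q,\max}^{g_s,h})$ is the first summand of \eqref{orto}; since $g_1$ and $g_s$ are quasi-isometric (Lemma \ref{Seb}) and, by Prop. \ref{sameop}, $\overline{\partial}_{E,m,q}^{g_s,g_s}$ is the unique closed extension of $\overline{\partial}_{E,m,q}$ on the compact manifold $M$, one has $\ker(\overline{\partial}_{E,m,q,\max}^{g_s,h})=\ker(\overline{\partial}_{E,m,q}^{g_s,g_s})$ and, the two operators having the same kernel and closed range, $\im((\overline{\partial}_{E,m,q,\max}^{g_s,h})^{*})=\im((\overline{\partial}_{E,m,q}^{g_s,g_s})^{*})$; hence the codomain equals the space of $\overline{\partial}_{E,m,q}^{g_s,g_s}$-harmonic forms on $M$, of dimension $\dim H_{\overline{\partial}_E}^{m,q}(M,E)$. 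By the hypothesis of the lemma the two dimensions coincide, so it suffices to prove that $\pi^{m,q}_{K,s}\circ\Phi_s^{m,q}$ is injective.

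For injectivity I would take $\psi$ in the domain with $\pi^{m,q}_{K,s}(\Phi_s^{m,q}(\psi))=0$ and show $\psi=0$. By Prop. \ref{Pp} and Prop. \ref{sameop}, $\psi\in\ker(\overline{\partial}_{E,m,q}^{g_s,g_s})=\ker(\overline{\partial}_{E,m,q,\max}^{g_s,h})$, so in \eqref{orto} the component of $\Phi_s^{m,q}(\psi)$ in $\im(\overline{\partial}_{E,m,q,\min}^{g_s,h,t})$ vanishes (it is orthogonal to $\ker(\overline{\partial}_{E,m,q,\max}^{g_s,h})$ by \eqref{mixedad}); together with $\pi^{m,q}_{K,s}(\Phi_s^{m,q}(\psi))=0$ this gives $\Phi_s^{m,q}(\psi)=\pi^{m,q}_{I,s}(\Phi_s^{m,q}(\psi))\in\im(\overline{\partial}_{E,m,q-1}^{g_1,g_s})$, i.e. $\psi=\overline{\partial}_{E,m,q-1}^{g_1,g_s}\beta$ for some $\beta\in L^2\Omega^{m,q-1}(M,E,g_1,\rho)$. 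Since $\psi\in L^2\Omega^{m,q}(A,E|_A,h|_A,\rho|_A)$, the distributional identity $\overline{\partial}_{E,m,q-1}\beta=\psi$ forces $\beta\in\mathcal{D}(\overline{\partial}_{E,m,q-1,\max}^{g_1,h})$ with $\overline{\partial}_{E,m,q-1,\max}^{g_1,h}\beta=\psi$, so $\psi\in\im(\overline{\partial}_{E,m,q-1,\max}^{g_1,h})$. On the other hand $\psi\in\ker(\overline{\partial}_{E,m,q-1,\min}^{g_1,h,t})=(\im(\overline{\partial}_{E,m,q-1,\max}^{g_1,h}))^{\bot}$ by \eqref{mixedad}, hence $\psi$ is orthogonal to itself and $\psi=0$. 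An injective linear map between finite-dimensional spaces of the same dimension is then bijective, which proves the lemma.

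I expect the dimension bookkeeping of the first step to be the delicate part: the "harmonic" domain space is built from the mixed operator $\overline{\partial}_{E,m,q-1,\max}^{g_1,h}$ rather than from $\overline{\partial}_{E,m,q-1,\max}^{h,h}$, so its identification with $H_{2,\overline{\partial}_{\max}}^{m,q}(A,E|_A,h|_A,\rho|_A)$ genuinely uses the standing hypothesis on images in bidegree $(m,q-1)$ and the closed-range properties, and one must also check that the codomain really is the classical $\overline{\partial}$-harmonic space on $M$. By contrast the injectivity step is essentially formal: the key observation is that a form in the domain already lies in $L^2$ with respect to $h$, so a primitive of it belonging to the larger space $L^2\Omega^{m,q-1}(M,E,g_1,\rho)$ automatically realizes it in $\im(\overline{\partial}_{E,m,q-1,\max}^{g_1,h})$ — only $\overline{\partial}$ of the primitive, not the primitive itself, must be $h$-square-integrable — after which orthogonality to itself kills it.
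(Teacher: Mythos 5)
Your proof is correct and follows essentially the same route as the paper: both arguments rest on the dimension count $\dim(\ker(\overline{\partial}_{E,m,q,\max}^{h,h})/\im(\overline{\partial}_{E,m,q-1,\max}^{g_1,h}))=\dim(H^{m,q}_{\overline{\partial}}(M,E))$ supplied by the hypothesis and the standing image equality, together with the injectivity observation that a class killed by $\Phi_s^{m,q}$ admits a primitive in $L^2\Omega^{m,q-1}(M,E,g_1,\rho)$ whose $\overline{\partial}$ is $h$-square-integrable, hence already lies in $\im(\overline{\partial}_{E,m,q-1,\max}^{g_1,h})$. The only (cosmetic) difference is that you argue directly on the harmonic representatives while the paper phrases the same argument as injectivity of the induced map between the cohomology quotients and then transports it via the isomorphisms \eqref{SDS} and \eqref{SDSD}.
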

\begin{proof}
By assumption we know that $g_1$ and $g_s$ are quasi-isometric for each $s\in (0,1]$ and that $\im(\overline{\partial}_{E,m,q-1,\max}^{g_1,h})=\im(\overline{\partial}_{E,m,q-1,\max}^{h,h})$ in $L^2\Omega^{m,q}(A,E|_A,h|_A,\rho|_A)$. This tells us that for each $s\in (0,1]$ we have $\im(\overline{\partial}_{E,m,q-1,\max}^{g_1,h})=\im(\overline{\partial}_{E,m,q-1,\max}^{g_s,h})$ in $L^2\Omega^{m,q}(A,E|_A,h|_A,\rho|_A)$ and thus $$\Phi_s^{m,q}:L^2\Omega^{m,q}(A,E|_A,h|_A,\rho|_A)\rightarrow L^2\Omega^{m,q}(A,E|_A,g_s|_A,\rho|_A)$$ induces an injective linear map between the $L^2$-$\overline{\partial}$-cohomology groups
\begin{equation}
\label{Sinner}
\Phi_{s}^{m,q}:\ker(\overline{\partial}_{E,m,q,\max}^{h,h})/\im(\overline{\partial}_{E,m,q-1,\max}^{g_1,h})\longrightarrow \ker(\overline{\partial}_{E,m,q,\max}^{g_s,h})/\im(\overline{\partial}_{E,m,q-1}^{g_1,g_s}).
\end{equation}
Note that we have 
$$\begin{aligned}
\dim(H_{2,\overline{\partial}_{\max}}^{m,q}(A,E|_A,h|_A,\rho|_A))&= \dim(\ker(\overline{\partial}_{E,m,q,\max}^{h,h})/\im(\overline{\partial}_{E,m,q-1,\max}^{g_1,h}))\\
&\leq \dim(\ker(\overline{\partial}_{E,m,q,\max}^{g_s,h})/\im(\overline{\partial}_{E,m,q-1}^{g_1,g_s}))\\
&\leq \dim(\ker(\overline{\partial}_{E,m,q}^{g_1,g_1})/\im(\overline{\partial}_{E,m,q-1}^{g_1,g_1}))\\
&=\dim(H_{\overline{\partial}_E}^{m,q}(M,E))\\
&=\dim(H_{\overline{\partial}_{\max}}^{m,q}(A,E|_A,h|_A,\rho|_A)).
\end{aligned}$$
We can thus conclude that \eqref{Sinner} is an isomorphism for each $0<s\leq 1$. Thanks to \eqref{orto} and \eqref{orto2} we have isomorphisms
\begin{equation}
\label{SDS}
\ker(\overline{\partial}_{E,m,q-1,\min}^{g_1,h,t})\cap\ker(\overline{\partial}_{E,m,q,\max}^{h,h})\cong\ker(\overline{\partial}_{E,m,q,\max}^{h,h})/\im(\overline{\partial}_{E,m,q-1,\max}^{g_1,h})
\end{equation}
and 
\begin{equation}
\label{SDSD}
\ker(\overline{\partial}_{E,m,q-1}^{g_1,g_s,t})\cap\ker(\overline{\partial}_{E,m,q,\max}^{g_s,h})\cong\ker(\overline{\partial}_{E,m,q,\max}^{g_s,h})/\im(\overline{\partial}_{E,m,q-1}^{g_1,g_s}).
\end{equation}
It is easy to check that if $\alpha\in \ker(\overline{\partial}_{E,m,q-1,\min}^{g_1,h,t})\cap\ker(\overline{\partial}_{E,m,q,\max}^{h,h})$ is the unique representative in $\ker(\overline{\partial}_{E,m,q-1,\min}^{g_1,h,t})\cap\ker(\overline{\partial}_{E,m,q,\max}^{h,h})$ of $[\alpha]\in\ker(\overline{\partial}_{E,m,q,\max}^{h,h})/\im(\overline{\partial}_{E,m,q-1,\max}^{g_1,h})$ then $\pi^{m,q}_{K,s}(\Phi_s^{m,q}(\alpha))$ is the unique representative in $\ker(\overline{\partial}_{E,m,q-1}^{g_1,g_s,t})\cap\ker(\overline{\partial}_{E,m,q,\max}^{g_s,h})$ of $[\Phi_s^{m,q}(\alpha)]\in \ker(\overline{\partial}_{E,m,q,\max}^{g_s,h})/\im(\overline{\partial}_{E,m,q-1}^{g_1,g_s})$. The conclusion now follows immediately by \eqref{Sinner}, \eqref{SDS} and \eqref{SDSD}.
\end{proof}

\begin{teo}
\label{projker}
In the setting of Th. \ref{compact2} assume in addition that $\dim(H_{\overline{\partial}}^{m,q}(M,E))=\dim(H_{2,\overline{\partial}_{\max}}^{m,q}(A,E,h,\rho))$. Let $\{s_n\}_{n\in \mathbb{N}}\subset (0,1]$ be a sequence with $s_n\rightarrow 0$ as $n\rightarrow \infty$. Then $$\pi^{m,q}_{K,s}\rightarrow \pi_{K,0}^{m,q}$$ compactly as $n\rightarrow \infty$.
\end{teo}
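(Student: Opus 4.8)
The plan is to reduce the statement to elementary linear algebra on the finite dimensional ranges of the projections $\pi^{m,q}_{K,s_n}$, exploiting that Lemmas \ref{lemmaker} and \ref{lemmaker2} already furnish a ``moving'' basis of these ranges which converges strongly to a basis of the range of $\pi^{m,q}_{K,0}$. Throughout I keep in mind that the hypotheses of Th. \ref{compact2} together with the extra dimension assumption are in force, so that \eqref{orto} and \eqref{orto2} are genuine orthogonal decompositions and both Lemma \ref{lemmaker} and Lemma \ref{lemmaker2} are applicable.

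First I would set $d:=\dim H_{2,\overline{\partial}_{\max}}^{m,q}(A,E|_A,h|_A,\rho|_A)=\dim H_{\overline{\partial}}^{m,q}(M,E)$ and fix an orthonormal basis $\{\psi_1,\dots,\psi_d\}$ of $\mathcal{H}_0:=\ker(\overline{\partial}_{E,m,q,\max}^{h,h})\cap\ker(\overline{\partial}_{E,m,q-1,\min}^{g_1,h,t})$ with respect to the inner product of $L^2\Omega^{m,q}(A,E|_A,h|_A,\rho|_A)$. By \eqref{orto2} the space $\mathcal{H}_0$ is exactly the range of $\pi^{m,q}_{K,0}$, so $\pi^{m,q}_{K,0}(u)=\sum_{j=1}^d\langle u,\psi_j\rangle_{L^2\Omega^{m,q}(A,E|_A,h|_A,\rho|_A)}\psi_j$. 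For $n\in\mathbb{N}$ and $j=1,\dots,d$ I would put $e_{n,j}:=\pi^{m,q}_{K,s_n}(\Phi^{m,q}_{s_n}(\psi_j))$. Then Lemma \ref{lemmaker} gives $e_{n,j}\to\psi_j$ strongly as $n\to\infty$, while Lemma \ref{lemmaker2} --- this is the only place where the extra dimension hypothesis enters --- tells us that $\pi^{m,q}_{K,s_n}\circ\Phi^{m,q}_{s_n}$ is an isomorphism of $\mathcal{H}_0$ onto $\mathcal{H}_{s_n}:=\ker(\overline{\partial}_{E,m,q,\max}^{g_{s_n},h})\cap\ker(\overline{\partial}_{E,m,q-1}^{g_1,g_{s_n},t})$, which is the range of $\pi^{m,q}_{K,s_n}$; hence $\{e_{n,1},\dots,e_{n,d}\}$ is a basis of $\mathcal{H}_{s_n}$ for every $n$.

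Next I would record two elementary consequences of the definitions in \S1.2: recalling that a strongly convergent sequence is in particular weakly convergent with the same limit, if $v_n\to v$ and $w_n\to w$ strongly then $\langle v_n,w_n\rangle_{L^2\Omega^{m,q}(A,E|_A,g_{s_n}|_A,\rho|_A)}\to\langle v,w\rangle_{L^2\Omega^{m,q}(A,E|_A,h|_A,\rho|_A)}$ (apply Def. \ref{weak} with $w_n$ as the strongly convergent test sequence), and strong convergence is stable under finite sums and under multiplication by a convergent sequence of scalars (directly from Def. \ref{strong}). Applying the first fact to the $e_{n,j}$ shows that the Gram matrices $G_n:=\big(\langle e_{n,j},e_{n,k}\rangle_{L^2\Omega^{m,q}(A,E|_A,g_{s_n}|_A,\rho|_A)}\big)_{1\le j,k\le d}$ converge to the identity $I_d$, whence $G_n^{-1}\to I_d$ (each $G_n$ is invertible, the $e_{n,j}$ being a basis).

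Finally, let $\{u_n\}_{n\in\mathbb{N}}$, with $u_n\in L^2\Omega^{m,q}(A,E|_A,g_{s_n}|_A,\rho|_A)$, be weakly convergent to $u\in L^2\Omega^{m,q}(A,E|_A,h|_A,\rho|_A)$. Since $\pi^{m,q}_{K,s_n}$ is the orthogonal projection onto $\mathcal{H}_{s_n}=\mathrm{span}\{e_{n,1},\dots,e_{n,d}\}$, the Gram-matrix formula for a finite rank orthogonal projection gives
\[
\pi^{m,q}_{K,s_n}(u_n)=\sum_{j,k=1}^d (G_n^{-1})_{kj}\,\langle u_n,e_{n,k}\rangle_{L^2\Omega^{m,q}(A,E|_A,g_{s_n}|_A,\rho|_A)}\,e_{n,j}.
\]
By Def. \ref{weak}, $\langle u_n,e_{n,k}\rangle_{L^2\Omega^{m,q}(A,E|_A,g_{s_n}|_A,\rho|_A)}\to\langle u,\psi_k\rangle_{L^2\Omega^{m,q}(A,E|_A,h|_A,\rho|_A)}$ for every $k$, because $e_{n,k}\to\psi_k$ strongly; combining this with $G_n^{-1}\to I_d$, with $e_{n,j}\to\psi_j$ strongly, and with the two stability properties above, I would conclude
\[
\pi^{m,q}_{K,s_n}(u_n)\longrightarrow\sum_{j=1}^d\langle u,\psi_j\rangle_{L^2\Omega^{m,q}(A,E|_A,h|_A,\rho|_A)}\psi_j=\pi^{m,q}_{K,0}(u)
\]
strongly as $n\to\infty$; by Def. \ref{compact} this is exactly the asserted compact convergence. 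I expect no serious obstacle in this last part: the analytic difficulty has already been absorbed into Lemmas \ref{lemmaker} and \ref{lemmaker2} (and ultimately into the compact convergence of the Green operators established in Theorems \ref{compact1} and \ref{compact2}). What remains is only the uniform-in-$n$ bookkeeping of the moving basis, the mild point to watch being that both the Gram matrices and the pairings $\langle u_n,e_{n,k}\rangle$ pass to the limit correctly across the varying Hilbert structures $L^2\Omega^{m,q}(A,E|_A,g_{s_n}|_A,\rho|_A)$.
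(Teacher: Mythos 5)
Your proposal is correct and follows essentially the same route as the paper: both reduce to the finite-dimensional ranges, use Lemmas \ref{lemmaker} and \ref{lemmaker2} to produce a moving basis $\pi^{m,q}_{K,s_n}(\Phi^{m,q}_{s_n}(\psi_j))$ converging strongly to an orthonormal basis of $\mathrm{ran}(\pi^{m,q}_{K,0})$, and conclude by pairing the weakly convergent $u_n$ against the strongly convergent basis vectors. The only (harmless) difference is that you absorb the non-orthonormality of the moving basis into the Gram-matrix formula and the limit $G_n^{-1}\rightarrow I_d$, whereas the paper first orthonormalizes via Gram--Schmidt and then uses the standard expansion of the projection.
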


\begin{proof}
Let $\{\psi_1,...,\psi_{\ell}\}$ be an orthonormal basis of $\ker(\overline{\partial}_{E,m,q-1,\min}^{g_1,h,t})\cap\ker(\overline{\partial}_{E,m,q,\max}^{h,h})$ and for each $j\in \{1,...,\ell\}$ let $\psi_{j,s_{n}}:=\pi^{m,q}_{K,s_n}(\Phi_{s_n}^{m,q}(\psi_j)).$ Then by Lemmas \ref{lemmaker} and \ref{lemmaker2} we know that $\{\psi_{1,s_n},....,\psi_{\ell,s_n}\}$ is a basis for $\ker(\overline{\partial}_{E,m,q-1}^{g_1,g_{s_n},t})\cap\ker(\overline{\partial}_{E,m,q,\max}^{g_{s_n},h})$ and $\psi_{j,s_n}\rightarrow \psi_j$ strongly as $n\rightarrow \infty$.  Let $\{\chi_{1,s_n,}...,\chi_{\ell,s_n}\}$ be the basis of $\ker(\overline{\partial}_{E,m,q-1}^{g_1,g_{s_n},t})\cap\ker(\overline{\partial}_{E,m,q,\max}^{g_{s_n},h})$ made by pairwise orthogonal elements obtained by applying the Gram-Schmidt procedure to the basis $\{\psi_{1,s_n},....,\psi_{\ell,s_n}\}$. Explicit we have 
$$
\begin{aligned}
&\chi_{1,s_n}=\psi_{1,s_n}\\
& \chi_{2,s_n}=\psi_{2,s_n}-\mathrm{pr}_{\chi_{1,s_n}}(\psi_{2,s_n})\\
&......\\
& \chi_{j,s_n}=\psi_{j,s_n}-\sum_{k=1}^{j-1}\mathrm{pr}_{\chi_{k,s_n}}(\psi_{j,s_n})\\
&......\\
& \chi_{\ell,s_n}=\psi_{\ell,s_n}-\sum_{k=1}^{\ell-1}\mathrm{pr}_{\chi_{k,s_n}}(\psi_{\ell,s_n})
\end{aligned}
$$ 
where 
\begin{equation}
\label{GrS}
\mathrm{pr}_{\chi_{k,s_n}}(\psi_{j,s_n}):=\frac{\langle\psi_{j,s_n},\chi_{k,s_n}\rangle_{L^2\Omega^{m,q}(A,E|_A,g_{s_n}|_A,\rho|_A)}}{\langle\chi_{k,s_n},\chi_{k,s_n}\rangle_{L^2\Omega^{m,q}(A,E|_A,g_{s_n}|_A,\rho|_A)}}\chi_{k,s_n}
\end{equation}
for each $j\in \{1,...,\ell\}$ and $k\in \{1,...,j-1\}$.
Looking at \eqref{GrS} and arguing by induction it is easy to check that $$\lim_{n\rightarrow\infty}\mathrm{pr}_{\chi_{k,s_n}}(\psi_{j,s_n})=0$$ strongly and consequently $$\chi_{j,s_n}\rightarrow \psi_{j}$$ strongly as $n\rightarrow \infty$ for each $j=1,...,\ell$. In particular $$\|\chi_{j,s_n}\|_{L^2\Omega^{m,q}(A,E|_A,g_{s_n}|_A,\rho|_A)}\rightarrow 1=\|\psi_{j}\|_{L^2\Omega^{m,q}(A,E|_A,h|_A,\rho|_A)}$$ as $n\rightarrow \infty$ for each $j=1,...,\ell$. Therefore, by defining $$\varphi_{j,s_n}:=\chi_{j,s_n}/\|\chi_{j,s_n}\|_{L^2\Omega^{m,q}(A,E|_A,g_{s_n}|_A,\rho|_A)},$$ we obtain an orthonormal basis of $\ker(\overline{\partial}_{E,m,q-1}^{g_1,g_{s_n},t})\cap\ker(\overline{\partial}_{E,m,q,\max}^{g_{s_n},h})$ made by $\{\varphi_{1,s_n},...,\varphi_{\ell,s_n}\}$ such that $\varphi_{j,s_n}\rightarrow \psi_{j}$ strongly as $n\rightarrow \infty$ for each $j=1,...,\ell$. Let now $\{\beta_{s_n}\}_{n\in \mathbb{N}}$, $\beta_{s_n}\in L^2\Omega^{m,q}(A,E|_A,g_{s_n}|_A,\rho|_A)$, be a weakly convergent sequence to some $\beta\in L^2\Omega^{m,q}(A,E|_A,h|_A,\rho|_A)$ as $n\rightarrow \infty$. We want to show that $\pi^{m,q}_{K,s_n}\beta_{s_n}\rightarrow \pi_{K,0}^{m,q}\beta$ strongly as $n\rightarrow \infty$.
We have $$\pi^{m,q}_{K,s_n}\beta_{s_n}=\sum_{j=1}^{\ell}\langle\varphi_{j,s_n},\beta_{s_n}\rangle_{L^2\Omega^{m,q}(A,E|_A,g_{s_n}|_A,\rho|_A)}\varphi_{j,s_n}.$$
Thanks to the first part of the proof we know that $\varphi_{j,s_n}\rightarrow \psi_j$ strongly as $n\rightarrow \infty$. Since $\beta_{s_n}$ converges weakly to $\beta$ as $n\rightarrow \infty$ we get that $$\langle\varphi_{j,s_n},\beta_{s_n}\rangle_{L^2\Omega^{m,q}(A,E|_A,g_{s_n}|_A,\rho|_A)}\rightarrow \langle\psi_{j},\beta\rangle_{L^2\Omega^{m,q}(A,E|_A,h|_A,\rho|_A)}$$ as $n\rightarrow \infty.$ Therefore we can conclude that $$\sum_{j=1}^{\ell}\langle\varphi_{j,s_n},\beta_{s_n}\rangle_{L^2\Omega^{m,q}(A,E|_A,g_{s_n}|_A,\rho|_A)}\varphi_{j,s_n}\rightarrow \sum_{j=1}^{\ell}\langle\psi_j,\beta\rangle_{L^2\Omega^{m,q}(A,E|_A,h|_A,\rho|_A)}\psi_j$$
strongly as $j\rightarrow \infty$, that is $$\pi^{m,q}_{K,s_n}\beta_{s_n}\rightarrow \pi_{K,0}^{m,q}\beta$$ strongly as $n\rightarrow \infty$.
\end{proof}

\subsection{From compact convergence to convergence in norm operator}

As in Lemma \ref{lemma3} let $p:A\times [0,1]\rightarrow A$ be the left projection and let  $$S_s^{m,q}\in C^{\infty}(A\times [0,1], \mathrm{End}(p^*\Lambda^{m,q}(A)\otimes p^*E))$$ be defined as  $S^{m,q}_s:=\det(G^{1,0}_{\mathbb{C},s})\otimes G^{0,q}_{\mathbb{C},s}\otimes \id$. We recall that $S^{m,q}_s$ is the family of endomorphisms of $p^*\Lambda^{m,q}(A)\otimes p^*E$ such that $g^*_{m,q,\rho,s}(\bullet,\bullet)=g^*_{m,q,\rho,1}(S^{m,q}_s\bullet,\bullet)$. It is not difficult to see that there exists 
$\Gamma_s^{m,q}\in C(A\times [0,1], \mathrm{End}(p^*\Lambda^{m,q}(A)\otimes p^*E))$, that is, a continuous section of $\mathrm{End}(p^*\Lambda^{m,q}(A)\otimes p^*E)\rightarrow A\times [0,1]$, such that 
\begin{enumerate}
\item $(\Gamma_s^{m,q})^2=S^{m,q}_s$;
\item $g^*_{1,m,q,\rho}(\Gamma_s^{m,q}\bullet,\bullet)=g^*_{1,m,q,\rho}(\bullet,\Gamma_s^{m,q}\bullet)$, that is $\Gamma_s^{m,q}$ is fiberwise self-adjoint w.r.t. $g^*_{1,m,q,\rho}$;
\item $g^*_{1,m,q,\rho}(\Gamma_s^{m,q}\bullet,\bullet)>0$ whenever $\bullet\neq 0$, that is $\Gamma_s^{m,q}$ is positive definite w.r.t. $g^*_{1,m,q,\rho}$;
\end{enumerate}
see e.g. \cite[Problem 2-E, p. 24]{Milnor}. Note that $g^*_{1,m,q,\rho}(\Gamma_s^{m,q}\bullet,\Gamma_s^{m,q}\bullet)=g^*_{s,m,q,\rho}(\bullet,\bullet)$. In other words $\Gamma_s^{m,q}$ is a fiberwise isometry between $p^*\Lambda^{m,q}(A)\otimes p^*E$ endowed with $g^*_{s,m,q,\rho}$ and $p^*\Lambda^{m,q}(A)\otimes p^*E$ endowed with $g^*_{m,q,\rho,1}$. Now let us define $\Psi^{m,q}_s\in C(A\times [0,1], \mathrm{End}(p^*\Lambda^{m,q}(A)\otimes p^*E))$ as 
\begin{equation}
\label{isoiso}
\Psi^{m,q}_s:=(\det(F^{1,0}_{\mathbb{C},s}))^{\frac{1}{2}}\otimes \Gamma_s^{m,q}.
\end{equation}
Let us check that $$\Psi_s^{m,q}:L^2\Omega^{m,q}(A,E|_A,g_s|_A,\rho|_A)\rightarrow L^2\Omega^{m,q}(A,E|_A,g_1|_A,\rho|_A)$$ is an isometry for each $s\in [0,1]$.
Let $\eta,\omega\in \Omega_c^{m,q}(A,E|_A)$. We have 
$$
\begin{aligned}
\langle\Psi_s^{m,q}\eta,\Psi_s^{m,q}\omega\rangle_{L^2\Omega^{m,q}(A,E|_A,g_1|_A,\rho|_A)}&=\int_Ag^*_{1,m,q,\rho}(\Psi_s^{m,q}\eta,\Psi_s^{m,q}\omega)\dvol_{g_1}\\
&=\int_Ag^*_{1,m,q,\rho}(\Gamma_s^{m,q}\eta,\Gamma_s^{m,q}\omega)\det(F_{\mathbb{C},s}^{1,0})\dvol_{g_1}\\
&=\int_Ag^*_{1,m,q,\rho}(S_s^{m,q}\eta,\omega)(\det(F_{s}))^{\frac{1}{2}}\dvol_{g_1}\\&=\int_Ag^*_{s,m,q,\rho}(\eta,\omega)\dvol_{g_s}\\
&=\langle\eta,\omega\rangle_{L^2\Omega^{m,q}(A,E|_A,g_s|_A,\rho|_A)}.
\end{aligned}
$$

We now prove various properties concerning $\Psi_s^{m,q}$.

\begin{lemma}
\label{lemma5}
Given any $\eta\in L^2\Omega^{m,q}(A,E|_A,h|_A,\rho|_A)$  it holds $$\lim_{s\rightarrow 0}\|\Psi_s^{m,q}(\Phi_s^{m,q}(\eta))-\Psi_0^{m,q}(\eta)\|_{L^2\Omega^{m,q}(A,E|_A,g_1|_A,\rho|_A)}=0.$$ 
\end{lemma}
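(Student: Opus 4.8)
The plan is to reduce the statement to the case of smooth, compactly supported forms and then pass to a general $\eta$ by a density argument, the whole thing being powered by a uniform-in-$s$ operator bound.

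First I would record the relevant uniform bounds. Since $\Psi_s^{m,q}\colon L^2\Omega^{m,q}(A,E|_A,g_s|_A,\rho|_A)\to L^2\Omega^{m,q}(A,E|_A,g_1|_A,\rho|_A)$ is an isometry for every $s\in[0,1]$ (as checked just before the statement), and since by \eqref{decia} one has $\|\Phi_s^{m,q}(\omega)\|_{L^2\Omega^{m,q}(A,E|_A,g_s|_A,\rho|_A)}^2\le a\,\|\omega\|_{L^2\Omega^{m,q}(A,E|_A,h|_A,\rho|_A)}^2$, the composition $\Psi_s^{m,q}\circ\Phi_s^{m,q}$ maps $L^2\Omega^{m,q}(A,E|_A,h|_A,\rho|_A)$ to $L^2\Omega^{m,q}(A,E|_A,g_1|_A,\rho|_A)$ with operator norm $\le\sqrt a$ for every $s\in[0,1]$; moreover $\Psi_0^{m,q}$, being the case $s=0$ of the above isometry together with $g_0=h$, is an isometry and hence has operator norm $1$. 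In particular $\Psi_s^{m,q}\circ\Phi_s^{m,q}-\Psi_0^{m,q}$ has operator norm bounded by $\sqrt a+1$, uniformly in $s\in[0,1]$.

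Next I would treat the case $\eta\in\Omega_c^{m,q}(A,E|_A)$. For such $\eta$ the map $\Phi_s^{m,q}$ acts as the identity, so $\Psi_s^{m,q}(\Phi_s^{m,q}(\eta))=\Psi_s^{m,q}(\eta)$ is the $E$-valued form $p\mapsto\Psi_{s,p}^{m,q}(\eta(p))$, supported in $\supp(\eta)$. Since $\Psi_s^{m,q}\in C(A\times[0,1],\mathrm{End}(p^*\Lambda^{m,q}(A)\otimes p^*E))$ and $\supp(\eta)\times[0,1]$ is compact, $\Psi_s^{m,q}\to\Psi_0^{m,q}$ uniformly on $\supp(\eta)$ as $s\to 0$, with respect to the fiberwise operator norm induced by $g^*_{1,m,q,\rho}$. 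From the pointwise estimate $|\Psi_s^{m,q}(\eta)-\Psi_0^{m,q}(\eta)|_{g^*_{1,m,q,\rho}}\le\bigl(\sup_{\supp(\eta)}|\Psi_s^{m,q}-\Psi_0^{m,q}|_{g^*_{1,m,q,\rho}}\bigr)\,|\eta|_{g^*_{1,m,q,\rho}}$ together with $\vol_{g_1}(\supp(\eta))<\infty$, integrating over $A$ against $\dvol_{g_1}$ yields $\|\Psi_s^{m,q}(\Phi_s^{m,q}(\eta))-\Psi_0^{m,q}(\eta)\|_{L^2\Omega^{m,q}(A,E|_A,g_1|_A,\rho|_A)}\to 0$ as $s\to 0$.

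Finally I would combine the two steps via a $3\varepsilon$-argument. Given an arbitrary $\eta\in L^2\Omega^{m,q}(A,E|_A,h|_A,\rho|_A)$ and $\varepsilon>0$, pick $\eta'\in\Omega_c^{m,q}(A,E|_A)$ with $\|\eta-\eta'\|_{L^2\Omega^{m,q}(A,E|_A,h|_A,\rho|_A)}<\varepsilon$, possible by density of $\Omega_c^{m,q}(A,E|_A)$ in that Hilbert space. Writing $\|\cdot\|_{g_1}$ for the $L^2$-norm with respect to $g_1|_A$ and $\rho|_A$, the triangle inequality gives
$$
\|\Psi_s^{m,q}(\Phi_s^{m,q}(\eta))-\Psi_0^{m,q}(\eta)\|_{g_1}\le\|(\Psi_s^{m,q}\circ\Phi_s^{m,q}-\Psi_0^{m,q})(\eta-\eta')\|_{g_1}+\|\Psi_s^{m,q}(\Phi_s^{m,q}(\eta'))-\Psi_0^{m,q}(\eta')\|_{g_1}.
$$
The first term is $\le(\sqrt a+1)\varepsilon$ by the uniform bound of the first step, and the second term tends to $0$ as $s\to 0$ by the second step; hence $\limsup_{s\to 0}\|\Psi_s^{m,q}(\Phi_s^{m,q}(\eta))-\Psi_0^{m,q}(\eta)\|_{g_1}\le(\sqrt a+1)\varepsilon$, and letting $\varepsilon\to 0$ gives the claim. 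The only point that genuinely needs care is the uniform-in-$s$ operator bound securing the density argument; the smooth case is a routine uniform-convergence-plus-finite-volume estimate.
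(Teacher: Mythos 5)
Your proof is correct and follows essentially the same route as the paper: first the compactly supported case via continuity of $\Psi_s^{m,q}$ on $A\times[0,1]$ together with $\vol_{g_1}(A)<\infty$ (you use uniform convergence on the compact support where the paper invokes dominated convergence — an immaterial difference), then a density argument for general $\eta$ controlled by the uniform-in-$s$ bound coming from \eqref{decia} and the isometry property of $\Psi_s^{m,q}$. Packaging that control as a single operator-norm bound $\|\Psi_s^{m,q}\circ\Phi_s^{m,q}-\Psi_0^{m,q}\|_{\op}\le\sqrt a+1$ is in fact a cleaner bookkeeping than the paper's three-term triangle inequality.
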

\begin{proof}
First we deal with the  case $\eta\in \Omega^{m,q}_c(A,E|_A)$. In this case 
$$\|\Psi_s^{m,q}(\Phi_s^{m,q}(\eta))-\Psi_0^{m,q}(\eta)\|^2_{L^2\Omega^{m,q}(A,E|_A,g_1|_A,\rho|_A)}=\int_Ag_{1,m,q,\rho}^*(\Psi_s^{m,q}\eta-\Psi_0^{m,q}\eta,\Psi_s^{m,q}\eta-\Psi_0^{m,q}\eta)\dvol_{g_1}.$$ Since $\eta\in \Omega_c^{m,q}(A,E|_A)$, $\Psi^{m,q}_s\in C(A\times [0,1], p^*\mathrm{End}(p^*\Lambda^{m,q}(A)\otimes p^*E))$ and $\mathrm{vol}_{g_1}(A)<\infty$, we can apply the Lebesgue dominated convergence theorem:
$$
\begin{aligned}
\lim_{s\rightarrow 0}\|\Psi_s^{m,q}(\Phi_s^{m,q}(\eta))-\Psi_0^{m,q}(\eta)\|^2_{L^2\Omega^{m,q}(A,E|_A,g_1|_A,\rho|_A)}&=\lim_{s\rightarrow 0}\int_Ag_{1,m,q,\rho}^*(\Psi_s^{m,q}\eta-\Psi_0^{m,q}\eta,\Psi_s^{m,q}\eta-\Psi_0^{m,q}\eta)\dvol_{g_1}\\
&=\int_A\lim_{s\rightarrow 0}g_{1,m,q,\rho}^*(\Psi_s^{m,q}\eta-\Psi_0^{m,q}\eta,\Psi_s^{m,q}\eta-\Psi_0^{m,q}\eta)\dvol_{g_1}\\
&=0.
\end{aligned}
$$
Now we consider the general case $\eta\in L^2\Omega^{m,q}(A,E|_A,h|_A,\rho|_A)$. Let $\epsilon >0$ be arbitrarily fixed and let $\varphi\in \Omega^{m,q}_c(A,E|_A)$ be such that $\|\eta-\varphi\|_{L^2\Omega^{m,q}(A,E|_A,h|_A,\rho|_A)}<\epsilon$. 
Let $a$ be the positive constant appearing in \eqref{decia}. Since $\Psi_s^{m,q}:L^2\Omega^{m,q}(A,E|_A,\rho|_A,g_s|_A)\rightarrow L^2\Omega^{m,q}(A,E|_A,\rho|_A,g_1|_A)$ is an isometry for each $s\in [0,1]$ we have 
$$
\begin{aligned}
&\|\Psi_s^{m,q}(\Phi_s^{m,q}(\eta))-\Psi_0^{m,q}(\eta)\|^2_{L^2\Omega^{m,q}(A,E|_A,g_1|_A,\rho|_A)}=\\
&\|\Psi_s^{m,q}(\Phi_s^{m,q}(\eta))-\Psi_s^{m,q}(\Phi_s^{m,q}(\varphi))+\Psi_s^{m,q}(\Phi_s^{m,q}(\varphi))-\Psi_0^{m,q}(\eta)\|^2_{L^2\Omega^{m,q}(A,E|_A,g_1|_A,\rho|_A)}\leq\\
& \|\Psi_s^{m,q}(\Phi_s^{m,q}(\eta))-\Psi_s^{m,q}(\Phi_s^{m,q}(\varphi))\|_{L^2\Omega^{m,q}(A,E|_A,g_1|_A,\rho|_A)}+\|\Psi_s^{m,q}(\Phi_s^{m,q}(\varphi))-\Psi_0^{m,q}(\eta)\|^2_{L^2\Omega^{m,q}(A,E|_A,g_1|_A,\rho|_A)}=\\
&\|\eta-\varphi\|_{L^2\Omega^{m,q}(A,E|_A,g_s|_A,\rho|_A)}+\|\Psi_s^{m,q}(\Phi_s^{m,q}(\varphi))-\Psi_0^{m,q}(\varphi)+\Psi_0^{m,q}(\varphi)-\Psi_0^{m,q}(\eta)\|^2_{L^2\Omega^{m,q}(A,E|_A,g_1|_A,\rho|_A)}\leq\\
& a\|\eta-\varphi\|_{L^2\Omega^{m,q}(A,E|_A,h|_A,\rho|_A)}+\|\Psi_s^{m,q}(\Phi_s^{m,q}(\varphi))-\Psi_0^{m,q}(\varphi)\|_{L^2\Omega^{m,q}(A,E|_A,g_1|_A,\rho|_A)}+\\
&\|\Psi_0^{m,q}(\varphi)-\Psi_0^{m,q}(\eta)\|^2_{L^2\Omega^{m,q}(A,E|_A,g_1|_A,\rho|_A)}\leq\\
& a\epsilon+\|\Psi_s^{m,q}(\Phi_s^{m,q}(\varphi))-\Psi_0^{m,q}(\varphi)\|_{L^2\Omega^{m,q}(A,E|_A,g_1|_A,\rho|_A)}+\epsilon.
\end{aligned}
$$
Since $\phi\in \Omega_c^{m,q}(A,E|_A)$ we can conclude as above that $$\lim_{s\rightarrow 0}\|\Psi_s^{m,q}(\Phi_s^{m,q}(\varphi))-\Psi_0^{m,q}(\varphi)\|_{L^2\Omega^{m,q}(A,E|_A,g_1|_A,\rho|_A)}=0$$ which in turn gives us 
$$\limsup_{s\rightarrow 0}\|\Psi_s^{m,q}(\Phi_s^{m,q}(\eta))-\Psi_0^{m,q}(\eta)\|^2_{L^2\Omega^{m,q}(A,E|_A,g_1|_A,\rho|_A)}\leq (a+1)\epsilon.$$
Since $\epsilon$ is arbitrarily fixed we can conclude that $$\lim_{s\rightarrow 0}\|\Psi_s^{m,q}(\Phi_s^{m,q}(\eta))-\Psi_0^{m,q}(\eta)\|^2_{L^2\Omega^{m,q}(A,E|_A,g_1|_A,\rho|_A)}=0$$ as desired.
\end{proof}

\begin{lemma}
\label{lemma6}
Let $\{s_n\}_{n\in \mathbb{N}}\subset (0,1]$ be a sequence such that $s_n\rightarrow 0$ as $n\rightarrow \infty$. Let $\eta\in L^2\Omega^{m,q}(A,E|_A,h|_A,\rho|_A)$ and let $\{\eta_{s_n}\}_{n\in \mathbb{N}}$ be a sequence such that $\eta_{s_n}\in L^2\Omega^{m,q}(A,E|_A,g_{s_n}|_A,\rho|_A)$. Then  $\eta_{s_n}\rightarrow \eta$ strongly as $n\rightarrow \infty$ if and only if  
\begin{equation}
\label{lim}
\lim_{n\rightarrow \infty}\|\Psi_{s_n}^{m,q}\eta_{s_n}-\Psi_0^{m,q}\eta\|_{L^2\Omega^{m,q}(A,E|_A,g_1|_A,\rho|_A)}=0.
\end{equation}
\end{lemma}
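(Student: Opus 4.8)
The plan is to derive both implications from a single ingredient: each $\Psi_s^{m,q}$ is an isometry of $L^2\Omega^{m,q}(A,E|_A,g_s|_A,\rho|_A)$ onto $L^2\Omega^{m,q}(A,E|_A,g_1|_A,\rho|_A)$, combined with Lemma \ref{lemma5}, which says that $\|\Psi_s^{m,q}(\Phi_s^{m,q}\zeta)-\Psi_0^{m,q}\zeta\|_{L^2\Omega^{m,q}(A,E|_A,g_1|_A,\rho|_A)}\to 0$ as $s\to 0$ for every $\zeta$ in the reference space $\mathcal{C}=L^2\Omega^{m,q}(A,E|_A,g_0|_A,\rho|_A)$. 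Throughout I will use that, by construction, $\mathcal{C}$ is the full Hilbert space $L^2\Omega^{m,q}(A,E|_A,h|_A,\rho|_A)$, that $\Phi_{s_n}^{m,q}$ is the inclusion induced by the identity, and that $\Psi_0^{m,q}$ is an isometry of $\mathcal{C}$ onto $L^2\Omega^{m,q}(A,E|_A,g_1|_A,\rho|_A)$.

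For the direction ``$\eta_{s_n}\to\eta$ strongly $\Rightarrow$ \eqref{lim}'', I would start from Def. \ref{strong}: fix a net $\{v_\beta\}_{\beta\in\mathcal{B}}\subset\mathcal{C}$ with $v_\beta\to\eta$ in $\mathcal{C}$ and $\lim_\beta\limsup_{n}\|\Phi_{s_n}^{m,q}v_\beta-\eta_{s_n}\|_{L^2\Omega^{m,q}(A,E|_A,g_{s_n}|_A,\rho|_A)}=0$. For each $\beta$, a triangle inequality splits $\|\Psi_{s_n}^{m,q}\eta_{s_n}-\Psi_0^{m,q}\eta\|_{L^2\Omega^{m,q}(A,E|_A,g_1|_A,\rho|_A)}$ into $\|\Psi_{s_n}^{m,q}\eta_{s_n}-\Psi_{s_n}^{m,q}\Phi_{s_n}^{m,q}v_\beta\|+\|\Psi_{s_n}^{m,q}\Phi_{s_n}^{m,q}v_\beta-\Psi_0^{m,q}v_\beta\|+\|\Psi_0^{m,q}v_\beta-\Psi_0^{m,q}\eta\|$, all three $L^2$-norms taken with respect to $g_1$. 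The first summand equals $\|\eta_{s_n}-\Phi_{s_n}^{m,q}v_\beta\|_{L^2\Omega^{m,q}(A,E|_A,g_{s_n}|_A,\rho|_A)}$ because $\Psi_{s_n}^{m,q}$ is an isometry, the second tends to $0$ as $n\to\infty$ for fixed $\beta$ by Lemma \ref{lemma5}, and the third equals $\|v_\beta-\eta\|_{\mathcal{C}}$ because $\Psi_0^{m,q}$ is an isometry. Taking $\limsup_{n\to\infty}$ and then $\lim_\beta$ makes the right-hand side vanish; since $\limsup_n\|\Psi_{s_n}^{m,q}\eta_{s_n}-\Psi_0^{m,q}\eta\|_{L^2\Omega^{m,q}(A,E|_A,g_1|_A,\rho|_A)}$ does not depend on $\beta$, it must equal $0$, which is \eqref{lim}.

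For the converse, assuming \eqref{lim}, the move is to test Def. \ref{strong} against the constant net $v_\beta\equiv\eta$, which is admissible precisely because $\eta\in\mathcal{C}$. Using once more that $\Psi_{s_n}^{m,q}$ is an isometry, I get $\|\Phi_{s_n}^{m,q}\eta-\eta_{s_n}\|_{L^2\Omega^{m,q}(A,E|_A,g_{s_n}|_A,\rho|_A)}=\|\Psi_{s_n}^{m,q}\Phi_{s_n}^{m,q}\eta-\Psi_{s_n}^{m,q}\eta_{s_n}\|_{L^2\Omega^{m,q}(A,E|_A,g_1|_A,\rho|_A)}$, which is bounded by $\|\Psi_{s_n}^{m,q}\Phi_{s_n}^{m,q}\eta-\Psi_0^{m,q}\eta\|_{L^2\Omega^{m,q}(A,E|_A,g_1|_A,\rho|_A)}+\|\Psi_0^{m,q}\eta-\Psi_{s_n}^{m,q}\eta_{s_n}\|_{L^2\Omega^{m,q}(A,E|_A,g_1|_A,\rho|_A)}$; the first term goes to $0$ by Lemma \ref{lemma5} and the second by \eqref{lim}. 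Hence $\limsup_n\|\Phi_{s_n}^{m,q}\eta-\eta_{s_n}\|_{L^2\Omega^{m,q}(A,E|_A,g_{s_n}|_A,\rho|_A)}=0$, which is exactly the defining condition of strong convergence realized by the constant net, so $\eta_{s_n}\to\eta$ strongly.

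I do not expect a genuine obstacle here; the only points requiring care are the order of the two limiting operations ($\limsup$ over $n$ first, then $\lim$ over the net $\beta$) in the first implication, and the observation that $\mathcal{C}$ coincides with the whole Hilbert space, so that the constant net at $\eta$ is available in the second implication.
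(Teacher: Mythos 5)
Your proof is correct and follows essentially the same route as the paper's: both implications rest on the isometry property of $\Psi_{s_n}^{m,q}$, Lemma \ref{lemma5}, and a triangle inequality, with the constant net at $\eta$ furnishing strong convergence in the converse direction. The only (harmless, in fact slightly more careful) difference is that in the forward direction you unwind Def. \ref{strong} with a general net $\{v_\beta\}$ and take $\limsup_n$ before $\lim_\beta$, whereas the paper works directly with the constant net at $\eta$, implicitly using that strong convergence here is equivalent to $\|\eta_{s_n}-\Phi_{s_n}^{m,q}\eta\|_{L^2\Omega^{m,q}(A,E|_A,g_{s_n}|_A,\rho|_A)}\rightarrow 0$.
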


\begin{proof}
Let us assume that $\eta_{s_n}\rightarrow \eta$ strongly as $n\rightarrow \infty$.
We have 
$$
\begin{aligned}
&\|\Psi_{s_n}^{m,q}\eta_{s_n}-\Psi_0^{m,q}\eta\|_{L^2\Omega^{m,q}(A,E|_A,g_1|_A,\rho|_A)}=\\
&\|\Psi_{s_n}^{m,q}\eta_{s_n}-\Psi_{s_n}^{m,q}(\Phi^{m,q}_{s_n}(\eta))+\Psi_{s_n}^{m,q}(\Phi_{s_n}^{m,q}(\eta))-\Psi_0^{m,q}\eta\|_{L^2\Omega^{m,q}(A,E|_A,g_1|_A,\rho|_A)}\leq\\
& \|\Psi_{s_n}^{m,q}\eta_{s_n}-\Psi_{s_n}^{m,q}(\Phi^{m,q}_{s_n}(\eta))\|_{L^2\Omega^{m,q}(A,E|_A,g_1|_A,\rho|_A)}+\|\Psi_{s_n}^{m,q}(\Phi_{s_n}^{m,q}(\eta))-\Psi_0^{m,q}\eta\|_{L^2\Omega^{m,q}(A,E|_A,g_1|_A,\rho|_A)}=\\
&\|\eta_{s_n}-\Phi^{m,q}_{s_n}(\eta)\|_{L^2\Omega^{m,q}(A,E|_A,g_{s_n}|_A,\rho)}+\|\Psi_{s_n}^{m,q}(\Phi_{s_n}^{m,q}(\eta))-\Psi_0^{m,q}\eta\|_{L^2\Omega^{m,q}(A,E|_A,g_1|_A,\rho|_A)}.
\end{aligned}
$$
As $\eta_{s_n}\rightarrow \eta$ strongly as $n\rightarrow \infty$ we know that $$\lim_{n\rightarrow \infty}\|\eta_{s_n}-\Phi^{m,q}_{s_n}(\eta)\|_{L^2\Omega^{m,q}(A,E|_A,g_{s_n}|_A,\rho|_A)}=0.$$ Furthermore Lemma \ref{lemma5} tells us that $$\lim_{n\rightarrow \infty}\|\Psi_{s_n}^{m,q}(\Phi_{s_n}^{m,q}(\eta))-\Psi_0^{m,q}\eta\|_{L^2\Omega^{m,q}(A,E|_A,g_1|_A,\rho|_A)}=0.$$
We can thus conclude that $$\lim_{n\rightarrow \infty}\|\Psi_{s_n}^{m,q}\eta_{s_n}-\Psi_0^{m,q}\eta\|_{L^2\Omega^{m,q}(A,E|_A,g_1|_A,\rho|_A)}=0.$$
Conversely let us assume \eqref{lim}. We want to show that $\eta_{s_n}\rightarrow \eta$ strongly as $n\rightarrow \infty$, that is $$\lim_{n\rightarrow \infty}\|\eta_{s_n}-\Phi_{s_n}^{m,q}\eta\|_{L^2\Omega^{m,q}(A,E|_A,g_{s_n}|_A,\rho|_A)}=0.$$
We have 
$$\begin{aligned}
&\|\eta_{s_n}-\Phi_{s_n}^{m,q}\eta\|_{L^2\Omega^{m,q}(A,E|_A,g_{s_n}|_A,\rho|_A)}=\|\Psi_{s_n}^{m,q}\eta_{s_n}-\Psi_{0}^{m,q}\eta+\Psi_{0}^{m,q}\eta-\Psi^{m,q}_{s_n}(\Phi_{s_n}^{m,q}\eta)\|_{L^2\Omega^{m,q}(A,E|_A,g_1|_A,\rho|_A)}\leq\\
& \|\Psi_{s_n}^{m,q}\eta_{s_n}-\Psi_{0}^{m,q}\eta\|_{L^2\Omega^{m,q}(A,E|_A,g_1|_A,\rho|_A)}+\|\Psi_{0}^{m,q}\eta-\Psi^{m,q}_{s_n}(\Phi_{s_n}^{m,q}\eta)\|_{L^2\Omega^{m,q}(A,E|_A,g_1|_A,\rho|_A)}.
\end{aligned}$$
We assumed that $$\lim_{n\rightarrow \infty}\|\Psi_{s_n}^{m,q}\eta_{s_n}-\Psi_{0}^{m,q}\eta\|_{L^2\Omega^{m,q}(A,E|_A,g_1|_A,\rho|_A)}=0$$ and by Lemma \ref{lemma5} we know that $$\lim_{n\rightarrow \infty}\|\|\Psi_{0}^{m,q}\eta-\Psi^{m,q}_{s_n}(\Phi_{s_n}^{m,q}\eta)\|_{L^2\Omega^{m,q}(A,E|_A,g_1|_A,\rho|_A)}\|=0.$$
We can thus conclude that $\eta_{s_n}\rightarrow \eta$ strongly as $n\rightarrow \infty$.
\end{proof}

\begin{lemma}
\label{lemma6bis}
Let $\{s_n\}_{n\in \mathbb{N}}\subset (0,1]$ be a sequence such that $s_n\rightarrow 0$ as $n\rightarrow \infty$. Let $\eta\in L^2\Omega^{m,q}(A,E|_A,h|_A,\rho|_A)$ and let $\{\eta_{s_n}\}_{n\in \mathbb{N}}$ be a sequence such that $\eta_{s_n}\in L^2\Omega^{m,q}(A,E|_A,g_{s_n}|_A,\rho|_A)$. Then  $\eta_{s_n}\rightarrow \eta$ weakly as $n\rightarrow \infty$ if and only if  
\begin{equation}
\label{limw}
\Psi_{s_n}^{m,q}\eta_{s_n} \rightharpoonup \Psi_0^{m,q}\eta
\end{equation}
in $L^2\Omega^{m,q}(A,E|_A,g_1|_A,\rho|_A)$ as $n\rightarrow \infty$.
\end{lemma}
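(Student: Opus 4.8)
The plan is to reduce this weak-convergence statement to Lemma \ref{lemma6}, its norm-convergence counterpart, by exploiting one extra feature of the operators $\Psi_s^{m,q}$: for each $s\in[0,1]$ the map $\Psi_s^{m,q}$ is not merely a norm-preserving map but a genuine \emph{unitary isomorphism} of $L^2\Omega^{m,q}(A,E|_A,g_s|_A,\rho|_A)$ onto $L^2\Omega^{m,q}(A,E|_A,g_1|_A,\rho|_A)$. This holds because $\Gamma_s^{m,q}$ is fibrewise positive definite, hence fibrewise invertible, and $(\det(F^{1,0}_{\mathbb{C},s}))^{1/2}$ is a strictly positive scalar function; consequently $\Psi_s^{m,q}$ has a bounded inverse and satisfies $(\Psi_s^{m,q})^*=(\Psi_s^{m,q})^{-1}$. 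I will abbreviate $L^2\Omega^{m,q}(A,E|_A,g_1|_A,\rho|_A)$, $L^2\Omega^{m,q}(A,E|_A,g_{s_n}|_A,\rho|_A)$ and $L^2\Omega^{m,q}(A,E|_A,h|_A,\rho|_A)$ by $K$, $H_n$ and $H$ respectively.

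For the forward implication (from $\eta_{s_n}\rightharpoonup\eta$ to \eqref{limw}) I would fix an arbitrary $\xi\in K$, put $w:=(\Psi_0^{m,q})^{-1}\xi\in H$ and $w_n:=(\Psi_{s_n}^{m,q})^{-1}\xi\in H_n$, observe that $\Psi_{s_n}^{m,q}w_n=\xi\to\xi=\Psi_0^{m,q}w$ in $K$ so that Lemma \ref{lemma6} yields $w_n\to w$ strongly, and then feed the pair $(\eta_{s_n},w_n)$ into Def. \ref{weak} to get $\langle\eta_{s_n},w_n\rangle_{H_n}\to\langle\eta,w\rangle_H$; rewriting both sides by means of $(\Psi_{s_n}^{m,q})^*=(\Psi_{s_n}^{m,q})^{-1}$ and $(\Psi_0^{m,q})^*=(\Psi_0^{m,q})^{-1}$ this becomes precisely $\langle\Psi_{s_n}^{m,q}\eta_{s_n},\xi\rangle_K\to\langle\Psi_0^{m,q}\eta,\xi\rangle_K$, which is \eqref{limw} as $\xi$ was arbitrary. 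For the converse I would take any $w\in H$ and any $w_n\in H_n$ strongly convergent to $w$, invoke Lemma \ref{lemma6} to turn this into $\Psi_{s_n}^{m,q}w_n\to\Psi_0^{m,q}w$ in $K$, and then combine \eqref{limw} with the elementary Hilbert-space fact that $\langle a_n,b_n\rangle\to\langle a,b\rangle$ whenever $a_n\rightharpoonup a$ and $b_n\to b$ (recall that weakly convergent sequences are bounded), obtaining $\langle\eta_{s_n},w_n\rangle_{H_n}=\langle\Psi_{s_n}^{m,q}\eta_{s_n},\Psi_{s_n}^{m,q}w_n\rangle_K\to\langle\Psi_0^{m,q}\eta,\Psi_0^{m,q}w\rangle_K=\langle\eta,w\rangle_H$, i.e.\ $\eta_{s_n}\rightharpoonup\eta$ in the sense of Def. \ref{weak}.

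I do not expect a serious obstacle; the only point requiring care is that the argument genuinely uses the \emph{surjectivity} of $\Psi_s^{m,q}$, not just that it preserves norms — surjectivity is what allows one to realise an arbitrary test vector $\xi\in K$ as $\Psi_{s_n}^{m,q}w_n$ for a strongly convergent net $\{w_n\}$, and it is what makes the correspondence between the two notions of weak convergence a genuine equivalence rather than a one-sided implication. Everything else is the routine bookkeeping of transporting Def. \ref{weak} through the unitaries $\Psi_{s_n}^{m,q}$ and $\Psi_0^{m,q}$ with the aid of Lemma \ref{lemma6}.
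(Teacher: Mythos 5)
Your proof is correct and follows essentially the same route as the paper: both directions reduce to Lemma \ref{lemma6} by transporting test vectors through the surjective isometries $\Psi_{s_n}^{m,q}$ (the paper likewise tests against $(\Psi_{s_n}^{m,q})^{-1}\omega$ in the forward direction and pushes an arbitrary strongly convergent sequence forward in the converse). The extra emphasis you place on surjectivity and $(\Psi_s^{m,q})^*=(\Psi_s^{m,q})^{-1}$ is implicit in the paper's argument but does not change it.
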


\begin{proof}
Assume that $\eta_{s_n}\rightarrow \eta$ weakly as $n\rightarrow \infty$. Let $\omega\in L^2\Omega^{m,q}(A,g_1|_A,E|_A,\rho|_A)$. Thanks to Lemma \ref{lemma6} we know that $(\Psi_{s_n}^{m,q})^{-1}\omega\rightarrow (\Psi_0^{m,q})^{-1}\omega$ strongly as $n\rightarrow \infty$. Thus we obtain
$$\begin{aligned}
\lim_{n\rightarrow \infty }\langle\Psi_{s_n}^{m,q}\eta_{s_n},\omega\rangle_{L^2\Omega^{m,q}(A,E|_A,g_1|_A,\rho|_A)}&=\lim_{n\rightarrow \infty}\langle\eta_{s_n},(\Psi_{s_n}^{m,q})^{-1}\omega\rangle_{L^2\Omega^{m,q}(A,E|_A,g_{s_n}|_A,\rho|_A)}\\
&= \langle\eta,(\Psi_{0}^{m,q})^{-1}\omega\rangle_{L^2\Omega^{m,q}(A,E|_A,h|_A,\rho|_A)}\\
&= \langle\Psi_{0}^{m,q}\eta,\omega\rangle_{L^2\Omega^{m,q}(A,E|_A,g_1|_A,\rho|_A)}
\end{aligned}$$
as required. Conversely let us assume that $\Psi_{s_n}^{m,q}\eta_{s_n} \rightharpoonup \Psi_0^{m,q}\eta$
in $L^2\Omega^{m,q}(A,E|_A,g_1|_A,\rho|_A)$ as $n\rightarrow \infty$. Let $\{\phi_{s_n}\}_{n\in \mathbb{N}}$ be a sequence such that $\phi_{s_n}\in L^2\Omega^{m,q}(A,E|_A,g_{s_n}|_A,\rho|_A)$ and $\phi_{s_n}\rightarrow \phi$ strongly to some $\phi\in L^2\Omega^{m,q}(A,E|_A,h|_A,\rho|_A)$ as $n\rightarrow \infty$. Lemma \ref{lemma6} tells us that $\|\Psi_{s_n}^{m,q}\phi_{s_n}-\Psi_0^{m,q}\phi\|_{L^2\Omega^{m,q}(A,E|_A,g_1|_A,\rho|_A)}\rightarrow 0$ as $n\rightarrow \infty$. Hence we obtain
$$\begin{aligned}
\lim_{n\rightarrow \infty}\langle\eta_{s_n},\phi_{s_n}\rangle_{L^2\Omega^{m,q}(A,E|_A,g_{s_n}|_A,\rho|_A)}&=\lim_{n\rightarrow \infty}\langle\Psi^{m,q}_{s_n}\eta_{s_n},\Psi_{s_n}^{m,q}\phi_{s_n}\rangle_{L^2\Omega^{m,q}(A,E|_A,g_1|_A,\rho|_A)}\\
&=\langle\Psi^{m,q}_{0}\eta,\Psi_{0}^{m,q}\phi\rangle_{L^2\Omega^{m,q}(A,E|_A,g_1|_A,\rho|_A)}\\
&=\langle \eta,\phi\rangle_{L^2\Omega^{m,q}(A,E|_A,h|_A,\rho|_A)}
\end{aligned}$$
as desired.
\end{proof}

\begin{lemma}
\label{lemma7bis}
Let $\{s_n\}_{n\in \mathbb{N}}\subset (0,1]$ be a sequence such that $s_n\rightarrow 0$ as $n\rightarrow \infty$. Then:
$$\lim_{n\rightarrow \infty}\|\Psi_{s_n}^{m,q}\circ G_{\overline{\partial}_{E,m,q,\max}^{g_{s_n},h}}\circ (\Psi_{0}^{m,q+1})^{-1}-\Psi_{0}^{m,q}\circ G_{\overline{\partial}_{E,m,q,\max}^{h,h}}\circ (\Psi_{0}^{m,q+1})^{-1}\|_{\mathrm{op}}=0$$
and $$\lim_{n\rightarrow \infty}\|G_{\overline{\partial}_{E,m,q}^{g_1,g_{s_n}}}\circ (\Psi_{s_n}^{m,q+1})^{-1}- G_{\overline{\partial}_{E,m,q,\max}^{g_1,h}}\circ (\Psi_{0}^{m,q+1})^{-1}\|_{\mathrm{op}}=0.$$
\end{lemma}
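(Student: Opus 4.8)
The plan is to deduce both operator–norm limits from the compact–convergence results of Theorems~\ref{compact1} and~\ref{compact2} by conjugating, through the pointwise isometries $\Psi_s^{\bullet,\bullet}$ of~\eqref{isoiso}, onto the fixed pair of Hilbert spaces $L^2\Omega^{m,q+1}(A,E|_A,g_1|_A,\rho|_A)$ and $L^2\Omega^{m,q}(A,E|_A,g_1|_A,\rho|_A)$, and then invoking Proposition~\ref{usefulprop}. As a preliminary remark, since $\Gamma_s^{m,q}$ is fibrewise self-adjoint and positive definite, hence fibrewise invertible, and $\det(F^{1,0}_{\mathbb{C},s})>0$, the section $\Psi_s^{m,q}$ is fibrewise invertible with continuous inverse; together with the isometry property already established this shows that $\Psi_s^{m,q}\colon L^2\Omega^{m,q}(A,E|_A,g_s|_A,\rho|_A)\to L^2\Omega^{m,q}(A,E|_A,g_1|_A,\rho|_A)$ is a unitary isomorphism for each $s\in[0,1]$, and likewise in bidegree $(m,q+1)$; in particular $(\Psi_0^{m,q+1})^{-1}$ is bounded, hence weak–weak continuous.

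For the first limit I would put $B_n:=\Psi_{s_n}^{m,q}\circ G_{\overline{\partial}_{E,m,q,\max}^{g_{s_n},h}}\circ(\Psi_0^{m,q+1})^{-1}$ and $B:=\Psi_0^{m,q}\circ G_{\overline{\partial}_{E,m,q,\max}^{h,h}}\circ(\Psi_0^{m,q+1})^{-1}$, which are bounded operators from $L^2\Omega^{m,q+1}(A,E|_A,g_1|_A,\rho|_A)$ to $L^2\Omega^{m,q}(A,E|_A,g_1|_A,\rho|_A)$. Given an arbitrary weakly convergent sequence $\omega_n\rightharpoonup\omega$ in $L^2\Omega^{m,q+1}(A,E|_A,g_1|_A,\rho|_A)$, I would set $\alpha_n:=(\Psi_0^{m,q+1})^{-1}\omega_n$ and $\alpha:=(\Psi_0^{m,q+1})^{-1}\omega$, so that $\alpha_n\rightharpoonup\alpha$ in $L^2\Omega^{m,q+1}(A,E|_A,h|_A,\rho|_A)$; Theorem~\ref{compact2} would then give $G_{\overline{\partial}_{E,m,q,\max}^{g_{s_n},h}}\alpha_n\to G_{\overline{\partial}_{E,m,q,\max}^{h,h}}\alpha$ strongly in the sense of Definition~\ref{strong}, and Lemma~\ref{lemma6}, applied in bidegree $(m,q)$, would translate this into $\|B_n\omega_n-B\omega\|_{L^2\Omega^{m,q}(A,E|_A,g_1|_A,\rho|_A)}\to 0$. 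As $\omega_n\rightharpoonup\omega$ is arbitrary, Proposition~\ref{usefulprop} — whose proof, namely \cite[Lemma~2.8]{KuSh}, carries over verbatim to operators between two distinct Hilbert spaces — yields $\|B_n-B\|_{\mathrm{op}}\to 0$, which is the first assertion.

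For the second limit I would put $B_n':=G_{\overline{\partial}_{E,m,q}^{g_1,g_{s_n}}}\circ(\Psi_{s_n}^{m,q+1})^{-1}$ and $B':=G_{\overline{\partial}_{E,m,q,\max}^{g_1,h}}\circ(\Psi_0^{m,q+1})^{-1}$, bounded operators between the same fixed pair of spaces by Corollary~\ref{allGreen} (no conjugation on the target is needed, since both Green operators already map into $L^2\Omega^{m,q}(A,E|_A,g_1|_A,\rho|_A)$). Given $\omega_n\rightharpoonup\omega$ in $L^2\Omega^{m,q+1}(A,E|_A,g_1|_A,\rho|_A)$, I would set $\eta_{s_n}:=(\Psi_{s_n}^{m,q+1})^{-1}\omega_n$ and $\eta:=(\Psi_0^{m,q+1})^{-1}\omega$; by Lemma~\ref{lemma6bis} in bidegree $(m,q+1)$, the identity $\Psi_{s_n}^{m,q+1}\eta_{s_n}=\omega_n\rightharpoonup\omega=\Psi_0^{m,q+1}\eta$ means exactly that $\eta_{s_n}\rightharpoonup\eta$ weakly in the sense of Definition~\ref{weak}. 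Theorem~\ref{compact1} then provides $G_{\overline{\partial}_{E,m,q}^{g_1,g_{s_n}}}\eta_{s_n}\to G_{\overline{\partial}_{E,m,q,\max}^{g_1,h}}\eta$ in $L^2\Omega^{m,q}(A,E|_A,g_1|_A,\rho|_A)$; since the target is fixed this is plain norm convergence there, exactly as in the closing lines of the proof of Theorem~\ref{compact1}, so $\|B_n'\omega_n-B'\omega\|\to 0$, and Proposition~\ref{usefulprop} again concludes.

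The hard part is not any individual estimate but keeping straight the four Hilbert spaces in play and which notion of convergence is being used at each stage — the Kuwae–Shioya one of Definitions~\ref{strong}–\ref{weak} attached to the degenerating family $\{g_{s_n}\}$, or the ordinary one on the fixed $g_1$– and $h$–spaces. The dictionary between the two is furnished precisely by the isometries $\Psi_s^{\bullet,\bullet}$ via Lemmas~\ref{lemma6} and~\ref{lemma6bis}, and once this is in place the statement is an immediate consequence of Theorems~\ref{compact1} and~\ref{compact2} together with Proposition~\ref{usefulprop}.
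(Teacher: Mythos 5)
Your proposal is correct and follows exactly the route the paper takes: the paper's proof of this lemma is a one-line citation of Proposition \ref{usefulprop}, Theorems \ref{compact1} and \ref{compact2}, and Lemmas \ref{lemma6} and \ref{lemma6bis}, and your argument is precisely the fully written-out version of that, using the isometries $\Psi_s^{m,\bullet}$ to transport the Kuwae--Shioya compact convergence onto the fixed $g_1$-Hilbert spaces before invoking Proposition \ref{usefulprop}. Your remark that \cite[Lemma 2.8]{KuSh} applies verbatim to operators between two distinct fixed Hilbert spaces is a worthwhile clarification, since the paper uses it in that setting without comment.
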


\begin{proof}
The first limit above follows immediately by Prop.\ref{usefulprop}, Th.\ref{compact2} and Lemmas \ref{lemma6} and \ref{lemma6bis}. The second one follows immediately by Prop.\ref{usefulprop}, Th.\ref{compact1} and Lemmas \ref{lemma6} and \ref{lemma6bis}.
\end{proof}

\begin{lemma}
\label{lemma8}
Let $\{s_n\}_{n\in \mathbb{N}}\subset (0,1]$ be a sequence such that $s_n\rightarrow 0$ as $n\rightarrow \infty$. Then: $$\lim_{n\rightarrow \infty}\|\Psi_{0}^{m,q+1}\circ G_{\overline{\partial}_{E,m,q,\min}^{g_{s_n},h,t}}\circ (\Psi_{s_n}^{m,q})^{-1}-\Psi_{0}^{m,q+1}\circ G_{\overline{\partial}_{E,m,q,\min}^{h,h,t}}\circ (\Psi_{0}^{m,q})^{-1}\|_{\mathrm{op}}=0$$
and $$\lim_{n\rightarrow \infty}\|(\Psi_{s_n}^{m,q+1})\circ G_{\overline{\partial}_{E,m,q}^{g_1,g_{s_n},t}} - (\Psi_{0}^{m,q+1})\circ G_{\overline{\partial}_{E,m,q,\min}^{g_1,h,t}} \|_{\mathrm{op}}=0.$$
\end{lemma}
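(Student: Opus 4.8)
The plan is to deduce both limits from Lemma \ref{lemma7bis} by passing to Hilbert--space adjoints, using that the maps $\Psi_s^{m,\bullet}$ are unitary, that $\Psi_1^{m,q}=\id$ (since $F_1=\id$ forces $S_1^{m,q}=\id$ and hence $\Gamma_1^{m,q}=\id$), and the elementary identity $G_{T^*}=(G_T)^*$, valid for any closed, densely defined operator $T$ between Hilbert spaces with closed range (so that $T^*$ also has closed range and both Green operators are defined).

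For the first identity I would set, for $n\geq 0$ (writing $s_0=0$, $g_0=h$),
$$\hat{T}_{s_n}:=\Psi_0^{m,q+1}\circ\overline{\partial}_{E,m,q,\max}^{g_{s_n},h}\circ(\Psi_{s_n}^{m,q})^{-1}:L^2\Omega^{m,q}(A,E|_A,g_1|_A,\rho|_A)\to L^2\Omega^{m,q+1}(A,E|_A,g_1|_A,\rho|_A),$$
which has closed range because $\overline{\partial}_{E,m,q,\max}^{g_{s_n},h}$ does (Prop. \ref{Gianni}) and the $\Psi$'s are unitary. A direct computation with the unitaries gives $G_{\hat{T}_{s_n}}=\Psi_{s_n}^{m,q}\circ G_{\overline{\partial}_{E,m,q,\max}^{g_{s_n},h}}\circ(\Psi_0^{m,q+1})^{-1}$, so the first limit of Lemma \ref{lemma7bis} says precisely $\|G_{\hat{T}_{s_n}}-G_{\hat{T}_0}\|_{\mathrm{op}}\to 0$. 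On the other hand, by \eqref{mixedad} and unitarity, $\hat{T}_{s_n}^*=\Psi_{s_n}^{m,q}\circ\overline{\partial}_{E,m,q,\min}^{g_{s_n},h,t}\circ(\Psi_0^{m,q+1})^{-1}$, and therefore, using $G_{T^*}=(G_T)^*$,
$$\Psi_0^{m,q+1}\circ G_{\overline{\partial}_{E,m,q,\min}^{g_{s_n},h,t}}\circ(\Psi_{s_n}^{m,q})^{-1}=G_{\hat{T}_{s_n}^*}=(G_{\hat{T}_{s_n}})^*.$$
Since $\|A^*\|_{\mathrm{op}}=\|A\|_{\mathrm{op}}$ and $(G_{\hat{T}_{s_n}}-G_{\hat{T}_0})^*=G_{\hat{T}_{s_n}^*}-G_{\hat{T}_0^*}$, the first claimed limit follows at once.

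For the second identity I would argue identically with $\hat{S}_{s_n}:=\Psi_{s_n}^{m,q+1}\circ\overline{\partial}_{E,m,q}^{g_1,g_{s_n}}$ for $n\geq 1$ and $\hat{S}_0:=\Psi_0^{m,q+1}\circ\overline{\partial}_{E,m,q,\max}^{g_1,h}$; no conjugation is needed on the source since it is already the $g_1$-space (equivalently $\Psi_1^{m,q}=\id$), and closedness of ranges comes from Cor. \ref{allGreen} and Prop. \ref{Gianni}. Then $G_{\hat{S}_{s_n}}=G_{\overline{\partial}_{E,m,q}^{g_1,g_{s_n}}}\circ(\Psi_{s_n}^{m,q+1})^{-1}$ and $G_{\hat{S}_0}=G_{\overline{\partial}_{E,m,q,\max}^{g_1,h}}\circ(\Psi_0^{m,q+1})^{-1}$, so the second limit of Lemma \ref{lemma7bis} reads $\|G_{\hat{S}_{s_n}}-G_{\hat{S}_0}\|_{\mathrm{op}}\to 0$; while $\hat{S}_{s_n}^*=\overline{\partial}_{E,m,q}^{g_1,g_{s_n},t}\circ(\Psi_{s_n}^{m,q+1})^{-1}$ (using \eqref{mixedad} together with the fact, proved as in Lemma \ref{Seb} via quasi-isometry of $g_1$ and $g_{s_n}$, that the formal adjoint admits a unique closed extension) and $\hat{S}_0^*=\overline{\partial}_{E,m,q,\min}^{g_1,h,t}\circ(\Psi_0^{m,q+1})^{-1}$, whence $G_{\hat{S}_{s_n}^*}=(G_{\hat{S}_{s_n}})^*=\Psi_{s_n}^{m,q+1}\circ G_{\overline{\partial}_{E,m,q}^{g_1,g_{s_n},t}}$ and $G_{\hat{S}_0^*}=\Psi_0^{m,q+1}\circ G_{\overline{\partial}_{E,m,q,\min}^{g_1,h,t}}$. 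Passing to adjoints as before gives the second claim.

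The only non-bookkeeping point is the identity $G_{T^*}=(G_T)^*$: granting $\im(T)$ closed, one checks it directly by noting that for $u\in\im(T)$ and $w\in\im(T^*)$, writing $v=G_Tu$ and $z=G_{T^*}w$, one has $\langle u,z\rangle=\langle Tv,z\rangle=\langle v,T^*z\rangle=\langle v,w\rangle=\langle G_Tu,w\rangle$, and both sides vanish on the orthogonal complements $\ker(T^*)$ and $\ker(T)$ respectively. I do not expect a genuine obstacle here; the main care required is simply the systematic matching of the conjugated operators $\hat{T}_{s_n}$, $\hat{S}_{s_n}$ and their adjoints with the four operator families appearing in Lemmas \ref{lemma7bis} and \ref{lemma8}, keeping track of which Hilbert space each $\Psi_s^{m,\bullet}$ and each Green operator acts between.
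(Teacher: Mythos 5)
Your proposal is correct and follows essentially the same route as the paper: the paper also deduces both limits from Lemma \ref{lemma7bis} by observing that the four operators in Lemma \ref{lemma8} are precisely the Hilbert-space adjoints of the four operators in Lemma \ref{lemma7bis} and that the operator norm is invariant under taking adjoints. The only difference is that you spell out the identity $G_{T^*}=(G_T)^*$ and the bookkeeping with the unitaries $\Psi_s^{m,\bullet}$, which the paper leaves implicit.
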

\begin{proof}
Note that $$\left(\Psi_{s_n}^{m,q}\circ G_{\overline{\partial}_{E,m,q,\max}^{g_{s_n},h}}\circ (\Psi_{0}^{m,q+1})^{-1}\right)^*=\Psi_{0}^{m,q+1}\circ G_{\overline{\partial}_{E,m,q,\min}^{g_{s_n},h,t}}\circ (\Psi_{s_n}^{m,q})^{-1}$$ and $$\left(\Psi_{0}^{m,q}\circ G_{\overline{\partial}_{E,m,q,\max}^{h,h}}\circ (\Psi_{0}^{m,q+1})^{-1}\right)^*=\Psi_{0}^{m,q+1}\circ G_{\overline{\partial}_{E,m,q,\min}^{h,h,t}}\circ (\Psi_{0}^{m,q})^{-1}.$$
Analogously $$\left(G_{\overline{\partial}_{E,m,q}^{g_1,g_{s_n}}}\circ (\Psi_{s_n}^{m,q+1})^{-1}\right)^*=(\Psi_{s_n}^{m,q+1})\circ G_{\overline{\partial}_{E,m,q}^{g_1,g_{s_n},t}}$$
and $$\left(G_{\overline{\partial}_{E,m,q,\max}^{g_1,h}}\circ (\Psi_{0}^{m,q+1})^{-1}\right)^*=(\Psi_{0}^{m,q+1})\circ G_{\overline{\partial}_{E,m,q,\min}^{g_1,h,t}}.$$
The conclusion now follows by Lemma \ref{lemma7bis}.
\end{proof}

Let $\pi^{m,q}_{K,s}$ and $\pi^{m,q}_{K,0}$ be the projections defined in \eqref{orto} and \eqref{orto2}, respectively.

\begin{lemma}
\label{lemma9}
Let $\{s_n\}_{n\in \mathbb{N}}\subset (0,1]$ be a sequence such that $s_n\rightarrow 0$ as $n\rightarrow \infty$. Then $$\lim_{n\rightarrow \infty}\|\Psi^{m,q}_{s_n}\circ \pi_{K,s_n}^{m,q}\circ (\Psi_{s_n}^{m,q})^{-1}-\Psi^{m,q}_{0}\circ \pi_{K,0}^{m,q}\circ (\Psi_{0}^{m,q})^{-1}\|_{\mathrm{op}}=0.$$
\end{lemma}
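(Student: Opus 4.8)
The plan is to deduce the statement from the abstract criterion of Proposition \ref{usefulprop}, applied on the fixed Hilbert space $H:=L^2\Omega^{m,q}(A,E|_A,g_1|_A,\rho|_A)$. Put $B_n:=\Psi^{m,q}_{s_n}\circ \pi_{K,s_n}^{m,q}\circ (\Psi_{s_n}^{m,q})^{-1}$ and $B:=\Psi^{m,q}_{0}\circ \pi_{K,0}^{m,q}\circ (\Psi_{0}^{m,q})^{-1}$. Since each $\Psi_s^{m,q}$ is an isometry of $L^2\Omega^{m,q}(A,E|_A,g_s|_A,\rho|_A)$ onto $H$, both $B_n$ and $B$ are genuinely bounded operators on $H$; moreover $B$ is of finite rank, its range being the image under $\Psi_0^{m,q}$ of the finite-dimensional space $\ker(\overline{\partial}_{E,m,q,\max}^{h,h})\cap\ker(\overline{\partial}_{E,m,q-1,\min}^{g_1,h,t})$. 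By Proposition \ref{usefulprop} it therefore suffices to prove that $\|B_nw_n-Bw\|_H\to 0$ whenever $w_n\rightharpoonup w$ in $H$.

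So I would fix a weakly convergent sequence $w_n\rightharpoonup w$ in $H$ and set $\eta_{s_n}:=(\Psi_{s_n}^{m,q})^{-1}w_n\in L^2\Omega^{m,q}(A,E|_A,g_{s_n}|_A,\rho|_A)$ and $\eta:=(\Psi_0^{m,q})^{-1}w\in L^2\Omega^{m,q}(A,E|_A,h|_A,\rho|_A)$. By Lemma \ref{lemma6bis}, the weak convergence $w_n\rightharpoonup w$ in $H$ is equivalent to $\eta_{s_n}\rightharpoonup\eta$ in the sense of Definition \ref{weak} relative to the converging sequence of Hilbert spaces $\{L^2\Omega^{m,q}(A,E|_A,g_{s_n}|_A,\rho|_A)\}$. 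Under the standing hypotheses of this subsection — which place us in the setting of Theorem \ref{compact2} and provide the equality $\dim H^{m,q}_{\overline{\partial}}(M,E)=\dim H^{m,q}_{2,\overline{\partial}_{\max}}(A,E|_A,h|_A,\rho|_A)$ — Theorem \ref{projker} applies and yields $\pi_{K,s_n}^{m,q}(\eta_{s_n})\to\pi_{K,0}^{m,q}(\eta)$ strongly in the sense of Definition \ref{strong}. Finally, Lemma \ref{lemma6}, applied to the sequence $\pi_{K,s_n}^{m,q}(\eta_{s_n})$ and its strong limit $\pi_{K,0}^{m,q}(\eta)$, translates this back to the fixed space: $\|\Psi_{s_n}^{m,q}(\pi_{K,s_n}^{m,q}(\eta_{s_n}))-\Psi_0^{m,q}(\pi_{K,0}^{m,q}(\eta))\|_H\to 0$, which is precisely $\|B_nw_n-Bw\|_H\to 0$. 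Proposition \ref{usefulprop} then gives $\|B_n-B\|_{\mathrm{op}}\to 0$, as desired.

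The step carrying the actual content is the appeal to Theorem \ref{projker}; once the compact convergence $\pi_{K,s_n}^{m,q}\to\pi_{K,0}^{m,q}$ is in hand, conjugating by the isometries $\Psi$ and feeding the output through the translation Lemmas \ref{lemma6} and \ref{lemma6bis} together with the criterion of Proposition \ref{usefulprop} is routine bookkeeping, requiring no new estimate. The only mild subtlety is to be careful, when invoking Lemma \ref{lemma6}, to use it for the pair $(\pi_{K,s_n}^{m,q}(\eta_{s_n}),\pi_{K,0}^{m,q}(\eta))$ rather than for $(\eta_{s_n},\eta)$: it is exactly this reformulation of strong convergence across the varying Hilbert spaces, transported to the single Hilbert space $H$ via the $\Psi_s^{m,q}$, that converts compact convergence of the projections into operator-norm convergence of the conjugated projections.
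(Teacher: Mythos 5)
Your proof is correct and follows exactly the route the paper takes: its own proof of this lemma is a one-line citation of Proposition \ref{usefulprop}, Theorem \ref{projker} and Lemmas \ref{lemma6} and \ref{lemma6bis}, which is precisely the chain you spell out. The only point worth keeping in mind is the one you already flag, namely that Theorem \ref{projker} requires the standing hypotheses of that subsection (in particular $\dim H^{m,q}_{\overline{\partial}}(M,E)=\dim H^{m,q}_{2,\overline{\partial}_{\max}}(A,E|_A,h|_A,\rho|_A)$), which the lemma inherits implicitly.
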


\begin{proof}
This follows by Prop.\ref{usefulprop}, Th. \ref{projker} and Lemmas \ref{lemma6} and \ref{lemma6bis}.
\end{proof}

Now for each $s\in [0,1]$ let us consider the following complex

\begin{align}
\nonumber &L^2\Omega^{m,0}(A,E|_A,g_1|_A,\rho|_A)\stackrel{\overline{\partial}^{g_1,g_1}_{E,m,0}}{\longrightarrow}...\stackrel{\overline{\partial}^{g_1,g_1}_{E,m,q-2}}{\longrightarrow}L^2\Omega^{m,q-1}(A,E|_A,g_1|_A,\rho|_A)\stackrel{\overline{\partial}^{g_1,g_s}_{E,m,q-1,\max}}{\longrightarrow} L^2\Omega^{m,q}(A,E|_A,g_s|_A,\rho|_A)\\
& \label{Giava} \stackrel{\overline{\partial}^{g_s,h}_{E,m,q,\max}}{\longrightarrow}
L^2\Omega^{m,q+1}(A,E|_A,h|_A,\rho|_A)\stackrel{\overline{\partial}^{h,h}_{E,m,q+1,\max}}{\longrightarrow}... \stackrel{\overline{\partial}^{h,h}_{E,m,m-1,\max}}{\longrightarrow}L^2\Omega^{m,m}(A,E|_A,h|_A,\rho|_A).
\end{align}

In other words from $0$ up to $q-1$ we have the $L^2$-$\overline{\partial}_{E}$-complex with respect to $g_1$ and $(E,\rho)$, from $q+1$ up $m$ we have the maximal $L^2$-$\overline{\partial}_{E}$-complex with respect to $h$ and $(E,\rho)$ and the connecting piece is given by $$\stackrel{\overline{\partial}^{g_1,g_s}_{E,m,q-1,\max}}{\longrightarrow}L^2\Omega^{m,q}(A,E|_A,g_s|_A,\rho|_A)\stackrel{\overline{\partial}^{g_s,h}_{E,m,q,\max}}{\longrightarrow}L^2\Omega^{m,q+1}(A,E|_A,h|_A,\rho|_A).$$ Let us now introduce the following complex

\begin{align}
& \nonumber L^2\Omega^{m,0}(A,E|_A,g_1|_A,\rho|_A)\stackrel{\overline{\partial}^{g_1,g_1}_{E,m,0}}{\longrightarrow}...\stackrel{\overline{\partial}^{g_1,g_1}_{E,m,q-2}}{\longrightarrow}L^2\Omega^{m,q-1}(A,E|_A,g_1|_A,\rho|_A)\stackrel{D^{g_1,g_s}_{m,q-1}}{\longrightarrow}L^2\Omega^{m,q}(A,E|_A,g_1|_A,\rho|_A)\\
& \label{ucomplex} \stackrel{D^{g_s,h}_{m,q}}{\longrightarrow}L^2\Omega^{m,q+1}(A,E|_A,g_1|_A,\rho|_A)\stackrel{D^{h,h}_{m,q+1}}{\longrightarrow}...\stackrel{D^{h,h}_{m,m-1}}{\longrightarrow}L^2\Omega^{m,m}(A,E|_A,g_1|_A,\rho|_A)
\end{align}
with $D^{g_1,g_s}_{m,q-1}:= \Psi_{s}^{m,q}\circ \overline{\partial}_{E,m,q-1,\max}^{g_1,g_s}$, $D^{g_s,h}_{m,q}:= \Psi_{0}^{m,q+1}\circ \overline{\partial}_{E,m,q,\max}^{g_s,h}\circ (\Psi_s^{m,q})^{-1}$ and  $D^{h,h}_{m,r}:= \Psi_{0}^{m,r+1}\circ \overline{\partial}_{E,m,r,\max}^{h,h}\circ (\Psi_0^{m,r})^{-1}$\quad for each  $r=q+1,...,m.$ Let 
\begin{equation}
\label{urolledup}
P^{m,q}_s:L^2\Omega^{m,\bullet}(A,E|_A,g_1|_A,\rho|_A)\rightarrow L^2\Omega^{m,\bullet}(A,E,g_1,\rho)
\end{equation}
be the rolled-up operator of the complex \eqref{ucomplex}, see \cite[p. 91-92]{BL}. This means nothing but $$\begin{aligned}
&P^{m,q}_s|_{L^2\Omega^{m,r}(A,E|_A,g_1|_A,\rho|_A)}:=\overline{\partial}_{E,m,r}^{g_1,g_1}+\overline{\partial}_{E,m,r-1}^{g_1,g_1,t},\ r=0,...,q-2\\
&P^{m,q}_s|_{L^2\Omega^{m,q-1}(A,E|_A,g_1|_A,\rho|_A)}:=D^{g_1,g_s}_{m,q-1}+\overline{\partial}_{E,m,q-2}^{g_1,g_1,t}\\
&P^{m,q}_s|_{L^2\Omega^{m,q}(A,E|_A,g_1|_A,\rho|_A)}:=D^{g_s,h}_{m,q}+(D^{g_1,g_s}_{m,q-1})^*\\ &P^{m,q}_s|_{L^2\Omega^{m,q+1}(A,E|_A,g_1|_A,\rho|_A)}:=D^{h,h}_{m,q+1}+(D^{g_s,h}_{m,q})^*\\ 
&P^{m,q}_s|_{L^2\Omega^{m,r}(A,E|_A,g_1|_A,\rho|_A)}:=D^{h,h}_{m,r}+(D^{h,h}_{m,r-1})^*,\ r=q+2,...,m.
\end{aligned}$$
Note that $P^{m,q}_0=P^{m,q-1}_1$ for each $q\in \{1,...,m\}$. Moreover we have $(D^{g_1,g_s}_{m,q-1})^*= \overline{\partial}_{E,m,q-1,\min}^{g_1,g_s,t}\circ (\Psi_s^{m,q})^{-1}$, $(D^{g_s,h}_{m,q})^*=\Psi^{m,q}_s\circ \overline{\partial}^{g_s,h,t}_{E,m,q,\min}\circ (\Psi^{m,q+1}_0)^{-1}$ and $(D^{h,h}_{m,r-1})^*=\Psi^{m,r-1}_0\circ \overline{\partial}_{E,m,r-1,\min}^{h,h,t}\circ (\Psi^{m,r}_0)^{-1}$ with $r=q+2,...,m$. Furthermore, we point out that when $q=m$ and $s=1$ the complex \eqref{ucomplex} is nothing but the $L^2$-$\overline{\partial}_E$-complex on $M$ w.r.t. $g_1$ and $\rho$ whereas when $q=0=s$ the complex \eqref{ucomplex} is unitarily equivalent to the maximal $L^2$-$\overline{\partial}_{E}$-complex on $A$ with respect to $h$ and $(E,\rho)$. Consequently in the case $q=m$ and $s=1$ the operator \eqref{urolledup} is the Dirac-Dolbeault operator on $M$ w.r.t. $g_1$ and $(E,\rho)$ whereas in the case $q=0=s$ the operator \eqref{urolledup} is unitary equivalent to the rolled-up operator of the maximal $L^2$-$\overline{\partial}_E$ complex over $A$ with respect to $h$ and $(E,\rho)$. We have now the following property:
\begin{lemma}
\label{lemma10}
In the setting of Th. \ref{projker}, the operator $P^{m,q}_s$ is self-adjoint and  has entirely discrete spectrum for each $q\in \{0,...,m\}$ and $s\in [0,1]$.
\end{lemma}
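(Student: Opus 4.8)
The plan is to show that $P_s^{m,q}$ is self-adjoint by exhibiting it as a unitary conjugate of a self-adjoint operator, and that it has discrete spectrum by relating its square (the associated Laplacian) to operators whose Green operators are known to be compact. First I would handle self-adjointness. The complex \eqref{ucomplex} is obtained from the ``mixed'' complex \eqref{Giava} by applying, degree by degree, the fiberwise isometries $\Psi_s^{m,r}\colon L^2\Omega^{m,r}(A,E|_A,g_s|_A,\rho|_A)\to L^2\Omega^{m,r}(A,E|_A,g_1|_A,\rho|_A)$ (with $\Psi_1^{m,r}=\id$ in the degrees $\le q-1$ and $\Psi_0^{m,r}$ in the degrees $\ge q+1$, consistently with the definitions of $D^{g_1,g_s}_{m,q-1}$, $D^{g_s,h}_{m,q}$, $D^{h,h}_{m,r}$). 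Hence the rolled-up operator $P_s^{m,q}$ of \eqref{ucomplex} is the conjugate by the block-diagonal unitary $\bigoplus_r \Psi_\bullet^{m,r}$ of the rolled-up operator of the complex \eqref{Giava}, which is a direct sum of Dolbeault-type operators and their adjoints and is therefore self-adjoint. So it suffices to check that the rolled-up operator of \eqref{Giava} is self-adjoint: this is standard for a bounded-length Hilbert complex, because each arrow is a closed, densely defined operator whose adjoint appears as the preceding arrow of the adjoint complex, and the rolled-up operator $\bigoplus (\delta_i+\delta_{i-1}^*)$ of any such complex is self-adjoint (cf.\ \cite{BL}). I would invoke Prop.\ \ref{sameop}, Lemma \ref{Seb}, Prop.\ \ref{Gianni} and the closed-range statements already established so that every piece ($\overline{\partial}^{g_1,g_1}_{E,m,r}$, $\overline{\partial}^{g_1,g_s}_{E,m,q-1,\max}$, $\overline{\partial}^{g_s,h}_{E,m,q,\max}$, $\overline{\partial}^{h,h}_{E,m,r,\max}$) is closed with closed range, which is exactly what makes the rolled-up operator self-adjoint.

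Next I would establish the discreteness of the spectrum. Since $P_s^{m,q}$ is self-adjoint with closed range in each degree, it is enough to show that its Green operator $G_{P_s^{m,q}}$ is compact, equivalently (Prop.\ \ref{useful}) that the inclusion of $\mathcal D(P_s^{m,q})\cap \overline{\im(P_s^{m,q})}$ with the graph norm into $L^2\Omega^{m,\bullet}$ is compact. Because $P_s^{m,q}$ respects the bidegree decomposition, $G_{P_s^{m,q}}$ splits as a finite direct sum of pieces, each of which is built (up to the unitaries $\Psi$, which do not affect compactness) from the Green operators $G_{\overline{\partial}^{g_1,g_1}_{E,m,r}}$, $G_{\overline{\partial}^{g_1,g_s}_{E,m,q-1}}$, $G_{\overline{\partial}^{g_s,h}_{E,m,q,\max}}$ and $G_{\overline{\partial}^{h,h}_{E,m,r,\max}}$ together with the corresponding Green operators of the formal adjoints. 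Compactness of $G_{\overline{\partial}^{g_1,g_1}_{E,m,r}}$ is classical elliptic theory on the compact manifold $M$; compactness of $G_{\overline{\partial}^{g_1,g_s}_{E,m,q-1}}$ is Cor.\ \ref{allGreen}; compactness of $G_{\overline{\partial}^{g_s,h}_{E,m,q,\max}}$ is Prop.\ \ref{Gianni} (applied with $g_s$ in place of $g$, which is legitimate since $(A,g_s|_A)$ is parabolic for $s\in(0,1]$); and compactness of $G_{\overline{\partial}^{h,h}_{E,m,r,\max}}$ is part of the standing assumptions in the setting of Th.\ \ref{compact2}. Compactness of the Green operators of the adjoints follows because, for a closed densely defined operator with closed range, $G_{T^*}$ is (up to the identification $H_2=\ker(T^*)\oplus\im(T)$) the adjoint of $G_T$, and the adjoint of a compact operator is compact. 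Assembling these, $G_{P_s^{m,q}}$ is compact, so $P_s^{m,q}$ has purely discrete spectrum.

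A small amount of bookkeeping is needed at the two ``junction'' degrees $q$ and $q+1$: there the Green operator of $P_s^{m,q}$ restricted to that degree is not a single $G_T$ but involves a combination of $G$ of an arrow and $G$ of the adjoint of the neighbouring arrow, glued along the Hodge-type decomposition \eqref{orto}. I would argue degree by degree using the orthogonal decomposition \eqref{orto} (for $s\in(0,1]$) or \eqref{orto2} (for $s=0$), noting that on $\im(D^{g_1,g_s}_{m,q-1})$ the operator $P_s^{m,q}$ acts as the adjoint arrow (so its ``inverse'' is the compact $G$ of the adjoint), on $\im((D^{g_s,h}_{m,q})^*)$ it acts as the forward arrow $D^{g_s,h}_{m,q}$ (inverse $=$ compact $G_{\overline{\partial}^{g_s,h}_{E,m,q,\max}}$ conjugated by $\Psi$'s), and on the finite-dimensional harmonic part $\ker D^{g_s,h}_{m,q}\cap\ker (D^{g_1,g_s}_{m,q-1})^*$ it vanishes — a finite-dimensional kernel does not obstruct discreteness. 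The main obstacle, and the step I would be most careful about, is precisely verifying that all the relevant images are closed (so that the Green operators and orthogonal projections are genuinely defined and bounded) at the junction degrees; this is where I would lean hardest on Prop.\ \ref{Pp}, Prop.\ \ref{Gianni}, Lemma \ref{Seb} and Lemma \ref{SebG}, together with the finite-dimensionality of the cohomology, exactly as in the proof of Prop.\ \ref{Gianni}.
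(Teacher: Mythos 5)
Your proposal is correct and follows essentially the same route as the paper: self-adjointness comes from the fact that $P^{m,q}_s$ is the rolled-up operator of a Hilbert complex (Br\"uning--Lesch), and discreteness of the spectrum from the finite cohomology of \eqref{Giava} together with the compactness of each constituent Green operator (classical elliptic theory, Cor.\ \ref{allGreen}, Prop.\ \ref{Gianni}, and the standing assumptions), which the paper states in two sentences and you merely spell out. The only tiny quibble is that closed range is not what makes the rolled-up operator self-adjoint (that holds for any Hilbert complex); it is only needed for the Hodge decomposition and the discreteness argument.
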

\begin{proof}
The fact that $P^{m,q}_s$ is self-adjoint follows immediately by its definition because it is the rolled-up operator of a Hilbert complex, see \cite[p. 92]{BL}. The discreteness of its spectrum is an easy consequence of the fact that  the complex \eqref{Giava} has finite cohomology and each operator has a compact Green operator.
\end{proof}

We are now in a position to prove the main result of this subsection.

\begin{teo}
\label{resolv}
In the above setting  we have $$\lim_{s\rightarrow 0}\|(P^{m,q}_s+i)^{-1}-(P^{m,q}_0+i)^{-1}\|_{\mathrm{op}}=0$$ for each $q=0,...,m$.
\end{teo}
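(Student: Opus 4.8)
The plan is to apply Lemma~\ref{lemmata} to the family $D_s:=P^{m,q}_s$, $s\in[0,1]$. By Lemma~\ref{lemma10} each $P^{m,q}_s$ is self-adjoint with entirely discrete spectrum, hence has closed range, so Lemma~\ref{lemmata} is available once its two hypotheses are verified for $t=s$; since both hypotheses are sequential, it suffices to fix an arbitrary sequence $\{s_n\}_{n\in\mathbb{N}}\subset(0,1]$ with $s_n\to 0$ and show that $\|G_{P^{m,q}_{s_n}}-G_{P^{m,q}_0}\|_{\op}\to 0$ and $\|\pi_{K,s_n}-\pi_{K,0}\|_{\op}\to 0$, where $\pi_{K,s}$ denotes the orthogonal projection of $L^2\Omega^{m,\bullet}(A,E|_A,g_1|_A,\rho|_A)$ onto $\ker(P^{m,q}_s)$.

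First I would record the standard block decomposition of the Green operator of the rolled-up operator of a Hilbert complex. If $(L_\bullet,D_\bullet)$ is a Hilbert complex with finite-dimensional cohomology and rolled-up operator $\mathcal D=\bigoplus_r(D_r+D_{r-1}^*)$, then, using the Hodge decomposition $L_r=\mathcal H^r\oplus\im(D_{r-1})\oplus\im(D_r^*)$ and the definition of Green operators, one checks directly that $G_{\mathcal D}|_{L_r}=G_{D_{r-1}}+(G_{D_r})^*$ (with the obvious omissions in the extreme degrees). Applied to \eqref{ucomplex}, whose only $s$-dependent differentials are $D^{g_1,g_s}_{m,q-1}=\Psi_s^{m,q}\circ\overline{\partial}^{g_1,g_s}_{E,m,q-1,\max}$ and $D^{g_s,h}_{m,q}=\Psi_0^{m,q+1}\circ\overline{\partial}^{g_s,h}_{E,m,q,\max}\circ(\Psi_s^{m,q})^{-1}$, this shows that $G_{P^{m,q}_{s}}-G_{P^{m,q}_0}$ is supported in degrees $q-1,q,q+1$ and is a finite sum of the operators $G_{D^{g_1,g_s}_{m,q-1}}-G_{D^{g_1,h}_{m,q-1}}$, $G_{D^{g_s,h}_{m,q}}-G_{D^{h,h}_{m,q}}$ and their adjoints. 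Since $\Psi_s^{m,q}$ and $\Psi_0^{m,q+1}$ are isometries one has $G_{D^{g_1,g_s}_{m,q-1}}=G_{\overline{\partial}^{g_1,g_s}_{E,m,q-1,\max}}\circ(\Psi_s^{m,q})^{-1}$ and $G_{D^{g_s,h}_{m,q}}=\Psi_s^{m,q}\circ G_{\overline{\partial}^{g_s,h}_{E,m,q,\max}}\circ(\Psi_0^{m,q+1})^{-1}$, so these differences tend to $0$ in operator norm precisely by Lemma~\ref{lemma7bis} (with indices $q-1$ and $q$ respectively) and by Lemma~\ref{lemma8} for the adjoint pieces; hence $\|G_{P^{m,q}_{s_n}}-G_{P^{m,q}_0}\|_{\op}\to 0$.

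Next I would analyse $\ker(P^{m,q}_s)=\bigoplus_r\mathcal H^r_s$, the harmonic spaces of \eqref{ucomplex}. In every degree $r\neq q$ the space $\mathcal H^r_s$ is independent of $s$: for $r\notin\{q-1,q,q+1\}$ both incident differentials are $s$-independent; for $r=q-1$ one uses that $\ker(D^{g_1,g_s}_{m,q-1})=\ker(\overline{\partial}^{g_1,g_s}_{E,m,q-1,\max})$ is $s$-independent by Lemma~\ref{SebG}; and for $r=q+1$ one uses that, by the standing assumption $\im(\overline{\partial}^{h,h}_{E,m,q,\max})=\im(\overline{\partial}^{g_1,h}_{E,m,q,\max})$ and the quasi-isometry of the $g_s$, the image $\im(D^{g_s,h}_{m,q})=\Psi_0^{m,q+1}\bigl(\im(\overline{\partial}^{g_s,h}_{E,m,q,\max})\bigr)$ is $s$-independent, hence so is $\ker((D^{g_s,h}_{m,q})^*)$. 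In degree $q$ one computes $\mathcal H^q_s=\Psi_s^{m,q}\bigl(\ker(\overline{\partial}^{g_s,h}_{E,m,q,\max})\cap\ker(\overline{\partial}^{g_1,g_s,t}_{E,m,q-1,\min})\bigr)$, so the degree-$q$ component of $\pi_{K,s}$ is $\Psi_s^{m,q}\circ\pi^{m,q}_{K,s}\circ(\Psi_s^{m,q})^{-1}$ with $\pi^{m,q}_{K,s}$ as in \eqref{orto}; by Lemma~\ref{lemma9} this converges in operator norm to $\Psi_0^{m,q}\circ\pi^{m,q}_{K,0}\circ(\Psi_0^{m,q})^{-1}$, i.e.\ to the degree-$q$ component of $\pi_{K,0}$. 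Since all other components are $s$-independent, $\|\pi_{K,s_n}-\pi_{K,0}\|_{\op}\to 0$. Both hypotheses of Lemma~\ref{lemmata} are thus verified along $\{s_n\}$, giving $\|(P^{m,q}_{s_n}+i)^{-1}-(P^{m,q}_0+i)^{-1}\|_{\op}\to 0$; as $\{s_n\}$ was arbitrary, the asserted limit as $s\to 0$ follows. The only genuinely non-formal step is the block decomposition of $G_{P^{m,q}_s}$ together with the bookkeeping of the isometries $\Psi^{m,q}_s$, which has to be arranged so that the $s$-dependent pieces match exactly the combinations occurring in Lemmas~\ref{lemma7bis}, \ref{lemma8} and \ref{lemma9}; everything else is a direct appeal to the convergence results already proved.
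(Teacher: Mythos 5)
Your proposal is correct and follows essentially the same route as the paper's own proof: reduce to Lemma \ref{lemmata}, decompose $G_{P^{m,q}_s}$ and $\ker(P^{m,q}_s)$ degree by degree, observe that only the blocks in degrees $q-1,q,q+1$ (and, for the kernel, only the degree-$q$ harmonic space) depend on $s$, and then invoke Lemmas \ref{lemma7bis}, \ref{lemma8} and \ref{lemma9}. The bookkeeping of the isometries $\Psi^{m,q}_s$ and the identification of the $s$-independent pieces (via Lemma \ref{SebG} and the image assumption) match the paper's argument.
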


\begin{proof}
According to Lemma \ref{lemmata} it suffices to show that $$\lim_{s\rightarrow 0}\|G_{P^{m,q}_s}-G_{P_0^{m,q}}\|_{\mathrm{op}}=0\quad \mathrm{and}\quad \lim_{s\rightarrow 0}\|\pi_{K,P^{m,q}_s}-\pi_{K,P^{m,q}_0}\|_{\mathrm{op}}=0$$ where $\pi_{K,P^{m,q}_s}:L^2\Omega^{m,\bullet}(A,E|_A,g_1|_A,\rho|_A)\rightarrow L^2\Omega^{m,\bullet}(A,E|_A,g_1|_A,\rho|_A)$ stands for  the orthogonal projection on $\ker(P^{m,q}_s)$. Since $P_s^{m,q}$ is the rolled-up operator of the complex \eqref{ucomplex} it is easy to check that 
\begin{equation}
\label{acciuga}
\ker(P_s^{m,q})=\bigoplus_{r=0}^m\ker(P_s^{m,q}|_{L^2\Omega^{m,r}(A,E|_A,g_1|_A,\rho|_A)})\quad \mathrm{and}\quad G_{P_s^{m,q}}=\bigoplus_{r=0}^mG_{P_s^{m,q}}|_{L^2\Omega^{m,r}(A,E|_A,g_1|_A,\rho|_A)}.
\end{equation}
Concerning $\ker(P^{m,q}_s)$ we obtain the following decomposition:  
$$\begin{aligned}\ker(P_s^{m,q})&=\left(\bigoplus_{r=0}^{q-2}\ker(\overline{\partial}^{g_1,g_1}_{E,m,r})\cap\ker(\overline{\partial}^{g_1,g_1,t}_{E,m,r})\right)\bigoplus \left(\ker(D^{g_1,g_s}_{m,q-1})\cap\ker(\overline{\partial}^{g_1,g_1,t}_{E,m,q-2})\right)\\
&\bigoplus \left(\ker(D^{g_s,h}_{m,q})\cap\ker((D^{g_1,g_s}_{m,q-1})^*)\right)\bigoplus \left(\ker(D^{h,h}_{m,q+1})\cap\ker((D^{g_s,h}_{m,q})^*)\right)\\
& \bigoplus \left(\bigoplus_{r=q+2}^{m}\ker(D^{h,h}_r)\cap\ker((D^{h,h}_r)^*)\right).
\end{aligned}$$
Note that $\ker(D^{g_1,g_s}_{m,q-1})$ is independent on $s\in[0,1]$. Moreover also $\ker((D^{g_s,h}_{m,q})^*)$ does not depend on $s\in[0,1]$. This latter assertion follows because by assumption $\im(\overline{\partial}_{E,m,q,\max}^{g_1,h})=\im(\overline{\partial}_{E,m,q,\max}^{h,h})$ which in turn implies that $\im(\overline{\partial}_{E,m,q,\max}^{g_s,h})=\im(\overline{\partial}_{E,m,q,\max}^{h,h})$ for each $s\in [0,1]$. Thus $\im(D^{g_s,h}_{m,q})=\im(D^{h,h}_{m,q})$ for each $s\in [0,1]$ and eventually we can conclude that $\ker((D^{g_s,h}_{m,q})^*)$ does not depend on $s\in[0,1]$ since $\ker((D^{g_s,h}_{m,q})^*)=(\im(D^{g_s,h}_{E,m,q}))^{\bot}$ in $L^2\Omega^{m,q+1}(A,E,h,\rho)$. Thus in the above decomposition the only term depending on $s$ is $\ker(D^{g_s,h}_{m,q})\cap\ker((D^{g_1,g_s}_{m,q-1})^*)$ and so the limit 
\begin{equation}
\label{boil}
\lim_{s\rightarrow 0}\|\pi_{K,P^{m,q}_s}-\pi_{K,P^{m,q}_0}\|_{\mathrm{op}}=0
\end{equation}

 boils down to proving that 
\begin{equation}
\label{limprojk}
\lim_{n\rightarrow \infty}\|\Psi^{m,q}_{s_n}\circ \pi_{K,s_n}^{m,q}\circ (\Psi_{s_n}^{m,q})^{-1}-\Psi^{m,q}_{0}\circ \pi_{K,0}^{m,q}\circ (\Psi_{0}^{m,q})^{-1}\|_{\mathrm{op}}=0.
\end{equation}
where $\pi_{K,s}^{m,q}$ is the orthogonal projection defined in \eqref{orto}.  
By Lemma \ref{lemma9} we already know that \eqref{limprojk} holds true and thus \eqref{boil} holds true as well. Let us go back to the second half of \eqref{acciuga}. Looking at \eqref{ucomplex} we note that the only terms in the decomposition of $G_{P_s^{m,q}}$ that depend on $s$ are $$G_{P_s^{m,q}}|_{L^2\Omega^{m,r}(A,E|_A,g_1|_A,\rho|_A)},\quad r=q-1,q,q+1.$$ We have 
$$
\begin{aligned}
G_{P_s^{m,q}}&|_{L^2\Omega^{m,q-1}(A,E|_A,g_1|_A,\rho|_A)}=\\
&G_{\overline{\partial}_{E,m,q-2}^{g_1,g_1}}+G_{(D^{g_1,g_s}_{m,q-1})^*}: L^2\Omega^{m,q-1}(A,E|_A,g_1|_A,\rho|_A)\rightarrow L^2\Omega^{m,q-2}(A,E|_A,g_1|_A,\rho|_A)\oplus L^2\Omega^{m,q}(A,E|_A,g_1|_A,\rho|_A);\\
G_{P_s^{m,q}}&|_{L^2\Omega^{m,q}(A,E|_A,g_1|_A,\rho|_A)}=\\
&G_{D_{m,q-1}^{g_1,g_s}}+G_{(D^{g_s,h}_{m,q})^*}: L^2\Omega^{m,q}(A,E|_A,g_1|_A,\rho|_A)\rightarrow L^2\Omega^{m,q-1}(A,E|_A,g_1|_A,\rho|_A)\oplus L^2\Omega^{m,q+1}(A,E|_A,g_1|_A,\rho|_A);\\
G_{P_s^{m,q}}&|_{L^2\Omega^{m,q+1}(A,E|_A,g_1|_A,\rho|_A)}=\\
&G_{D_{m,q}^{g_s,h}}+G_{(D^{h,h}_{m,q+1})^*}: L^2\Omega^{m,q+1}(A,E|_A,g_1|_A,\rho|_A)\rightarrow L^2\Omega^{m,q}(A,E|_A,g_1|_A,\rho|_A)\oplus L^2\Omega^{m,q+2}(A,E|_A,g_1|_A,\rho|_A).
\end{aligned}$$
Therefore
$$\begin{aligned}
\|G_{P^{m,q}_s}-G_{P^{m,q}_0}\|_{\mathrm{op}}&=\|\bigoplus_{r=0}^mG_{P^{m,q}_s}|_{L^2\Omega^{m,r}(A,E|_A,g_1|_A,\rho|_A)}-\bigoplus_{r=0}^mG_{P^{m,q}_0}|_{L^2\Omega^{m,r}(A,E|_A,g_1|_A,\rho|_A)}\|_{\mathrm{op}}\\
&\leq \sum_{r=q-1}^{q+1}\|G_{P^{m,q}_s}|_{L^2\Omega^{m,r}(A,E|_A,g_1|_A,\rho|_A)}-G_{P^{m,q}_0}|_{L^2\Omega^{m,r}(A,E|_A,g_1|_A,\rho|_A)}\|_{\mathrm{op}}\\
&\leq \|G_{(D^{g_1,g_s}_{m,q-1})^*}-G_{(D^{g_1,h}_{m,q-1})^*}\|_{\mathrm{op}}+\|G_{D_{m,q-1}^{g_1,g_s}}-G_{D_{m,q-1}^{g_1,h}}\|_{\mathrm{op}}\\
&+\|G_{(D^{g_s,h}_{m,q})^*}-G_{(D^{h,h}_{m,q})^*}\|_{\mathrm{op}}+\|G_{D^{g_s,h}_{m,q}}-G_{D^{h,h}_{m,q}}\|_{\mathrm{op}}.
\end{aligned}$$
Clearly for each $s\in [0,1]$ we have $$\begin{aligned}
&G_{D^{g_1,g_s}_{m,q-1}}= G_{\overline{\partial}^{g_1,g_s}_{E,m,q-1,\max}}\circ (\Psi_s^{m,q})^{-1},\quad G_{(D_{m,q-1}^{g_1,g_s})^*}=\Psi^{m,q}_s\circ G_{\overline{\partial}_{E,m,q-1,\min}^{g_1,g_s,t}}\\ & G_{D^{g_s,h}_{m,q}}=\Psi^{m,q}_s\circ G_{\overline{\partial}^{g_s,h}_{E,m,q,\max}}\circ (\Psi^{m,q+1}_0)^{-1},\quad  G_{(D^{g_s,h}_{m,q})^*}=\Psi_0^{m,q+1}\circ G_{(\overline{\partial}^{g_s,h,t}_{E,m,q,\min})^*}\circ (\Psi_s^{m,q})^{-1}.
\end{aligned}$$ Thanks to Lemmas \ref{lemma7bis} and \ref{lemma8} we know that 
$$\begin{aligned}
&\lim_{s\rightarrow0}\|G_{D^{g_1,g_s}_{m,q-1}}-G_{D^{g_1,h}_{m,q-1}}\|_{\mathrm{op}}=0,\quad \lim_{s\rightarrow0}\|G_{(D_{m,q-1}^{g_1,g_s})^*}-G_{(D_{m,q-1}^{g_1,h})^*}\|_{\mathrm{op}}=0\\
& \lim_{s\rightarrow0}\|G_{D^{g_s,h}_{m,q}}-G_{D^{h,h}_{m,q}}\|_{\mathrm{op}}=0,\quad \lim_{s\rightarrow0}\|G_{(D^{g_s,h}_{m,q})^*}-G_{(D^{h,h}_{m,q})^*}\|_{\mathrm{op}}=0
\end{aligned}.$$
We can thus conclude that $$\lim_{s\rightarrow 0}\|G_{P^{m,q}_s}-G_{P^{m,q}_0}\|_{\mathrm{op}}=0$$ as required.
\end{proof}

\section{Resolutions and canonical $K$-homology classes}
Let $(M,g)$ be a compact complex manifold of complex dimension $m$ endowed with a Hermitian metric $g$. Let $(E,\rho)\rightarrow M$ be a Hermitian holomorphic vector bundle over $M$. For each $p\in\{0,...,m\}$ let us consider the Hilbert space $L^2\Omega^{p,\bullet}(M,E,g,\rho)$ endowed with the grading induced by the splitting in $L^2$ $E$-valued $(p,\bullet)$-forms with even/odd antiholomorphic degree. Furthermore we consider the corresponding Dirac-Dolbeault operator $\overline{\eth}_{E,p}:=\overline{\partial}_{E,p}+\overline{\partial}_{E,p}^t$ $$\overline{\eth}_{E,p}:L^2\Omega^{p,\bullet}(M,E,g,\rho)\rightarrow L^2\Omega^{p,\bullet}(M,E,g,\rho)$$ and the $C^*$-algebra $C(M):=C(M,\mathbb{C})$ acting on $L^2\Omega^{p,\bullet}(M,E,g,\rho)$ by pointwise multiplication: 
\begin{equation}
\label{pointwisem}
C(M)\ni f\mapsto m_f\in \mathcal{B}(L^2\Omega^{p,\bullet}(M,E,g,\rho))\ \mathrm{given\ by}\ m_f\psi:=f\psi
\end{equation}
for every $\psi\in L^2\Omega^{p,\bullet}(M,E,g,\rho)$. Finally let us consider as a dense subalgebra $\mathcal{A}:=C^{\infty}(M)$. It is well known that the triplet $(L^2\Omega^{p,\bullet}(M,E,g,\rho),m, \overline{\eth}_{E,p})$ is an even unbounded Fredholm module, see, for example \cite[\S 10]{Nigel}, and thus the triplet  $(L^2\Omega^{p,\bullet}(M,E,g,\rho),m, \overline{\eth}_{E,p}\circ (\id+(\overline{\eth}_{E,p})^2)^{-\frac{1}{2}})$ gives a class in $KK_0(C(M),\mathbb{C})$, see Prop. \ref{unbounded}. We denote this class with $$[\overline{\eth}_{E,p}]\in KK_0(C(M),\mathbb{C})$$ and when $p=m$ we call it the canonical analytic $K$-homology class of $M$ and $E$. In particular when $p=m$ and $E$ is the trivial holomorphic line bundle $M\times \mathbb{C}\rightarrow M$ we call the above class the canonical analytic $K$-homology class of $M$. This terminology is based on the fact that $\Lambda^{m,0}(M)$ is called the canonical bundle of $M$. We remark that since $M$ is compact the class $[\overline{\eth}_{E,p}]$ depends neither on  $g$ nor on $\rho$ since all the Hermitian metrics on $M$ as well as all the Hermitian metrics on $E$ are quasi-isometric, see e.g. \cite{Hilsum}. Now we briefly recall  the notion of Hermitian complex space. Complex spaces are a classical topic in complex geometry and we refer for instance to \cite{GrRe} for definitions and properties. 
We recall that a  paracompact  and reduced complex space $X$ is said to be \emph{Hermitian} if  the regular part of $X$ carries a Hermitian metric $\gamma$  such that for every point $p\in X$ there exists an open neighbourhood $U\ni p$ in $X$, a proper holomorphic embedding of $U$ into a polydisc $\phi: U \rightarrow \mathbb{D}^N\subset \mathbb{C}^N$ and a Hermitian metric $g$ on $\mathbb{D}^N$ such that $(\phi|_{\reg(U)})^*g=\gamma$, see e.g. \cite{JRu}. In this case we will write $(X,\gamma)$ and with a little abuse of language we will say that $\gamma$ is a \emph{Hermitian metric on $X$}. Clearly any analytic subvariety of a complex Hermitian manifold endowed with the metric induced by the ambient space metric is an example of Hermitian complex space. Note that when $X$ is compact all the Hermitian metrics on $X$ belong to the same quasi-isometry class. This follows easily by the lifting lemma, see \cite[Rmk 1.30.1 p. 37]{SingDef} . Let now $F\rightarrow X$ be a holomorphic vector bundle of complex rank $s$. We assume that $F|_{\reg(X)}$ is equipped with a Hermitian metric $\tau$ such  that for each point $p\in X$ the following property holds true: there exists an open neighbourhood $U$, a positive constant $c$ and a holomorphic trivialization $\psi:E|_U\rightarrow U\times \mathbb{C}^s$ such that, denoting by $\sigma$ the Hermitian metric on $\reg(U)\times \mathbb{C}^s$ induced by $\tau$ through $\psi$, we have 
\begin{equation}
\label{trappolo}
c^{-1}\upsilon_{\mathrm{std}}\leq \sigma\leq c\upsilon_{\mathrm{std}}
\end{equation}
where $\upsilon_{\mathrm{std}}$ is the Hermitian metric on $\reg(U)\times \mathbb{C}^s$ that assigns to each point of $\reg(U)$ the standard Euclidean K\"ahler metric of $\mathbb{C}^s$. In order to state the next results, we also need to recall the existence of a resolution of singularities, see \cite{Hiro}. Let $X$ be a compact and irreducible complex space. There then exists a compact complex manifold $M$, a divisor with only normal crossings $D\subset M$ and a surjective holomorphic map $\pi:M\rightarrow X$ such that $\pi^{-1}(\sing(X))=D$ and 
\begin{equation}
\label{hiro}
\pi|_{M\setminus D}: M\setminus D\longrightarrow X\setminus \sing(X)
\end{equation}
is a biholomorphism. Consider now the maximal $L^2$-$\overline{\partial}_F$-complex $$0\rightarrow L^2\Omega^{m,0}(\reg(X),F,\gamma,\tau)\stackrel{\overline{\partial}_{E,m,0,\max}^{\gamma,\gamma}}{\longrightarrow}...\stackrel{\overline{\partial}_{E,m,m-1,\max}^{\gamma,\gamma}}{\longrightarrow}L^2\Omega^{m,m}(\reg(X),F,\gamma,\tau)\rightarrow 0$$ and let
\begin{equation}
\label{rol}
\overline{\eth}_{F,m,\mathrm{abs}}:L^2\Omega^{m,\bullet}(\reg(X),F,\gamma,\tau)\rightarrow L^2\Omega^{m,\bullet}(\reg(X),F,\gamma,\tau)
\end{equation}
be the corresponding rolled-up operator.  Note that we can write
\begin{equation}
\label{bdbd}
\overline{\eth}_{F,m,\mathrm{abs}}=\overline{\partial}_{F,m,\max}^{\gamma,\gamma}+\overline{\eth}_{F,m,\min}^{\gamma,\gamma,t}
\end{equation}

with 
\begin{equation}
\label{roll}
\overline{\partial}_{F,m,\max}^{\gamma,\gamma}:L^2\Omega^{m,\bullet}(\reg(X),F,\gamma,\tau)\rightarrow L^2\Omega^{m,\bullet}(\reg(X),F,\gamma,\tau)
\end{equation}
defined by  $$\overline{\partial}_{F,m,\max}^{\gamma,\gamma}|_{L^2\Omega^{m,r}(\reg(X),F,\gamma,\tau)}:=\overline{\partial}_{E,m,r,\max}^{\gamma,\gamma}$$ for each $r=0,...,m$ and with $\overline{\partial}_{F,m,\min}^{\gamma,\gamma,t}$ defined in the obvious analogous way. We have now the following 

\begin{prop}
\label{minclass}
Let $(X,\gamma)$ be a compact and irreducible Hermitian complex space of complex dimension $m$ such that $\dim(\sing(X))=0$. Let $(F,\tau)\rightarrow X$ be a Hermitian holomorphic vector bundle over $X$ which satisfies \eqref{trappolo}.
 Then the triplet
\begin{equation}
\label{hdmin}
(L^2\Omega^{m,\bullet}(\reg(X),F,\gamma,\tau),m,\overline{\eth}_{F,m,\mathrm{abs}})
\end{equation}
defines an even unbounded Fredholm module for $C(X)$ and thus a class 
\begin{equation}\label{eq:classes}
[\overline{\eth}_{F,m,\mathrm{abs}}]\in KK_0(C(X),\mathbb{C}).
\end{equation}
Moreover this  class does not depend  on the particular Hermitian metric $\gamma$ that we fix on $X$.
\end{prop}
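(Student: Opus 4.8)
The plan is to verify directly the four axioms of an even unbounded Fredholm module for the triplet \eqref{hdmin}, relying on the structural results already established in the excerpt. The grading and the representation $m$ are immediate: $L^2\Omega^{m,\bullet}(\reg(X),F,\gamma,\tau)$ carries the $\mathbb{Z}_2$-grading by parity of antiholomorphic degree, $C(X)$ acts by pointwise multiplication as in \eqref{pointwisem}, this action preserves the grading, and $\overline{\eth}_{F,m,\mathrm{abs}}$ anticommutes with it since $\overline{\partial}_{F,m,\max}^{\gamma,\gamma}$ raises degree by one and $\overline{\eth}_{F,m,\min}^{\gamma,\gamma,t}$ lowers it by one, cf. \eqref{bdbd}. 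That $\overline{\eth}_{F,m,\mathrm{abs}}$ is self-adjoint is the standard fact that the rolled-up operator of a Hilbert complex is self-adjoint; here the relevant Hilbert complex is the maximal $L^2$-$\overline{\partial}_F$-complex of $F$-valued $(m,\bullet)$-forms on $\reg(X)$.

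The first substantial point is the compactness of $m_f\circ(\id+\overline{\eth}_{F,m,\mathrm{abs}}^2)^{-1}$ for $f\in C^\infty(X)$; equivalently, since $(\id+\overline{\eth}_{F,m,\mathrm{abs}}^2)^{-1}$ is bounded, it suffices that this resolvent itself be compact, i.e. that $\overline{\eth}_{F,m,\mathrm{abs}}$ have entirely discrete spectrum. This is where the hypothesis $\dim(\sing(X))=0$ enters. The strategy is to fix a resolution $\pi\colon M\to X$ as in \eqref{hiro}, pull back $F$ to $E=\pi^*F$ and $\tau$ to $\rho=\pi^*\tau$, and realize $\overline{\eth}_{F,m,\mathrm{abs}}$ as a limit, in the norm-resolvent sense, of operators $P^{m,q}_s$ built in Section~2: by the discussion preceding Lemma~\ref{lemma10}, for $q=0$ and $s=0$ the operator $P^{m,0}_0$ is unitarily equivalent to the rolled-up operator of the maximal $L^2$-$\overline{\partial}_E$-complex over $A=\reg(X)$ (pulled back), hence to $\overline{\eth}_{F,m,\mathrm{abs}}$ after identifying $\reg(X)$ with $M\setminus D$ via \eqref{hiro}. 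One must check that the hypotheses of Theorems~\ref{compact2} and \ref{projker} are met: this requires that $(A,g|_A)$ be parabolic for a metric $g$ on $M$, that the maximal $L^2$-$\overline{\partial}$-cohomology groups in degree $(m,q+1)$ be finite dimensional, and that $\im(\overline{\partial}_{E,m,q,\max}^{h,h})=\im(\overline{\partial}_{E,m,q,\max}^{g_1,h})$ together with the dimension matching $\dim H^{m,q}_{\overline{\partial}}(M,E)=\dim H^{m,q}_{2,\overline{\partial}_{\max}}(A,E|_A,h|_A,\rho|_A)$. These parabolicity and finiteness facts for $\dim(\sing(X))=0$ and metrics of the type \eqref{trappolo} are exactly the content of the results of \cite{FBei}, \cite{OvRu} invoked in the introduction, so I would cite them and verify the compatibility of the bundle metric assumption \eqref{trappolo} with the $L^\infty$-type bounds used in Section~1.1 (via \eqref{bibi2} and the parabolicity). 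Lemma~\ref{lemma10} then gives that $P^{m,0}_s$ has discrete spectrum for all $s$, and Theorem~\ref{resolv} yields norm-resolvent convergence as $s\to 0$; discreteness of the spectrum of $P^{m,0}_0$ — hence of $\overline{\eth}_{F,m,\mathrm{abs}}$ — follows because norm-resolvent convergence of operators with discrete spectrum to an operator whose resolvent is compact is automatic here, the compact Green operators of the connecting pieces being supplied by Prop.~\ref{Gianni} and Cor.~\ref{allGreen}. Alternatively one argues directly from Prop.~\ref{Gianni}/Prop.~\ref{useful} that each Green operator in the complex \eqref{Giava} is compact and the cohomology is finite, so the rolled-up operator has compact resolvent.

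The remaining axiom is that $[\overline{\eth}_{F,m,\mathrm{abs}},m_f]$ extend to a bounded operator and that the domain of $\overline{\eth}_{F,m,\mathrm{abs}}$ be $m_f$-invariant for $f\in C^\infty(X)$. The key observation is that the commutator with $\overline{\partial}_{F,m,\max}^{\gamma,\gamma}$ is pointwise Clifford multiplication by $\overline{\partial}f$, which is bounded on $\reg(X)$ because $f\in C^\infty(X)$ pulls back to a smooth function on the compact manifold $M$ and $\gamma$ is quasi-isometric to the ambient polydisc metric near singular points; the same holds for the commutator with the formal adjoint part, and invariance of the maximal domain follows since multiplication by $f$ commutes with $\overline{\partial}$ in the distributional sense up to the bounded zeroth-order term $\overline{\partial}f\wedge(\cdot)$. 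I would spell this out using the local embedding $\phi\colon U\to\mathbb{D}^N$ from the definition of Hermitian complex space, reducing the bound to the Euclidean case. Finally, independence of the class on $\gamma$ follows from Prop.~\ref{sameclass}: any two Hermitian metrics on $X$ lie in the same quasi-isometry class (lifting lemma, \cite{SingDef}), so one can interpolate linearly, the commutators vary strong-operator-continuously, and the resolvents vary in norm by an application of Theorem~\ref{resolv} (or directly by the quasi-isometry invariance already recorded), giving equality of the $K$-homology classes. The main obstacle I anticipate is not any single estimate but the bookkeeping needed to match the abstract hypotheses of Theorems~\ref{compact2} and \ref{projker} — in particular the image-equality $\im(\overline{\partial}_{E,m,q,\max}^{h,h})=\im(\overline{\partial}_{E,m,q,\max}^{g_1,h})$ and the cohomological dimension count — to the concrete geometry of a resolution of a complex space with zero-dimensional singular locus.
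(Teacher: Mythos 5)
There is a genuine gap, and it sits exactly at the one point the paper flags as nontrivial: the discreteness of the spectrum of $\overline{\eth}_{F,m,\mathrm{abs}}$. You propose to obtain it from the Section~2 machinery (Lemma \ref{lemma10}, Theorem \ref{resolv}, Cor.~\ref{allGreen}, Prop.~\ref{Gianni}), but that route is circular. Proposition \ref{Gianni} only yields compactness of the Green operator of the \emph{mixed} operator $\overline{\partial}_{E,m,q,\max}^{g,h}$, where the domain metric $g$ is a genuine smooth metric on the compact resolution $M$; it says nothing about the pure $\overline{\partial}_{E,m,q,\max}^{h,h}$ pieces, which are the only ones appearing in the complex \eqref{Giava} when $q=0$, $s=0$ (the case unitarily equivalent to $\overline{\eth}_{F,m,\mathrm{abs}}$). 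Compactness of $G_{\overline{\partial}_{E,m,q,\max}^{h,h}}$ is an explicit \emph{hypothesis} of Theorem \ref{compact2}, Lemma \ref{lemma10} is stated ``in the setting of Th.~\ref{projker}'' and its proof uses that ``each operator has a compact Green operator,'' and in Lemma \ref{parameter} the paper invokes Lemma \ref{tris} precisely in order to feed this hypothesis into Theorem \ref{resolv}. So your ``alternative'' direct argument via Prop.~\ref{Gianni}/Prop.~\ref{useful} also fails for the $h,h$ segments of \eqref{Giava}: you would be assuming what you set out to prove.

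What is actually needed — and what the paper supplies in Lemma \ref{tris} — is an independent proof that $\overline{\eth}_{F,m,\mathrm{abs}}$ has compact resolvent, using $\dim(\sing(X))=0$ in an essential, local way. The argument is: first choose a bundle metric $\tau'$ that is isometrically trivial over disjoint neighbourhoods $U_1,\dots,U_\ell$ of the finitely many singular points; cover $X$ by these together with $U_0\Subset\reg(X)$ and a subordinate partition of unity whose differentials are compactly supported in $\reg(X)$; on $U_0$ extract convergent subsequences by interior elliptic estimates and Rellich, and on each $U_k$ transfer to the trivial bundle $\reg(X)\times\mathbb{C}^n$ with the standard fibre metric, where discreteness follows from the \emph{untwisted} results of \cite[Th.~5.2]{FBei} and \cite[Th.~1.2]{OvRu}; then pass from $\tau'$ to a general $\tau$ satisfying \eqref{trappolo} by a quasi-isometry argument on the level of Green operators (the maps $J_q$, $K_q$ in the paper). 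Your proposal cites \cite{FBei} and \cite{OvRu} only for ``parabolicity and finiteness facts'' and never performs this localization, so the twisted discreteness is not established. Two smaller points: the self-adjointness, grading, and boundedness of $[\overline{\eth}_{F,m,\mathrm{abs}},m_f]$ are handled essentially as in the paper (which defers to \cite[Prop.~3.6]{BeiPiazza}), but note that the paper works with the subalgebra $S_c(X)$ of functions locally constant near $\sing(X)$, so that $\overline{\partial}f$ is compactly supported in $\reg(X)$, rather than with all of $C^\infty(X)$; and the image equality and dimension count you list among the hypotheses to ``verify'' are themselves the content of Lemma \ref{imago} (a sheaf-theoretic argument), not consequences of anything you have written down.
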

\begin{proof}
The proof follows by arguing as in \cite[Prop. 3.6]{BeiPiazza}.  In particular we use $S_c(X)$ defined as 
\begin{align}
\label{subalgebra}
& S_c(X):=\{f\in C(X)\cap C^{\infty}(\reg(X))\ \text{such that for each}\ p\in \sing(X)\ \text{there exists an}\\ 
& \nonumber \text{open   neighbourhood}\ U\ \text{of}\ p\ \text{with}\ f|_U=c\in \mathbb{C}\}
\end{align} 
as a dense $*$-subalgebra of $C(X)$. Moreover we recall that $m$ denotes the pointwise multiplication, see \eqref{pointwisem}. The only point that needs an explanation is the discreteness of the spectrum of $\overline{\eth}_{F,m,\mathrm{abs}}$. This is settled in the next lemma.
\end{proof}

\begin{lemma}
\label{tris}
Let $(X,\gamma)$ be a compact and irreducible Hermitian complex space of complex dimension $m$ with $\dim(\sing(X))=0$. Then $$\overline{\eth}_{F,m,\mathrm{abs}}:L^2\Omega^{m,\bullet}(\reg(X),F,\gamma,\tau)\rightarrow L^2\Omega^{m,\bullet}(\reg(X),F,\gamma,\tau)$$ has entirely discrete spectrum, with $(F,\tau)\rightarrow X$ any Hermitian holomorphic vector bundle over $X$ which satisfies \eqref{trappolo}. 
\end{lemma}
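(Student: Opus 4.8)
The plan is to reduce, by Serre‑type duality, to the already established discreteness for the $\overline{\partial}$‑complex of $(0,\bullet)$‑forms. Let $\overline{\ast}_\gamma$ be the conjugate‑linear Hodge star attached to $\gamma$ and $\tau$; at each point of $\reg(X)$ it is a conjugate‑linear isometry $\Lambda^{m,q}(\reg(X))\otimes F\to\Lambda^{0,m-q}(\reg(X))\otimes F^*$, and therefore induces conjugate‑linear isometries $L^2\Omega^{m,q}(\reg(X),F,\gamma,\tau)\cong L^2\Omega^{0,m-q}(\reg(X),F^*,\gamma,\tau^*)$ for $q=0,\dots,m$. From the pointwise identities $\overline{\ast}_\gamma\,\overline{\partial}_F=\pm\,\overline{\partial}^{t}_{F^*}\,\overline{\ast}_\gamma$ and $\overline{\ast}_\gamma\,\overline{\partial}^{t}_F=\pm\,\overline{\partial}_{F^*}\,\overline{\ast}_\gamma$, combined with \eqref{mixedad}, one checks that $\overline{\ast}_\gamma$ carries the maximal $L^2$-$\overline{\partial}_F$‑complex of $(m,\bullet)$‑forms onto the minimal $L^2$-$\overline{\partial}_{F^*}$‑complex of $(0,\bullet)$‑forms: indeed the maximal domain of $\overline{\partial}_F$ is sent to the maximal domain of $\overline{\partial}^{t}_{F^*}$, which by \eqref{mixedad} is the adjoint of the minimal extension of $\overline{\partial}_{F^*}$. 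Hence $\overline{\eth}_{F,m,\mathrm{abs}}$ and the rolled‑up operator $\overline{\eth}_{F^*,0,\mathrm{rel}}$ of the minimal $L^2$-$\overline{\partial}_{F^*}$‑complex of $(0,\bullet)$‑forms over $(\reg(X),\gamma,\tau^*)$ are conjugate by the isometry $\overline{\ast}_\gamma$, and in particular they have the same spectrum.

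Next, $\tau^*$ again satisfies \eqref{trappolo}: in the holomorphic trivialisation dual to the one provided by \eqref{trappolo}, the metric induced by $\tau^*$ is the inverse conjugate transpose of $\sigma$, hence quasi‑isometric to $\upsilon_{\mathrm{std}}$ with the same constant $c$. Since $X$ is compact and irreducible with $\dim(\sing(X))=0$, the operator $\overline{\eth}_{F^*,0,\mathrm{rel}}$ has entirely discrete spectrum: for the untwisted complex this is \cite[Th. 1.2]{OvRu} and \cite[Th. 5.2]{FBei}, and the argument is insensitive to the twist by the bundle $F^*$, its only local ingredient being $L^2$‑estimates which by \eqref{trappolo} change only by a fixed factor if $\tau^*$ is replaced, in a local holomorphic frame, by a trivial metric. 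Therefore $\overline{\eth}_{F,m,\mathrm{abs}}$ has entirely discrete spectrum, as claimed. The only non‑formal step here is this last one — checking that the known discreteness statements for the $(0,\bullet)$‑complex on a space with isolated singularities persist after twisting — and it is the sole place where the structure \eqref{trappolo} of $\tau$ is essential; the rest is bookkeeping with the Hodge star.

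For completeness, the same conclusion also fits the machinery of Section 2. Fix a resolution $\pi:M\rightarrow X$ with $D:=\pi^{-1}(\sing(X))$ a divisor with only normal crossings, set $A:=M\setminus D\cong\reg(X)$ via \eqref{hiro}, $E:=\pi^*F$, $h:=\pi^*\gamma$, let $\rho$ be a smooth Hermitian metric on $E\rightarrow M$ quasi‑isometric over $A$ to $\pi^*\tau$ (possible by \eqref{trappolo}), and fix any Hermitian metric $g=g_1$ on $M$. Up to the similarity above, $\overline{\eth}_{F,m,\mathrm{abs}}$ is then the operator $P^{m,0}_0$ of Section 2; parabolicity of $A$ holds because $D$ is a normal crossings divisor, while the remaining hypotheses of Th. \ref{projker} (finiteness of the $L^2$-$\overline{\partial}$‑cohomology of the degenerate metric $h$, compactness of the Green operators $G_{\overline{\partial}^{h,h}_{E,m,q,\max}}$, the equality $\im(\overline{\partial}^{h,h}_{E,m,q,\max})=\im(\overline{\partial}^{g_1,h}_{E,m,q,\max})$ via Prop. \ref{Pp}, and the dimension matching $\dim H^{m,q}_{\overline{\partial}}(M,E)=\dim H^{m,q}_{2,\overline{\partial}_{\max}}(A,E|_A,h|_A,\rho|_A)$) all follow from the transported $(0,\bullet)$ results together with the Grauert--Riemenschneider isomorphisms $H^{m,q}_{2,\overline{\partial}_{\max}}(A,E|_A,h|_A,\rho|_A)\cong H^q(X,\mathcal{K}^{GR}_X\otimes F)\cong H^{m,q}_{\overline{\partial}}(M,E)$; Lemma \ref{lemma10} with $q=s=0$ then yields once more that $P^{m,0}_0$ is self‑adjoint with entirely discrete spectrum.
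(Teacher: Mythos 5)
Your reduction via the conjugate-linear Hodge star is correct as far as it goes: $\overline{\ast}_\gamma$ does intertwine the maximal $L^2$-$\overline{\partial}_F$-complex of $(m,\bullet)$-forms with the minimal $L^2$-$\overline{\partial}_{F^*}$-complex of $(0,\bullet)$-forms (via \eqref{mixedad}), it is an anti-unitary, and $\tau^*$ again satisfies \eqref{trappolo}. But this step only relocates the problem; it does not solve it. The crux of the lemma is precisely the assertion you dispose of in one clause — that the discreteness results of \cite[Th. 1.2]{OvRu} and \cite[Th. 5.2]{FBei}, which are stated for the untwisted complex, survive the twist by $(F^*,\tau^*)$. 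This is not a formal consequence of quasi-isometry of metrics: the rolled-up operator contains $\overline{\partial}^t$, whose minimal domain is the domain of the adjoint of $\overline{\partial}_{\max}$ \emph{with respect to the chosen fibre metric}, and adjoints taken with respect to two merely quasi-isometric inner products have different (only linearly isomorphic) domains. So "the $L^2$-estimates change only by a fixed factor" does not by itself give equality of the domains entering the graph-norm compactness argument. The paper's proof of this lemma is devoted exactly to this point: it first chooses an auxiliary metric $\tau'$ that is \emph{exactly} standard in a holomorphic trivialisation near each of the finitely many singular points, runs a partition-of-unity argument (interior elliptic estimates plus Rellich away from $\sing(X)$; near each $p_k$ the operator is literally $n$ copies of the untwisted one, after checking that multiplication by the cut-offs preserves the maximal and minimal domains), and only then transfers compactness from $\tau'$ to a general $\tau$ satisfying \eqref{trappolo} at the level of Green operators, via the bounded isomorphisms $J_q$ and $K_q$ between the ranges of $\overline{\partial}_{\max}$ (which, unlike the adjoint domains, genuinely do not depend on the metric within the quasi-isometry class). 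None of this is supplied by your argument, so the essential content of the lemma remains unproved.

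Your closing paragraph does not repair this. The hypotheses of Lemma \ref{lemma10} and Th. \ref{projker} that you invoke — finite dimensionality of $H^{m,q}_{2,\overline{\partial}_{\max}}(A,E|_A,h|_A,\rho|_A)$ and, above all, compactness of $G_{\overline{\partial}^{h,h}_{E,m,q,\max}}$ — are in the paper obtained as \emph{by-products} of the proof of this very lemma (and of Lemma \ref{imago}); assuming that they "follow from the transported $(0,\bullet)$ results" is assuming the twisted discreteness you set out to prove. The route is therefore circular unless the localization argument for the twisted operator is carried out, which is the one step your proposal omits.
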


\begin{proof}
Let $\sing(X)=\{p_1,...,p_{\ell}\}$. First, we prove this lemma under some additional requirements: the holomorphic vector bundle $F$ is endowed with a Hermitian metric $\tau'$ such that for each $p_k\in \sing(X)$, $k=1,...,\ell$ there exists an open neighbourhood $U_k$ and a trivialization $\chi_k:F|_{U_k}\rightarrow U_k\times \mathbb{C}^n$, with $n:=\mathrm{rnk}(F)$, such that $\chi^*(\upsilon_{\mathrm{std}})=\tau'$, with $\upsilon_{std}$ defined in \eqref{trappolo}. Clearly we can always endow $F$ with such a metric. Thanks to \cite[Th. 5.2]{FBei} and \cite[Th. 1.2]{OvRu} we know that $$\overline{\eth}_{m,\mathrm{abs}}:L^2\Omega^{m,\bullet}(\reg(X),\gamma)\rightarrow L^2\Omega^{m,\bullet}(\reg(X),\gamma)$$ has entirely discrete spectrum. From this we get immediately that  the twisted Dirac-Dolbeault operator with respect to the trivial holomorphic vector bundle $\reg(X)\times\mathbb{C}^n$ endowed with the standard Euclidean K\"ahler metric $\upsilon_{\mathrm{std}}$ 
\begin{equation}
\label{b-twist}
\overline{\eth}_{\mathbb{C}^n,m,\mathrm{abs}}:L^2\Omega^{m,\bullet}(\reg(X),\reg(X)\times\mathbb{C}^n,\gamma,\upsilon_{\mathrm{std}})\rightarrow L^2\Omega^{m,\bullet}(\reg(X),\reg(X)\times\mathbb{C}^n,\gamma,\upsilon_{\mathrm{std}})
\end{equation}
 has entirely discrete spectrum, as well. Let now $U_0$ be an open subset of $\reg(X)$ such that $\{U_0,U_1,...,U_{\ell}\}$ is an open covering of $X$. We also assume that $U_0\cap \sing(X)=\emptyset$ and that $U_i\cap U_j=\emptyset$ for each $1\leq i<j\leq \ell$. Let $\{\phi_0,...,\phi_{\ell}\}$ be a partition of unity subordinated to $\{U_0,U_1,...,U_{\ell}\}$ such that $\phi_i\in C^{\infty}(\reg(X))\cap C(X)$ for each $i=0,...,\ell$. Note that for every $1\leq i\leq \ell$ there exists an open neighbourhood $V_i$ of $p_i$ with $V_i\subset U_i$ and $\phi_i|_{V_i}= 1$. In particular we have $d_0\phi_i\in \Omega_c^1(\reg(U_i))$. Consider now a sequence $\{\eta_j\}_{j\in \mathbb{N}}\subset \mathcal{D}(\overline{\eth}_{F,m,\mathrm{abs}})$ which is bounded with respect to the corresponding graph norm. Clearly the sequence $\{\phi_0\eta_j\}_{j\in\mathbb{N}}$ still lies in $\mathcal{D}(\overline{\eth}_{F,m,\mathrm{abs}})$ and  is bounded with respect to the corresponding graph norm. Moreover the support of $\phi_0\eta_j$ is contained in $U_0$ for each $j\in \mathbb{N}$. Since $U_0$ is relatively compact in $\reg(X)$ we can use elliptic estimates, see e.g. \cite[Lemma 1.1.17]{Lesch}, and a  Rellich-type compactness theorem  to deduce the existence of a subsequence $\{\eta_{0,j}\}_{j\in \mathbb{N}}\subset \{\eta_{j}\}_{j\in \mathbb{N}}$ such that $\{\phi_0\eta_{0,j}\}_{j\in \mathbb{N}}$ converges in $L^2\Omega^{m,\bullet}(\reg(X),F,\gamma,\tau')$. Consider now the sequence $\{\phi_1\psi_{0,j}\}_{j\in \mathbb{N}}\subset L^2\Omega^{m,\bullet}(\reg(X),\reg(X)\times\mathbb{C}^n,\gamma,\upsilon_{\mathrm{std}})$ with $\psi_{0,j}:=(\chi_1^{-1})^*(\eta_{0,j}|_{\reg(U_1)})$. It is clear that the sequence $\{\phi_1\psi_{0,j}\}_{j\in \mathbb{N}}$ lies both in the domain of $$\overline{\partial}^{\gamma,\gamma}_{\mathbb{C}^n,m,\max}:L^2\Omega^{m,\bullet}(\reg(U_1),\reg(X)\times\mathbb{C}^n,\gamma|_{\reg(U_1)},\upsilon_{\mathrm{std}}|_{\reg(U_1)})\rightarrow L^2\Omega^{m,\bullet}(\reg(U_1),\reg(X)\times\mathbb{C}^n,\gamma|_{\reg(U_1)},\upsilon_{\mathrm{std}}|_{\reg(U_1)})$$ and $$\overline{\partial}_{\mathbb{C}^n,m,\min}^{\gamma,\gamma,t}:L^2\Omega^{m,\bullet}(\reg(U_1),\reg(X)\times\mathbb{C}^n,\gamma|_{\reg(U_1)},\upsilon_{\mathrm{std}}|_{\reg(U_1)})\rightarrow L^2\Omega^{m,\bullet}(\reg(U_1),\reg(X)\times\mathbb{C}^n,\gamma|_{\reg(U_1)},\upsilon_{\mathrm{std}}|_{\reg(U_1)})$$ and it is bounded in the corresponding graph norm. From the definition of minimal domain we get immediately that $\{\phi_1\psi_{0,j}\}_{j\in \mathbb{N}}$ lies in the domain of  $$\overline{\partial}_{\mathbb{C}^n,m,\min}^{\gamma,\gamma,t}:L^2\Omega^{m,\bullet}(\reg(X),\reg(X)\times \mathbb{C}^n,\gamma,\upsilon_{\mathrm{std}})\rightarrow L^2\Omega^{m,\bullet}(\reg(X),\reg(X)\times\mathbb{C}^n,\gamma,\upsilon_{\mathrm{std}}).$$ Moreover since $\phi_1$ has compact support contained in $U_1$ it is not difficult to see that $\{\phi_1\psi_{0,j}\}_{j\in \mathbb{N}}$ lies also in the domain of  $$\overline{\partial}^{\gamma,\gamma}_{\mathbb{C}^n,m,\max}:L^2\Omega^{m,\bullet}(\reg(X),\reg(X)\times\mathbb{C}^n,\gamma,\upsilon_{\mathrm{std}})\rightarrow L^2\Omega^{m,\bullet}(\reg(X),\reg(X)\times\mathbb{C}^n,\gamma,\upsilon_{\mathrm{std}}).$$ Summarizing we have shown that $\{\phi_1\psi_{0,j}\}_{j\in \mathbb{N}}$ lies in the domain of  \eqref{b-twist} and it is bounded in the corresponding graph norm. Therefore there exists a subsequence $\{\psi_{1,j}\}_{j\in\mathbb{N}}\subset \{\psi_{0,j}\}_{j\in \mathbb{N}}$ such  that $\{\phi_1\psi_{1,j}\}_{j\in\mathbb{N}}$ converges in $L^2\Omega^{m,\bullet}(\reg(X),\reg(X)\times \mathbb{C}^n,\gamma,\upsilon_{\mathrm{std}})$. Eventually we can conclude that there exists a  subsequence $\{\eta_{1,j}\}_{j\in \mathbb{N}}\subset \{\eta_{0,j}\}_{j\in \mathbb{N}}$, which satisfies $(\chi_1^{-1})^*(\eta_{1,j}|_{\reg(U_1)})=\psi_{1,j}$, such that the sequence $\{\phi_1\eta_{1,j}\}_{j\in \mathbb{N}}$ converges in $L^2\Omega^{m,\bullet}(\reg(X),F,\gamma,\tau')$. Repeating this procedure up to $n$ we construct a subsequence  $\{\eta_{n,j}\}_{j\in \mathbb{N}}\subset \{\eta_{j}\}_{j\in \mathbb{N}}$ such that $\{\phi_i\eta_{n,j}\}_{j\in \mathbb{N}}$ converges in $L^2\Omega^{m,\bullet}(\reg(X),F,\gamma,\tau')$ for each $i=0,...,n$. We can thus conclude that the sequence $\{\eta_{n,j}\}_{j\in \mathbb{N}}$ converges in $L^2\Omega^{m,\bullet}(\reg(X),F,\gamma,\tau')$ and this completes the first part of the proof. Note that as a by-product of this first part of the proof we get that  $\im(\overline{\partial}^{\gamma,\gamma}_{F,m,q,\max})$ is a closed subspace of $L^2\Omega^{m,q}(\reg(X),F,\gamma,\tau')$  for each $q=0,...,m$,  $$\left(\ker(\overline{\partial}^{\gamma,\gamma}_{F,m,q,\max})\cap\ker(\overline{\partial}_{F,m,q-1,\min}^{\gamma,\gamma,t})\right)\cong \ker(\overline{\partial}_{F,m,q,\max}^{\gamma,\gamma})/\im(\overline{\partial}^{\gamma,\gamma}_{F,m,q-1,\max})$$ is a finite-dimensional vector space
and $$G_{\overline{\partial}_{F,m,q,\max}}^{\tau'}:L^2\Omega^{m,q+1}(\reg(X),F,\gamma,\tau')\rightarrow L^2\Omega^{m,q}(\reg(X),F,\gamma,\tau')$$ is a compact operator where we have denoted with $G_{\overline{\partial}_{F,m,q,\max}}^{\tau'}$ the Green operator of 
$$\overline{\partial}^{\gamma,\gamma}_{F,m,q,\max}:L^2\Omega^{m,q}(\reg(X),F,\gamma,\tau')\rightarrow L^2\Omega^{m,q+1}(\reg(X),F,\gamma,\tau').$$ Let now $\tau$ be an arbitrarily fixed Hermitian metric on $F\rightarrow X$ which satisfies \eqref{trappolo}. Since $\tau$ and $\tau'$ are quasi-isometric we know that for each $q=0,...,m$ $$\overline{\partial}^{\gamma,\gamma}_{F,m,q,\mathrm{abs}}:L^2\Omega^{m,q}(\reg(X),F,\gamma,\tau)\rightarrow L^2\Omega^{m,q+1}(\reg(X),F,\gamma,\tau)$$ has closed range and  $$\left(\ker(\overline{\partial}^{\gamma,\gamma}_{F,m,q,\max})\cap\ker(\overline{\partial}_{F,m,q-1,\min}^{\gamma,\gamma,t})\right)\cong\ker(\overline{\partial}^{\gamma,\gamma}_{F,m,q,\max})/\im(\overline{\partial}^{\gamma,\gamma}_{F,m,q-1,\max})$$ is a finite-dimensional vector space. Let us now consider the following $L^2$-decomposition $$L^2\Omega^{m,q}(\reg(X),F,\gamma,\tau)=\left(\ker(\overline{\partial}^{\gamma,\gamma}_{F,m,q,\max})\cap\ker(\overline{\partial}_{F,m,q-1,\min}^{\gamma,\gamma,t})\right)\oplus \im(\overline{\partial}_{F,m,q-1,\max}^{\gamma,\gamma})\oplus \im(\overline{\partial}_{F,m,q,\min}^{\gamma,\gamma,t}).$$ We already know that $\ker(\overline{\partial}_{F,m,q,\max}^{\gamma,\gamma})\cap\ker(\overline{\partial}_{F,m,q-1,\min}^{\gamma,\gamma,t})$ is finite dimensional. Since
$$
\begin{aligned}
\mathcal{D}(\overline{\eth}_{F,m,\mathrm{abs}})=\bigoplus_{q=0}^m \left(\mathcal{D}(\overline{\partial}_{F,m,q,\max}^{\gamma,\gamma})\cap \mathcal{D}(\overline{\partial}_{F,m,q-1,\min}^{\gamma,\gamma,t})\right)\\
\ker(\overline{\eth}_{F,m,\mathrm{abs}})=\bigoplus_{q=0}^m \left(\ker(\overline{\partial}_{F,m,q,\max}^{\gamma,\gamma})\cap \ker(\overline{\partial}_{F,m,q-1,\min}^{\gamma,\gamma,t})\right)\\
\mathrm{im}(\overline{\eth}_{F,m,\mathrm{abs}})=\bigoplus_{q=0}^m \left(\mathrm{im}(\overline{\partial}_{F,m,q,\max}^{\gamma,\gamma})\oplus \mathrm{im}(\overline{\partial}_{F,m,q-1,\min}^{\gamma,\gamma,t})\right)
\end{aligned}
$$
we know that $\ker(\overline{\eth}_{F,m,\mathrm{abs}})$ has finite dimension and $\im(\overline{\eth}_{F,m,\mathrm{abs}})$ is closed. Thus in order to conclude that $\overline{\eth}_{F,m,\mathrm{abs}}:L^2\Omega^{m,\bullet}(\reg(X),F,\gamma,\tau)\rightarrow L^2\Omega^{m,\bullet}(\reg(X),F,\gamma,\tau)$ has entirely discrete spectrum it is enough to prove that the corresponding Green operator is compact, see Prop. \ref{useful}. Thanks to \eqref{bdbd} this boils down to show that  the Green operators of $\overline{\partial}_{F,m,q,\max}^{\gamma,\gamma}$ and $\overline{\partial}_{F,m,q,\min}^{\gamma,\gamma,t}$ with respect to $\tau$:
\begin{equation}
\label{okm}
G_{\overline{\partial}_{F,m,q,\max}}^{\tau}:L^2\Omega^{m,q+1}(\reg(X),F,\gamma,\tau)\rightarrow L^2\Omega^{m,q}(\reg(X),F,\gamma,\tau)
\end{equation}
and 
\begin{equation}
\label{mko}
G_{\overline{\partial}_{F,m,q-1,\min}^t}^{\tau}:L^2\Omega^{m,q-1}(\reg(X),F,\gamma,\tau)\rightarrow L^2\Omega^{m,q}(\reg(X),F,\gamma,\tau)
\end{equation}
 are both compact for each $q=0,...,m$. Note that the compactness of \eqref{mko} follows from the compactness of \eqref{okm}. Indeed $\overline{\partial}_{F,m,q-1,\min}^{\gamma,\gamma,t}=(\overline{\partial}_{F,m,q-1,\max}^{\gamma,\gamma})^*$ and consequently $G_{\overline{\partial}_{F,m,q-1,\min}^t}^{\tau}=(G_{\overline{\partial}_{F,m,q-1,\max}}^{\tau})^*.$ Thus we are left to prove the compactness of \eqref{okm}. To this aim we point out that since $\tau$  and $\tau'$ are quasi-isometric we have an equality of topological vector spaces $L^2\Omega^{m,q}(\reg(X),F,\gamma,\tau')=L^2\Omega^{m,q}(\reg(X),F,\gamma,\tau)$. In particular the identity map $$\mathrm{Id}:L^2\Omega^{m,q}(\reg(X),F,\gamma,\tau')\rightarrow L^2\Omega^{m,q}(\reg(X),F,\gamma,\tau)$$ is bijective, continuous with continuous inverse. Moreover it is clear that the identity induces a continuous isomorphism, that we denote by $K_q$, with continuous inverse $$L^2\Omega^{m,q}(\reg(X),F,\gamma,\tau')\supset \im(\overline{\partial}_{F,m,q-1,\max}^{\gamma,\gamma})\stackrel{K_q}{\longrightarrow } \im(\overline{\partial}_{F,m,q-1,\max}^{\gamma,\gamma})\subset L^2\Omega^{m,q}(\reg(X),F,\gamma,\tau)$$
with $K_q$ defined as the restriction of $\mathrm{Id}$ on $\im(\overline{\partial}^{\gamma,\gamma}_{F,m,q-1,\mathrm{abs}})\subset L^2\Omega^{m,q}(\reg(X),F,\gamma,\tau')$. Furthermore let us introduce the map  $J_q$:
$$L^2\Omega^{m,q}(\reg(X),F,\gamma,\tau')\supset \im(\overline{\partial}_{F,m,q,\min}^{\gamma,\gamma,t})\stackrel{J_q}{\longrightarrow } \im(\overline{\partial}_{F,m,q,\min}^{\gamma,\gamma,t})\subset L^2\Omega^{m,q}(\reg(X),F,\gamma,\tau)$$ defined
as  $J_q:=\pi^{\tau}_q\circ \mathrm{Id}$  with $\pi^{\tau}_q$ the orthogonal projection $\pi^{\tau}_q:L^2\Omega^{m,q}(\reg(X),F,\gamma,\tau)\rightarrow  \im(\overline{\partial}_{F,m,q,\min}^{\gamma,\gamma,t}).$
Using again the fact that $\tau$ and $\tau'$ are quasi-isometric it is not difficult to check that  $J_q$ is  bounded, bijective with bounded inverse and that 
$$G_{\overline{\partial}_{F,m,q,\max}}^{\tau}|_{\im(\overline{\partial}_{F,m,q,\max}^{\gamma,\gamma})}:\im(\overline{\partial}_{F,m,q,\max}^{\gamma,\gamma})\rightarrow L^2\Omega^{m,q}(\reg(X),F,\gamma,\tau)$$ equals $$J_q\circ G_{\overline{\partial}_{F,m,q,\max}}^{\tau'}\circ K_{q+1}^{-1}:\im(\overline{\partial}^{\gamma,\gamma}_{F,m,q,\max})\rightarrow L^2\Omega^{m,q}(\reg(X),F,\gamma,\tau).$$ Since both $J_q$ and $K_q^{-1}$ are continuous and $$ G_{\overline{\partial}_{F,m,q,\max}^{\tau'}}:L^2\Omega^{m,q+1}(\reg(X),F,\gamma,\tau')\rightarrow L^2\Omega^{m,q}(\reg(X),F,\gamma,\tau')$$ is compact we obtain that $$G_{\overline{\partial}_{F,m,q,\max}}^{\tau}|_{\im(\overline{\partial}_{F,m,q,\max}^{\gamma,\gamma})}:\im(\overline{\partial}_{F,m,q,\max}^{\gamma,\gamma})\rightarrow L^2\Omega^{m,q}(\reg(X),F,\gamma,\tau)$$ is compact. Finally this implies immediately that also \eqref{okm} is compact.
\end{proof}

We have now the main result of this paper.

\begin{teo}
\label{tata}
Let $(X,\gamma)$ be a compact and irreducible Hermitian complex space of complex dimension $m$ such that $\dim(\sing(X))=0$. Let $(F,\tau)\rightarrow X$ be a Hermitian holomorphic vector bundle over $X$ which satisfies \eqref{trappolo}.
Let $\pi:M\rightarrow X$ be a resolution of $X$ and let $E\rightarrow M$ be the holomorphic vector bundle defined as $E:=\pi^*F$. We then have the following equality in $KK_0(C(X),\mathbb{C})$: $$\pi_*[\overline{\eth}_{E,m}]=[\overline{\eth}_{F,m,\mathrm{abs}}]\in KK_0(C(X),\mathbb{C}).$$
\end{teo}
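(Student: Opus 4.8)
The plan is to package everything proved in Section~2 into an explicit homotopy of even unbounded Fredholm modules and then invoke Proposition~\ref{sameclass}, exactly as announced in the introduction. Concretely, I would apply the machinery of Section~2 to the following data: take the resolution $\pi:M\to X$, the manifold $M$ with a fixed Hermitian metric $g_1:=g$, the bundle $(E,\rho)=(\pi^*F,\pi^*\tau)$, and let $h:=g_0$ be the pseudometric $h=\pi^*\gamma$ on $M$; this is a genuine Hermitian pseudometric since $\pi$ is a biholomorphism off the normal-crossings divisor $D=\pi^{-1}(\sing X)$, and $A:=M\setminus D$ is open and dense. One must check the two standing hypotheses of Section~2: first, $(A,g_1|_A)$ is parabolic — this holds because $\dim(\sing X)=0$ forces $D$ to have real codimension $\ge 2$ and such sets are negligible for capacity in any metric on a compact manifold (cf.\ \cite{Troya}); second, the $L^2$-$\overline{\partial}_{\max}$ cohomology of $F$-valued $(m,q+1)$-forms on $(A,h|_A,\rho|_A)$ is finite-dimensional, and moreover $\im(\overline{\partial}_{E,m,q,\max}^{h,h})=\im(\overline{\partial}_{E,m,q,\max}^{g_1,h})$ and $\dim H^{m,q}_{2,\overline{\partial}_{\max}}(A,E|_A,h|_A,\rho|_A)=\dim H^{m,q}_{\overline{\partial}_E}(M,E)$ — these are precisely the compactness/closed-range facts established for $\overline{\eth}_{F,m,\mathrm{abs}}$ in Lemma~\ref{tris} (via \cite{FBei}, \cite{OvRu}), pulled back through the biholomorphism $\pi|_A$, which identifies $L^2\Omega^{m,\bullet}(\reg(X),F,\gamma,\tau)$ with $L^2\Omega^{m,\bullet}(A,E|_A,h|_A,\rho|_A)$ isometrically.

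Granting this, the family $g_s=(1-f(s))h+f(s)g_1$ from Section~2 satisfies conditions (1)--(4), and Theorem~\ref{resolv} gives that the rolled-up operators $P^{m,q}_s$ of the interpolating Hilbert complex \eqref{ucomplex} have norm-resolvent-continuous family $s\mapsto(P^{m,q}_s+i)^{-1}$ on $[0,1]$. Now I specialise to $q=m$: by the remarks following \eqref{urolledup}, $P^{m,m}_1=\overline{\eth}_{E,m}$ is the Dirac--Dolbeault operator on $(M,g_1,E,\rho)$, while $P^{m,m}_0=P^{m,m-1}_1$, and iterating the chain $P^{m,q}_0=P^{m,q-1}_1$ down to $q=0$ we reach $P^{m,0}_0$, which by the same remarks is unitarily equivalent (via the block isometry $\bigoplus_r\Psi_0^{m,r}$) to $\overline{\eth}_{F,m,\mathrm{abs}}$ read on $A$, i.e.\ to $\pi^*\overline{\eth}_{F,m,\mathrm{abs}}$. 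So one gets a finite concatenation of norm-resolvent-continuous paths linking $\overline{\eth}_{E,m}$ to a self-adjoint operator unitarily equivalent to $\overline{\eth}_{F,m,\mathrm{abs}}$.

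The remaining task is to upgrade this resolvent-deformation of operators to a homotopy of \emph{Fredholm modules over $C(X)$}. Here the representation is $\upsilon=m\circ\pi^*:C(X)\to\mathcal B(L^2\Omega^{m,\bullet}(A,E|_A,g_1|_A,\rho|_A))$ by pullback-then-multiplication, with dense subalgebra $S_c(X)$ from \eqref{subalgebra} — note that for $f\in S_c(X)$ the pullback $\pi^*f$ is smooth on $M$ and \emph{locally constant near $D$}, which is exactly what makes $[\,\cdot\,,\upsilon(f)]$ behave well across the whole family. I would verify: (i) for each $s$, $(L^2\Omega^{m,\bullet},\upsilon,P^{m,m\text{-block}}_s)$ is an even unbounded Fredholm module with respect to $S_c(X)$ — self-adjointness and discrete spectrum come from Lemma~\ref{lemma10}, the grading is the even/odd antiholomorphic one (odd for the rolled-up operator, even for $\upsilon$), and the commutator $[P_s,\upsilon(f)]$ is bounded because $d\pi^*f$ is a bounded form (being compactly supported in $A$ when $f\in S_c(X)$) and the interpolating operators differ from the genuine $\overline{\partial}+\overline{\partial}^t$ only by the continuous bundle endomorphisms $\Psi_s^{m,r}$, $T_s$, which commute with multiplication; (ii) $s\mapsto[P_s,\upsilon(f)]$ is strong-operator continuous — this follows from the pointwise continuity in $s$ of those bundle endomorphisms together with dominated convergence, as in Lemmas~\ref{lemma3} and \ref{lemma5}; (iii) condition (2) of Proposition~\ref{sameclass} is Theorem~\ref{resolv} itself. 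Then Proposition~\ref{sameclass} yields $[P^{m,m}_1]=[P^{m,m}_0]=\dots=[P^{m,0}_0]$ in $KK_0(C(X),\mathbb{C})$, i.e.\ (after applying naturality of $\pi_*$ and the fact that for a constant-near-$D$ function the pullback representation computes $\pi_*$ of the $C(M)$-class) $\pi_*[\overline{\eth}_{E,m}]=[\overline{\eth}_{F,m,\mathrm{abs}}]$, which is the claim.

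The main obstacle I anticipate is step (i)--(ii): controlling the commutators $[P^{m,q}_s,\upsilon(f)]$ \emph{uniformly in $s$} and checking they vary strong-continuously, since the operators $D^{g_s,h}_{m,q}=\Psi_0^{m,q+1}\circ\overline{\partial}^{g_s,h}_{E,m,q,\max}\circ(\Psi_s^{m,q})^{-1}$ are conjugates of $\overline{\partial}$ by $s$-dependent, merely continuous (not smooth) bundle isometries, so one cannot just quote the standard "$\overline{\partial}$ has bounded commutator with smooth functions" fact verbatim; one needs that $\Psi_s^{m,q}$ commutes with $\upsilon(f)=m_{\pi^*f}$ (true, as both act fibrewise, multiplication being scalar) so that $[D^{g_s,h}_{m,q},\upsilon(f)]=\Psi_0\circ[\overline{\partial}^{g_s,h},\upsilon(f)]\circ\Psi_s^{-1}=\Psi_0\circ m_{\bar\partial(\pi^*f)}\circ\Psi_s^{-1}$, whose operator norm is bounded by $\|\bar\partial(\pi^*f)\|_\infty$ uniformly in $s$ precisely because $f$ is constant near $D$. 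Once this bookkeeping is done carefully the rest is a direct application of the Section~2 results and Proposition~\ref{sameclass}.
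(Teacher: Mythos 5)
Your overall architecture is the one the paper actually uses: interpolating complexes, the chain $P^{m,m}_1\rightsquigarrow P^{m,m}_0=P^{m,m-1}_1\rightsquigarrow\cdots\rightsquigarrow P^{m,0}_0$, Proposition \ref{sameclass} fed by Theorem \ref{resolv}, the dense subalgebra $S_c(X)$, and the commutator bookkeeping with the isometries $\Psi^{m,q}_s$ (which, as you note, commute with scalar multiplication). But there is one genuine gap: you assert that the hypotheses $\im(\overline{\partial}_{E,m,q,\max}^{h,h})=\im(\overline{\partial}_{E,m,q,\max}^{g_1,h})$ and $\dim H^{m,q}_{2,\overline{\partial}_{\max}}(A,E|_A,h|_A,\rho|_A)=\dim H^{m,q}_{\overline{\partial}}(M,E)$ are ``precisely the compactness/closed-range facts established for $\overline{\eth}_{F,m,\mathrm{abs}}$ in Lemma \ref{tris}, pulled back through the biholomorphism.'' They are not. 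Lemma \ref{tris} gives discreteness of the spectrum of $\overline{\eth}_{F,m,\mathrm{abs}}$, hence finite-dimensionality and closed range of the $(h,h)$-complex; it says nothing about the \emph{maximal mixed-metric} operator $\overline{\partial}^{g_1,h}_{\max}$, whose domain is strictly larger (Prop.\ \ref{Pp} only gives the inclusion $\im(\overline{\partial}^{h,h}_{\max})\subseteq\im(\overline{\partial}^{g_1,h}_{\max})$), nor does it compare the $L^2$-cohomology downstairs with the Dolbeault cohomology of the \emph{resolution} $M$. Both equalities are exactly what makes Theorems \ref{compact2} and \ref{projker} applicable (and are used again in the proof of Theorem \ref{resolv} to see that $\ker((D^{g_s,h}_{m,q})^*)$ is independent of $s$), so they cannot be waved through.

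The paper supplies them in Lemma \ref{imago} by a separate sheaf-theoretic argument: one shows that both $L^2$-complexes $(\mathcal{C}^{m,\bullet}_{F,\gamma},\overline{\partial})$ and $(\mathcal{C}^{m,\bullet}_{F,\sigma},\overline{\partial})$ (for $\sigma=((\pi|_A)^{-1})^*g$) are fine resolutions of the direct image sheaf $\pi_*(\mathcal{K}_M(E))$, following \cite{JRu}; the induced isomorphism on hypercohomology then forces the reverse inclusion of images and the equality of dimensions with $H^{m,q}_{\overline{\partial}}(M,E)$. You need to either reproduce this argument or cite the corresponding result of Ruppenthal for the twisted case; without it the deformation machinery of Section 2 has no licence to run. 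The rest of your write-up (parabolicity of $A$, the uniform commutator bound $\|[D^{g_s,h}_{m,q},\upsilon(f)]\|\leq C\|\overline{\partial}(\pi^*f)\|_\infty$ for $f\in S_c(X)$, strong continuity via dominated convergence, and the identification $\pi_*[\overline{\eth}_{E,m}]=[Q^{m,m}_1]$) matches the paper's proof and is sound.
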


In order to prove the above theorem we need some preliminary results.
\begin{lemma}
\label{imago}
Let $(X,\gamma)$ be a compact and irreducible Hermitian complex space of complex dimension $m$ with $\dim(\sing(X))=0$. Let $(F,\tau)\rightarrow X$ be a Hermitian holomorphic vector bundle over $X$ which satisfies \eqref{trappolo}. Let $\pi:M\rightarrow X$ be a resolution of $X$ and let $g$ be an arbitrarily fixed Hermitian metric on $M$. Let $E\rightarrow M$ be the holomorphic vector bundle defined as $E:=\pi^*F$ and let $h=\pi^*\gamma$, $\rho:=\pi^*\tau$ and $A:=\pi^{-1}(\reg(X))$. Consider the operators $$\overline{\partial}_{E,m,q,\max}^{g,h}:L^2\Omega^{m,q}(A,E|_A,g|_A,\rho|_A)\rightarrow L^2\Omega^{m,q+1}(A,E|_A,h|_A,\rho|_A)$$ and $$\overline{\partial}_{E,m,q,\max}^{h,h}:L^2\Omega^{m,q}(A,E|_A,h|_A,\rho|_A)\rightarrow L^2\Omega^{m,q+1}(A,E|_A,h|_A,\rho|_A).$$ Then for each $q=0,...,m$  the following equalities hold in $L^2\Omega^{m,q+1}(A,E|_A,h|_A,\rho|_A)$: $$\im(\overline{\partial}_{E,m,q,\max}^{h,h})=\overline{\im(\overline{\partial}_{E,m,q,\max}^{h,h})}=\overline{\im(\overline{\partial}_{E,m,q,\max}^{g,h})}=\im(\overline{\partial}_{E,m,q,\max}^{g,h}).$$ Moreover $$\dim (H^{m,q}_{\overline{\partial}}(M,E))=\dim (H^{m,q}_{2,\overline{\partial}_{\max}}(A,E|_A,h|_A,\rho|_A)).$$
\end{lemma}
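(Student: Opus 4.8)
The plan is to transport the whole statement to $\reg(X)$ through the biholomorphism $\pi|_{A}\colon A\to\reg(X)$ and then feed the finiteness and closed‑range information coming from Lemma \ref{tris} into Prop.\ \ref{Gianni} and Prop.\ \ref{Pp}; the one genuinely new input will be an identification of the maximal $L^{2}$-$\overline{\partial}_{F}$-cohomology of $(m,\bullet)$-forms on $\reg(X)$ with Dolbeault cohomology on $M$.

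First I would check that the standing hypotheses of \S2.1 hold in this geometric situation. Since $D=\pi^{-1}(\sing(X))$ is a normal crossings divisor, hence an analytic subset of complex codimension one, $(A,g|_{A})=(M\setminus D,g|_{M\setminus D})$ is parabolic (removing an analytic subset of positive codimension from a compact complex manifold yields a parabolic manifold, cf.\ \cite[Th.\ 3.4, Prop.\ 3.1]{Troya}). Pulling back by $\pi^{*}$ identifies $L^{2}\Omega^{m,q}(A,E|_{A},h|_{A},\rho|_{A})$ with $L^{2}\Omega^{m,q}(\reg(X),F,\gamma,\tau)$ and $\overline{\partial}^{h,h}_{E,m,q,\max}$ with $\overline{\partial}^{\gamma,\gamma}_{F,m,q,\max}$, and by Lemma \ref{tris} and its proof (where $\dim(\sing(X))=0$ and \eqref{trappolo} are used) the operator $\overline{\partial}^{\gamma,\gamma}_{F,m,q,\max}$ has closed range, $H^{m,q}_{2,\overline{\partial}_{\max}}(\reg(X),F,\gamma,\tau)$ is finite dimensional, and its Green operator is compact. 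Transported back to $A$ this gives at once the first equality, $\im(\overline{\partial}^{h,h}_{E,m,q,\max})=\overline{\im(\overline{\partial}^{h,h}_{E,m,q,\max})}$, and the second standing assumption of \S2.1. With both assumptions in force, Prop.\ \ref{Gianni} applies to $\overline{\partial}^{g,h}_{E,m,q,\max}$ and yields $\im(\overline{\partial}^{g,h}_{E,m,q,\max})=\overline{\im(\overline{\partial}^{g,h}_{E,m,q,\max})}$, the last equality. Finally, since $h\le cg$ on the compact manifold $M$, Prop.\ \ref{Pp} (with $h_{1}=h_{2}=h$, $g_{1}=g$, $g_{2}=h$) gives $\mathcal{D}(\overline{\partial}^{h,h}_{E,m,q,\max})\subseteq\mathcal{D}(\overline{\partial}^{g,h}_{E,m,q,\max})$ with the operators agreeing, hence $\im(\overline{\partial}^{h,h}_{E,m,q,\max})\subseteq\im(\overline{\partial}^{g,h}_{E,m,q,\max})$: one inclusion in the middle equality.

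For the reverse inclusion $\im(\overline{\partial}^{g,h}_{E,m,q,\max})\subseteq\im(\overline{\partial}^{h,h}_{E,m,q,\max})$, which is equivalent to the asserted dimension equality, I would argue as follows. By \eqref{trappolo} the metric $\rho=\pi^{*}\tau$ is locally bounded above and below near $D$, so the maximal $L^{2}$-$\overline{\partial}_{F}$-complex of $F$-valued $(m,\bullet)$-forms on $\reg(X)$ is a fine resolution of the twisted Grauert--Riemenschneider sheaf $\pi_{*}\mathcal{O}(K_{M})\otimes\mathcal{O}(F)$ on $X$; the delicate ingredient here is the local $L^{2}$-$\overline{\partial}$-solvability near $\sing(X)$ (equivalently, near $D$), which belongs to the $L^{2}$-$\overline{\partial}$-theory on complex spaces with isolated singularities underlying \cite{OvRu} and \cite{FBei}. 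Combined with the Grauert--Riemenschneider vanishing $R^{>0}\pi_{*}\mathcal{O}(K_{M})=0$, the projection formula and the Leray spectral sequence, this yields
$$H^{m,q}_{2,\overline{\partial}_{\max}}(\reg(X),F,\gamma,\tau)\cong H^{q}(X,\pi_{*}\mathcal{O}(K_{M})\otimes\mathcal{O}(F))\cong H^{q}(M,K_{M}\otimes\pi^{*}F)=H^{m,q}_{\overline{\partial}}(M,E),$$
which is the dimension equality; moreover, reading everything on $M$, this isomorphism is the one induced by the continuous inclusion of Hilbert complexes $(L^{2}\Omega^{m,\bullet}(A,E|_{A},h|_{A},\rho|_{A}),\overline{\partial}_{\max})\hookrightarrow(L^{2}\Omega^{m,\bullet}(M,E,g,\rho),\overline{\partial})$ --- continuous by \eqref{bibi2}, a quasi-isomorphism because both complexes are fine resolutions of $\mathcal{O}(K_{M}\otimes E)$. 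In particular the induced map $j\colon H^{m,q}_{2,\overline{\partial}_{\max}}(A,E|_{A},h|_{A},\rho|_{A})\to H^{m,q}_{\overline{\partial}}(M,E)$ is injective for every $q$; if some $\eta\in\im(\overline{\partial}^{g,h}_{E,m,q-1,\max})$ were not in $\im(\overline{\partial}^{h,h}_{E,m,q-1,\max})$, then its $\overline{\partial}^{h,h}_{E,m,q-1,\max}$-harmonic projection would be a nonzero class in $H^{m,q}_{2,\overline{\partial}_{\max}}(A,E|_{A},h|_{A},\rho|_{A})$ which is $\overline{\partial}$-exact on $M$ with an $L^{2}$-primitive with respect to $g$, hence annihilated by $j$ --- a contradiction. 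Running over $q=0,\dots,m$ finishes the proof.

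I expect the main obstacle, and the only point requiring input beyond the results already in this excerpt, to be the identification of the maximal $L^{2}$-$\overline{\partial}_{F}$-complex of $(m,\bullet)$-forms with a resolution of the twisted Grauert--Riemenschneider sheaf --- that is, the local $L^{2}$-$\overline{\partial}$-lemma near the exceptional set --- together with the (more routine, but still to be checked) fact that the abstract cohomology isomorphism coincides with the one induced by the inclusion of complexes, so that its injectivity can be used for the middle equality. Everything else is a direct assembly of Prop.\ \ref{Pp}, Prop.\ \ref{Gianni} and the consequences of Lemma \ref{tris}.
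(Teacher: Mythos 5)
Your argument is correct and is, at its core, the same as the paper's: both proofs rest on the observation that the maximal $L^2$-$\overline{\partial}$-complexes of $(m,\bullet)$-forms attached to the two metrics are fine resolutions of one and the same coherent sheaf, so that the continuous inclusion of one complex into the other induces an isomorphism on cohomology; injectivity of that map gives the equality of images (your contradiction via the class of $\eta$ killed by $j$ is exactly how the paper extracts $\im(\overline{\partial}^{g,h}_{E,m,q,\max})=\im(\overline{\partial}^{h,h}_{E,m,q,\max})$ from its isomorphism \eqref{aboveiso}), and the existence of the isomorphism gives the dimension count. The organizational difference is that the paper works entirely on $X$: it sheafifies the maximal domains of $\overline{\partial}^{\gamma,\gamma}$ and of $\overline{\partial}^{\sigma,\sigma}$ (with $\sigma:=((\pi|_A)^{-1})^*g$) over open subsets of $X$, shows following \cite{JRu} that both are fine resolutions of $\pi_*\mathcal{K}_M(E)$, and then identifies the $\sigma$-cohomology with $H^{m,q}_{\overline{\partial}}(M,E)$ by Prop.~\ref{sameop}; Grauert--Riemenschneider vanishing, the projection formula and Leray therefore never appear explicitly (they are absorbed into the local exactness of the $\sigma$-complex at the singular points). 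The one step of yours that needs restating is the claim that the $h$-complex is a fine resolution of $\mathcal{O}(K_M\otimes E)$ \emph{on $M$}: the fibre of $\pi$ over a singular point is a whole compact divisor, so local exactness of the $L^2_h$-complex near a single point of $D\subset M$ is not what the local $L^2$-theory near $\sing(X)$ provides, and is in fact a different (and unproved) assertion. The correct statement is that the sheafified complex is a fine resolution of $\pi_*\mathcal{K}_M(E)$ on $X$; routing your quasi-isomorphism claim through $X$ and then passing to $M$ via Prop.~\ref{sameop} repairs this without altering anything else. Your front-loading of Lemma~\ref{tris} and Prop.~\ref{Gianni} to obtain the two closedness statements is a harmless variant of the paper's shortcut, which deduces closedness of $\im(\overline{\partial}^{g,h}_{E,m,q,\max})$ from the equality of images together with finite-dimensionality of the cohomology.
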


\begin{proof}
First, we note that when $F=\reg(X)\times \mathbb{C}^n$ and $\tau=\upsilon_{\mathrm{std}}$ the above chain of equalities is an immediate consequence of \cite[Th. 1.5]{JRu}. We now tackle the general case. To this aim we introduce the following presheaves  $C^{m,q}_{F,\gamma}$ on $X$ given by the assignments  
\begin{equation}
\label{presheaf}
C^{m,q}_{F,\gamma}(U):=\{\mathcal{D}(\overline{\partial}_{F,m,q,\max}^{\gamma,\gamma})\ \text{on}\ \reg(U)\};
\end{equation}
in other words to every open subset $U$ of $X$ we assign the maximal domain of $\overline{\partial}_{F,m,q}$ over $\reg(U)$ with respect to $F|_U$, $\gamma|_{\reg(U)}$ and $\tau|_{\reg(U)}$.  We denote by $\mathcal{C}^{m,q}_{F,\gamma}$ the corresponding sheafification. Finally, let $(\mathcal{C}^{m,\bullet}_{F,\gamma},\overline{\partial}_{F,m,\bullet}^{\gamma,\gamma})$ be the complex of sheaves where the action of $\overline{\partial}_{F,m,\bullet}^{\gamma,\gamma}$ is understood in the distributional sense. Let $\sigma$ be the Hermitian metric on $\reg(X)$ defined as $\sigma:=((\pi|_A)^{-1})^*g$. Let us consider the corresponding complex of sheaves $(\mathcal{C}^{m,\bullet}_{F,\sigma},\overline{\partial}_{F,m,\bullet}^{\sigma,\sigma})$. Thanks to Prop. \ref{Pp} it is easy to check that the continuous inclusion $I:L^2\Omega^{m,q}(\reg(X),F,\gamma,\tau)\hookrightarrow L^2\Omega^{m,q}(\reg(X),F,\sigma,\tau)$ gives rise to a morphism of sheaves 
\begin{equation}
\label{mor}
\mathcal{I}:(\mathcal{C}^{m,\bullet}_{F,\gamma},\overline{\partial}_{F,m,\bullet}^{\gamma,\gamma})\rightarrow (\mathcal{C}^{m,\bullet}_{F,\sigma},\overline{\partial}_{F,m,\bullet}^{\sigma,\sigma}).
\end{equation}
Let  $\mathcal{K}_M(E)$ be the sheaf of holomorphic sections of $K_M\otimes E\rightarrow M$. Since we assumed \eqref{trappolo} we can argue as in the proof of \cite[Th. 1.5]{JRu} to show that both $(\mathcal{C}^{m,\bullet}_{F,\gamma},\overline{\partial}_{F,m,\bullet}^{\gamma,\gamma})$ and $(\mathcal{C}^{m,\bullet}_{F,\sigma},\overline{\partial}_{F,m,\bullet}^{\sigma,\sigma})$ are fine resolutions of $\pi_*(\mathcal{K}_M(E))$. This in turn implies that the morphism \eqref{mor} induces an isomorphism, still denoted with $\mathcal{I}$, between the cohomology groups: \begin{equation}
\label{isocom}
\mathcal{I}:H^q(X,\mathcal{C}^{m,\bullet}_{F,\gamma}(X))\rightarrow H^q(X,\mathcal{C}^{m,\bullet}_{F,\sigma}(X)), q=0,...,m
\end{equation}
where, by $H^q(X,\mathcal{C}^{m,\bullet}_{F,\gamma}(X))$ and $H^q(X,\mathcal{C}^{m,\bullet}_{F,\sigma}(X))$ we mean the cohomology groups of the complexes of global sections of $\mathcal{C}^{m,\bullet}_{F,\gamma}$ and $\mathcal{C}^{m,\bullet}_{F,\sigma}$, that is the cohomology of the complexes:
$$0\rightarrow \mathcal{C}^{m,0}_{F,\gamma}(X)\stackrel{\overline{\partial}_{F,m,0}^{\gamma,\gamma}}{\rightarrow}...\stackrel{\overline{\partial}_{F,m,m-1}^{\gamma,\gamma}}{\rightarrow}\mathcal{C}^{m,m}_{F,\gamma}(X)\rightarrow 0\quad \mathrm{and}\quad 0\rightarrow \mathcal{C}^{m,0}_{F,\sigma}(X)\stackrel{\overline{\partial}_{F,m,0}^{\sigma,\sigma}}{\rightarrow}...\stackrel{\overline{\partial}_{F,m,m-1}^{\sigma,\sigma}}{\rightarrow}\mathcal{C}^{m,m}_{F,\sigma}(X)\rightarrow 0.$$ 
It is clear that on $X$ we have the equalities $\mathcal{C}^{m,q}_{F,\gamma}(X)=\{\mathcal{D}(\overline{\partial}_{F,m,q,\max}^{\gamma,\gamma})\ \mathrm{on}\ \reg(X)\}$  and analogously $\mathcal{C}^{m,q}_{F,\sigma}(X)=\{\mathcal{D}(\overline{\partial}_{F,m,q,\max}^{\sigma,\sigma})\ \mathrm{on}\ \reg(X)\}$ which in turn imply the equalities $$H^q(X,\mathcal{C}^{m,\bullet}_{F,\gamma}(X))=H^{m,q}_{\overline{\partial}_{F,\max}^{\gamma,\gamma}}(\reg(X),F,\sigma,\tau)$$ $$H^q(X,\mathcal{C}^{m,\bullet}_{F,\sigma}(X)) =H^{m,q}_{\overline{\partial}_{F,\max}^{\sigma,\sigma}}(\reg(X),F,\sigma,\tau).$$ Therefore, by the fact that \eqref{isocom} is an isomorphism we obtain that the continuous inclusion $I:L^2\Omega^{m,q}(\reg(X),F,\gamma,\tau)\hookrightarrow L^2\Omega^{m,q}(\reg(X),F,\sigma,\tau)$ induces an isomorphism between the $L^2$-$\overline{\partial}$ cohomology groups 
\begin{equation}
\label{aboveiso}
H^{m,q}_{2,\overline{\partial}_{\max}}(\reg(X),F,\gamma,\tau)\cong H^{m,q}_{2,\overline{\partial}_{\max}}(\reg(X),F,\sigma,\tau).
\end{equation} 
By using \eqref{aboveiso} we get immediately that $\im(\overline{\partial}_{F,m,q,\max}^{\gamma,\gamma})=\im(\overline{\partial}_{F,m,q,\max}^{\sigma,\gamma})$ and therefore $\im(\overline{\partial}_{E,m,q,\max}^{h,h})=\im(\overline{\partial}_{E,m,q,\max}^{g,h})$, as required. Moreover since $H^{m,q}_{2,\overline{\partial}_{\max}}(\reg(X),F,\gamma,\tau)$ is finite dimensional we have that $\im(\overline{\partial}_{E,m,q,\max}^{h,h})$ is closed. Hence $\im(\overline{\partial}_{E,m,q,\max}^{g,h})=\overline{\im(\overline{\partial}_{E,m,q,\max}^{g,h})}$ as required. Finally as remarked above we know that both the complexes $(\mathcal{C}^{m,\bullet}_{F,\gamma},\overline{\partial}_{F,m,\bullet}^{\gamma,\gamma})$ and $(\mathcal{C}^{m,\bullet}_{F,\sigma},\overline{\partial}_{F,m,\bullet}^{\sigma,\sigma})$ are fine resolutions of the sheaf $\pi_*(\mathcal{K}_M(E))$. Hence 
$$
\begin{aligned}
\dim(H^{m,q}_{2,\overline{\partial}_{\max}}(A,E|_A,h|_A,\rho|_A))&=\dim(H^{m,q}_{2,\overline{\partial}_{\max}}(\reg(X),F,\gamma,\tau))=\dim(H^q(X,\mathcal{C}^{m,\bullet}_{F,\gamma}(X)))\\
&=\dim(H^q(X,\mathcal{C}^{m,\bullet}_{F,\sigma}(X)))=\dim(H^{m,q}_{2,\overline{\partial}_{\max}}(\reg(X),F,\sigma,\tau))\\
&=\dim(H^{m,q}_{2,\overline{\partial}_{\max}}(A,E|_A,g|_A,\rho|_A))=\dim(H^{m,q}_{\overline{\partial}}(M,E))
\end{aligned}
$$ where the last equality follows by Prop. \ref{sameop}. The proof is thus complete.
\end{proof}

In order to continue we need to introduce various tools. Let $\pi:M\rightarrow X$ be a resolution of $X$ with $A:=\pi^{-1}(\reg(X))$. As in the previous section we consider $M\times [0,1]$ and the canonical projection  $p:M\times [0,1]\rightarrow M$. Let $g_s\in C^{\infty}(M\times [0,1], p^*T^*M\otimes p^*T^*M)$ be a smooth section of $p^*T^*M\otimes p^*T^*M\rightarrow M\times[0,1]$ such that:
\begin{enumerate}
\item $g_s(JX,JY)=g_s(X,Y)$ for any $X,Y\in \mathfrak{X}(M)$ and $s\in [0,1]$;
\item $g_s$ is a Hermitian metric on $M$ for any $s\in (0,1]$;
\item $g_1=g$ and $g_0=h$ with $h:=\pi^*\gamma$;
\item There exists a positive constant $\frak{a}$ such that $g_0\leq \frak{a}g_s$ for each $s\in [0,1]$.
\end{enumerate}
Let us also denote with $p:\reg(X)\times [0,1]\rightarrow \reg(X)$ the left projection and let $$\sigma_s\in C^{\infty}(\reg(X)\times [0,1], p^*T^*\reg(X)\otimes p^*T^*\reg(X))$$ be the smooth section of $p^*T^*\reg(X)\otimes p^*T^*\reg(X)\rightarrow \reg(X)$ induced by $g_s$ and $\pi$.
Note that   $\sigma_s$ is the Hermitian metric over $\reg(X)$ given by $\sigma_s:=((\pi|_{A})^{-1})^*g_s$ for each $s\in [0,1]$. In particular $\sigma_1=((\pi|_{A})^{-1})^*g$ whereas $\sigma_0=\gamma$. Let us consider the following complex 

\begin{align}
& \nonumber L^2\Omega^{m,0}(\reg(X),F,\sigma_1,\tau)\stackrel{\overline{\partial}^{\sigma_1,\sigma_1}_{F,m,0}}{\longrightarrow}......\stackrel{\overline{\partial}^{\sigma_1,\sigma_1}_{F,m,q-2}}{\longrightarrow}L^2\Omega^{m,q-1}(\reg(X),F,\sigma_1,\tau)\stackrel{\overline{\partial}^{\sigma_1,\sigma_s}_{F,m,q-1,\max}}{\longrightarrow}L^2\Omega^{m,q}(\reg(X),F,\sigma_s,\tau)\\
& \label{nev} \stackrel{\overline{\partial}^{\sigma_s,\gamma}_{F,m,q,\max}}{\longrightarrow}
L^2\Omega^{m,q+1}(\reg(X),F,\gamma,\tau)\stackrel{\overline{\partial}^{\gamma,\gamma}_{F,m,q+1,\max}}{\longrightarrow}......\stackrel{\overline{\partial}^{\gamma,\gamma}_{F,m,m-1,\max}}{\longrightarrow}L^2\Omega^{m,m}(\reg(X),F,\gamma,\tau).
\end{align}

Let $\chi^{m,q}_s\in C(\reg(X)\times [0,1], \mathrm{End}(p^*\Lambda^{m,q}(\reg(X))\otimes p^*F))$ be the family of endomorphisms defined as $\chi_s^{m,q}:=(\pi^*)^{-1}\circ\Psi^{m,q}_s\circ \pi^*$, see \eqref{isoiso} for the definition of $\Psi_s^{m,q}$. Clearly $\chi_s^{m,q}:L^2\Omega^{m,q}(\reg(X),F,\sigma_s,\tau)\rightarrow L^2\Omega^{m,q}(\reg(X),F,\sigma_1,\tau)$ is an isometry for each $s\in [0,1]$. Let also define the following family of endomorphisms $$\chi^{m,\bullet}_s\in C(\reg(X)\times [0,1], \mathrm{End}(p^*\Lambda^{m,\bullet}(\reg(X))\otimes p^*F)),\quad \chi_s^{m,\bullet}:= \bigoplus_{r=0}^m\chi_s^{m,r}.$$ It is clear that $\chi_s^{m,\bullet}:L^2\Omega^{m,\bullet}(\reg(X),F,\sigma_s,\tau)\rightarrow L^2\Omega^{m,\bullet}(\reg(X),F,\sigma_1,\tau)$ is an isometry. Following \eqref{ucomplex} we introduce the following complex 
\begin{align}
\label{uucomplex}
L^2\Omega^{m,0}(\reg(X),F,\sigma_1,\tau)&\stackrel{\overline{\partial}^{\sigma_1,\sigma_1}_{F,m,0}}{\longrightarrow}......\stackrel{\overline{\partial}^{\sigma_1,\sigma_1}_{F,m,q-2}}{\longrightarrow}L^2\Omega^{m,q-1}(\reg(X),F,\sigma_1,\tau)\stackrel{D^{\sigma_1,\sigma_s}_{m,q-1}}{\longrightarrow}L^2\Omega^{m,q}(\reg(X),F,\sigma_1,\tau)\\
\nonumber & \stackrel{D^{\sigma_s,\gamma}_{m,q}}{\longrightarrow} L^2\Omega^{m,q+1}(\reg(X),F,\sigma_1,\tau)\stackrel{D^{\gamma,\gamma}_{m,q+1}}{\longrightarrow}......\stackrel{D^{\gamma,\gamma}_{m,m-1}}{\longrightarrow}L^2\Omega^{m,m}(\reg(X),F,\sigma_1,\tau)
\end{align}
with $D^{\sigma_1,\sigma_s}_{m,q-1}:= \chi_{s}^{m,q}\circ \overline{\partial}_{F,m,q-1,\max}^{\sigma_1,\sigma_s}$, $D^{\sigma_s,\gamma}_{m,q}:= \chi_{0}^{m,q+1}\circ \overline{\partial}_{E,m,q,\max}^{\sigma_s,\gamma}\circ (\chi_s^{m,q})^{-1}$ and  $D^{\gamma,\gamma}_{m,r}:= \chi_{0}^{m,r+1}\circ \overline{\partial}_{F,m,r,\max}^{\gamma,\gamma}\circ (\chi_0^{m,r})^{-1}$ for each  $r=q+1,...,m$. Finally let 
\begin{equation}
\label{uurolledup}
Q^{m,q}_s:L^2\Omega^{m,\bullet}(\reg(X),F,\sigma_1,\tau)\rightarrow L^2\Omega^{m,\bullet}(\reg(X),F,\sigma_1,\tau)
\end{equation}
be the rolled-up operator of the complex \eqref{uucomplex}. Note that $Q_1^{m,q-1}=Q_{0}^{m,q}$ for each $q\in \{1,...,m\}$, see \eqref{urolledup}. The next Lemma is the key tool to prove Th. \ref{tata}.

\begin{lemma}
\label{parameter}
In the setting of Theorem \ref{tata} the following properties hold true
\begin{enumerate}
\item The triplet $$(L^2\Omega^{m,\bullet}(\reg(X),F,\sigma_1,\tau),m,Q^{m,q}_s)$$ is an even unbounded Fredholm module over $C(X)$. We denote with $[Q^{m,q}_s]$ the corresponding class in $KK_0(C(X),\mathbb{C})$.
\item For each $q\in {0,...,m}$ and $s\in [0,1]$, we have the following equality in $KK_0(C(X),\mathbb{C})$: $$[Q^{m,q}_s]=[Q^{m,q}_1].$$
\end{enumerate}
\end{lemma}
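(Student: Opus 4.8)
The plan is to reduce both assertions to the abstract machinery already developed. For part (1), I would first observe that the complex \eqref{uucomplex} is, for each fixed $q\in\{0,\dots,m\}$ and $s\in[0,1]$, a Hilbert complex whose cohomology is finite-dimensional: indeed, via the isometries $\chi_s^{m,\bullet}$ and the biholomorphism $\pi|_A$, the complex \eqref{uucomplex} is unitarily equivalent to the complex obtained from \eqref{Giava} on $A=\pi^{-1}(\reg(X))$ (with $g_1=g$, $g_0=h=\pi^*\gamma$, $\rho=\pi^*\tau$), and Lemma \ref{imago} together with $\dim(\sing(X))=0$ guarantees that all the relevant $L^2$-$\overline{\partial}$-cohomology groups are finite-dimensional and that the images $\im(\overline{\partial}_{E,m,q,\max}^{h,h})$ and $\im(\overline{\partial}_{E,m,q,\max}^{g,h})$ coincide and are closed. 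Hence, as in Lemma \ref{lemma10}, the rolled-up operator $Q_s^{m,q}$ is self-adjoint with entirely discrete spectrum, so $\upsilon(a)(1+(Q_s^{m,q})^2)^{-1}$ is compact for every $a\in C^\infty(\reg(X))\cap C(X)$. The commutator condition is where the $\dim(\sing(X))=0$ hypothesis is used again: exactly as in Prop. \ref{minclass} and \cite[Prop. 3.6]{BeiPiazza}, one takes the dense $*$-subalgebra $S_c(X)$ of \eqref{subalgebra}, and for $f\in S_c(X)$ the commutator $[Q_s^{m,q},m_f]$ is, up to the bounded endomorphisms $\chi_s^{m,\bullet}$, Clifford multiplication by $d_0 f$, which has compact support in $\reg(X)$ and hence extends to a bounded operator; the grading is the even/odd antiholomorphic degree grading, with respect to which $m_f$ is even and $Q_s^{m,q}$ is odd. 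This gives the even unbounded Fredholm module and the class $[Q_s^{m,q}]$.

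For part (2), I would apply Prop. \ref{sameclass} to the family $\{(L^2\Omega^{m,\bullet}(\reg(X),F,\sigma_1,\tau),m,Q_s^{m,q})\}_{s\in[0,1]}$ with the fixed dense $*$-subalgebra $\mathcal{A}:=S_c(X)$. Two continuity conditions must be checked. The operator-norm continuity of $s\mapsto (i+Q_s^{m,q})^{-1}$ is precisely Theorem \ref{resolv}, once one transports it through the isometries $\chi_s^{m,\bullet}$ and the biholomorphism $\pi|_A$ (note $P_s^{m,q}$ in \S 2 lives on $A$ while $Q_s^{m,q}$ lives on $\reg(X)$, but these are unitarily equivalent via $\pi^*$, so the two resolvent families differ by conjugation by a fixed unitary and the norm limit is preserved); Theorem \ref{resolv} states the limit only as $s\to 0$, but the same argument, relabelling the base point, gives continuity at every $s_0\in[0,1]$, and in fact for $s_0\in(0,1]$ the metrics $g_s$ are all quasi-isometric so continuity there is elementary. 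For the strong-operator continuity of $s\mapsto[Q_s^{m,q},m_f]$, one uses the explicit description above: $[Q_s^{m,q},m_f]$ acts as a zero-order operator built from Clifford multiplication by $d_0 f$ conjugated by the continuous (in $s$) families of endomorphisms $\chi_s^{m,r}$, and since $\chi_s^{m,r}\to\chi_{s_0}^{m,r}$ uniformly on the compact set $\supp(\phi)$ containing $\supp(d_0 f)$, one gets convergence in operator norm, hence a fortiori in the strong operator topology.

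The main obstacle I expect is bookkeeping rather than conceptual: one must be careful that the operators $Q_s^{m,q}$ are genuinely \emph{self-adjoint} (not merely symmetric) on the nose, which requires the closedness of all the intermediate images — this is exactly what Lemma \ref{imago} secures, and it is the only place the twisting bundle $(F,\tau)$ and the condition \eqref{trappolo} enter in a non-trivial way (through the sheaf-theoretic identification of fine resolutions of $\pi_*\mathcal{K}_M(E)$). A secondary subtlety is that Prop. \ref{sameclass} is stated for a family over $C(Z)$ with a single fixed $*$-subalgebra, so one should check that $S_c(X)$ works uniformly in $s$; this is immediate since $S_c(X)$ does not involve the metric. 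Once these points are in place, parts (1) and (2) follow, and combined with the endpoint identifications — $Q_1^{m,q-1}=Q_0^{m,q}$, the case $q=m,s=1$ giving (a unitary conjugate of) $\overline{\eth}_{E,m}$ pushed forward, and the case $q=0,s=0$ giving $\overline{\eth}_{F,m,\mathrm{abs}}$ — the chain of equalities $[Q_s^{m,q}]=[Q_1^{m,q}]$ telescopes to yield Theorem \ref{tata}.
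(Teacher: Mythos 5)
Your proposal is correct and follows essentially the same route as the paper: part (1) via the unitary equivalence with $P^{m,q}_s$, Lemma \ref{lemma10}, Lemma \ref{imago} and the subalgebra $S_c(X)$, and part (2) via Prop.~\ref{sameclass}, with Theorem \ref{resolv} (transported through $\pi^*$) giving the resolvent continuity and the explicit zero-order description of $[Q^{m,q}_s,m_f]$ giving the commutator continuity. The only minor difference is that you argue operator-norm convergence of the commutators from uniform convergence of $\chi^{m,r}_s$ on the compact support of $\overline{\partial}f$, whereas the paper settles for the (sufficient) strong convergence via dominated convergence; both work for the same reason.
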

\begin{proof}
Let $f\in C(X)$. Since in particular $f\in L^{\infty}(X)$ we obtain immediately that  $m_f:L^2\Omega^{m,r}(\reg(X),F,\sigma_s,\tau)\rightarrow L^2\Omega^{m,r}(\reg(X),F,\sigma_s,\tau)$ is bounded for each $r\in \{0,...,m\}$ and $s\in [0,1]$. Let us now fix $S_c(X)$ as a dense $*$-subalgebra of $C(X)$. Clearly we have $\overline{\partial}f\in \Omega_c^{0,1}(\reg(X))$ and therefore the map  $\overline{\partial}f\wedge $ given by \begin{equation}
\label{www}
L^2\Omega^{m,r}(\reg(X),F,\sigma_{s_1},\tau)\ni \eta\mapsto \overline{\partial}f\wedge\eta \in L^2\Omega^{m,r+1}(\reg(X),F,\sigma_{s_2},\tau)
\end{equation}
is continuous for any choice of $r\in \{0,...,m\}$ and $s_1,s_2\in [0,1]$.  Consequently the adjoint map $(\overline{\partial}f\wedge)^*$ given by $$L^2\Omega^{m,r+1}(\reg(X),F,\sigma_{s_2},\tau)\ni \varphi\mapsto (\overline{\partial}f\wedge)^*\varphi \in L^2\Omega^{m,r}(\reg(X),F,\sigma_{s_1},\tau)$$ is continuous as well. Note that we can write the above map $(\overline{\partial}f\wedge)^*$ as $(U_{s_1}^{m,r})^{-1}\circ i_{(\nabla_1f)^{0,1}} \circ U_{s_2}^{m,r+1}$ with $$U_s^{m,r}:=(\pi^*)^{-1}\circ S_s^{m,r}\circ \pi^*,$$ $$S_s^{m,r}\in C^{\infty}(A\times [0,1], \mathrm{End}(p^*\Lambda^{m,q}(A)\otimes p^*E))$$ defined in the proof of Lemma \ref{lemma3},  $\nabla_1 f$ the gradient of $f$ w.r.t. $\sigma_1$,  $(\nabla_1 f)^{0,1}$ the $(0,1)$ component of $\nabla_1 f$ and  $i_{(\nabla_1f)^{0,1}}$ the interior multiplication w.r.t. $(\nabla_1 f)^{0,1}$.
Since $f\in L^{\infty}(X)$ and $df\in \Omega_c^1(\reg(X))$ we can argue as in \cite[Prop. 2.3]{BeiSym} to conclude that $m_f$ preserves the domain of
\begin{equation}
\label{preserve1}
\overline{\partial}_{F,m,r,\max}^{\sigma_1,\sigma_2}:L^2\Omega^{m,r}(\reg(X),F,\sigma_{s_1},\tau)\rightarrow L^2\Omega^{m,r+1}(\reg(X),F,\sigma_{s_2},\tau).
\end{equation}

Moreover it is also easy to see that $m_f$ preserves the domain of 
\begin{equation}
\label{preserve2}
\overline{\partial}_{F,m,r,\min}^{\sigma_1,\sigma_2,t}:L^2\Omega^{m,r+1}(\reg(X),F,\sigma_{s_1},\tau)\rightarrow L^2\Omega^{m,r}(\reg(X),F,\sigma_{s_2},\tau).
\end{equation}
Indeed if $\eta$ lies in the domain of \eqref{preserve2} and $\{\eta_k\}_{k\in \mathbb{N}}\in \Omega_c^{r+1}(\reg(X),F)$ is a sequence converging to $\eta$ in the graph norm of $\overline{\partial}_{F,m,r}^{\sigma_1,\sigma_2,t}$, then $f\eta_k\rightarrow f\eta$ in $L^2\Omega^{m,r+1}(\reg(X),F,\sigma_2,\tau)$ as $k\rightarrow \infty$ and  $\overline{\partial}_{F,m,r}^{\sigma_1,\sigma_2,t}(f \eta_k)=f\overline{\partial}_{F,m,r}^{\sigma_1,\sigma_2,t}\eta_k-(\overline{\partial}f\wedge)^*\eta_k$ $\rightarrow f\overline{\partial}_{F,m,r,\min}^{\sigma_1,\sigma_2,t}\eta-(\overline{\partial}f\wedge)^*\eta$ in $L^2\Omega^{m,r}(\reg(X),F,\sigma_2,\tau)$ as $k\rightarrow \infty$. Hence we can conclude that also $f\eta$ lies in the domain of \eqref{preserve2}. Consider now the complex \eqref{nev}
$$\begin{aligned}
& \nonumber L^2\Omega^{m,0}(\reg(X),F,\sigma_1,\tau)\stackrel{\overline{\partial}^{\sigma_1,\sigma_1}_{F,m,0}}{\longrightarrow}...\stackrel{\overline{\partial}^{\sigma_1,\sigma_1}_{F,m,q-2}}{\longrightarrow}L^2\Omega^{m,q-1}(\reg(X),F,\sigma_1,\tau)\stackrel{\overline{\partial}^{\sigma_1,\sigma_s}_{F,m,q-1,\max}}{\longrightarrow}L^2\Omega^{m,q}(\reg(X),F,\sigma_s,\tau)\\
&  \stackrel{\overline{\partial}^{\sigma_s,\gamma}_{F,m,q,\max}}{\longrightarrow}L^2\Omega^{m,q+1}(\reg(X),F,\gamma,\tau)\stackrel{\overline{\partial}^{\gamma,\gamma}_{F,m,q+1,\max}}{\longrightarrow}...\stackrel{\overline{\partial}^{\gamma,\gamma}_{F,m,m-1,\max}}{\longrightarrow}L^2\Omega^{m,m}(\reg(X),F,\gamma,\tau)
\end{aligned}$$
and let $L_s^{m,q}$ be the corresponding rolled-up operator. By the above discussion it is now clear that $m_f$ preserves the domain of $L_s^{m,q}$ and that $[L_s^{m,q},m_f]=\overline{\partial}f\wedge-(\overline{\partial}f\wedge)^*$ is continuous for each $f\in S_c(X)$. Since $\chi_s^{m,r}$ is a vector bundle isometric endomorphism we have $\chi_s^{m,r}\circ m_f=m_f\circ \chi_s^{m,r}$ and $(\chi_s^{m,r})^{-1}\circ m_f=m_f\circ (\chi_s^{m,r})^{-1}$ for each $s\in [0,1]$, $r=0,...,m$ and $f\in C(X)$. Therefore, using the above arguments, we can also conclude  that for each $f\in S_c(X)$ the operator $m_f$ preserves the domain of $Q_s^{m,q}$ and  $$[Q_s^{m,q},m_f]:L^2\Omega^{m,\bullet}(\reg(X),F,\sigma_1,\tau)\rightarrow L^2\Omega^{m,\bullet}(\reg(X),F,\sigma_1,\tau)$$ is continuous. Furthermore the operator $Q_s^{m,q}$ is unitarily equivalent to the operator defined in \eqref{urolledup} through the isometry $\pi^*:L^2\Omega^{m,\bullet}(\reg(X),F,\sigma_1,\tau)\rightarrow L^2\Omega^{m,\bullet}(A,E,g_1,\rho)$. By Lemma \ref{lemma10} we can thus conclude that $Q^{m,q}_s:L^2\Omega^{m,\bullet}(\reg(X),F,\sigma_1,\tau)\rightarrow L^2\Omega^{m,\bullet}(\reg(X),F,\sigma_1,\tau)$ has entirely discrete spectrum and this is equivalent to the compactness of the resolvent. Finally it is clear that the grading of $L^2\Omega^{m,\bullet}(\reg(X),F,\sigma_1,\tau)$, which is induced by the splitting in $L^2$ $F$-valued $(m,\bullet)$-forms with even/odd anti-holomorphic degree, commutes with $m$ and anti-commutes with $Q_s^{m,q}$. We can therefore conclude that  the triplet $$(L^2\Omega^{m,\bullet}(\reg(X),F,\sigma_1,\tau),m,Q^{m,q}_s)$$ is an even unbounded Fredholm module over $C(X)$. This concludes the proof of the first part. Now we tackle the second part of the proof, and to do that, we use Prop. \ref{sameclass}. Note that for each $s\in (0,1]$ the metrics $\sigma_s$ and $\sigma_1$ are quasi-isometric. Hence the continuity of the map $(0,1]\rightarrow B(L^2\Omega^{m,\bullet}(\reg(X),F,\sigma_1,\tau))$ given by $s\mapsto (Q^{m,q}_s+i)^{-1}$ with respect to the operator norm follows by arguing as in \cite{Hilsum}. As remarked above we have $Q_s^{m,q}=(\pi^*)^{-1}\circ P^{m,q}_s\circ \pi^*$ with $P^{m,q}_s$ defined in \eqref{urolledup} and $\pi^*:L^2\Omega^{m,\bullet}(\reg(X),F,\sigma_1,\tau)\rightarrow L^2\Omega^{m,\bullet}(A,E,g_1,\rho)$ the isometry induced by the resolution map $\pi:M\rightarrow X$. Hence we have 

$$
\begin{aligned}
\|(Q^{m,q}_s+i)^{-1}-(Q^{m,q}_0+i)^{-1}\|_{\mathrm{op}}&=\|\pi^*\circ (P^{m,q}_s+i)^{-1}\circ (\pi^*)^{-1}-\pi^*\circ (P^{m,q}_0+i)^{-1}\circ (\pi^*)^{-1}\|_{\mathrm{op}}\\
& =\|(P^{m,q}_s+i)^{-1}- (P^{m,q}_0+i)^{-1}\|_{\mathrm{op}}.
\end{aligned}
$$
Thanks to Lemma s\ref{tris} and \ref{imago}, we are in a position to apply Th. \ref{resolv}, and hence we obtain $$\lim_{s\rightarrow 0}\|(P^{m,q}_s+i)^{-1}-(P^{m,q}_0+i)^{-1}\|_{\mathrm{op}}=0.$$ We can thus conclude that the map $[0,1]\rightarrow B(L^2\Omega^{m,\bullet}(\reg(X),F,\sigma_1,\tau))$ given by $s\mapsto (Q^{m,q}_s+i)^{-1}$ is continuous with respect to the operator norm. This settles the second requirement of Prop. \ref{sameclass}. We are left to show that for each $f\in S_c(X)$ the map $[0,1]\rightarrow B(L^2\Omega^{m,\bullet}(\reg(X),F,\sigma_1,\tau))$ given by $s\mapsto [Q^{m,q}_s,m_f]$ is continuous with respect to the strong operator topology. To this aim it is enough to show that for any arbitrarily fixed $r=0,...,m$ and $\eta\in L^2\Omega^{m,r}(\reg(X),F, \sigma_1, \tau)$ we have 
\begin{equation}
\label{stronglimit}
\lim_{s\rightarrow 0}[Q^{m,q}_s,m_f]\eta=[Q^{m,q}_0,m_f]\eta\quad \mathrm{in}\ L^2\Omega^{m,r}(\reg(X),F,\sigma_1,\tau).
\end{equation}
 
If $0\leq r\leq q-2$ then $$[Q^{m,q}_s,m_f]\eta=[\overline{\partial}_{F,m,r}^{\sigma_1,\sigma_1}+\overline{\partial}_{F,m,r-1}^{\sigma_1,\sigma_1,t},m_f]\eta=[Q^{m,q}_0,m_f]\eta$$ for each $s\in [0,1]$. Thus \eqref{stronglimit} is obviously satisfied. If $r=q-1$ then 
$$
\begin{aligned}
[Q^{m,q}_s,m_f]\eta&=[D_{F,m,q-1}^{\sigma_1,\sigma_s}+\overline{\partial}_{F,m,q-2}^{\sigma_1,\sigma_1,t},m_f]\eta\\
&=[\chi_{s}^{m,q}\circ \overline{\partial}_{F,m,q-1,\max}^{\sigma_1,\sigma_s}+\overline{\partial}_{F,m,q-2}^{\sigma_1,\sigma_1,t},m_f]\eta\\
&=[\chi_{s}^{m,q}\circ \overline{\partial}_{F,m,q-1,\max}^{\sigma_1,\sigma_s},m_f]\eta+[\overline{\partial}_{F,m,q-2}^{\sigma_1,\sigma_1,t},m_f]\eta\\
&=\chi_{s}^{m,q}\circ[\overline{\partial}_{F,m,q-1,\max}^{\sigma_1,\sigma_s},m_f]\eta+[\overline{\partial}_{F,m,q-2}^{\sigma_1,\sigma_1,t},m_f]\eta\\
& =\chi_{s}^{m,q}(\overline{\partial}f\wedge \eta)+[\overline{\partial}_{F,m,q-2}^{\sigma_1,\sigma_1,t},m_f]\eta.
\end{aligned}
$$
Note that the term $[\overline{\partial}_{F,m,q-2}^{\sigma_1,\sigma_1,t},m_f]\eta$ is independent on $s$ while the equality $$\lim_{s\rightarrow 0}\chi_{s}^{m,q}(\overline{\partial}f\wedge \eta)=\chi_{0}^{m,q}(\overline{\partial}f\wedge \eta)\quad \mathrm{in}\ L^2\Omega^{m,q}(\reg(X),F,\sigma_1,\tau)$$  follows easily by the Lebesgue dominated convergence theorem and the fact that $\overline{\partial}f\in \Omega^{0,1}_c(\reg(X))$ and  $\chi^{m,q}_s\in C(\reg(X)\times [0,1], \mathrm{End}(p^*\Lambda^{m,q}(\reg(X))\otimes p^*F))$. Since $$[Q^{m,q-1}_0,m_f]\eta=\chi_{0}^{m,q}(\overline{\partial}f\wedge \eta)+[\overline{\partial}_{F,m,q-2}^{\sigma_1,\sigma_1,t},m_f]\eta$$ we can conclude that \eqref{stronglimit} also holds true in the case $r=q-1$. If $r=q$ then 
$$
\begin{aligned}
[Q^{m,q}_s,m_f]\eta&=[D_{F,m,q}^{\sigma_s,\gamma}+(D_{F,m,q-1}^{\sigma_1,\sigma_s})^*,m_f]\eta\\
&=[\chi_{0}^{m,q+1}\circ \overline{\partial}_{F,m,q,\max}^{\sigma_s,\gamma}\circ (\chi_s^{m,q})^{-1}+\overline{\partial}_{F,m,q-1,\min}^{\sigma_1,\sigma_s,t}\circ (\chi_s^{m,q})^{-1},m_f]\eta\\
&=[\chi_{0}^{m,q+1}\circ \overline{\partial}_{F,m,q,\max}^{\sigma_s,\gamma}\circ (\chi_s^{m,q})^{-1},m_f]\eta+[\overline{\partial}_{F,m,q-1,\min}^{\sigma_1,\sigma_s,t}\circ (\chi_s^{m,q})^{-1},m_f]\eta\\
&=\chi_{0}^{m,q+1}\circ[ \overline{\partial}_{F,m,q,\max}^{\sigma_s,\gamma},m_f]\circ (\chi_s^{m,q})^{-1}\eta+[\overline{\partial}_{F,m,q-1,\min}^{\sigma_1,\sigma_s,t},m_f]\circ (\chi_s^{m,q})^{-1}\eta\\
& =\chi_{0}^{m,q+1}(\overline{\partial}f\wedge (\chi_s^{m,q})^{-1}\eta)+ i_{(\nabla_1f)^{0,1}} ( U_{s}^{m,q}((\chi_s^{m,q})^{-1}\eta)).
\end{aligned}
$$
Again, by  the fact that $\overline{\partial}f\in \Omega^{0,1}_c(\reg(X))$, $U^{m,q}_s$ and $(\chi^{m,q}_s)^{-1}$ $\in C(\reg(X)\times [0,1], \mathrm{End}(p^*\Lambda^{m,q}(\reg(X))\otimes p^*F))$ and the Lebesgue dominated convergence theorem we have
$$\lim_{s\rightarrow 0}\chi_{0}^{m,q+1}(\overline{\partial}f\wedge (\chi_s^{m,q})^{-1}\eta)=\chi_{0}^{m,q+1}(\overline{\partial}f\wedge (\chi_0^{m,q})^{-1}\eta)\quad \mathrm{in}\ L^2\Omega^{m,q+1}(\reg(X),F,\sigma_1,\tau)$$
and   $$\lim_{s\rightarrow 0 }i_{(\nabla_1f)^{0,1}} ( U_{s}^{m,q}((\chi_s^{m,q})^{-1}\eta))=i_{(\nabla_1f)^{0,1}} ( U_{0}^{m,q}((\chi_0^{m,q})^{-1}\eta))  \quad \mathrm{in}\ L^2\Omega^{m,q-1}(\reg(X),F,\sigma_1,\tau).$$ Since $$[Q^{m,q}_0,m_f]\eta=\chi_{0}^{m,q+1}(\overline{\partial}f\wedge (\chi_0^{m,q})^{-1}\eta)
+ i_{(\nabla_1f)^{0,1}} ( U_{0}^{m,q}((\chi_0^{m,q})^{-1}\eta))$$ we can conclude that \eqref{stronglimit} holds true also in the case $r=q$. If $r=q+1$ we have 
$$
\begin{aligned}
[Q^{m,q}_s,m_f]\eta&=[D_{F,m,q+1}^{\gamma,\gamma}+(D_{F,m,q}^{\sigma_s,\gamma})^*,m_f]\eta\\
&=[\chi_{0}^{m,q+2}\circ \overline{\partial}_{F,m,q+1,\max}^{\gamma,\gamma}\circ (\chi_0^{m,q+1})^{-1}+\chi_s^{m,q}\circ \overline{\partial}_{F,m,q,\min}^{\sigma_s,\gamma,t}\circ (\chi_0^{m,q+1})^{-1},m_f]\eta\\
&=[\chi_{0}^{m,q+2}\circ \overline{\partial}_{F,m,q+1,\max}^{\gamma,\gamma}\circ (\chi_0^{m,q+1})^{-1},m_f]\eta+[\chi_s^{m,q}\circ \overline{\partial}_{F,m,q,\min}^{\sigma_s,\gamma,t}\circ (\chi_0^{m,q+1})^{-1},m_f]\eta\\
&=\chi_{0}^{m,q+2}\circ [\overline{\partial}_{F,m,q+1,\max}^{\gamma,\gamma},m_f]\circ (\chi_0^{m,q+1})^{-1}\eta+\chi_s^{m,q}\circ [\overline{\partial}_{F,m,q,\min}^{\sigma_s,\gamma,t},m_f]\circ (\chi_0^{m,q+1})^{-1}\eta\\
& =\chi_{0}^{m,q+2}(\overline{\partial}f\wedge (\chi_0^{m,q+1})^{-1}\eta)+ \chi_s^{m,q}((U_s^{m,q})^{-1}(i_{(\nabla_1f)^{0,1}}(U_0^{m,q+1}((\chi_0^{m,q+1})^{-1}\eta)))).
\end{aligned}
$$
Note that the first term does not depend on $s$ whereas for the second term we have 
$$\lim_{s\rightarrow 0 }\chi_s^{m,q}((U_s^{m,q})^{-1}(i_{(\nabla_1f)^{0,1}}(U_0^{m,q+1}((\chi_0^{m,q+1})^{-1}\eta))))=\chi_0^{m,q}((U_0^{m,q})^{-1}(i_{(\nabla_1f)^{0,1}}(U_0^{m,q+1}((\chi_0^{m,q+1})^{-1}\eta))))\quad \mathrm{in}$$ in  $L^2\Omega^{m,q}(\reg(X),F,\sigma_1,\tau)$ for the same reasons explained in the previous cases. Since for $r=q+1$ we have $$[Q^{m,q}_0,m_f]\eta=\chi_{0}^{m,q+2}(\overline{\partial}f\wedge (\chi_0^{m,q+1})^{-1}\eta)+\chi_0^{m,q}((U_0^{m,q})^{-1}(i_{(\nabla_1f)^{0,1}}(U_0^{m,q+1}((\chi_0^{m,q+1})^{-1}\eta))))$$
we can conclude that \eqref{stronglimit} holds true also in the case $r=q+1$. Finally if $r\geq q+2$ we have 
$$[Q^{m,q}_s,m_f]\eta=[D_{F,m,r}^{\gamma,\gamma}+(D_{F,m,r-1}^{\gamma,\gamma})^*,m_f]\eta= [Q^{m,q}_0,m_f]\eta $$ for each $s\in [0,1]$. Thus \eqref{stronglimit} is obviously satisfied for $r\geq q+2$, which  completes the proof this lemma.
\end{proof}

\begin{rem}
\label{rem}
The condition $\dim(\sing(X))=0$ allows us to use $S_c(X)$ as a dense $*$-subalgebra of $C(X)$ and thus  the map \eqref{www} is bounded even when $s_1\neq 0$ and $s_2=0$. If 
$\dim(\sing(X))>0$ it is not clear to us how to define a dense $*$-subalgebra of $C(X)$ such that  the map \eqref{www} is bounded  when $s_1\neq 0$ and $s_2=0$. This problem might be overcome if one can replace Theorems \ref{compact1} and \ref{compact2} with a stronger convergence result, namely 
\begin{equation}
\label{might}
G_{\overline{\partial}_{E,m,q}^{g_s,g_s}}\rightarrow G_{\overline{\partial}_{E,m,q,\max}^{h,h}}
\end{equation}
 compactly as $s\rightarrow 0$. Indeed in this case one can simply consider as a dense $*$-subalgebra of $C(X)$ the space of smooth functions on $X$, see  \cite[Def. 1]{BeiPiazza}. Unfortunately, at the moment, we do not know how to prove \eqref{might}.
\end{rem}

We are now in a position to prove Th. \ref{tata}.
\begin{proof} (of Th. \ref{tata}).
We start by pointing out that  $\pi^*:L^2\Omega^{m,\bullet}(\reg(X),F,\sigma_1,\tau)\rightarrow L^2\Omega^{m,\bullet}(A,E,g_1,\rho)$ is an isometry that makes the even bounded Fredholm modules $$(L^2\Omega^{m,\bullet}(\reg(X),F,\sigma_1,\tau),m,Q_1^{m,m}\circ (\mathrm{Id}+(Q_1^{m,m})^2)^{-\frac{1}{2}})$$ and $$(L^2\Omega^{m,\bullet}(M,E,g,\rho),m\circ \pi^*,\overline{\eth}_{E,m}\circ (\mathrm{Id}+(\overline{\eth}_{E,m})^2)^{-\frac{1}{2}})$$ unitarily equivalent. Therefore $\pi_*[\overline{\eth}_{E,m}]=[Q_1^{m,m}]$. Thanks to Lemma \ref{parameter} we know that $[Q_1^{m,m}]=[Q_0^{m,m}]$ and by construction, see \eqref{uurolledup}, we have $[Q_0^{m,m}]=[Q_1^{m,m-1}]$. Again by Lemma \eqref{parameter} we have $[Q_1^{m,m-1}]=[Q_0^{m,m-1}]$ and therefore $[Q_1^{m,m}]=[Q_0^{m,m-1}]$. Applying  this procedure iteratively we can conclude that $[Q_1^{m,m}]=[Q_0^{m,r}]$ for each $r=0,...,m$. In particular we have $[Q_1^{m,m}]=[Q_0^{m,0}]$. Finally note that $\chi_0^{m,\bullet}:L^2\Omega^{m,\bullet}(\reg(X),F,\gamma,\tau)\rightarrow L^2\Omega^{m,\bullet}(\reg(X),F,\sigma_1,\tau)$ is an isometry that turns the even bounded Fredholm modules 
$$(L^2\Omega^{m,\bullet}(\reg(X),F,\sigma_1,\tau),m,Q_0^{m,0}\circ (\mathrm{Id}+(Q_0^{m,0})^2)^{-\frac{1}{2}})$$ and $$(L^2\Omega^{m,\bullet}(\reg(X),F,\gamma,\tau),m,\overline{\eth}_{F,m,\mathrm{abs}}\circ (\mathrm{Id}+(\overline{\eth}_{F,m,\mathrm{abs}})^2)^{-\frac{1}{2}})$$ unitarily equivalent. Therefore, $[Q_0^{m,0}]=[\overline{\eth}_{F,m,\mathrm{abs}}]$ and so, we can thus conclude that $$\pi_*[\overline{\eth}_{E,m}]=[\overline{\eth}_{F,m,\mathrm{abs}}]\quad \mathrm{in}\ KK_0(C(X),\mathbb{C})$$ as desired.
\end{proof}

\end{document}